\documentclass[12pt,centertags,oneside]{amsart}
\usepackage{amsmath,amstext,amsthm,amscd,typearea,hyperref}
\usepackage{amssymb}
\usepackage{a4wide}
\usepackage[mathscr]{eucal}
\usepackage{mathrsfs}
\usepackage{typearea}
\usepackage{charter}
\usepackage{pdfsync}
\usepackage[a4paper,width=16.5cm,top=3cm,bottom=3cm]{geometry}

\numberwithin{equation}{section}

\allowdisplaybreaks
\tolerance=1
\emergencystretch=\maxdimen
\hyphenpenalty=10000
\hbadness=10000

\usepackage{multicol}

\usepackage{xcolor}

%\newtheorem{thmA}{Theorem}
%\renewcommand{\thethmA}{\Alph{thmA}}
%\newtheorem{corA}[thmA]{Corollary}
%\renewcommand{\thecorA}{\Alph{corA}}

% theorems with special labels

\newtheorem{theorem}{Theorem}[section]
\newtheorem{definition}[theorem]{Definition}
\newtheorem{proposition}[theorem]{Proposition}
\newtheorem{corollary}[theorem]{Corollary}
\newtheorem{lemma}[theorem]{Lemma}
\newtheorem{remark}[theorem]{Remark}

\newtheorem*{definition*}{Definition}

\newtheorem{mainthm}{Theorem}

\newcommand{\cali}[1]{\mathscr{#1}}

\newcommand{\SL}{{\rm SL}}

\newcommand{\supp}{{\rm supp}}

\newcommand{\diff}{{\rm d}}

\newcommand{\del}{\partial}

\newcommand{\dbar}{\overline\partial}

\newcommand{\ep}{\epsilon}

\newcommand{\Cc}{\cali{C}}

\newcommand{\W}{\cali{W}}

\newcommand{\D}{\mathbb{D}}
\newcommand{\C}{\mathbb{C}}

\newcommand{\N}{\mathbb{N}}
\newcommand{\Z}{\mathbb{Z}}
\newcommand{\R}{\mathbb{R}}

\renewcommand\P{\mathbb{P}}

\newcommand{\E}{\mathbf{E}}

\newcommand{\omegaFS}{ \omega_{\mathrm{FS}}}

\newcommand{\norm}[1]{\lVert#1\rVert}

\newcommand{\sgn}{{\rm sgn}}
\newcommand{\g}{{\mathbf g}}

\newcommand{\oF}{\mathcal{F}}
\newcommand{\oP}{\mathcal{P}}
\newcommand{\oN}{\mathcal{N}}
\newcommand{\oR}{\mathcal{R}}
\newcommand{\oE}{\mathcal{E}}
\newcommand{\oU}{\mathcal{U}}
\newcommand{\oQ}{\mathcal{Q}}
\newcommand{\oS}{\mathcal{S}}
\newcommand{\oL}{\mathcal{L}}

\newcommand{\bN}{\mathbf{N}}
\newcommand{\bM}{\mathbf{M}}
\newcommand{\bT}{\mathbf{T}}
\newcommand{\bS}{\mathbf{S}}
\newcommand{\bA}{\mathbf{A}}
\newcommand{\bB}{\mathbf{B}}

\newcommand{\bQ}{\mathbf{Q}}
\newcommand{\bC}{\mathbf{C}}
\newcommand{\bP}{\mathbf{P}}
%------------------------------Boldsymbol-------------------------------------

%%%%%%%%%%%%%%%%%%%%%%%%%%%%%%%%%%%%%%%%%%%%%%%%%%%%%%%%%%%%
%%% Header

\usepackage{fancyhdr}

\pagestyle{fancy}
\fancyhf{}
\rhead{ \thepage }
\lhead{\textit{Decay of Fourier coefficients for Furstenberg measures}}

%%%%%%%%%%%%%%%%%%%%%%%%%%%%%%%%%%%%%%%%%%%%%%%%%%%%%%%%%%%%

\title{Decay of Fourier coefficients for Furstenberg measures}

\author{Tien-Cuong Dinh}
\address{Department of Mathematics,  National University of Singapore - 10, Lower Kent Ridge Road - Singapore 119076}
\email{matdtc@nus.edu.sg}

\author{Lucas Kaufmann}
\address{Center for Complex Geometry - Institute for Basic Science (IBS) - 55 Expo-ro Yuseong-gu Daejeon 34126 South Korea}
\email{lucaskaufmann@ibs.re.kr}

\author{Hao Wu}
\address{Department of Mathematics,  National University of Singapore - 10, Lower Kent Ridge Road - Singapore 119076}
\email{matwu@nus.edu.sg}

\thanks{This work was supported by the NUS and MOE grants  AcRF Tier 1 R-146-000-259-114, R-146-000-299-114 and MOE-T2EP20120-0010. L. Kaufmann was supported by the Institute for Basic Science (IBS-R032-D1)}

\begin{document}

\begin{abstract}
Let $\nu$ be the Furstenberg measure  associated with a non-elementary probability measure $\mu$ on $\SL_2(\R)$. We show that, when $\mu$ has a finite second moment, the Fourier coefficients of $\nu$ tend to zero at infinity. In other words, $\nu$ is a Rajchman measure. This improves a recent result of Jialun Li.
\end{abstract}

\clearpage\maketitle
\thispagestyle{empty}

\noindent\textbf{Keywords:} random walks on Lie groups, random matrices, Fourier coefficients, Rajchman measures, renewal operators.

\noindent\textbf{Mathematics Subject Classification 2010:} \texttt{60B15,60B20,60K15,42A16,37C30}.

\setcounter{tocdepth}{1}
\tableofcontents

\section{Introduction}

Let $\bC$ be a circle parametrized by $e^{i\theta}$ with $0\leq\theta< 2\pi$. For a probability measure $\nu$ on $\bC$,  we define its  \textit{$k$-th Fourier coefficient} by $$\widehat\nu (k):=\int_{\bC } e^{i k \theta} \,\diff \nu(\theta) \quad\text{for}\quad k\in\Z.$$

The behavior of $\widehat\nu (k)$ when $|k| \to \infty$ and in particular, the question of whether $\widehat\nu (k)$ tends to zero are widely studied. This is linked with different problems in analysis, geometry, number theory, etc. In particular, this question appears naturally in the study of sets of uniqueness for trigonometric series, a topic that has raised interest since Riemann, Cantor, etc., and in the study of various problems in geometric measure theory. The reader may refer to the survey \cite{korner} and the book  \cite{mattila:book} for an overview of these topics. 

It is a standard principle in Fourier analysis that regularity properties of a function can be obtained from decay properties of its Fourier transform. This principle can be carried to probability measures, but in this case, properties other than regularity (often of arithmetic nature) play a role in the decay of the Fourier transform. For instance, it can be shown that the Fourier coefficients of any measure on the standard middle-thirds Cantor set do not decay to zero as $k$ tends to infinity. The same is true for measures on the standard middle $\lambda$-Cantor set, $0< \lambda < 1 / 2$,  precisely when $\lambda^{-1}$ is a Pisot number, highlighting the role of arithmetic properties in this problem, see \cite[Chapter 8]{mattila:book} and \cite{kahane-salem}. We refer to the introduction of \cite{li-sahlsten} and the references therein for a nice overview of this topic.

In this context, we say that a probability measure $\nu$ on $\bC$ is a \textit{Rajchman measure} if its Fourier transform vanishes at infinity, that is, $\widehat\nu (k) \to 0$ as $|k| \to \infty$. They form an important class of measures and appear in many situations, see for instance the survey \cite{lyons:rajchman}.  The aim of this work is to obtain the Rajchman property for probability measures arising as the Furstenberg measure associated with a low-moment random walk on $\SL_2(\R)$.
\vskip5pt

We now make our result precise. Denote by $G_\R:= \SL_2(\R)$ (resp. $G:= \SL_2(\C)$) the space of real (resp. complex) $2$ by $2$ matrices of determinant one. The standard linear action of $G_\R$ on $\R^2$ descends to the real projective line $\R\P^1$. We can also see a matrix in $G_\R$ as an element of $G$ acting on the complex projective line $\C \P^1 =: \P^1$  (i.e., the Riemann sphere) and  preserving the real submanifold $\R\P^1$. Using a stereographic projection, we naturally identify  $\R\P^1$ with a circle $\bC$. In particular, measures on $\P^1$ supported by $\R\P^1$ can be identified with measures on $\bC$ and their Fourier coefficients can be defined as above.

Let $\mu$ be a probability measure on $G$. Then, $\mu$ induces a random walk on $G$ by setting $$S_n: = g_n \cdots g_1,$$ where the $g_j$'s are independent random elements of $G$ with law given by $\mu$. 

A measure $\nu$ on $\P^1$ is called \textit{stationary} if
$$\int_{G} g_* \nu \, \diff \mu(g)= \nu,$$ where $g_*$ stands for the push-forward under the map induced by $g$. An easy compactness argument shows that stationary measures always exist.

We say that $\mu$ is \textit{non-elementary} if its support does not preserve a finite subset of $\P^1$ and if the semi-group it generates is not relatively compact in $G$.  In this case, $\mu$ admits a unique stationary measure which is called the \textit{Furstenberg measure} associated with $\mu$. When $\mu$ is supported by $G_\R$, the associated Furstenberg measure $\nu$ is supported by $\bC$. In particular, one can define the  Fourier coefficients of $\nu$.

We can now state our main theorem. 

\begin{mainthm}\label{thm:main-fourier}
	Let $\mu$ be a non-elementary probability measure on $G_\R=\SL_2(\R)$. Assume that $\mu$ has a finite second moment, that is, $\int_{G_\R} \log^2 \|g\| \, \diff \mu(g) < \infty$. Let $\nu$ be the associated stationary measure on $\bC$. Then 
	$$\widehat\nu(k) \longrightarrow 0 \quad \text{ as } \quad |k|\to \infty.$$
	In other words, $\nu$ is a Rajchman measure.
\end{mainthm}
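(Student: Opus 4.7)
The plan is to use the stationarity of $\nu$ to rewrite $\widehat\nu(k)$ as a double integral over a random matrix and over $x\in\bC$, and then to combine the contracting action of typical matrices drawn from $\mu^{*n}$ with a renewal-theoretic analysis of the norm cocycle---valid under the second moment hypothesis---to extract the oscillatory cancellation. Let $\lambda>0$ denote the top Lyapunov exponent of $\mu$. By stationarity,
\[
\widehat\nu(k)=\int_{G_\R}\int_\bC e^{ik\,\theta(g\cdot x)}\,\diff\nu(x)\,\diff\mu^{*n}(g),
\]
and we choose $n=n(k)\sim\tfrac{1}{2\lambda}\log|k|$ so that, by Furstenberg-Kesten, a typical $g\sim\mu^{*n}$ satisfies $\log\|g\|\sim\tfrac{1}{2}\log|k|$.

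Writing the Cartan decomposition $g=r_\alpha a_t r_\beta$ with $a_t=\mathrm{diag}(e^t,e^{-t})$, the Möbius map induced by $g$ has an attracting fixed point $p_g=r_\alpha\cdot\infty$ with multiplier $e^{-2t}\sim|k|^{-1}$. Consequently $g_*\nu$ is concentrated in an arc of length $\sim|k|^{-1}$ around $p_g$, and the phase $k\theta$ oscillates by $O(1)$ across this arc. Linearising $g$ near $p_g$ and using the H\"older regularity of $\nu$ (Guivarc'h--Le Page, valid under the second moment) to bound the nonlinear error, the inner integral takes the form
\[
\int_\bC e^{ik\Phi_g(x)}\,\diff\nu(x)+o(1),
\]
with a smooth phase $\Phi_g$ depending on the Cartan parameters of $g$.

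The decisive step is to convert this $O(1)$ oscillation into genuine decay via the outer integration over $\mu^{*n}$. The tool is the renewal theorem for the norm cocycle $\sigma(S_n,x)=\log\|S_n v\|/\|v\|$: under the second moment hypothesis, the renewal measure $\sum_{n\geq 1}\mu^{*n}$ pushed forward by $\sigma(\,\cdot\,,x)$ has a density tending to $\lambda^{-1}$ uniformly in $x$. Discretising $\log\|g\|$ over dyadic scales in $[\tfrac12\log|k|,\log|k|]$, applying the renewal equidistribution on each scale (noting that the angular coordinate $\alpha(g)$ is asymptotically distributed as $\nu$), and summing, one aims to produce a Riemann-Lebesgue-type cancellation in $\int e^{ik\Phi_g(x)}\,\diff\mu^{*n}(g)$ that gives decay of $\widehat\nu(k)$.

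The main obstacle lies precisely in this last step. Li's recent result was obtained under stronger moment assumptions by invoking a spectral gap for an associated transfer operator acting on a space of H\"older observables, which yields quantitative power-law decay of $\widehat\nu(k)$; under a mere second moment no such spectral gap is available, and one must substitute the softer, qualitative renewal theorem for the quantitative spectral input. The technical difficulty is then to organise the cancellation dyadic-scale by dyadic-scale while controlling the linearisation errors of the previous step so that they do not accumulate and swamp the renewal cancellation; the output is $\widehat\nu(k)\to 0$ without any explicit rate.
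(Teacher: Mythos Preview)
Your proposal correctly identifies the ingredients---contraction of the Möbius action, renewal theory for the norm cocycle, and oscillatory cancellation---but the argument as written has a genuine structural gap, and one of its premises is inaccurate.

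First, the mechanism for cancellation is not pinned down. Choosing a single time $n\sim\frac{1}{2\lambda}\log|k|$ and then integrating $e^{ik\Phi_g(x)}$ against $\mu^{*n}(g)$ does not by itself produce decay: the renewal theorem gives \emph{equidistribution} of $\log\|g\|$ modulo scales, not oscillatory cancellation, and since $\nu$ is singular you cannot extract decay from the inner integral over $x$ alone. The paper's route is different and more robust: one first uses a \emph{stopping time} decomposition (Proposition~\ref{decomp}) to write $\int_\bC e^{ik\varphi}\psi\,\diff\nu$ as a signed sum of integrals over the sets $\bM_t^\pm$ where $\log\|g_n\cdots g_1\|$ first crosses the level $t$; then a Cauchy--Schwarz step converts the problem into a \emph{two-point} correlation $\int\int e^{ik(\varphi(\g^{-1}x)-\varphi(\g^{-1}y))}\psi(\g^{-1}x)\psi(\g^{-1}y)\,\diff\check\mu^\N(\g)\,\diff\nu(x)\,\diff\nu(y)$. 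This two-point structure is essential: the phase difference $\varphi(\g^{-1}x)-\varphi(\g^{-1}y)$ can be approximated (Proposition~\ref{prop:li-approximation}) by an explicit function of the Cartan data of $\g$ and the angles, which then feeds into a residual renewal operator $\oE_2^+$ whose asymptotics (Proposition~\ref{renewal-4}) produce an integral to which Lemma~\ref{inequality-exp} (an elementary oscillatory integral bound) applies. Your single-point, single-time scheme has no analogue of this step.

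Second, your statement that ``under a mere second moment no such spectral gap is available'' is not correct, and abandoning spectral input is what leaves your outline underdetermined. The point of the present paper (building on \cite{DKW:PAMQ,DKW:LLT}) is precisely that a spectral gap for the perturbed Markov operators $\oP_\xi$ \emph{does} hold under a second moment, once one works on the Sobolev-type space $W^{1,2}$ and the logarithmic modulus space $\Cc^{\log^{p-1}}$ (and their blend $\W$) rather than on H\"older spaces. The regularity of $\xi\mapsto\oP_\xi$ is only $\tfrac12$-H\"older rather than analytic, which weakens the error terms in the renewal asymptotics (Proposition~\ref{renewal-1} gives $O((1+t)^{-1/4})$ instead of the exponential decay available in the exponential-moment case), but this is still enough quantitative input to close the argument. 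A purely qualitative renewal theorem, as you suggest, would not suffice to control the accumulation of linearisation errors you yourself flag.
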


 Under the stronger \textit{exponential moment condition}, i.e., $\int_{G_\R} \|g\|^\alpha \, \diff \mu(g) < \infty$ for some $\alpha > 0$,  the above result is due to Li \cite{li:fourier}. The sequel \cite{li:fourier-2} provides a polynomial decay rate, that is, $|\widehat \nu (k)| \lesssim 1/|k|^\beta$ for some $\beta >0$.  Various related results can be found in recent literature.  For Patterson-Sullivan measures associated with convex co-compact subgroups of $\SL_2(\R)$, Bourgain-Dyatlov \cite{bourgain-dyatlov} obtained a polynomial Fourier decay. This is related to zero-free regions of the Selberg zeta functions. Li-Naud-Pan \cite{li-naud-pan} showed that the same type of decay is observed for Patterson-Sullivan measures associated with Zariski dense Schottky subgroups of $\SL_2(\C)$. They also note that such measures arise as the Fusternberg measure associated with some random walk on the corresponding group having a finite exponential moment. In the context of fractal geometry, Li-Sahlsten \cite{li-sahlsten} obtained the Fourier decay for a class of self-affine measures.

Theorem \ref{thm:main-fourier} will be a consequence of a more general result, see Theorem \ref{thm:fourier-general} below. Under a $(2+\epsilon)$-moment condition, that is $\int_{G_\R} \log^{2+\varepsilon} \|g\| \, \diff \mu(g) < \infty$ for some $\varepsilon >0$, our techniques can be used to show that $|\widehat \nu (k)| \lesssim 1 / \log \log |k|$. The proof is not presented here in order to make the paper less technical.

\vskip5pt

 The proof of Theorem \ref{thm:main-fourier} follows the strategy of \cite{li:fourier} and draw from techniques of our recent works \cite{DKW:IJM,DKW:PAMQ,DKW:LLT}. The key idea is to use stopping times and Cauchy-Schwarz inequality in order to bound the integral defining $\widehat\nu(k)$ by a quantitity that can be translated to fit the framework of renewal theory for Markov chains, as initiated by Kesten \cite{kesten:renewal} and further developed in \cite{boyer,guivarch-lepage,li:fourier,li:fourier-2}.  We refer to Section \ref{sec:fourier} for more details on the structure of the proof. In the setting of random matrix products, most known results about renewal operators require exponential moment conditions on the measure $\mu$ and break down in our situation. The main difficulties are the fact that natural pertubations of the Markov operator display very low regularity (see Section \ref{sec:markov}) and that the large deviation estimates, as obtained in \cite{benoist-quint:CLT}, are weaker in the finite second moment case.  Therefore, in order to be able to apply Li's method, we need to further develop the spectral analysis of the perturbed Markov operators initiated in \cite{DKW:LLT} and generalize most of the estimates of \cite{li:fourier} to cover the case of finite second moment. Due to the low regularity of the perturbed Markov operators, new arguments are needed. This is one of the main technical contributions of this work.
\vskip5pt

The text is structured as follows. In Section \ref{sec:prelim}, we recall some basic fact about the theory of random walks on $\SL_2(\C)$ and state a few basic results on Fourier analysis needed in our work. In Section \ref{sec:markov}, we study the spectral analysis of the Markov operator and its perturbations in the case of low moments as initiated in  \cite{DKW:PAMQ,DKW:LLT}. We prove new regularity results that are necessary in the analysis of the renewal operators. Section \ref{sec:renewal} is the technical core of this work, where we introduce many ``renewal operators'' and derive their asymptotic. Finally, in Section \ref{sec:fourier}, we use these results to prove our main theorem.

\medskip
 
 \noindent\textbf{Notations:} Throughout this paper, the symbols $\lesssim$ and $\gtrsim$ stand for inequalities up to a positive multiplicative constant. The dependence of these constants on certain parameters, or lack thereof, will be clear from the context.

\section{Preliminary results} \label{sec:prelim}

In this article, we'll work with $G=\SL_2(\C)$. In the end, some results will specialize to $G_\R = \SL_2(\R)$. 

\subsection{Norm cocycle,  Lyapunov exponent and density points}
Let $\mu$ be a probability measure on $G$.
   For $n \geq 1$, we define the convolution measure  $\mu^{*n} := \mu * \cdots * \mu$ ($n$ times) as the push-forward of the product measure $\mu^{\otimes n}$ on $G^n$ by the map $(g_1, \ldots, g_n) \mapsto g_n \cdots g_1$. This is the law of the product $S_n:= g_n \cdots g_1$ when the $g_j$'s are i.i.d. with law $\mu$.

Denote by $\norm{g}$ the operator norm of the matrix $g$. Note that since $g \in \SL_2(\C)$, we have that $\|g\| \geq 1$.

\begin{definition} \rm We say that  $\mu$  has a \textit{finite exponential moment} if  $\int_G \|g\|^\alpha \, \diff \mu(g) < \infty$  for some $\alpha > 0$, and a \textit{finite moment of order} $p >0$ (or a finite $p$-th moment) if  $$\int_G \log^p \|g\| \, \diff \mu(g) < \infty.$$
\end{definition}

For a probability measure $\mu$ with a finite first moment, we define the  \textit{(upper) Lyapunov exponent} of $\mu$ by 
$$\gamma := \lim_{n \to \infty} \frac1n \,\E \big( \log\|S_n\| \big)=\lim_{n \to \infty} \frac1n \int \log\|g_n \cdots g_1\| \, \diff \mu(g_1) \cdots \diff \mu(g_n)$$
and the \textit{norm cocycle} by $$\sigma(g,x) = \sigma_g(x):= \log \frac{\norm{gv}}{\norm{v}}, \quad \text{for }\,\, v \in \C^2 \setminus \{0\}, \, x = [v] \in \P^1   \, \text{ and } g \in G.$$
Observe that $\|\sigma_g\|_\infty = \log \|g\|$ and $g_* \sigma_g := \sigma_g \circ g^{-1} = - \sigma_{g^{-1}}$.

We say that $\mu$ is \textit{non-elementary} if its support does not preserve a finite subset of $\P^1$ and if the semi-group it generates is not relatively compact in $G$.  A non-elementary measure admits a unique \textit{stationary measure}, called the \textit{Furstenberg measure}. This is a probability measure $\nu$ on $\P^1$ satisfying $$\int_G g_* \nu \, \diff \mu(g)= \nu.$$ One can show that for non-elementary measures we always have $\gamma > 0$. See \cite{bougerol-lacroix} for the proofs of the above facts.

\textit{Cartan's decomposition} says that every matrix $g \in \SL_2(\C)$ can be written as
\begin{equation*} 
g=k_g a_g \ell_g, \quad \text{ where } \, \, k_g,\ell_g \in \text{SU}(2) \,\, \text{ and } \,\, a_g=\left(\begin{smallmatrix} \lambda & 0 \\ 0 & \lambda^{-1} \end{smallmatrix}\right) \,\, \text{ for some } \,\, \lambda \geq 1.
\end{equation*}
Observe that $\|g\| = \|g^{-1}\| = \lambda$.

Let $e_1,e_2 \in \P^1$ be the points corresponding to the standard basis of $\C^2$. We define the \textit{density points} of $g$ by $$z^m_g:=\ell_g^{-1} e_2 \quad \text{ and }\quad z^M_g:=k_g e_1.$$
These points have the property that, when $\lambda$ is large, there are small neighborhoods $V_g^m$ (resp. $V_g^M$) of $z^m_g$ (resp. $z^M_g$) such that $g$ maps $\P^1 \setminus V_g^m$ into $V_g^M$. Observe that $z^m_g=z^M_{g^{-1}}$.  These points are uniquely defined when $\lambda  > 1$. When $\lambda = 1$ these points are not uniquely determined and we set by convention $z^m_g=z^M_{g} := e_1$.  This doesn't change our estimates, see Proposition \ref{prop:BQLDT} below.
\medskip

We equip $\P^1$ is with the natural distance given by 
\begin{equation} \label{eq:distance-def}
d(x,y) : = \frac{| \det(v,w) \, |}{\|v\| \|w\|}, \quad \text{where} \quad v,w \in \C^2 \setminus \{0\}, \, x = [v]\in\P^1, \, y = [w] \in \P^1.
\end{equation}

Then,  $(\P^1, d)$ has diameter one and $\text{SU}(2)$ acts by holomorphic isometries. We denote by $\D(x,r)$ the associated disc of center $x$ and radius $r$. Since $\det(g)=1$, the above distance can be read from the norm cocycle by the formula
\begin{equation} \label{d-gxgy}
d(gx,gy)=\frac{ \|v\| \|w\|}   {    \|gv\| \|gw\|}d(x,y) =e^{-\sigma_g(x)-\sigma_g(y)}d(x,y).
\end{equation}

The distance to the density point $z_g^m$ is related to the value of the cocycle $\sigma_g$ by
	\begin{equation} \label{g^-2}
	d(z^m_g,x)\leq \frac{\norm{gv}}{\norm{g}\norm{v}} \leq d(z^m_g,x)+\norm{g}^{-2},
	\end{equation}
which can be easily proven using Cartan's decomposition. See \cite[Lemma 14.2]{benoist-quint:book}.

\subsection{Large deviation estimates} In the proof of Theorem \ref{thm:main-fourier}, the following large deviation estimates, due to Benoist-Quint, will be used frequently. We note that it also holds in the more general setting of linear groups over local fields. We refer to  \cite{benoist-quint:book} for the corresponding version for measures with a finite exponential moment.

\begin{proposition}[\cite{benoist-quint:CLT}--Proposition 4.1 and Lemma 4.8] \label{prop:BQLDT} 
	Let $\mu$ be a probability measure on $G=\SL_2(\C)$. Assume that $\mu$ is non-elementary and $\int_G \log^p \|g\| \, \diff \mu(g) < \infty$ for some $p > 1$. Let $\gamma > 0$ be the Lyapunov exponent of $\mu$. Then, for every $\epsilon>0$ there exist positive constants $C_{n,\epsilon}$ satisfying $\sum_{n\geq 1} n^{p-2}C_{n,\epsilon}< \infty$ and $\lim_{n\to \infty} n^{p-1} C_{n,\epsilon}=0$, such that 
	\begin{equation*}
	\mu^{*n} \big\{g\in G:\, \big|\log\|g\|-n\gamma\big|\geq \epsilon n\big\}\leq C_{n,\epsilon},
	\end{equation*}
	and for every $x\in\P^1$,
	\begin{equation*}
	\mu^{*n}\big\{g\in G:\, | \sigma_g(x)-n\gamma| \geq \epsilon n\big\}\leq C_{n,\epsilon}.
	\end{equation*}
	
	Moreover, for every $0<\epsilon < \gamma$ and $x,y\in\P^1$,  the following subsets of $G$ have $\mu^{*n}$ measure bounded by $C_{n,\epsilon}$:
	\begin{equation} \label{BQregular1}
	\begin{aligned}[c]
	\big\{g\in G:\,d(g^{-1}x,z^m_g)\geq e^{-(2\gamma-\ep) n}\big\} \\
	\big\{g\in G:\,d(gx,z^M_g)\geq e^{-(2\gamma-\ep) n}\big\}
	\end{aligned}
	\quad  \quad 
	\begin{aligned}[c] 
	&\big\{g\in G:\,d(z^m_g,y)\leq e^{-\ep n}\big\} \\
	&\big\{g\in G:\,d(z^M_g,y)\leq e^{-\ep n}\big\}\\
	&\big\{g\in G:\,d(g^{-1}x,y)\leq e^{-\ep n}\big\}.
	\end{aligned}
	\end{equation}	
\end{proposition}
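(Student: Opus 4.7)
The plan is to reduce the cocycle large deviation estimates to a martingale tail bound via a cohomological (Poisson equation) decomposition of the norm cocycle, and then to derive the density point estimates in \eqref{BQregular1} from the cocycle bounds using the elementary inequality \eqref{g^-2} coming from Cartan's decomposition.

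First I would establish a decomposition
$$\sigma_g(x) = \gamma + \psi(gx) - \psi(x) + \Phi(g,x)$$
where $\psi\colon \P^1 \to \R$ is bounded measurable, $\Phi\colon G \times \P^1 \to \R$ is a measurable cocycle satisfying $\int_G \Phi(g,x)\,\diff\mu(g) = 0$ for $\nu$-a.e.\ $x$, and $|\Phi(g,x)| \lesssim \log\|g\|$. Concretely, $\psi$ is a bounded solution of the Poisson equation $(I - P)\psi = \bar\sigma$, with $P\varphi(x) = \int \varphi(gx)\,\diff\mu(g)$ and $\bar\sigma(x) = \int (\sigma_g(x)-\gamma)\,\diff\mu(g)$; its existence relies on a quantitative spectral gap for $P$ on a H\"older space, which is available under the non-elementary hypothesis even at low moment. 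Iterating yields $\sigma_{S_n}(x) - n\gamma = \psi(S_n x) - \psi(x) + M_n(x)$ with $M_n(x) = \sum_{k=1}^{n} \Phi(g_k, S_{k-1}x)$ a martingale for the natural filtration, so the deviations of $\sigma_{S_n}(x) - n\gamma$ are controlled by those of $M_n$ up to a bounded error.

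For the tail of $M_n$ I would apply a Fuk-Nagaev-type inequality for martingale differences in $L^p$. Splitting $\Phi = \Phi\,\ind_{|\Phi|\leq t} + \Phi\,\ind_{|\Phi|>t}$ with a threshold $t = t(n,\epsilon)$ to be optimized, the bounded part contributes an Azuma/Bennett exponential term, while the tail part is bounded by $n\,\mu\{|\Phi(g,\cdot)| > t\} \lesssim n\, t^{-p}\,\E\bigl[|\Phi|^p\,\ind_{|\Phi|>t}\bigr]$. The finite $p$-th moment of $\mu$ implies $\E[|\Phi|^p\,\ind_{|\Phi|>t}] \to 0$ as $t \to \infty$, and an optimization yields a sequence $C_{n,\epsilon}$ satisfying both $n^{p-1} C_{n,\epsilon} \to 0$ and $\sum_n n^{p-2} C_{n,\epsilon} < \infty$. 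The bound on $\log\|S_n\|$ follows from the cocycle bound at finitely many points (a covering argument on $\P^1$) together with \eqref{g^-2}, since $\log\|S_n\|$ agrees with $\sigma_{S_n}(x)$ up to $O(\|S_n\|^{-2})$ whenever $x$ is far from the density point $z^m_{S_n}$.

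Finally, the estimates in \eqref{BQregular1} follow from the cocycle bound via \eqref{g^-2} applied to $g$ or $g^{-1}$. On the event $\{\sigma_{g^{-1}}(x) \geq (\gamma - \epsilon/3)n\} \cap \{\log\|g\| \leq (\gamma + \epsilon/3)n\}$, which has probability at least $1 - C_{n,\epsilon/3}$, the inequality \eqref{g^-2} gives $d(z^m_g, g^{-1}x) \lesssim e^{-(2\gamma - \epsilon)n}$, yielding the first bound. The estimates $d(z^m_g, y) \leq e^{-\epsilon n}$ are controlled using the fact that the law of $z^m_g$ under $\mu^{*n}$ converges weakly to (a reflected version of) the Furstenberg measure $\nu$, which is non-atomic as a consequence of non-elementarity; a quantitative rate in $C_{n,\epsilon}$ comes from the cocycle bounds combined with \eqref{g^-2}. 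The main obstacle is the sharp summability $\sum_n n^{p-2} C_{n,\epsilon} < \infty$, which is critical at $p = 2$: a naive Burkholder/Markov estimate on $\E|M_n|^p$ gives only $C_{n,\epsilon} \sim n^{1-p}$, non-summable at the endpoint. Obtaining the sharper bound genuinely requires the Fuk-Nagaev truncation together with the dominated-convergence gain on the tail, and constructing a uniformly bounded $\psi$ at low moment requires careful regularity analysis of the Markov operator.
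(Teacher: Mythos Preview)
The paper does not prove this proposition; it is quoted from Benoist--Quint \cite{benoist-quint:CLT} (Proposition~4.1 and Lemma~4.8), with only the remark that the extra condition $\lim_{n\to\infty} n^{p-1}C_{n,\epsilon}=0$ can be read off from the proof of \cite[Theorem~2.2]{benoist-quint:CLT}. So there is nothing to compare against in the present paper, and your sketch should be measured against the Benoist--Quint argument itself.

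Your overall architecture---cohomological decomposition $\sigma = \gamma + \text{coboundary} + \text{martingale increment}$, followed by a truncated Fuk--Nagaev/Azuma bound on the martingale---is exactly the Benoist--Quint strategy. However, the step where you produce the bounded solution $\psi$ of the Poisson equation contains a genuine error. You write that the existence of $\psi$ ``relies on a quantitative spectral gap for $P$ on a H\"older space, which is available under the non-elementary hypothesis even at low moment.'' This is false: the spectral gap of $P$ on H\"older spaces is the Le~Page theory and requires an \emph{exponential} moment. Under a mere $p$-th moment there is no such gap on $\Cc^\alpha$, and indeed the whole point of \cite{benoist-quint:CLT} (and of the present paper's Sections~\ref{sec:markov}--\ref{sec:renewal}) is to work around its absence. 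Benoist--Quint instead obtain the centering $\psi$ by first proving a uniform law of large numbers for $\sigma$ directly (their Proposition~3.2 on ``special'' cocycles), which yields uniform convergence of $\sum_n P^n\bar\sigma$ without any spectral information. If you patch your argument by invoking that input rather than a H\"older spectral gap, the rest of your martingale analysis goes through.

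Two smaller points. First, your derivation of the density-point bounds from \eqref{g^-2} is essentially correct for the two left-hand sets in \eqref{BQregular1}, but your treatment of $\{d(z^m_g,y)\le e^{-\epsilon n}\}$ via ``the law of $z^m_g$ converges weakly to $\nu$'' is too soft: to get constants $C_{n,\epsilon}$ with the stated summability one argues instead, as in \cite[Lemma~4.8]{benoist-quint:CLT}, by combining \eqref{g^-2} with the cocycle large deviations at a fixed auxiliary point and the triangle inequality. Second, your remark that ``a naive Burkholder/Markov bound gives only $C_{n,\epsilon}\sim n^{1-p}$, non-summable at $p=2$'' is correct and is precisely why the truncation step is essential; this is the content of \cite[Theorem~2.2]{benoist-quint:CLT}, and the same proof yields $n^{p-1}C_{n,\epsilon}\to 0$, as the paper notes.
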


We note that the fact that  $\lim_{n\to \infty} n^{p-1} C_{n,\epsilon}=0$ is not explicitly stated in \cite{benoist-quint:CLT}, but it can be  easily deduced from the proof of \cite[Theorem 2.2]{benoist-quint:CLT}.  In order to simplify the notation we'll often use the abbreviation $C_n$, omitting the dependence on $\ep$.  This shouldn't cause any confusion since $\ep$ will always be fixed, with its value depending on the context.
\medskip

In the course of the proof of our main theorem, we'll often need to control quantities depending on a parameter $t>0$ when $t$ is large. The following notion will ease some notation. 

\begin{definition}\rm
	A non-negative function $\varepsilon(t)$ on $\R_{\geq 0}$ is said to be a  \textit{rate function} if it satisfies $\lim_{t\to \infty} \varepsilon(t)=0$.
\end{definition}

Notice that, for any rate function $\varepsilon(t)$,  the function $t\mapsto \sup_{s\geq t}\varepsilon(s)$ is a decreasing rate function that dominates  $\varepsilon(t)$. Therefore, we can always assume that our rate functions are decreasing.

We now give a few useful consequences of Proposition \ref{prop:BQLDT}.  We mostly work with the case $p=2$,  as this corresponds to the assumption  of Theorem \ref{thm:main-fourier}.  Some of the results below can be improved when $\mu$ has a finite moment of order $p > 2$. We leave the details to the reader.

\begin{lemma} \label{large-n}
  Let $\mu$ be as in Proposition \ref{prop:BQLDT} and $p=2$. Then, there exists a decreasing rate function $\varepsilon_0(t)$ such that
		$$\sum _{n\geq \lceil 2t/ {\gamma}\rceil}\mu^{*n}\big\{g\in G:\, \log\|g\|\leq t\big\} + \sum _{n\leq \lfloor 2t/ (3\gamma)\rfloor}\mu^{*n}\big\{g\in G:\, \log\|g\|\geq t\big\}  \leq \varepsilon_0(t) $$
		and, for every $x \in \P^1$,
	$$	\sum _{n\geq \lceil 2t/ {\gamma}\rceil} \mu^{*n}\big\{g\in G:\,\sigma_g(x)  \leq t\big\} + \sum _{n\leq \lfloor 2t/ (3\gamma)\rfloor} \mu^{*n}\big\{g\in G:\,\sigma_g(x)  \geq t\big\} \leq \varepsilon_0(t) $$
	 for all $t>0$.
\end{lemma}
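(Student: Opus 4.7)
The plan is to apply Proposition \ref{prop:BQLDT} with $p=2$ and $\epsilon = \gamma/2$, handling the two halves of the sum separately. Write $C_n := C_{n,\gamma/2}$, so that $\sum_n C_n < \infty$. The starting observation is elementary: if $n \geq \lceil 2t/\gamma \rceil$ then $n\gamma - t \geq n\gamma/2$, while if $n \leq \lfloor 2t/(3\gamma)\rfloor$ then $t - n\gamma \geq t/3 \geq n\gamma/2$. Both inequalities will let us translate the events in the lemma into deviation events of size at least $(\gamma/2)n$.

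For the first half (involving $\{\log\|g\|\leq t\}$ or $\{\sigma_g(x)\leq t\}$), the event forces $|\log\|g\|-n\gamma|\geq n\gamma/2$, respectively $|\sigma_g(x)-n\gamma|\geq n\gamma/2$, so Proposition \ref{prop:BQLDT} yields $\mu^{*n}\{\log\|g\|\leq t\}\leq C_n$ and similarly for $\sigma_g(x)$. Summing, these contributions are bounded by the tail $\sum_{n\geq \lceil 2t/\gamma\rceil}C_n$ of a convergent series, which tends to $0$ as $t\to\infty$.

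The second half is more delicate, since the summation range itself grows with $t$. The plan is to split at the threshold $N_1(t):=\lfloor t^{1/2}\rfloor$. For $N_1<n\leq \lfloor 2t/(3\gamma)\rfloor$, the event $\log\|g\|\geq t$ gives $\log\|g\|-n\gamma\geq t/3\geq n\gamma/2$, so Proposition \ref{prop:BQLDT} again bounds each term by $C_n$, and summing produces the tail $\sum_{n>N_1}C_n\to 0$. For $n\leq N_1$, we use a direct moment bound: submultiplicativity of the norm gives $\log\|S_n\|\leq \sum_{j=1}^n\log\|g_j\|$, and Cauchy--Schwarz together with $M:=\int\log^2\|g\|\,\diff\mu<\infty$ yields $\E[\log^2\|S_n\|]\leq n^2 M$; Markov's inequality then gives $\mu^{*n}\{\log\|g\|\geq t\}\leq n^2 M/t^2$, and summing over $n\leq N_1$ contributes a quantity of order $t^{-1/2}$. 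The bound for $\{\sigma_g(x)\geq t\}$ follows from $|\sigma_g(x)|\leq \log\|g\|$, hence $\{\sigma_g(x)\geq t\}\subseteq \{\log\|g\|\geq t\}$ for $t\geq 0$.

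Assembling the four estimates produces a function $\varepsilon_0(t)\to 0$, which we may replace by $t\mapsto \sup_{s\geq t}\varepsilon_0(s)$ to make it decreasing, as pointed out after the definition of rate function. The main technical point (and the only place where the naive approach fails) is the second half of the sum: a direct application of Proposition \ref{prop:BQLDT} alone is insufficient there, since $\sum_{n\leq N}C_n$ only converges to $\sum_n C_n$ rather than to $0$ as $N\to\infty$. The finite second moment assumption $M<\infty$ is exactly what is needed to dispose of the small-$n$ regime and close this gap.
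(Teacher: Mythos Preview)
Your proof is correct and follows essentially the same approach as the paper: both take $\epsilon=\gamma/2$ in Proposition~\ref{prop:BQLDT}, bound the first sum by the tail $\sum_{n\geq \lceil 2t/\gamma\rceil}C_n$, and handle the second sum by splitting at $\sqrt{t}$, using the second-moment Chebyshev/Markov bound for small $n$ and the large deviation estimate for large $n$. The treatment of the $\sigma_g(x)$ statement via $\sigma_g(x)\leq \log\|g\|$ also matches the paper.
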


\begin{proof}
	We'll only prove the first assertion. The second one can be obtained in a similar way for the first term and using that $\sigma_g(x) \leq \log \|g\|$ for the second term.
	
Take $\ep=\gamma/2$ in Proposition \ref{prop:BQLDT}. For the first sum in the first  inequality,	notice that for $n\geq \lceil 2t/ {\gamma}\rceil$, we have $n\ep \geq t$ and hence
 $$\big\{g\in G:\, \log\|g\|\leq t\big\} \subseteq \big\{g\in G:\, |\log\|g\|-n\gamma|\geq \epsilon n\big\}.$$
By Proposition \ref{prop:BQLDT}, we deduce that $$\mu^{*n}\big\{g\in G:\, \log\|g\|\leq t\big\}\leq 	\mu^{*n}\big\{g\in G:\, |\log\|g\|-n\gamma|\geq \epsilon n\big\}\leq C_{n,\epsilon}.$$
Therefore,
	 $$\sum _{n\geq \lceil 2t/ {\gamma}\rceil}\mu^{*n}\big\{g\in G:\, \log\|g\|\leq t\big\}  \leq \sum_{n\geq \lceil 2t/ {\gamma}\rceil}C_{n,\epsilon},$$ 
	 which tends to $0$ as $t\to\infty$ since $\sum C_{n,\ep}$ is finite.
	 
	We now estimate the second sum in the statement of the lemma. Set  $M_n:=\int \log^2\|g\|  \, \diff \mu^{*n}(g)$. Then, $\mu^{*n}\big\{\log\|g\|\geq t\big\}\leq M_n/t^2$ by Chebyshev's inequality. On the other hand, the sub-additivity of $\log\|g\|$ and Cauchy-Schwarz inequality imply that $M_n\leq n^2M_1$.
	 
	 When $n\leq \sqrt t$, we obtain
	 $$\sum_{1\leq n\leq \sqrt t}\mu^{*n}\big\{\log\|g\|\geq t\big\}\leq\sum_{1\leq n\leq \sqrt t} M_n/t^2\leq \sum_{1\leq n\leq \sqrt t} n^2M_1/t^2\leq  (\sqrt t)^3M_1/t^2=M_1/\sqrt t.$$ 
	 
	 When $\sqrt t<n \leq \lfloor 2t/ (3\gamma)\rfloor$, we have $t  \geq n(\gamma+\ep)$. Then, Proposition \ref{prop:BQLDT} yields $$\mu^{*n}\big\{g\in G:\, \log\|g\|\geq t\big\}  \leq 	\mu^{*n}\big\{g\in G:\, |\log\|g\|-n\gamma|\geq \epsilon n\big\}\leq C_{n,\epsilon},$$
	 so
	 $$\sum_{\sqrt t<n\leq \lfloor 2t/ (3\gamma)\rfloor}\mu^{*n}\big\{g\in G:\, \log\|g\|\geq t\big\}\leq \sum_{\sqrt t<n\leq \lfloor 2t/ (3\gamma)\rfloor}C_{n,\epsilon}\leq \sum_{n>\sqrt t}C_{n,\epsilon}.$$ 
	 The last quantity tends to zero as $t \to \infty$ because $\sum C_{n,\epsilon}$ is finite.
	 Set
	 $$\varepsilon_0(t):=\sum_{n\geq \lceil 2t/ {\gamma}\rceil}C_{n,\epsilon}+2M_1/\sqrt t+\sum_{n>\sqrt t}C_{n,\epsilon},$$ which is  decreasing rate function. 	From the above estimates, the desired inequality follows. This concludes the proof.
\end{proof}

We will use in Sections \ref{sec:renewal} and \ref{sec:fourier} the following refined large deviation estimate. A version for more general linear groups can be proved similarly. This is the analogue of \cite[Lemma 17.8]{benoist-quint:book}, which holds when $\mu$ has a finite exponential moment. 

\begin{lemma}\label{diff-log} 	Let $\mu$ be a probability measure on $G=\SL_2(\C)$. Assume that $\mu$ is non-elementary and $\int_G \log^p \|g\| \, \diff \mu(g) <  \infty$ for some $p>1$. Let $100l\geq n> l\geq 1$ be integers and let $x\in\P^1$. Then there exist positive constants $D_n$ independent of $x$ satisfying $\sum_{n\geq 1} n^{p-2}D_n< \infty$ and $\lim_{n\to \infty}n^{p-1}D_n=0$, and  subsets $\bS_{n,l,x}\subseteq G \times G$ such that 
	\begin{equation} \label{refined-estimate}
	\big|\log\norm{g_2g_1}-\sigma_{g_2}(g_1x)-\log\norm{g_1}\big|\leq 4e^{-\gamma l} \quad \text{for all} \quad (g_2,g_1)\in \bS_{n,l,x},
	\end{equation}
	and $\mu^{*(n-l)} \otimes \mu^{*l}(\bS_{n,l,x})\geq 1-D_l- D_n$.
\end{lemma}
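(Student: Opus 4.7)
The plan is to use the cocycle relation $\sigma_{g_2 g_1}=\sigma_{g_2}(g_1\cdot)+\sigma_{g_1}$ to rewrite the target quantity as $\xi_{g_2g_1}-\xi_{g_1}$, where $\xi_g:=\log\|g\|-\sigma_g(x)\geq 0$. Inequality \eqref{g^-2} gives $\xi_g=-\log d(z^m_g,x)-\eta_g$ with $0\leq\eta_g\leq\log(1+\|g\|^{-2}/d(z^m_g,x))$, so the problem reduces to estimating $|\log d(z^m_{g_1},x)-\log d(z^m_{g_2g_1},x)|$ plus the small $\eta$-errors. I would therefore define $\bS_{n,l,x}$ to be the set where the following hold for a small $\ep>0$ to be chosen: (i) $\|g_1\|\geq e^{(\gamma-\ep)l}$ and $\|g_2g_1\|\geq e^{(\gamma-\ep)n}$; (ii) $d(z^m_{g_1},x),\,d(z^m_{g_2g_1},x)\geq e^{-\ep n}$; (iii) $\alpha:=d(z^M_{g_1},z^m_{g_2})\geq e^{-\ep l}$. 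Conditions (i)--(ii) are standard large-deviation events of $\mu^{*l}$- or $\mu^{*n}$-type, controlled by $C_{l,\ep}$ or $C_{n,\ep}$ via Proposition~\ref{prop:BQLDT}. For (iii), I would apply the fourth inequality in \eqref{BQregular1} to $g_1$ at the fixed point $y=z^m_{g_2}$ (conditional on $g_2$) and use the uniformity in $y$ to integrate over $g_2$; this yields a $\mu^{*l}$-type bound. Setting $D_l, D_n$ to encompass all these constants, one gets $\mu^{*(n-l)}\otimes\mu^{*l}(\bS_{n,l,x}^c)\leq D_l+D_n$ with the required summability and decay.

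On $\bS_{n,l,x}$, the $\eta$-errors are each at most $e^{-(2\gamma-3\ep)l}$, and the main term $|\log d(z^m_{g_1},x)-\log d(z^m_{g_2g_1},x)|$ is bounded by $d(z^m_{g_2g_1},z^m_{g_1})/e^{-\ep n}$. The heart of the proof is to control $d(z^m_{g_2g_1},z^m_{g_1})$ \emph{without} invoking large deviations on $\mu^{*(n-l)}$, which would be too weak when $n-l$ is small. I would introduce the intermediate point $z_*:=g_1^{-1}z^m_{g_2}=(g_2g_1)^{-1}(g_2 z^m_{g_2})$ and apply the triangle inequality:
\[
d(z^m_{g_2g_1},z^m_{g_1})\leq d(z^m_{g_2g_1},z_*)+d(z_*,z^m_{g_1})\leq \frac{\|g_2g_1\|^{-2}}{d(g_2z^m_{g_2},z^M_{g_2g_1})}+\frac{\|g_1\|^{-2}}{\alpha},
\]
the two bounds coming from applying \eqref{g^-2} to $(g_2g_1)^{-1}$ and $g_1^{-1}$ respectively (using $z^m_{g^{-1}}=z^M_g$).

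The main obstacle is to establish the lower bound $d(g_2 z^m_{g_2},z^M_{g_2g_1})\geq\alpha$. I would prove this through an explicit Cartan/SVD computation. Writing $g_i=k_ia_i\ell_i$ and setting $M:=\ell_2k_1\in\mathrm{SU}(2)$, parametrized so that $|M_{11}|=d(z^M_{g_1},z^m_{g_2})=\alpha$, one has $g_2z^m_{g_2}=k_2e_2$ and $z^M_{g_2g_1}=k_2k_Ae_1$, where $k_A$ sends $e_1$ to the top eigenvector of $N:=(a_2Ma_1)(a_2Ma_1)^*$. The identity $(\lambda_+-N_{11})(\lambda_+-N_{22})=|N_{12}|^2$ (using $\det N=1$) gives the top eigendirection as $(1,t)^\top$ with $|t|^2=(\sqrt{\mathrm{tr}(N)^2-4}-\Delta)/(\sqrt{\mathrm{tr}(N)^2-4}+\Delta)$ where $\Delta=N_{11}-N_{22}$, and a case analysis on $\mathrm{sgn}(\Delta)$ yields the sharp inequality $|t|\leq|M_{21}|/|M_{11}|$ valid for all $\|g_1\|,\|g_2\|\geq 1$, whence $d(g_2z^m_{g_2},z^M_{g_2g_1})=1/\sqrt{1+|t|^2}\geq|M_{11}|=\alpha$. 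This gives $d(z^m_{g_2g_1},z^m_{g_1})\leq 2e^{-(2\gamma-3\ep)l}$ on $\bS_{n,l,x}$, so the main term is at most $2e^{-(2\gamma-103\ep)l}$ (using $n\leq 100l$); choosing $\ep\leq\gamma/200$ and summing with the $\eta$-contributions yields the claimed bound $4e^{-\gamma l}$.
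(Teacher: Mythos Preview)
Your proof is correct but follows a genuinely different path from the paper's. Both arguments share the same skeleton: write the target as $\xi_{g_2g_1}-\xi_{g_1}$, reduce to bounding $d(z^m_{g_2g_1},z^m_{g_1})$ plus two small ``$\eta$--errors'', and handle the latter via \eqref{g^-2}. The divergence is entirely in how $d(z^m_{g_2g_1},z^m_{g_1})$ is controlled.

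The paper takes the intermediate point $(g_2g_1)^{-1}x$ and writes
\[
d(z^m_{g_1},z^m_{g_2g_1})\leq d\big((g_2g_1)^{-1}x,\,z^m_{g_1}\big)+d\big((g_2g_1)^{-1}x,\,z^m_{g_2g_1}\big),
\]
bounding both summands by two further large-deviation events: the second is a $\mu^{*n}$--event (the first line of \eqref{BQregular1} for $g=g_2g_1$), while the first is, after conditioning on $g_2$ and setting $y=g_2^{-1}x$, the $\mu^{*l}$--event $\{d(g_1^{-1}y,z^m_{g_1})\geq e^{-(2\gamma-\ep)l}\}$, uniform in $y$. Thus the paper never touches $\mu^{*(n-l)}$ either, and needs no deterministic geometry beyond \eqref{g^-2}. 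Your intermediate point $z_*=g_1^{-1}z^m_{g_2}$ replaces one of these probabilistic events by the deterministic inequality $d(g_2z^m_{g_2},z^M_{g_2g_1})\geq\alpha$, which you establish by an explicit SVD computation. Your argument is correct (the key inequality $|t|\leq|M_{21}|/|M_{11}|$ reduces, after squaring, to $|\Delta(b)|\geq|\Delta(1)|$ when $\Delta$ has the appropriate sign, and $\Delta(b)=bu-v/b$ is increasing in $b\geq1$), but it is substantially more work than the paper's route, which needs only repeated appeals to Proposition~\ref{prop:BQLDT}. What your approach buys is a sharp geometric statement of independent interest; what the paper's approach buys is brevity.

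One minor slip: with your condition (ii) written as $d(z^m_{g_1},x)\geq e^{-\ep n}$, the bound on $\eta_{g_1}$ is $e^{-(2\gamma-2\ep)l+\ep n}\leq e^{-(2\gamma-102\ep)l}$, not $e^{-(2\gamma-3\ep)l}$. This is harmless under your final choice $\ep\leq\gamma/200$, but you should either correct the stated exponent or strengthen (ii) for $g_1$ to $d(z^m_{g_1},x)\geq e^{-\ep l}$.
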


\begin{proof}
		Let $\ep>0$ be a small constant  and take  $C_{n,\ep}$ as in Proposition \ref{prop:BQLDT}. Define $$D_n:=3\max(C_{n,\ep},C_{n,\ep/100}).$$
		Clearly, we have $\sum_{n\geq 1} n^{p-2}D_n< \infty$ and $\lim_{n\to \infty}n^{p-1}D_n=0$.
		\vskip 3pt
		
	Let $\bS_{n,l,x}$ be the set of all $(g_2,g_1)\in G \times G$ such that the following inequalities hold:
	\begin{equation*} 
  \begin{aligned}[c]
  & \text{(A1)} \quad  -\ep l\leq\log\norm{g_1}-l\gamma \leq\ep l \\
  & \text{(A2)} \quad d(z_{g_1}^m,x)\geq e^{-\ep l} \\
  & \text{(A3)} \quad -\ep n\leq\log\norm{g_2g_1}-n\gamma \leq\ep n   
  \end{aligned}
\quad \text{ and } \quad 
  \begin{aligned}[c] 
  & \text{(A4)}  \quad d(z_{g_2g_1}^m,x)\geq e^{-\ep l} \\
  & \text{(A5)} \quad  d\big((g_2g_1)^{-1}x, z^m_{g_2g_1}\big)\leq e^{-(2\gamma -\ep)n} \\
  & \text{(A6)} \quad  d(g_1^{-1}g_2^{-1}x,z^m_{g_1})\leq e^{-(2\gamma-\ep)l}.
\end{aligned}
\end{equation*}

	\noindent \textbf{Claim 1.} Inequality \eqref{refined-estimate} holds for all $(g_2,g_1)\in\bS_{n,l,x}$.
\proof[Proof of Claim 1]
Fix $(g_2,g_1)\in\bS_{n,l,x}$.  
 Let $v\in\C^2\setminus \{0\}$ with $[v]=x$.
Observe that 
\begin{align*}
\big|\log\norm{g_2g_1}-\sigma_{g_2}(g_1x)-\log\norm{g_1}\big|=\Big|\log\frac{\norm{g_1v}}{\norm{g_1}\norm{v}} - \log\frac{\norm{g_2g_1v}}{\norm{g_2g_1}\norm{v}}  \Big|
\leq  B_1+B_2+B_3,
\end{align*}
where $$B_1:=\Big| \log \frac{\norm{g_1v}}{\norm{g_1}\norm{v}} - \log d(z^m_{g_1},x)\Big|,\quad
B_2:=\Big| \log\frac{\norm{g_2g_1v}}{\norm{g_2g_1}\norm{v}} - \log d(z^m_{g_2g_1},x)\Big| $$
and $B_3:=\big| \log d(z^m_{g_1},x)-\log d(z^m_{g_2g_1},x)\big|$.

\vskip 3pt
Using \eqref{g^-2} and (A1), we get 
$$0\leq {\norm{g_1v}\over\norm{g_1}\norm{v}} - d(z^m_{g_1},x)\leq e^{-2\gamma l+2\ep l} \quad \text{and}\quad  {\norm{g_1}\norm{v}\over \norm{g_1v}}\leq {1\over d(z^m_{g_1},x)}.$$
By the mean value theorem applied to the function $t\mapsto \log t$ between ${\norm{g_1v}\over\norm{g_1}\norm{v}}$ and $d(z^m_{g_1},x)$ and using (A2), we have
\begin{equation*}\label{diff-log-1}
B_1=\Big| \log{\norm{g_1v}\over\norm{g_1}\norm{v}} - \log d(z^m_{g_1},x)\Big| \leq  e^{\ep l}\cdot e^{-2\gamma l+2\ep l}\leq e^{-\gamma l}.
\end{equation*}

Repeating the similar arguments as above and using (A3) and (A4), we get
\begin{equation*}
B_2=\Big| \log{\norm{g_2g_1v}\over\norm{g_2g_1}\norm{v}} - \log d(z^m_{g_2g_1},x)\Big| \leq e^{-\gamma n}\leq e^{-\gamma l}.
\end{equation*} 

It remains to estimate $B_3$.  By using the triangle inequality, (A5) and (A6), we get  
\begin{equation*}
d(z_{g_1}^m,z_{g_2g_1}^m)\leq d\big((g_2g_1)^{-1}x, z^m_{g_1}\big)+d\big((g_2g_1)^{-1}x, z^m_{g_2g_1}\big)\leq e^{-(2\gamma-\ep)l}+e^{-(2\gamma-\ep)n}. 
\end{equation*}
From (A2), (A4) and the mean value theorem applied to the function $t\mapsto \log t$ between $d(z^m_{g_1},x)$ and $d(z^m_{g_2g_1},x)$, one has
\begin{equation*}
B_3=\big|\log d(z^m_{g_1},x)-\log d(z^m_{g_2g_1},x)\big|\leq e^{\ep l}\cdot \big(e^{-(2\gamma-\ep)l}+e^{-(2\gamma-\ep)n}\big)\leq 2e^{-\gamma l}.
\end{equation*}
The proof of Claim 1 is finished. \endproof

\noindent \textbf{Claim 2.} $\mu^{*(n-l)} \otimes \mu^{*l}(\bS_{n,l,x})\geq 1-D_n- D_l$.
\proof[Proof of Claim 2] From Proposition \ref{prop:BQLDT} we see that (A1),(A2),(A3),(A5) and (A6) hold for $(g_2,g_1)$ outside a set of $\mu^{*(n-l)} \otimes \mu^{*l}$-measure $3C_{l,\ep}+2C_{n,\ep}$ in $G \times G$.

Since $100l \geq n$, we have $e^{-\ep l}\leq e^{-\ep n/100}$.
Now applying Proposition \ref{prop:BQLDT} with $\ep/100$ instead of $\ep$, we obtain that
$$  \mu^{*(n-l)} \otimes \mu^{*l} \big\{ (g_2,g_1):\, d(z_{g_2g_1}^m,x)\leq e^{-\ep l}  \big\} \leq C_{n,\ep/100}.       $$ 
Hence (A4) holds for $(g_2,g_1)$ outside a set of $\mu^{*(n-l)} \otimes \mu^{*l}$ measure $C_{n,\ep/100}$. Therefore, we deduce that $$\mu^{*(n-l)} \otimes \mu^{*l}(\bS_{n,l,x})\geq 1- 3C_{l,\ep}-2C_{n,\ep}- C_{n,\ep/100} \geq  1-D_l- D_n,$$
which finishes the proof of Claim 2. \endproof

The lemma follows directly from the two claims.
\end{proof}

\subsection{Fourier transform}
The \textit{Fourier transform} of an integrable function $f$, denoted by $\widehat f$, is defined by
$$\widehat f(\xi):=\int_{-\infty}^{\infty}f(u)e^{-i u\xi} \,\diff u$$ 
and the inverse Fourier transform is $$f(u)={1\over {2\pi}}\int_{-\infty}^{\infty} \widehat f (\xi) e^{ i u\xi} \,\diff \xi.$$
With these definitions, the Fourier transform of $\widehat f(\xi)$ is $2\pi f(-u)$. The convolution formula says that $\widehat{f_1*f_2}=\widehat f_1\cdot \widehat f_2$.

We'll frequently deal with functions on $\R$ having non-integrable singularities at $u=0$ and  $u=\pm \infty$. In this case, we'll work with the \textit{Cauchy principal value}, defined by
$$\mathrm{p.v.} \int_{-\infty}^{\infty} f(u)\,\diff u:=\lim_{\epsilon \to 0^+ \atop M \to \infty} \Big(\int_{-M}^{-\epsilon} f(u)\,\diff u +\int^{M}_\epsilon f(u)\,\diff u\Big)$$ 
provided that the limit exists. When the function is integrable at infinity, one can replace the limit when $M \to \infty$ in the above definition by $\int_{-\infty}^{-\epsilon}$ and  $\int^{\infty}_\epsilon$, respectively.
With this definition, we have 
\begin{equation}\label{cauchy-principal}
\mathrm{p.v.} \int_{-\infty}^{\infty}{1\over u}e^{- iu\xi}\,\diff u =-i\pi \, \sgn (\xi),
\end{equation}
where $\sgn$ is the sign function.  This identity implies the following useful formula, which is related to the Hilbert transform, see e.g.\ \cite{frederick-book}.  

\begin{lemma}\label{f/x}
	Let $\varphi$ be a function in $L^1(\R)$ such that $\widehat\varphi\in L^1(\R)\cap\Cc^1(\R)$.  Then  
	$$\mathrm{p.v.}\int_{-\infty}^{\infty} {\widehat\varphi(\xi)\over \xi} e^{-it\xi} \,\diff \xi =i\pi\int_{-\infty}^{-t} \varphi(u)\,\diff u -i\pi\int_{-t}^{\infty} \varphi(u)\,\diff u  \quad\text{for all}\quad t\in\R.$$
\end{lemma}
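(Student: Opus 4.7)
The strategy is to insert the definition $\widehat\varphi(\xi)=\int_{-\infty}^{\infty}\varphi(u)e^{-iu\xi}\,\diff u$ into the left-hand side, swap the order of integration, and then evaluate the inner integral in $\xi$ by means of the identity \eqref{cauchy-principal}. Indeed, formally,
$$\mathrm{p.v.}\int_{-\infty}^{\infty} \frac{\widehat\varphi(\xi)}{\xi} e^{-it\xi}\,\diff\xi=\int_{-\infty}^{\infty}\varphi(u)\left(\mathrm{p.v.}\int_{-\infty}^{\infty}\frac{e^{-i(u+t)\xi}}{\xi}\,\diff\xi\right)\diff u,$$
and \eqref{cauchy-principal} identifies the parenthesis with $-i\pi\,\sgn(u+t)$, which turns the outer integral into $-i\pi\int_{-t}^{\infty}\varphi(u)\,\diff u+i\pi\int_{-\infty}^{-t}\varphi(u)\,\diff u$, as required.

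The actual work is justifying the interchange of integrals, since the $\xi$-integral converges only in the principal value sense and $\widehat\varphi/\xi$ need not be integrable near $0$. The natural plan is to truncate: fix $0<\epsilon<M<\infty$ and set
$$I_{M,\epsilon}(t):=\int_{\epsilon\leq|\xi|\leq M}\frac{\widehat\varphi(\xi)}{\xi}e^{-it\xi}\,\diff\xi,\qquad J_{M,\epsilon}(s):=\int_{\epsilon\leq|\xi|\leq M}\frac{e^{-is\xi}}{\xi}\,\diff\xi.$$
Since $\widehat\varphi\in L^1$ (so that the truncated integrand is $L^1$ in $(\xi,u)$ with respect to Lebesgue), one may apply Fubini without any principal value to obtain $I_{M,\epsilon}(t)=\int_{-\infty}^{\infty}\varphi(u)J_{M,\epsilon}(u+t)\,\diff u$. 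Then one lets $\epsilon\to 0$ and $M\to\infty$ on both sides.

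For the outer integral, the odd symmetry of $1/\xi$ gives $J_{M,\epsilon}(s)=-2i\int_{\epsilon}^{M}\sin(s\xi)/\xi\,\diff\xi$, and a change of variable reduces this to a partial sine-integral, so $|J_{M,\epsilon}(s)|$ is bounded by a universal constant $C$ uniformly in $M,\epsilon,s$ and converges pointwise to $-i\pi\,\sgn(s)$ as $\epsilon\to 0,M\to\infty$. Hence dominated convergence applied with majorant $C|\varphi|\in L^1$ yields
$$\lim_{\epsilon\to 0,\,M\to\infty}\int_{-\infty}^{\infty}\varphi(u)J_{M,\epsilon}(u+t)\,\diff u=-i\pi\int_{-\infty}^{\infty}\varphi(u)\sgn(u+t)\,\diff u,$$
which is exactly the right-hand side of the lemma. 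For the inner integral (the left-hand side), the hypotheses $\widehat\varphi\in\Cc^1(\R)\cap L^1(\R)$ are used to ensure that $I_{M,\epsilon}(t)$ converges to $\mathrm{p.v.}\int\widehat\varphi(\xi)\xi^{-1}e^{-it\xi}\,\diff\xi$: the $\Cc^1$ regularity handles the symmetric cancellation near $\xi=0$ (writing $\widehat\varphi(\xi)=\widehat\varphi(0)+O(\xi)$, the $O(\xi)$ part is integrable and the constant part vanishes by oddness of $1/\xi$), while $\widehat\varphi\in L^1$ handles integrability at infinity.

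The only delicate step is this uniform bound on $J_{M,\epsilon}$; once that is in place, the proof is just Fubini plus dominated convergence, and combining the two limits gives the identity.
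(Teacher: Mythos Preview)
Your proposal is correct and follows essentially the same route as the paper's proof: truncate to $\epsilon\le|\xi|\le M$, apply Fubini on the truncated region, use the uniform bound on the partial sine integral to invoke dominated convergence on the $u$-side, and evaluate via \eqref{cauchy-principal}. One small slip: the Fubini step on the truncated integral is justified by $\varphi\in L^1(\R)$ (the integrand has modulus $|\varphi(u)|/|\xi|\le|\varphi(u)|/\epsilon$ on a bounded $\xi$-set), not by $\widehat\varphi\in L^1$; the latter hypothesis is only used, as you correctly note later, for the convergence of the left-hand side as $M\to\infty$.
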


\begin{proof}
	By definition of Cauchy principal value, we have
	\begin{align*}
	\mathrm{p.v.}\int_{-\infty}^{\infty} {\widehat\varphi(\xi)\over \xi} e^{-it\xi}\, \diff \xi= \lim_{\epsilon \to 0^+ \atop M\to\infty}  \int_\ep^M {\widehat\varphi(\xi) e^{-it\xi} -\widehat\varphi(-\xi)e^{it\xi}\over \xi} \, \diff \xi .
	\end{align*}
	Since the function $\widehat\varphi(\xi) e^{-it\xi}$ is $\Cc^1$, the  integrand from the last integral is bounded for $0<\xi\leq 1$ by the mean value theorem. When $\xi>1$, the integrand is dominated by $2 |\widehat\varphi(\xi)|$, which is integrable by assumption. Lebesgue's dominated convergence theorem implies that the above limit exists.
	
	By definition of Fourier transform, we have
	\begin{align*} 
	\int_\ep^M {\widehat\varphi(\xi) e^{-it\xi}\over \xi}  \diff \xi=\int_\ep^M {e^{-it\xi}\over \xi} \int_{-\infty}^{\infty} \varphi(u)e^{-iu\xi}   \diff u \diff \xi =\int_{-\infty}^\infty \int_{\ep}^M {  \varphi(u)e^{-i(u+t)\xi} \over \xi}   \diff \xi \diff u,
	\end{align*}
	by  Fubini's theorem. This is is justified by the fact that $\ep$ is fixed and $\varphi \in L^1(\R)$. Similarly,
	$$\int_\ep^M {\widehat\varphi(-\xi) e^{it\xi}\over \xi} \, \diff \xi=\int_{-\infty}^\infty  \int_{\ep}^M {\varphi(u) e^{i(u+t)\xi} \over \xi}  \, \diff \xi \diff u.$$
	It follows that
	\begin{align*}
	\mathrm{p.v.}\int_{-\infty}^{\infty} {\widehat\varphi(\xi)\over \xi} e^{-it\xi}\, \diff \xi&=
	\lim_{\ep\to 0^+ \atop M\to\infty}\int_{-\infty}^\infty  \varphi(u)\int_{\ep}^M {e^{-i(u+t)\xi}- e^{i(u+t)\xi} \over \xi} \, \diff \xi \diff u  \\
	&=\lim_{\ep\to 0^+ \atop M\to\infty}\int_{-\infty}^\infty  \varphi(u)\int_{\ep(u+t)}^{M(u+t)} {-2i\sin(\xi)\over \xi  } \,\diff \xi \diff u .
	\end{align*}
	
	Using an integration by parts, it is easy to see that  $\int_{b_1}^{b_2} {\sin( \xi)\over\xi} \,\diff \xi$ is bounded uniformly in $0 < b_1<b_2 < \infty$. Using this and the fact that the function ${\sin \xi\over\xi}$ is even, we get that $ \big|\varphi(u)\int_{\ep(u+t)}^{M(u+t)} {-2i\sin(\xi)\over \xi  } \diff \xi\big|$	is dominated by $C|\varphi(u)|$ for some constant $C>0$ independent of $\ep,M,u$ and $t$. Then, using  Lebesgue's dominated convergence theorem again and \eqref{cauchy-principal}, we obtain
	$$\mathrm{p.v.}\int_{-\infty}^{\infty} {\widehat\varphi(\xi)\over \xi} e^{-it\xi} \diff \xi= \int_{-\infty}^\infty \varphi (u) \cdot\mathrm{p.v.} \int_{-\infty}^\infty {e^{-i(u+t)\xi} \over \xi} \diff \xi \diff u =-i\pi\int_{-\infty}^\infty \varphi(u) \,\sgn(u+t) \diff u .$$ The desired result follows.
\end{proof}

The following lemma will be used in Subsection \ref{subsec:R}. We denote by $\|\cdot\|_{\Cc^\alpha}$ the $\alpha$-H\"older norm for $0<\alpha < 1$.

\begin{lemma} \label{lemma:fourier-decay-holder}
	Let $f$ be a  function on $\R$ with support in a compact set $K$ and  $M > 0$ be a constant. Assume that $f$ is locally ${1\over 2}$-H\"older continuous on $\R \setminus \{0\}$ with  $\|f\|_{\Cc^{1/2}(K \setminus (-\delta,\delta))} \leq  M \delta^{-q}$ for every $0<\delta< 1$ and some $q>1$. Assume further that $|f(u)| \leq  M |u|^{-1/2}$. Then, there exists a constant $C_K>0$, independent of $f$ and $M$, such that 
	$$|\widehat{f} (\xi)| \leq  C_K M \, (1+|\xi|)^{-{1\over 4q-2}}\quad\text{for every}\quad \xi \in \R.$$
\end{lemma}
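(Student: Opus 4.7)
The plan is to split $f = f_1 + f_2$ where $f_1 := f \cdot \chi_{[-\delta,\delta]}$ and $f_2 := f - f_1$, for a parameter $\delta > 0$ to be optimized, and to estimate $\widehat{f_1}$ by the trivial $L^1$ bound and $\widehat{f_2}$ via the ``shift by $\pi/\xi$'' trick. For $\widehat{f_1}$, the hypothesis $|f(u)|\leq M|u|^{-1/2}$ gives directly
$$|\widehat{f_1}(\xi)| \leq \|f_1\|_{L^1} \leq M\int_{-\delta}^{\delta}|u|^{-1/2}\,\diff u = 4M\sqrt{\delta}.$$

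For $\widehat{f_2}(\xi)$, with $h = \pi/\xi$, the identity $2\widehat{f_2}(\xi) = \int [f_2(u) - f_2(u-h)]e^{-iu\xi}\,\diff u$ reduces the estimate to bounding $\int|f_2(u)-f_2(u-h)|\,\diff u$. I split the domain into a ``good'' region, where both $u$ and $u-h$ lie on the same side of $0$ with absolute value $\geq \delta$, and a ``bad'' region where one of them lies in $(-\delta,\delta)$. On the bad region, which has total measure $\lesssim h$, I use the pointwise bound $|f|\lesssim M\delta^{-1/2}$, contributing $M\delta^{-1/2}h$. On the good region, the crucial refinement is to apply the Hölder bound at the \emph{local} scale $v = \min(|u|,|u-h|)$ rather than at the fixed scale $\delta$: since the segment between $u$ and $u-h$ lies in $K\setminus(-v,v)$, the hypothesis gives
$$|f(u)-f(u-h)| \leq Mv^{-q}h^{1/2}.$$
Integrating this over $v \in [\delta,|K|]$ and using $q>1$ yields
$$\int_{\text{good}}|f_2(u)-f_2(u-h)|\,\diff u \,\lesssim\, Mh^{1/2}\int_{\delta}^{|K|}v^{-q}\,\diff v \,\lesssim\, C_K Mh^{1/2}\delta^{1-q}.$$
Hence $|\widehat{f_2}(\xi)| \lesssim_K M\delta^{1-q}|\xi|^{-1/2} + M\delta^{-1/2}|\xi|^{-1}$.

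Combining the two estimates gives $|\widehat{f}(\xi)| \lesssim_K M\bigl(\sqrt{\delta} + \delta^{1-q}|\xi|^{-1/2} + \delta^{-1/2}|\xi|^{-1}\bigr)$. Balancing the first two terms via $\sqrt{\delta} = \delta^{1-q}|\xi|^{-1/2}$, i.e.\ $\delta = |\xi|^{-1/(2q-1)}$, produces $\sqrt{\delta} = |\xi|^{-1/(4q-2)}$; a direct check using $q>1$ shows that the third term $\delta^{-1/2}|\xi|^{-1}$ is dominated by $\sqrt{\delta}$ in this regime. For $|\xi|$ bounded one uses the trivial $L^1$ bound $|\widehat{f}(\xi)|\leq \|f\|_{L^1}\lesssim_K M$, obtained by integrating $M|u|^{-1/2}$ over $K$. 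The main obstacle, and the place where my naive first attempt would give only $|\xi|^{-1/(4q+2)}$, is the passage from the global $\Cc^{1/2}$-norm bound $M\delta^{-q}$ on the shift integral to the sharper $M\delta^{1-q}$ bound: this gain precisely comes from using the \emph{local} Hölder seminorm at scale $|u|$ and exploiting integrability of $v^{-q}$ away from $0$, which is where the condition $q>1$ is essential.
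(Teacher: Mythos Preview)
Your argument is correct and in fact sharper than the paper's. Both use the same skeleton---the shift identity $2\widehat f(\xi)=\int[f(u)-f(u-\pi/\xi)]e^{-iu\xi}\,\diff u$ together with a splitting near versus away from the origin---so the overall strategy coincides; your pre-splitting $f=f_1+f_2$ is a cosmetic variant of the paper's direct decomposition of the shift integral. The substantive difference is in how the H\"older hypothesis is used away from $0$: the paper applies the \emph{global} bound $\|f\|_{\Cc^{1/2}(K\setminus(-\delta,\delta))}\le M\delta^{-q}$ uniformly over $K$, arriving at $|\widehat f(\xi)|\lesssim \delta^{-q}\xi^{-1/2}+\delta^{1/2}$, whose true optimum is only $|\xi|^{-1/(4q+2)}$ (the paper's final line sets $\delta=\xi^{-1/(2q-1)}$, but with that choice the first term equals $\xi^{+1/(4q-2)}$, so the optimization as written is off). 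Your refinement---applying the hypothesis at the \emph{local} scale $v=\min(|u|,|u-h|)$ to get $|f(u)-f(u-h)|\le Mv^{-q}h^{1/2}$ and then integrating $v^{-q}$ over $[\delta,\infty)$---gains exactly one power of $\delta$ via the integrability of $v^{-q}$ for $q>1$, replacing $\delta^{-q}$ by $\delta^{1-q}$ and thereby recovering the stated exponent $1/(4q-2)$. So the observation in your last paragraph is precisely what is needed to obtain the lemma as stated.
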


\begin{proof}
	We can assume that $M=1$. From the assumption that $|f(u)| \leq |u|^{-1/2}$ and the fact that $f$ is supported by $K$, we see that $f$ is integrable. Hence, $\widehat f(\xi) =\int_{-\infty}^{\infty}f(u)e^{-i u\xi} \diff u$ is a well-defined bounded function. Therefore, it is enough to prove the desired bound for $|\xi|$ large enough.
	
	We may assume that $\xi>0$. The case $\xi<0$ can be treated in the same way. Applying the change of coordinates $u \mapsto u - \pi \slash \xi$ gives $\widehat f(\xi) = -\int_{-\infty}^{\infty}f(u - \pi \slash \xi)e^{-i u\xi} \diff u$. So,  $$\widehat f(\xi)  = \frac12 \int_{-\infty}^{\infty} \Big[ f(u) - f\Big( u - \frac{\pi}{\xi} \Big) \Big] e^{-i u\xi}\, \diff u.$$

 For $\delta >0$ small and $\xi > 0$ large, we have
	\begin{align*}
   \big|\widehat f(\xi)\big| & \leq \frac12 \int_{\R \setminus (-\delta,\delta+{\pi\over\xi})} \Big| f(u) - f\Big( u - \frac{\pi}{\xi} \Big) \Big|  \diff u + \frac12 \int_{-\delta}^{\delta+{\pi\over\xi}}\Big| f(u) - f\Big( u - \frac{\pi}{\xi} \Big) \Big|   \diff u \\
	&\lesssim  \int_{ K \setminus (-\delta,\delta+{\pi\over\xi})}  \delta^{-q}\Big({\pi\over \xi}\Big)^{1/2} \diff u +\int_{-\delta-{\pi\over\xi}}^{\delta+{\pi\over\xi}} |u|^{-1/2} \diff u \lesssim   \delta^{-q}\xi^{-1/2} + \delta^{1/2},
	\end{align*}
	where the implicit constants in the last two steps are independent of $\xi, \delta$ and $f$. Letting $\delta:=\xi^{-{1\over 2q-1}}$ yields $|\widehat f(\xi)| \lesssim  \xi^{-{1\over 4q-2}} \lesssim (1+\xi)^{-{1\over 4q-2}}$ for $\xi$ large. This finishes the proof.
\end{proof}

 We will often have to approximate functions by ones whose Fourier transforms are compactly supported. In this case, convolution with the functions $\vartheta_\delta$ below will give such approximations.

\begin{lemma}[\cite{DKW:LLT}--Lemma 2.2] \label{l:vartheta}
	There is a smooth strictly positive even function $\vartheta$ on $\R$ with $\int_\R \vartheta(u) \diff u=1$ such that its Fourier transform $\widehat\vartheta$ is  a smooth function supported by $[-1,1]$.
	Moreover, for $0<\delta< 1$ and $\vartheta_\delta(u):=\delta^{-2}\vartheta(u/\delta^2)$, we have that $\widehat{\vartheta_\delta}$ is supported by $[-\delta^{-2},\delta^{-2}]$, $\norm{\widehat{\vartheta_\delta}}_{\Cc^1}\leq c$ and $\int_{|u|\geq \delta} \vartheta_\delta (u)\diff u\leq c\delta$ for some constant $c>0$ independent of $\delta$.
\end{lemma}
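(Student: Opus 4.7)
The plan is to construct $\vartheta$ by prescribing its Fourier transform, so that the support condition on $\widehat\vartheta$ is built in, and then to verify the three scaling assertions by elementary computation. First, I would pick a smooth, non-negative, even function $\eta$ on $\R$, supported in $[-\tfrac12,\tfrac12]$ and not identically zero, and set
\[
g(u) := \tfrac{1}{2\pi}\int_\R \eta(\xi)\, e^{iu\xi}\,\diff\xi.
\]
By the Paley--Wiener theorem, $g$ extends to an entire function of exponential type $\tfrac12$; since $\eta$ is real and even, $g$ is real-valued and even on $\R$; and $g$ is Schwartz because $\eta$ is smooth. The function $g^2$ is then smooth, non-negative, even, and the convolution theorem yields $\widehat{g^{2}} = \tfrac{1}{2\pi}(\eta * \eta)$, which is smooth and supported in $[-1,1]$.

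Next, to promote $g^{2}$ to a strictly positive function I would use a shifted symmetric sum. The real zeros of the non-zero real-analytic function $g$ form a discrete, hence at most countable, set $\{u_k\}_k \subset \R$. I would pick $a>0$ avoiding the countable ``bad'' set $\{u_k - u_i\}_{k,i}$, so that for every $k$ at least one of $u_k \pm a$ is not a zero of $g$. Setting
\[
\vartheta_1(u) := g(u)^2 + g(u-a)^2 + g(u+a)^2,
\]
the function $\vartheta_1$ is smooth, even, and strictly positive (at $u \notin \{u_k\}$ the first summand is positive; at $u=u_k$, the choice of $a$ makes at least one of the other two summands positive). Its Fourier transform $(1 + 2\cos(a\xi))\,\widehat{g^2}(\xi)$ is still smooth and supported in $[-1,1]$. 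Finally I would normalize by $\vartheta := \vartheta_1 / \int_\R \vartheta_1$, giving the first three conclusions.

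For the scaling part, a direct computation gives $\widehat{\vartheta_\delta}(\xi) = \widehat\vartheta(\delta^2\xi)$, which is automatically supported in $[-\delta^{-2},\delta^{-2}]$. The $\Cc^1$ bound follows from $\|\widehat{\vartheta_\delta}\|_\infty = \|\widehat\vartheta\|_\infty$ together with $|(\widehat{\vartheta_\delta})'(\xi)| = \delta^2|\widehat\vartheta'(\delta^2\xi)| \leq \|\widehat\vartheta'\|_\infty$ for $\delta \in (0,1)$. For the tail, since $\widehat\vartheta$ is smooth and compactly supported, $\vartheta$ is Schwartz, hence $\vartheta(v) \leq C/(1+v^2)$; changing variables $v = u/\delta^2$ yields
\[
\int_{|u|\geq\delta}\vartheta_\delta(u)\,\diff u = \int_{|v|\geq 1/\delta}\vartheta(v)\,\diff v \leq c\,\delta.
\]
The only non-trivial step will be securing strict positivity, because a single squared Paley--Wiener function must oscillate and hence vanish on a non-empty discrete set; the shifted-sum trick is what resolves this, and everything else reduces to the convolution theorem and elementary scaling identities of the Fourier transform.
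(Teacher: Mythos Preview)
The paper does not give its own proof of this lemma; it is quoted verbatim from \cite{DKW:LLT} with no argument supplied here, so there is nothing to compare against directly.

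Your proof is correct. The construction via $g^2$ with $\widehat g=\eta$ supported in $[-\tfrac12,\tfrac12]$ is the standard route, and the shifted-sum device $g(u)^2+g(u-a)^2+g(u+a)^2$ cleanly handles strict positivity: since $g$ is entire and not identically zero its real zeros are discrete, and choosing $a>0$ outside the countable difference set $\{u_k-u_i\}$ guarantees that at each zero $u_k$ at least one translate is non-vanishing. Evenness survives because $g$ is even, and $\widehat{\vartheta_1}=(1+2\cos(a\xi))\widehat{g^2}$ keeps the support in $[-1,1]$. The scaling assertions are verified correctly: $\widehat{\vartheta_\delta}(\xi)=\widehat\vartheta(\delta^2\xi)$ gives the support and the $\Cc^1$ bound immediately, and the tail bound follows from the Schwartz decay of $\vartheta$ after the change of variables $v=u/\delta^2$.
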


The following simple result will be used later. Its proof is left to the reader. See \cite[Lemma 4.26]{li:fourier} for a similar estimate.

\begin{lemma} \label{renewal-3-lemma-3}
	Let $\varphi_C(u) :=\mathbf1_{u\in[b_1,b_2]}\cdot\varphi(u)$, where $b_2>b_1$ and $\|\varphi\|_{\Cc^1}\leq 1$. Then, when $\delta \to 0$,
		 \begin{align*}
	\big|\varphi_C*\vartheta_\delta(u)-\varphi_C(u)\big|\lesssim &
	\begin{cases}
	\delta        & \text{for}     \quad     u\in[b_1+\delta,b_2-\delta]                ,\\
	1        & \text{for}   \quad             u\in[b_1-\delta,b_1+\delta]\cup [b_2-\delta,b_2+\delta]                 ,\\
	\mathbf 1_{u\in[b_1,b_2]} *\vartheta_\delta        & \text{for}  \quad u\notin [b_1-\delta,b_2+\delta].
	\end{cases}
	\end{align*}
\end{lemma}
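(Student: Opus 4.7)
The plan is to write, using $\int_{\R}\vartheta_\delta(s)\,\diff s = 1$ from Lemma \ref{l:vartheta},
$$\varphi_C*\vartheta_\delta(u)-\varphi_C(u) = \int_{\R}\bigl[\varphi_C(u-s)-\varphi_C(u)\bigr]\vartheta_\delta(s)\,\diff s,$$
and then treat the three ranges of $u$ separately, exploiting only the three key properties of $\vartheta_\delta$ recorded in that lemma: non-negativity, unit mass, and the tail estimate $\int_{|s|\geq \delta}\vartheta_\delta(s)\,\diff s \leq c\delta$.

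For the interior range $u\in [b_1+\delta,b_2-\delta]$, I would split the domain of integration into $\{|s|\leq\delta\}$ and $\{|s|>\delta\}$. In the first region both arguments $u$ and $u-s$ lie in $[b_1,b_2]$, where $\varphi_C$ coincides with the $\Cc^1$ function $\varphi$; the mean value theorem together with $\|\varphi\|_{\Cc^1}\leq 1$ then bounds the integrand by $|s|\vartheta_\delta(s)$, whose integral is $\lesssim \delta$. For the tail $|s|>\delta$, the uniform bound $\|\varphi_C\|_\infty\leq 1$ combined with the tail estimate for $\vartheta_\delta$ again yields a contribution of size $\lesssim \delta$.

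For the boundary layer $u\in [b_1-\delta,b_1+\delta]\cup [b_2-\delta,b_2+\delta]$, the claim is immediate since both $\varphi_C(u)$ and $\varphi_C*\vartheta_\delta(u)$ are bounded by $\|\varphi\|_\infty\leq 1$, so their difference is $O(1)$. For the exterior range $u\notin [b_1-\delta,b_2+\delta]$ one has $\varphi_C(u)=0$, and the pointwise inequality $|\varphi_C|\leq \mathbf{1}_{[b_1,b_2]}$ immediately gives $|\varphi_C*\vartheta_\delta(u)| \leq \mathbf{1}_{[b_1,b_2]}*\vartheta_\delta(u)$, which is the required bound.

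I do not anticipate any substantial obstacle: this is a standard mollification estimate. The only points that warrant some attention are the uniformity of all implicit constants in $b_1,b_2,\delta$ and $\varphi$ (which holds because only $\|\varphi\|_{\Cc^1}\leq 1$ is used) and the fact that one should invoke the $O(\delta)$ tail estimate for $\vartheta_\delta$ from Lemma \ref{l:vartheta}, rather than the intrinsic concentration scale $\delta^2$ appearing in the definition $\vartheta_\delta(u) = \delta^{-2}\vartheta(u/\delta^2)$.
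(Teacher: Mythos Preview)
Your proposal is correct and complete; the argument you give is exactly the standard mollification estimate one would expect here. The paper itself does not provide a proof of this lemma (it is explicitly left to the reader), so there is nothing to compare against, but your treatment of each of the three ranges is the natural one and all constants are indeed uniform in $b_1,b_2,\delta,\varphi$ as you note.
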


\section{The Markov operator and its perturbations} \label{sec:markov}

In this section, we recall the main properties of the Markov operator and its purely imaginary perturbations obtained in \cite{DKW:PAMQ} and \cite{DKW:LLT} in the low moment case. We also prove some new results about these operators that will be needed later. We refer to \cite{benoist-quint:book,lepage:theoremes-limites,li:fourier-2} for analogous results when the random walk has a finite exponential moment.

\subsection{Markov operator}
The \textit{Markov operator} $\oP$ and its purely imaginary perturbations $\oP_\xi$ are the operators acting on functions on $\P^1$ given by
\begin{equation} \label{eq:makov-op}
\oP u := \int_G g^* u \, \diff \mu(g) \quad \text{and} \quad  \oP_\xi u  (x) :=   \int_G e^{i \xi \sigma_g(x)} g^*u (x) \, \diff \mu(g) \,\, \text{ for } \,\, \xi \in \R,
\end{equation}
where $\sigma_g$ is the norm cocycle.

Notice that $\oP_0= \oP$. A straightforward computation using the cocycle relation $\sigma_{g_2g_1}(x) = \sigma_{g_2}(g_1  x) + \sigma_{g_1}(x)$ shows that $\oP^n_\xi$ corresponds to the perturbed Markov operator associated with the convolution power $\mu^{\ast n}$, that is,
$$\oP^n_\xi u  (x) = \int_G e^{i \xi \sigma_g(x)} g^*u (x) \, \diff \mu^{*n}(g).$$

We will consider the action of the above operators on several spaces of functions on $\P^1$. 

\subsection{Action on $W^{1,2}$} If $\phi$ (resp. $\psi$) is a $(1,0)$-form (resp. $(0,1)$-form ) with $L^2$ coefficients, we define their norms by $$\|\phi\|_{L^2}:= \Big( \int_{\P^1} i \phi \wedge \overline \phi \Big)^{1 \slash 2} \text{ and } \|\psi\|_{L^2}:= \Big( \int_{\P^1} i\, \overline \psi \wedge \psi \Big)^{1 \slash 2}.$$

We denote by $L^2_{(1,0)}$ and $L^2_{(0,1)}$ the space of forms with finite $L^2$-norm. Observe that any $g \in G=\SL_2(\C)$ acts unitarily on these spaces.

 The  \textit{Sobolev space} $W^{1,2}$ is the space of complex-valued measurable functions on $\P^ 1$ with finite  $\|\cdot\|_{W^{1,2}}$-norm, where
$$\|u\|_{W^{1,2}} := \|u\|_{L^2} + \frac12 \|\partial u\|_{L^2}+ \frac12 \|\dbar u\|_{L^2}.$$

As usual, we identify functions that are equal almost everywhere. Here $\|\cdot \|_{L^2}$ stands for the $L^2$-norm with respect to the Fubini-Study form $\omegaFS$ on $\P^1$, the unique unitarily invariant $(1,1)$-form of mass one. The quantity $\|u\|_{L^2}$ in the above definition can be replaced by $\|u\|_{L^1}$ or  $\big| \int_{\P^1}  u \, \omegaFS \big|$ and produce equivalent norms, see \cite[Proposition 2.1]{DKW:PAMQ}.

The main properties of $\oP_\xi$ acting on $W^{1,2}$ are summarized in the following theorem. Recall that the essential spectral radius $\rho_{\rm ess}$ is the radius of the smallest disc centered at the origin containing the spectrum deprived of the isolated eigenvalues of finite multiplicity.

\begin{theorem}[\cite{DKW:PAMQ,DKW:LLT}] \label{thm:spectrum-Pxi-sobolev}
Let $\mu$ be a non-elementary probability measure on $G=\SL_2(\C)$. Assume that $\mu$ has a finite moment of order $p>0$, i.e.,  $\int_G \log^p \|g\| \, \diff \mu(g) <\infty$.
\begin{enumerate}
\item If $p \geq \frac12$, then $\oP_0$ defines a bounded operator on $W^{1,2}$.  Moreover, $\oP_0$ has a spectral gap, that is,  $\rho_{\rm ess}(\oP_0)<1$ and $\oP_0$ has a single eigenvalue of modulus $\geq 1$ located at $1$. It is an isolated eigenvalue of multiplicity one.

\item If $p \geq 1$, then  $\oP_\xi: W^{1,2} \to W^{1,2}$ is a bounded operator for every $\xi\in \R$.

\item If $p \geq 1$, then  the family $\xi \mapsto \oP_\xi$ of bounded operators on $W^{1,2}$ is continuous in $\xi$. If $p \geq 2$ this  family is $\lfloor p-1 \rfloor$-times differentiable and $\frac{\diff^k}{\diff \xi^k} \oP_\xi = \oP^{(k)}_\xi$ for $k=0,1,\ldots, \lfloor p-1 \rfloor$, where
\begin{equation*} \label{eq:def-P_t^k}
\oP^{(k)}_\xi u := \int_G (i  \sigma_g)^k \, e^{i\xi \sigma_g} g^* u \, \diff \mu(g).
\end{equation*}

\item If $p \geq 2$, then, for every compact subset $K \subset \R \setminus \{0\}$, there exist $0 < \delta < 1$ and $C>0$  such that $\norm{\oP_\xi^n}_{W^{1,2}} <C( 1 - \delta)^n$ for every $\xi \in K$ and $n \geq 1$. 
\end{enumerate}
\end{theorem}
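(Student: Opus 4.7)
The plan is to follow the framework of \cite{DKW:PAMQ, DKW:LLT}, where $\oP_0$ and its perturbations were first analyzed on $W^{1,2}$ in the low-moment setting. The key technical input is the pointwise distortion estimate $\|g^* u\|_{W^{1,2}} \lesssim (1 + \log\|g\|)^{1/2}\|u\|_{W^{1,2}}$ valid for every $g \in G$, which follows from writing $g$ in Cartan decomposition and observing that the pullback acts essentially isometrically on $\partial u$ and $\dbar u$ up to a Jacobian factor comparable to $e^{\sigma_g}$, so the Sobolev norm is distorted only logarithmically in $\|g\|$. Integrating this against $\mu$ under a finite $\frac{1}{2}$-moment yields the boundedness of $\oP_0$ in part (1). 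For $\oP_\xi$, differentiating $e^{i\xi\sigma_g}g^*u$ produces an extra term $i\xi\, e^{i\xi\sigma_g}(\partial\sigma_g)\, g^*u$, whose $L^2$-norm is controlled by $\log\|g\|$, giving part (2) under a finite first moment.

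For the spectral gap asserted in (1), I would prove a Lasota--Yorke (Doeblin--Fortet) type inequality $\|\oP_0^n u\|_{W^{1,2}} \le C\rho^n \|u\|_{W^{1,2}} + C_n \|u\|_{L^2}$ for some $0<\rho < 1$. The contraction constant $\rho < 1$ ultimately reflects positivity of the Lyapunov exponent $\gamma > 0$: with high $\mu^{*n}$-probability $\|S_n\|\sim e^{n\gamma}$, so $g^*\partial u$ concentrates near the density point $z^M_g$ and its $L^2$-norm shrinks geometrically, a fact quantified through the large deviation estimates of Proposition \ref{prop:BQLDT}. Hennion's theorem combined with the compact embedding $W^{1,2}\hookrightarrow L^2$ then yields $\rho_{\mathrm{ess}}(\oP_0) < 1$. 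Simplicity of the eigenvalue $1$ follows from uniqueness of the Furstenberg measure, and the absence of other peripheral eigenvalues follows from non-elementarity via a standard cyclicity argument.

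For part (3), I would differentiate under the integral sign, obtaining $\oP^{(1)}_\xi u = \int_G i\sigma_g\, e^{i\xi\sigma_g}\, g^*u\,\diff\mu(g)$, whose $W^{1,2}$-boundedness requires an extra factor of $\log\|g\|$ in the integrand and hence a finite second moment; iterating produces the $\lfloor p-1\rfloor$-times differentiability. Continuity for $p \ge 1$ is immediate from dominated convergence with the same estimates. Finally, for part (4) I would first show that $\oP_\xi$ has spectral radius strictly less than one for every $\xi\ne 0$: an eigenvector with eigenvalue of modulus one would give the functional equation $e^{i\xi\sigma_g(x)}u(gx)=\lambda u(x)$ for $\mu$-almost every $g$, and a Le Page type argument using non-elementarity and proximality of the action forces $\xi=0$. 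Continuity of the spectrum from (3) and compactness of $K$ then produce the uniform exponential decay $\|\oP_\xi^n\|_{W^{1,2}}\le C(1-\delta)^n$ via holomorphic functional calculus. The main obstacle throughout is the genuine loss of regularity compared with the exponential-moment case, where $\xi\mapsto\oP_\xi$ is holomorphic in a complex strip; here one must verify the Lasota--Yorke inequality and the spectral perturbation estimates uniformly in $\xi$ with only finitely many derivatives of $\oP_\xi$ available, which is the source of most of the technical work in \cite{DKW:LLT}.
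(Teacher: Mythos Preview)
The paper does not prove this theorem: it is quoted verbatim from the references \cite{DKW:PAMQ,DKW:LLT}, with no argument given here beyond the subsequent remarks. Your sketch is therefore not to be compared against anything in the present paper, but rather against the content of those earlier works.

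That said, your outline is broadly faithful to the strategy of \cite{DKW:PAMQ,DKW:LLT}, with two caveats. First, the distortion estimate you state is slightly misleading: since $g$ acts \emph{unitarily} on $L^2_{(1,0)}$ and $L^2_{(0,1)}$, the quantities $\|\partial(g^*u)\|_{L^2}$ and $\|\dbar(g^*u)\|_{L^2}$ are preserved exactly; the logarithmic loss enters only through the $L^1$ (or $L^2$) part of the norm and through the cross-terms $g^*u\,\partial\sigma_g$ that appear when differentiating $e^{i\xi\sigma_g}g^*u$ (cf.\ Lemma~\ref{lemma:sigma-estimates}). This is what makes the $W^{1,2}$ framework so effective and is the genuine reason only a $\tfrac12$-moment suffices for part~(1). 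Second, your derivation of~(4) is too quick. Knowing that $\rho(\oP_\xi)<1$ for each fixed $\xi\neq 0$ and that $\xi\mapsto\oP_\xi$ is continuous does \emph{not} automatically yield a uniform bound $\|\oP_\xi^n\|\le C(1-\delta)^n$ over a compact $K$: the spectral radius is only upper semicontinuous in general, and one needs either a uniform Lasota--Yorke inequality in $\xi$ (which is what \cite{DKW:LLT} actually establishes) or an explicit control on the resolvent over $K$. Your phrase ``via holomorphic functional calculus'' hides real work here, since in the low-moment setting there is no analytic family to exploit.
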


The regularity of $\xi \mapsto \oP_\xi$ together with the spectral gap for $\xi=0$ yield a spectral gap for $\oP_\xi$ for small values of $\xi$, see \cite[Corollary 4.5]{DKW:PAMQ} and Corollary \ref{cor:P_t-decomp-W} below. This is a consequence of the general theory of perturbations of linear operators, see \cite{kato:book}.  When $\mu$ has a finite exponential moment, the family $\xi \mapsto \oP_\xi$ acting on some H\" older space is analytic in $\xi$, which simplifies the study considerably. See \cite{benoist-quint:book,bougerol-lacroix,lepage:theoremes-limites}.

Part (4) of the above theorem implies that the spectral radius of $\oP_\xi$  is strictly smaller than one for \textbf{all non-zero values} of $\xi$. This is a strong property which is related to the absence of some strong invariance of the cocycle $\sigma(g,x)$. This is a central ingredient in the proof of the Local Limit Theorem for $\sigma(g,x)$ in \cite{DKW:LLT} and will also be indispensable here.

\medskip

As a consequence of the spectral gap for $\oP_0$, we get the following equidistribution theorem. When $p > 1$, this is Theorem 1.2 in \cite{DKW:PAMQ}. The fact that the result still holds when $p > 1 \slash 2$ was observed in \cite[Remark 3.13]{DKW:LLT}.

\begin{theorem} \label{thm:equi-dis}
		Let $\mu$ be a non-elementary probability measure on $G=\SL_2(\C)$. Assume that $\int_G \log^p \|g\| \, \diff \mu(g) <\infty$ for some $p > 1 \slash 2$.  Let $\nu$ be the associated stationary measure. Then, for every $u \in \Cc^1(\P^1)$, we have 
	$$\Big\| \oP^n u - \int_{\P^1} u \,\diff \nu\Big\|_\infty \leq C \lambda^n\norm{u}_{\Cc^1}$$ for some constants $C>0$ and $0<\lambda<1$ independent of $u$.
\end{theorem}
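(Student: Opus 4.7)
The proof plan rests on the spectral gap for $\oP_0$ on $W^{1,2}(\P^1)$ from Theorem~\ref{thm:spectrum-Pxi-sobolev}(1). This gives a decomposition $\oP^n = \pi + R^n$ with $\pi v := \bigl(\int_{\P^1} v\,\diff\nu\bigr)\cdot\mathbf{1}$ a rank-one projection satisfying $\pi R = R\pi = 0$, and $\|R^n\|_{W^{1,2}\to W^{1,2}} \lesssim \lambda_0^n$ for some $\lambda_0 \in (0,1)$. Applied to $u \in \Cc^1(\P^1)$, which embeds continuously into $W^{1,2}(\P^1)$ with $\|u\|_{W^{1,2}}\lesssim\|u\|_{\Cc^1}$, this yields
$$\|\oP^n u - \pi u\|_{W^{1,2}} \lesssim \lambda_0^n\|u\|_{\Cc^1},$$
and, via the Sobolev embedding $W^{1,2}(\P^1)\hookrightarrow L^q(\P^1)$ valid for every finite $q$ (as $\P^1$ has real dimension $2$), the same exponential decay in every $L^q$-norm.

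To upgrade from $L^q$ to $L^\infty$, I would combine this with a modulus-of-continuity estimate for $\oP^n u$. The basic bound
$$|\oP^n u(x)-\oP^n u(y)| \leq \|u\|_{\Cc^1}\int_G d(gx,gy)\,\diff\mu^{*n}(g),$$
together with the cocycle identity $d(gx,gy) = e^{-\sigma_g(x)-\sigma_g(y)}d(x,y)$ and Proposition~\ref{prop:BQLDT} (splitting $G$ into a typical set where $\sigma_g(x), \sigma_g(y)\geq(1-\varepsilon)n\gamma$, on which $d(gx,gy)\lesssim e^{-2(1-\varepsilon)n\gamma}d(x,y)$, and an exceptional set of $\mu^{*n}$-measure $\leq C_n$), yields
$$|\oP^n u(x)-\oP^n u(y)| \lesssim \|u\|_{\Cc^1}\bigl(e^{-n\gamma}d(x,y) + C_n\bigr).$$
Writing $\oP^n u - \pi u = (\oP^n u - \overline{\oP^n u}) + (\overline{\oP^n u} - \pi u)$ with $\overline f := \int_{\P^1} f\,\omegaFS$, and bounding the first term by the oscillation and the second by $\|\oP^n u-\pi u\|_{L^1}\lesssim\lambda_0^n\|u\|_{\Cc^1}$, one obtains
$$\|\oP^n u - \pi u\|_\infty \lesssim (\lambda_0^n + e^{-n\gamma} + C_n)\|u\|_{\Cc^1}.$$

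The main obstacle is the term $C_n$: in the low-moment regime of Proposition~\ref{prop:BQLDT} it decays only polynomially, whereas the theorem asserts exponential decay. Overcoming this requires a bootstrap argument: once $\oP^{n_0}u - \pi u$ is small in sup norm for a large fixed $n_0$, one re-examines $R^{n+n_0}u = \oP^n(R^{n_0}u)$ and recycles the Sobolev-plus-oscillation scheme, now applied to $R^{n_0}u$, whose smallness in both $L^\infty$ and $W^{1,2}$ is simultaneously under control. Iterating this carefully, and exploiting the sharper large-deviation estimates of Lemma~\ref{diff-log} together with the contraction of $g\sim\mu^{*n}$ toward the density point $z^M_g$, produces the claimed exponential rate. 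Balancing the soft (polynomial) large-deviation control against the hard (exponential) spectral gap throughout this iteration is, in my view, the technical heart of the proof.
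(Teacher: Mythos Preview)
Your bootstrap step is where the argument breaks. The oscillation estimate
\[
|\oP^n v(x)-\oP^n v(y)| \leq \|v\|_{\Cc^1}\int_G d(gx,gy)\,\diff\mu^{*n}(g)
\]
needs $\Cc^1$ control of the input $v$. But under a mere $p$-th moment condition, $\oP$ does \emph{not} act continuously on $\Cc^1$ (or Lipschitz) functions: from $d(gx,gy)\leq\|g\|^2 d(x,y)$ one sees this would require $\int\|g\|^2\,\diff\mu<\infty$, an exponential moment. So $R^{n_0}u=\oP^{n_0}u-\pi u$ is not $\Cc^1$ in general, and ``recycling the scheme applied to $R^{n_0}u$'' cannot be made precise. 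Having $R^{n_0}u$ small in $L^\infty$ and $W^{1,2}$ is not enough to feed back into your oscillation bound. A secondary issue: Proposition~\ref{prop:BQLDT} and Lemma~\ref{diff-log} both assume $p>1$, so for $\tfrac12<p\leq1$ your oscillation estimate is not even available to begin with.

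The paper's route sidesteps iteration entirely. The correct substitute for $\Cc^1$ regularity is the $\log^{p-1}$ modulus: by Proposition~\ref{prop:P_t-logp} (or its analogue for $\oP_0$), one has $[\oP^n u]_{\log^{p-1}}\lesssim n^q\|u\|_{\Cc^1}$ with only polynomial growth. This, together with the exponential $W^{1,2}$ decay from the spectral gap, is exactly the input to Lemma~\ref{lemma:logp-pre-equidistribution}, whose proof uses Moser--Trudinger: a function with tiny $W^{1,2}$ norm is tiny except on a set of exponentially small measure, and the $\log^{p-1}$ continuity prevents a large value anywhere from being isolated on such a small set. The balance between the two yields $\|\oP^n u-\pi u\|_\infty\leq c\tau^n$ directly, with no bootstrap. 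This is what the references \cite{DKW:PAMQ,DKW:LLT} cited in the paper carry out.
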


The following regularity property of the stationary measure will be used later on. It is a direct consequence of \cite[Proposition 4.5]{benoist-quint:CLT}. See also \cite[Corollary 5.9]{DKW:PAMQ}.

\begin{proposition}  \label{regularity}
	Let $\mu$ be a non-elementary probability measure on $G=\SL_2(\C)$. Assume that $\int_G \log^p \|g\| \, \diff \mu(g) <\infty$ for some $p >1$. Let $\nu$ be the associated stationary measure. Then, there is a constant $C>0$ such that  $$\nu\big(\D(x,r)\big) \leq C/|\log r|^{p-1} \quad\text{for every} \,\, x\in \P^1 \, \text{ and every } \,\, 0< r < 1 .$$
\end{proposition}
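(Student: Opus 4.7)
The plan is to bound $M(r) := \sup_{x \in \P^1} \nu(\D(x,r))$ by $O(|\log r|^{-(p-1)})$ by deriving a one-step geometric recurrence from stationarity and then iterating on a geometrically growing sequence of scales. Since $\mu$ is non-elementary, $\nu$ has no atoms (a classical consequence of stationarity together with finiteness of the maximal-mass atoms, see e.g.\ \cite{bougerol-lacroix}), and compactness of $\P^1$ then gives $M(r) \to 0$ as $r \to 0$. Iterating stationarity $n$ times yields $\nu(\D(x,r)) = \int_G \nu(g^{-1}\D(x,r))\, \diff \mu^{*n}(g)$, so everything reduces to controlling the preimage $g^{-1}\D(x,r)$ for typical $g$ distributed by $\mu^{*n}$.

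The geometric input combines two applications of \eqref{g^-2}. For $y \in \P^1$ with $[w] = y$, applying \eqref{g^-2} to $g$ at the point $g^{-1}y$ gives $d(z_g^m, g^{-1}y) \leq \|w\|/(\|g\|\,\|g^{-1}w\|)$, while applying \eqref{g^-2} to $g^{-1}$ at $y$ (with $\|g^{-1}\| = \|g\|$ and $z_{g^{-1}}^m = z_g^M$) yields $\|g^{-1}w\|/\|w\| \geq \|g\|\, d(z_g^M, y)$. Combining these gives
\begin{equation*}
d(g^{-1}y, z_g^m) \leq \frac{1}{\|g\|^{2}\, d(z_g^M, y)}.
\end{equation*}
Fix $\epsilon \in (0, \gamma)$ small and $n \geq 1$ with $r \leq \tfrac{1}{2} e^{-\epsilon n}$. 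By Proposition \ref{prop:BQLDT}, outside a set of $\mu^{*n}$-measure at most $2 C_n$, one has $\|g\| \geq e^{(\gamma - \epsilon) n}$ and $d(z_g^M, x) \geq e^{-\epsilon n}$, whence $d(z_g^M, y) \geq \tfrac12 e^{-\epsilon n}$ for every $y \in \D(x, r)$, and therefore $g^{-1}\D(x,r) \subset \D\bigl(z_g^m,\, 2 e^{-(2\gamma - 3\epsilon)n}\bigr)$. Taking the supremum in $x$ yields the one-step recurrence
\begin{equation*}
M(r) \leq M\!\bigl(2 e^{-(2\gamma - 3\epsilon)n}\bigr) + 2 C_n \quad \text{whenever } n \geq \epsilon^{-1} \log(2/r).
\end{equation*}

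Next, choose $\epsilon$ small enough that $c := (2\gamma - 3\epsilon)/\epsilon > 1$ (in particular $c^{p-1} > 1$), and define inductively $n_0 := \lceil \epsilon^{-1} \log(2/r) \rceil$, $r_{j+1} := 2 e^{-(2\gamma - 3\epsilon) n_j}$, $n_{j+1} := \lceil \epsilon^{-1} \log(2/r_{j+1}) \rceil$. A short computation shows $|\log r_j|$ and $n_j$ both grow geometrically with ratio $\sim c$. Iterating the recurrence and letting the number of steps go to infinity (so $r_k \to 0$ and, by atomlessness, $M(r_k) \to 0$) gives $M(r) \leq 2 \sum_{j=0}^{\infty} C_{n_j}$. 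By Proposition \ref{prop:BQLDT} we have $n^{p-1} C_n \to 0$, so $C_{n_j} \leq \varepsilon(n_j)/n_j^{p-1}$ for some decreasing rate function $\varepsilon$; using $n_j \geq n_0 \asymp |\log r|$ and the convergent geometric series $\sum_j c^{-j(p-1)}$, one obtains $\sum_j C_{n_j} \lesssim \varepsilon(n_0)\,|\log r|^{-(p-1)}$, which gives the proposition for $r$ small. For $r$ bounded away from $0$ the trivial bound $M(r) \leq 1$ suffices after enlarging the constant.

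The main difficulty is that Proposition \ref{prop:BQLDT} furnishes only the asymptotic decay $n^{p-1} C_n \to 0$, not a sharp pointwise bound; this forces the iteration scales $r_j$ to grow geometrically in $|\log r_j|$ rather than, say, dyadically, and $\epsilon$ must be chosen small enough that the ratio $c$ satisfies $c^{p-1} > 1$ so that the resulting series converges to the correct rate. The other indispensable ingredient is the atomlessness of $\nu$, which is what kills the boundary term $M(r_k)$ in the limit and is the only place where the non-elementary hypothesis is crucial.
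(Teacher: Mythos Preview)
Your proof is correct. The paper itself does not give a proof of this proposition; it simply cites \cite[Proposition~4.5]{benoist-quint:CLT} and \cite[Corollary~5.9]{DKW:PAMQ}. Your self-contained argument follows what is essentially the standard strategy underlying those references: use stationarity to write $\nu(\D(x,r)) = \int \nu(g^{-1}\D(x,r))\,\diff\mu^{*n}(g)$, apply the large-deviation estimates of Proposition~\ref{prop:BQLDT} to show that for typical $g$ the preimage $g^{-1}\D(x,r)$ is contained in a disc of much smaller radius, and iterate. The contraction inequality $d(g^{-1}y,z_g^m)\le (\|g\|^2\,d(z_g^M,y))^{-1}$ that you extract from two applications of \eqref{g^-2} is correct, as is the recurrence $M(r)\le M(2e^{-(2\gamma-3\epsilon)n})+2C_n$. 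The geometric iteration with ratio $c=(2\gamma-3\epsilon)/\epsilon>1$ then yields $M(r)\le 2\sum_j C_{n_j}$ with $n_j\asymp c^j n_0$; since $n^{p-1}C_n$ is bounded by Proposition~\ref{prop:BQLDT}, the series is dominated by $n_0^{-(p-1)}\sum_j c^{-j(p-1)}\lesssim |\log r|^{-(p-1)}$. One minor remark: you do not actually need the rate function $\varepsilon$ in the last step, since the proposition only asks for $O(|\log r|^{-(p-1)})$ rather than $o$; the boundedness of $n^{p-1}C_n$ already suffices.
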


\subsection{Action on $\Cc^{\log^p}$}

Let $u$ be a function on $\P^1$. Denote $\log^\star t: = 1 + |\log t \, |$ for $t > 0$ and let $d$ be the distance defined in \eqref{eq:distance-def}. Define for $p>0$,
$$\|u\|_{\log^p}: = \|u\|_{\infty} + [u]_{\log^p},\quad \text{where} \quad [u]_{\log^p} := \sup_{x \neq y \in \P^1} \big|u(x) - u(y)\big| \big(\log^\star d(x,y)\big)^p.$$
We say that $u$ is \textit{$\log^p$-continuous} if $\|u\|_{\log^p} <  \infty$ and denote by $\Cc^{\log^p}$ the space of $\log^p$-continuous functions. The space $\Cc^{\log^p}$ is closed under multiplication and it is easy to see that 
\begin{equation} \label{eq:log^p-product}
[uv]_{\log^p} \leq \|u\|_\infty [v]_{\log^p} + [u]_{\log^p} \|v\|_\infty \quad \text{and} \quad \|uv\|_{\log^p} \leq \|u\|_\infty \|v\|_{\log^p} + \|u\|_{\log^p} \|v\|_\infty
\end{equation}
for all $u,v \in \Cc^{\log^p}$. See \cite[Lemma 3.7]{DKW:LLT} for a proof.

It follows directly from the definition of $[\, \cdot \,]_{\log^p}$ and the mean value theorem that, if $u \in \Cc^{\log^p}$ and  $\chi: \C \to \C$  a $\Cc^1$-function, then
\begin{equation}  \label{eq:C^1-log^p}
[\chi(u)]_{\log^p} \leq \|\diff \chi\|_\infty [u]_{\log^p}.
\end{equation}
Observe that the quantity $\|\diff \chi\|_\infty$ above can be replaced by the supremum of $|\diff \chi|$ over the range of the function $u$.

Concerning the action of the operators $\oP_\xi$, we have the following result.  

\begin{proposition}[\cite{DKW:LLT}--Propositions 3.9 and 4.9] \label{prop:P_t-logp}
Let $\mu$ be a non-elementary probability measure on $G=\SL_2(\C)$. Let $p > 1$ and assume that  $M_{p}(\mu):= \int_G \log^{p} \|g\| \, \diff \mu(g)$ is finite. Then,  $$\|\oP u\|_{\log^{p-1}} \leq c \big(1+M_{p-1}(\mu)\big) \|u\|_{\log^{p-1}} \quad \text{and}  \quad \|\oP_\xi u\|_{\log^{p-1}} \leq c \, (1 + |\xi|) M_{p}(\mu) \|u\|_{\log^{p-1}}$$ for some some constant $c>0$ independent of $u,\xi$ and $\mu$.  In particular, for every $\xi \in \R$, the operator $\oP_\xi$ acts continuously on $\Cc^{\log^{p-1}}$.  Moreover, for every $\xi \in \R$ and every $n \geq 1$, we have $$\|\oP^n_\xi\|_{\log^{p-1}} \leq c \, ( 1 + |\xi|) \, n^{p} \, M_{p}(\mu).$$
\end{proposition}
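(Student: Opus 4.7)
The plan is to control the seminorm $[\,\cdot\,]_{\log^{p-1}}$ directly from its definition; the sup-norm parts are immediate since $\|\oP u\|_\infty, \|\oP_\xi u\|_\infty \leq \|u\|_\infty$. Throughout set $R := \log^\star d(x,y)$ and rely on the cocycle identity \eqref{d-gxgy}, i.e., $\log d(gx,gy) = \log d(x,y) - \sigma_g(x) - \sigma_g(y)$, together with $|\sigma_g(\cdot)| \leq \log\|g\|$ for $g\in\SL_2(\C)$.

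For the first inequality, observe that
$$|\oP u(x) - \oP u(y)| \leq [u]_{\log^{p-1}}\int_G (\log^\star d(gx,gy))^{-(p-1)}\,\diff\mu(g),$$
and I would show the integral is $\lesssim (1+M_{p-1}(\mu))R^{-(p-1)}$ by splitting over $\{\log\|g\|\leq R/4\}$ and $\{\log\|g\|>R/4\}$. On the first set the cocycle relation yields $\log^\star d(gx,gy)\gtrsim R$, so the integrand is $\lesssim R^{-(p-1)}$; on the second the crude bound $(\log^\star d(gx,gy))^{-(p-1)}\leq 1\leq(4\log\|g\|/R)^{p-1}$ converts the deficit into $\log^{p-1}\|g\|$, which integrates to $M_{p-1}(\mu)$. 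Summing gives the first inequality.

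For the second, decompose $\oP_\xi u(x)-\oP_\xi u(y) = A+B$ with
$$A := \int_G e^{i\xi\sigma_g(x)}[u(gx)-u(gy)]\,\diff\mu(g), \qquad B := \int_G [e^{i\xi\sigma_g(x)}-e^{i\xi\sigma_g(y)}]\,u(gy)\,\diff\mu(g).$$
The phase in $A$ is inert, so $|A|$ is controlled exactly as in the $\oP$-case. For $B$, I would use $|e^{i\xi\sigma_g(x)}-e^{i\xi\sigma_g(y)}|\leq \min(2,\,|\xi|\cdot|\sigma_g(x)-\sigma_g(y)|)$, combining the crude bound $|\sigma_g(x)-\sigma_g(y)|\leq 2\log\|g\|$ with the finer estimate extracted from \eqref{g^-2}, namely $|\sigma_g(x)-\sigma_g(y)|\lesssim \log(1+(d(x,y)+\|g\|^{-2})/d(z_g^m,\{x,y\}))$. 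Splitting the integration over $\log\|g\|$ vs.\ $R$ (via Chebyshev, $\mu\{\log\|g\|>t\}\leq M_p(\mu)t^{-p}$) and over $d(z_g^m,\{x,y\})$ vs.\ $d(x,y)$ then gives the result, the excess power of $\log\|g\|$ required to extract the decay $R^{-(p-1)}$ from the crude bound being precisely the reason $M_p(\mu)$ appears in place of $M_{p-1}(\mu)$. The upshot is $|B|\leq c(1+|\xi|)M_p(\mu)\|u\|_{\log^{p-1}}R^{-(p-1)}$.

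The iterated bound follows from the cocycle identity $\sigma_{g_n\cdots g_1}(x) = \sum_{k=1}^n\sigma_{g_k}(g_{k-1}\cdots g_1 x)$, which yields $\oP_\xi^n u(x) = \int_G e^{i\xi\sigma_g(x)}u(gx)\,\diff\mu^{*n}(g)$, so $\oP_\xi^n$ is the one-step perturbed Markov operator for $\mu^{*n}$. Substituting $\mu^{*n}$ into the one-step bound and combining sub-additivity of $\log\|\cdot\|$ with the elementary inequality $(a_1+\cdots+a_n)^p\leq n^{p-1}\sum_k a_k^p$ gives $M_p(\mu^{*n})\leq n^p M_p(\mu)$, whence the $n^p$ growth. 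The main obstacle is the analysis of $B$: unlike for $A$, there is no automatic smallness in $d(x,y)$ of $|e^{i\xi\sigma_g(x)}-e^{i\xi\sigma_g(y)}|$, so the required $R^{-(p-1)}$ decay has to be manufactured by a careful trade-off between the moment hypothesis on $\|g\|$ and the location of the density point $z_g^m$ relative to $\{x,y\}$.
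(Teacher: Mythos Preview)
Your outline for the first inequality and for term $A$ is correct, as is the iteration via $M_p(\mu^{*n})\leq n^p M_p(\mu)$. The paper does not actually prove this proposition here (it is cited from \cite{DKW:LLT}); however, it records immediately afterwards, in Lemma~\ref{lemma:sigma-estimates}, the pointwise-in-$g$ estimates that drive the argument, in particular part~(3): $[\sigma_g]_{\log^{p-1}} \leq C_p(1+\log^p\|g\|)$. With this, term $B$ is one line: by \eqref{eq:C^1-log^p} one has $[e^{i\xi\sigma_g}]_{\log^{p-1}} \leq |\xi|\,[\sigma_g]_{\log^{p-1}}$, so $|B|\leq \|u\|_\infty\,|\xi|\int_G(1+\log^p\|g\|)\,\diff\mu(g)\cdot R^{-(p-1)}$.

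Your proposed route to $B$, by contrast, has a genuine gap. The estimate you extract from \eqref{g^-2}, namely $|\sigma_g(x)-\sigma_g(y)|\lesssim \log\bigl(1+(d(x,y)+\|g\|^{-2})/d(z_g^m,\{x,y\})\bigr)$, depends on the position of $z_g^m$ relative to $\{x,y\}$. In the region where $d(z_g^m,\{x,y\})$ is small --- say, below $d(x,y)$ --- this bound is useless, and you are forced back to either the crude bound $|\sigma_g(x)-\sigma_g(y)|\leq 2\log\|g\|$ or the trivial bound $|e^{i\xi\sigma_g(x)}-e^{i\xi\sigma_g(y)}|\leq 2$. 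Neither carries any $R^{-(p-1)}$ decay, so to make your split work you would need $\mu\{g:\,d(z_g^m,\{x,y\})<d(x,y)\}\lesssim M_p(\mu)R^{-(p-1)}$. But the moment condition $M_p(\mu)<\infty$ says nothing whatsoever about the distribution of $z_g^m$: for instance, for a finitely supported non-elementary $\mu$, choosing $x$ equal to $z_g^m$ for one of the atoms gives this set positive measure uniformly in $R$.

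The correct ``fine'' ingredient is not the density-point estimate but the global Lipschitz bound $|\sigma_g(x)-\sigma_g(y)|\lesssim \|g\|^2\,d(x,y)$, which holds uniformly in $x,y$ for every $g$. Interpolating this (used when $R\gtrsim\log\|g\|$, whence $\|g\|^2 d(x,y)\lesssim e^{2\log\|g\|-R}$) with the crude bound (used when $R\lesssim\log\|g\|$) yields exactly $[\sigma_g]_{\log^{p-1}}\lesssim 1+\log^p\|g\|$. This is the content of Lemma~\ref{lemma:sigma-estimates}-(3), and once you have it the rest of your plan goes through unchanged.
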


The proofs  of Theorem \ref{thm:spectrum-Pxi-sobolev}-(2)-(3) and Proposition \ref{prop:P_t-logp} make use of some basic estimates on the norm cocycle and on the action of elements of $\SL_2(\C)$ with respect to the norms introduced above. These estimates will also be useful here, so we state them below. See \cite[Lemmas 3.8 and 4.1]{DKW:LLT} for a proof.

\begin{lemma} \label{lemma:sigma-estimates}
There exists a constant $C > 0$ such that the following estimates hold for any $g \in \SL_2(\C)$:
\begin{itemize}
\item[(1)] $\|\del \sigma_g \|_{L^2} \leq C \,  (1+\log^{1 \slash 2} \|g\|)$;

\item[(2)] For $u \in W^{1,2}$, we have $\|g^* u \, \del \sigma_g \|_{L^2}  \leq C \,   (1 + \log \|g\|) \cdot \|u\|_{W^{1,2}}$.
\end{itemize}

Moreover, for $p > 1$, there is a constant $C_p >0$ such that
\begin{itemize}
\item[(3)] $[\sigma_g]_{\log^{p-1}} \leq C_p \, (1+\log^{p} \|g\|)$;

\item[(4)] $[g^* u]_{\log^{p-1}} \leq C_p (1 + \log^{p-1} \|g\|) [u]_{\log^{p-1}}$.
\end{itemize}
\end{lemma}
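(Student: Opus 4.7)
The plan is to handle all four estimates by first reducing to the case of a diagonal matrix via Cartan's decomposition $g = k_g a_g \ell_g$ with $a_g = \diag(\lambda,\lambda^{-1})$ and $\lambda = \|g\|$. Since $k_g,\ell_g \in \mathrm{SU}(2)$ are unitary, $k_g$ preserves the Euclidean norm on $\C^2$ while $\ell_g$ acts on $\P^1$ as a holomorphic isometry preserving $\omegaFS$; a direct check gives $\sigma_g = \ell_g^*\sigma_{a_g}$, so each of the four estimates for $\sigma_g$ and $g^*u$ follows from the corresponding estimate with $a_g$ in place of $g$ (after replacing $u$ by its pullback under the unitary factors, which preserves both $W^{1,2}$ and $\Cc^{\log^{p-1}}$). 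In the affine coordinate with $[v]=[z:1]$ one computes
\[\sigma_{a_g}(z) \,=\, \tfrac12 \log\frac{\lambda^2|z|^2 + \lambda^{-2}}{|z|^2 + 1},\]
a function bounded by $\log\lambda$ whose transition from $-\log\lambda$ to $\log\lambda$ is concentrated on the annulus $\{|z|\asymp\lambda^{-2}\}$.

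For (1), I would differentiate the formula above to obtain $\partial\sigma_{a_g} = \tfrac12 \bar z\,dz\,[(|z|^2+\lambda^{-4})^{-1}-(|z|^2+1)^{-1}]$ and integrate its squared pointwise norm over $\C$. The second term is uniformly $L^2$-bounded, while the first has an integrable singularity of order $1/|z|$ on $\lambda^{-2}\lesssim|z|\lesssim 1$, producing a contribution of order $\log\lambda$ to the squared $L^2$-norm. Taking square roots gives $\|\partial\sigma_g\|_{L^2}\leq C(1+\log^{1/2}\|g\|)$.

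For (2), the central input is the identity $\sigma_{g^{-1}} = -\sigma_g\circ g^{-1}$ recalled in the excerpt; differentiating it and using that $g$ is holomorphic yields $\partial\sigma_g = -g^*\partial\sigma_{g^{-1}}$, so $g^*u\cdot\partial\sigma_g = -g^*(u\cdot\partial\sigma_{g^{-1}})$. For any $(1,0)$-form $\phi$ on $\P^1$ the change of variables formula for top forms under the biholomorphism $g$ gives $\|g^*\phi\|_{L^2}^2 = \int_{\P^1} g^*(i\phi\wedge\bar\phi) = \int_{\P^1} i\phi\wedge\bar\phi = \|\phi\|_{L^2}^2$, so it is enough to prove
\[\|u\,\partial\sigma_{g^{-1}}\|_{L^2} \,\leq\, C\,(1+\log\|g\|)\,\|u\|_{W^{1,2}}.\]
Since $\|g^{-1}\|=\|g\|$, after a further Cartan reduction $\partial\sigma_{a_{g^{-1}}}$ is a form satisfying $|\partial\sigma_{a_{g^{-1}}}|\lesssim\min(\lambda^2,\,1/|z|)$ whose $1/|z|$ tail is supported on an annular band of length of order $\log\lambda$ on the logarithmic scale. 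The required bound follows from a pairing estimate between $W^{1,2}$ functions and such singular forms, essentially a truncated two-dimensional Hardy/BMO-type inequality in which the logarithmic factor arises from the width of this band. I expect this step to be the main technical obstacle, since one is on the critical edge of the Sobolev embedding $W^{1,2}\not\hookrightarrow L^\infty$ in real dimension two.

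For (3) and (4) I work again with $\sigma_{a_g}$. For (3), when $d(x,y)\geq e^{-\log\lambda}$ the crude bound $|\sigma_{a_g}(x)-\sigma_{a_g}(y)|\leq 2\log\lambda$ together with $\log^\star d(x,y)\leq 1+\log\lambda$ directly yields the target $C_p(1+\log^p\|g\|)$; for smaller $d(x,y)$ one integrates the pointwise gradient bound $|d\sigma_{a_g}|(z)\lesssim\min(\lambda^2,\,1/|z|)$ along a segment joining $x$ to $y$, summing the contributions from the dyadic annuli around the transition region. For (4), the distance formula \eqref{d-gxgy} gives $\log^\star d(gx,gy)\geq\log^\star d(x,y)-2\log\|g\|$. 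When $\log^\star d(x,y)\geq 4(1+\log\|g\|)$ this yields $\log^\star d(gx,gy)\geq\tfrac12\log^\star d(x,y)$, and the estimate follows at once from $|u(gx)-u(gy)|\leq[u]_{\log^{p-1}}(\log^\star d(gx,gy))^{-(p-1)}$; in the complementary regime $(\log^\star d(x,y))^{p-1}\leq C_p(1+\log^{p-1}\|g\|)$ and one absorbs this factor into the same pointwise bound, using only $\log^\star d(gx,gy)\geq 1$.
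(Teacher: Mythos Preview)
The paper does not actually prove this lemma; it only cites \cite[Lemmas 3.8 and 4.1]{DKW:LLT}. Your outline is a correct route to a self-contained proof, and the Cartan reduction together with the explicit formula for $\sigma_{a_g}$ in the affine chart is the right starting point. Your arguments for (1), (3), and (4) are essentially complete; in particular, the two-regime splitting in (4) via the distortion identity \eqref{d-gxgy} is clean and goes through exactly as written.

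For (2), your reduction $g^*u\cdot\partial\sigma_g=-g^*(u\cdot\partial\sigma_{g^{-1}})$ together with the unitarity of $g^*$ on $L^2_{(1,0)}$ correctly reduces matters to bounding $\|v\cdot\partial\sigma_a\|_{L^2}$ for a diagonal $a$ with $\|a\|=\lambda$ and $v\in W^{1,2}$. The ``Hardy/BMO-type'' step you flag as the crux is genuine and can be made precise as follows. On the annulus $\lambda^{-2}\le|z|\le 1$ the task is to bound $\int |v|^2|z|^{-2}\,dA$: writing $v(r,\theta)-v(1,\theta)=\int_r^1\partial_s v\,ds$, applying Cauchy--Schwarz with weight $s\,ds$, and integrating in $r$ produces exactly a factor $(\log\lambda)^2$ times $\|\nabla v\|_{L^2}^2$, plus a trace term $\int|v(1,\theta)|^2\,d\theta\lesssim\|v\|_{W^{1,2}}^2$. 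On the inner disc $|z|\le\lambda^{-2}$ the weight $i\partial\sigma_a\wedge\overline{\partial\sigma_a}$ has total mass $O(1)$, and the BMO/Moser--Trudinger embedding gives $\int_{|z|\le\lambda^{-2}}|v|^2\,dA\lesssim\lambda^{-4}(1+\log^2\lambda)\|v\|_{W^{1,2}}^2$, which after multiplying by the pointwise bound $|\partial\sigma_a|^2\lesssim\lambda^4$ again yields $(\log\lambda)^2\|v\|_{W^{1,2}}^2$. Both pieces combine to the claimed $(1+\log\|g\|)$ after taking square roots. So your plan is correct; only this last computation needs to be written out in full.
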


Concerning the regularity of the family $\xi \mapsto \oP_\xi$ acting on $\Cc^{\log^{p-1}}$ we have the following result, which improves \cite[Proposition 4.11]{DKW:LLT}.

\begin{proposition} \label{prop:P_t-logp-regularity}
Let $p > 1$  and assume that $\int_G \log^p \|g\| \, \diff \mu(g) < \infty $. Then, the family $\xi \mapsto \oP_\xi$ acting on $\Cc^{\log^{p-1}}$ is locally $\frac12$-H\"older continuous on $\R$.
\end{proposition}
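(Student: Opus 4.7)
Fix a compact interval $I \subset \R$. The goal is to show $\|\oP_{\xi_1} u - \oP_{\xi_2} u\|_{\log^{p-1}} \leq C_I |\xi_1-\xi_2|^{1/2}\|u\|_{\log^{p-1}}$ for all $\xi_1, \xi_2 \in I$ and $u \in \Cc^{\log^{p-1}}$. Setting $t := \xi_1 - \xi_2$ and $f_g(x) := e^{i\xi_1\sigma_g(x)} - e^{i\xi_2\sigma_g(x)}$, the operator difference reads $(\oP_{\xi_1} - \oP_{\xi_2})u(x) = \int_G f_g(x)\,g^*u(x)\,\diff\mu(g)$. The H\"older-$1/2$ exponent will always enter through the fundamental inequality $|e^{ia} - e^{ib}| \leq \sqrt{2}\,|a-b|^{1/2}$, giving $\|f_g\|_\infty \leq \sqrt{2}\,|t|^{1/2}\log^{1/2}\|g\|$. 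Since $\log^{1/2}\|g\| \leq 1+\log^p\|g\|$ is $\mu$-integrable, the sup-norm part of the $\Cc^{\log^{p-1}}$-norm is controlled at once.

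For the $\log^{p-1}$-seminorm I will expand $f_g(x)g^*u(x) - f_g(y)g^*u(y) = f_g(x)(g^*u(x) - g^*u(y)) + (f_g(x) - f_g(y))g^*u(y)$. The first summand is controlled directly using $\|f_g\|_\infty$ together with the cocycle estimate $[g^*u]_{\log^{p-1}} \leq C_p(1+\log^{p-1}\|g\|)[u]_{\log^{p-1}}$ of Lemma \ref{lemma:sigma-estimates}(4); the resulting integrand $|t|^{1/2}\log^{1/2}\|g\|(1+\log^{p-1}\|g\|)$ is $\mu$-integrable because $\log^{p-1/2}\|g\| \leq 1+\log^p\|g\|$, so that after integration one obtains the required $|t|^{1/2}(\log^\star d(x,y))^{-(p-1)}$ decay.

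The heart of the proof is to bound the second summand $\int_G |f_g(x) - f_g(y)|\,\diff\mu(g)$ by $C_I |t|^{1/2}(\log^\star d(x,y))^{-(p-1)}$. Two complementary pointwise estimates are available. Writing $f_g = e^{i\xi_2\sigma_g}(e^{it\sigma_g} - 1)$ and applying the product rule in $\sigma_g$ yields the Lipschitz-in-$\xi$ bound $|f_g(x) - f_g(y)| \leq |t|\,|\sigma_g(x) - \sigma_g(y)|(1 + R\log\|g\|)$ with $R := \sup_{\xi\in I}|\xi|$, which combined with $[\sigma_g]_{\log^{p-1}} \leq C_p(1+\log^p\|g\|)$ from Lemma \ref{lemma:sigma-estimates}(3) carries the full $(\log^\star d)^{-(p-1)}$ spatial decay but costs an extra $\log^{p+1}\|g\|$ moment; while the uniform bound $|f_g(x) - f_g(y)| \leq 2\|f_g\|_\infty \lesssim |t|^{1/2}\log^{1/2}\|g\|$ has the correct $|t|^{1/2}$ factor but no spatial decay. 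The main obstacle, and the essential technical improvement over the continuity statement of \cite{DKW:LLT}, is combining these estimates so that the resulting integral carries both the $|t|^{1/2}$ H\"older factor and the full $(\log^\star d)^{-(p-1)}$ decay using only the $p$-th moment of $\mu$. My plan is to split the integration over $G$ at a threshold $\log\|g\| = L$ with $L$ depending on $|t|$ and $\log^\star d(x,y)$: on $\{\log\|g\|\leq L\}$ I apply the Lipschitz-in-$\xi$ estimate, the excess $\log\|g\|$ factor being absorbed into $L$ so that $|t|L \lesssim |t|^{1/2}$; on the tail $\{\log\|g\|>L\}$ I apply the uniform estimate and control $\int_{\log\|g\|>L}\log^{1/2}\|g\|\diff\mu$ by $L^{-(p-1/2)}M_p(\mu)$ via Chebyshev applied to $\int\log^p\|g\|\diff\mu<\infty$. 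Balancing the two contributions by optimising $L$ as a function of $|t|$ and $\log^\star d(x,y)$ then yields the desired estimate.
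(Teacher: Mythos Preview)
Your splitting strategy has a genuine gap. With your two pointwise estimates
\[
|f_g(x)-f_g(y)|\lesssim |t|\,(1+\log^{p+1}\|g\|)\,D^{-(p-1)}
\quad\text{and}\quad
|f_g(x)-f_g(y)|\lesssim |t|^{1/2}\log^{1/2}\|g\|,
\]
where $D:=\log^\star d(x,y)$, the head piece contributes $\lesssim |t|\,L\,D^{-(p-1)}$ and the tail contributes $\lesssim |t|^{1/2}L^{-(p-1/2)}$. For both to be $\lesssim |t|^{1/2}D^{-(p-1)}$ you need simultaneously $L\lesssim |t|^{-1/2}$ and $L\gtrsim D^{(p-1)/(p-1/2)}$, which forces $D\lesssim |t|^{-(2p-1)/(4(p-1))}$. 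For fixed $|t|>0$ and $D\to\infty$ no such $L$ exists; minimising the sum over $L$ gives only $|t|^{2p/(2p+1)}D^{-(2p-1)(p-1)/(2p+1)}$, whose ratio to the target is $|t|^{(2p-1)/(2(2p+1))}D^{2(p-1)/(2p+1)}$, which is unbounded in $D$. So the $\log^{p-1}$ seminorm of $(\oP_{\xi_1}-\oP_{\xi_2})u$ is not controlled by your argument.

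The missing idea is that the Lipschitz endpoint must carry \emph{twice} the required spatial decay before one halves it. The paper fixes $g,x,y$, views $\phi(\xi):=e^{i\xi\sigma_g(x)}-e^{i\xi\sigma_g(y)}$ as a function of $\xi$, and uses the interpolation $\|\phi\|_{\Cc^{1/2}}\lesssim\|\phi\|_{\Cc^0}^{1/2}\|\phi\|_{\Cc^1}^{1/2}$. The key point is to bound $\phi'$ via $[\sigma_g e^{i\xi\sigma_g}]_{\log^q}$ with $q=2(p-1)$ rather than $q=p-1$: this costs $(1+\log^{2p}\|g\|)$ but yields $D^{-2(p-1)}$, and after taking the square root one lands exactly on $(1+\log^{p}\|g\|)D^{-(p-1)}|t|^{1/2}$, which is $\mu$-integrable under the $p$-th moment hypothesis. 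Your splitting argument can in fact be salvaged by the same modification: feed $[\sigma_g]_{\log^{2(p-1)}}\lesssim 1+\log^{2p-1}\|g\|$ into the Lipschitz bound to get $|f_g(x)-f_g(y)|\lesssim|t|(1+\log^{2p}\|g\|)D^{-2(p-1)}$ on the head, and use the trivial bound $|f_g(x)-f_g(y)|\leq 4$ on the tail; choosing $L^p=|t|^{-1/2}D^{p-1}$ then makes both pieces exactly $|t|^{1/2}D^{-(p-1)}$. But as written, with the $(p-1)$-seminorm, the optimisation does not close.
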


\begin{proof}
	It is enough to work with $\xi$ in a fixed bounded interval $J$ of $\R$. We need to show that $\|(\oP_\xi - \oP_\eta) u\|_{\log^{p-1}} \lesssim |\xi- \eta|^{1 \slash 2}$ for $\xi, \eta \in J$ uniformly in $u$ in the unit ball of $\Cc^ {\log^{p-1}}$. By definition, we have $$(\oP_\xi - \oP_\eta) u  = \int_G \big(e^{i \xi \sigma_g } - e^{i \eta \sigma_g} \big)  g^*u  \,  \diff \mu(g).$$
	
	From now on, assume that $\|u\|_{\log^{p-1}}=\norm{u}_{\infty}+[u]_{\log^{p-1}} \leq 1$. Using that $\big|\frac{\diff}{\diff \xi} e^{i\xi \sigma_g} \big| \leq |\sigma_g| \leq \log \|g\|$, we have $$\big\|\big(e^{i\xi \sigma_g} - e^{i\eta \sigma_g}\big) g^*u \big\|_\infty \leq \|e^{i\xi \sigma_g} - e^{i\eta \sigma_g}\|_\infty \leq |\xi- \eta|\cdot \log \|g\|.$$ 
 In particular, the last quantity is $\lesssim  |\xi- \eta|^{1/2} \cdot \log \|g\|$. It remains to bound $[(e^{i\xi \sigma_g} - e^{i\eta \sigma_g}) g^*u]_{\log^{p-1}}$. From \eqref{eq:log^p-product}
and the fact that $\|g^* u\|_\infty \leq 1$, we get $$[(e^{i\xi \sigma_g} - e^{i\eta \sigma_g}) g^*u]_{\log^{p-1}} \leq [(e^{i\xi \sigma_g} - e^{i\eta \sigma_g})]_{\log^{p-1}} + \|e^{i\xi \sigma_g} - e^{i\eta \sigma_g}\|_\infty [g^*u]_{\log^{p-1}}.$$
		As above $\|e^{i\xi \sigma_g} - e^{i\eta \sigma_g}\|_\infty \leq |\xi- \eta|\cdot \log \|g\|$. Also, $[g^*u]_{\log^{p-1}} \lesssim 1+\log^{p-1}\|g\|$  by Lemma \ref{lemma:sigma-estimates}-(4). Therefore, the second term above is bounded by a constant times $$|\xi- \eta| \cdot (1+\log^p \|g\|) \lesssim |\xi- \eta|^{1/2} \cdot (1+\log^p \|g\|).$$
	
	We now estimate the quantity $[(e^{i\xi \sigma_g} - e^{i\eta \sigma_g})]_{\log^{p-1}}$. 
		For fixed $g \in G$ and $x,y \in \P^1$,  consider the function $$\phi(\xi) := e^{i\xi \sigma_g(x)} - e^{i\xi \sigma_g(y)}.$$
	Let $q >0$ and $0< \alpha \leq 1$ be numbers that will be fixed later. 
	We'll use the interpolation inequality $\|\phi\|_{\Cc^\alpha} \leq C_\alpha \|\phi\|_{\Cc^0}^{1 - \alpha} \|\phi\|_{\Cc^1}^\alpha$, where $C_\alpha>0$ is a constant independent of $\phi$, see for instance \cite{triebel}.

	We have $\phi'(\xi) = i\sigma_g(x) e^{i\xi \sigma_g(x)} -i\sigma_g(y) e^{i\xi \sigma_g(y)}$. Recall that $[\sigma_g]_{\log^q} \lesssim 1+\log^{q+1} \|g\|$ from Lemma \ref{lemma:sigma-estimates}-(3). Observe that $\sigma_g$ takes values in the interval $I_g:=[- \log \|g\|, \log \|g\|]$. Applying \eqref{eq:C^1-log^p} to the function $\chi(s) = s e^{i \xi s}$ for $s \in I_g$ and noting that $|\diff \chi| \lesssim 1 + \log \|g\|$ on $I_g$ (recall that $\xi$ belongs to a bounded interval), we obtain $[\sigma_g e^{i\xi\sigma_g}]_{\log^q} \lesssim 1+\log^{q+2} \|g\|$. We deduce that $$|\phi'(\xi)| \leq c_1 (1 + \log^{q+2} \|g\|) (\log^\star d(x,y))^{-q}$$
	for some constant $c_1 > 0$. Therefore, $\|\phi\|_{\Cc^1} \leq c_2 (1 + \log^{q+2} \|g\|) (\log^\star d(x,y))^{-q}$ for some constant $c_2 > 0$, since $\|\phi\|_{\Cc^0} \leq 2$. Inserting this in the interpolation inequality and using again that $\|\phi\|_{\Cc^0} \leq 2$ yields $$\|\phi\|_{\Cc^\alpha} \leq c_3 (1 + \log^{q+2} \|g\|)^ \alpha (\log^\star d(x,y))^{- \alpha q}$$
	 for some constant $c_3>0$.
	We conclude that
	\begin{align*}
	\big| \big( e^{i\xi \sigma_g(x)}  - e^{i\xi \sigma_g(y)} \big) - \big( e^{i\eta \sigma_g(x)} - e^{i\eta \sigma_g(y)} \big) \big|   = \big| \phi(\xi) - \phi(\eta)  \big| \leq \|\phi\|_{\Cc^\alpha}  |\xi-\eta|^\alpha \\  \leq  c_3 (1 + \log^{q+2} \|g\|)^ \alpha  (\log^\star d(x,y))^{- \alpha q} |\xi-\eta|^\alpha.
	\end{align*} 
	
	Since $x$ and $y$ are arbitrary, we get that
	\begin{equation*}  
	[(e^{i\xi \sigma_g} - e^{i\eta \sigma_g})]_{\log^{\alpha q}} \leq c_3 (1 + \log^{\alpha(q+2)} \|g\|) |\xi-\eta|^\alpha.
	\end{equation*}
	
	Now, set $\alpha := 1 \slash 2$  and $q:= 2(p-1)$, so that $\alpha q = p-1$ and $\alpha(q+2) = p$. Then, the preceding estimate gives 
	$$[(e^{i\xi \sigma_g} - e^{i\eta \sigma_g})]_{\log^{p-1}} \leq c_3 (1 + \log^p \|g\|) |\xi-\eta|^{1 \slash 2}.$$
	
	Gathering all the above estimates, we obtain a constant $c_4 > 0$ independent of $u$, $g$ and $\xi,\eta \in J$ such that $$\big\|\big(e^{i \xi \sigma_g } - e^{i \eta \sigma_g} \big)  g^*u\big\|_{\log^{p-1}} \leq c_4 (1 + \log^p \|g\|) |\xi-\eta|^{1 \slash 2}.$$
	We conclude by the triangle inequality that $$\|(\oP_\xi - \oP_\eta) u\|_{\log^{p-1}} \leq c_4(1+M_p(\mu)) |\xi- \eta|^{1 \slash 2}$$ uniformly in $u$ in the unit ball of $\Cc^{\log^{p-1}}$. Recall that $M_{p}(\mu) = \int_G \log^{p} \|g\| \, \diff \mu(g)$, which is finite by assumption. We conclude that $\oP_\xi : \Cc^{\log^{p-1}}  \to \Cc^{\log^{p-1}}$ is locally $\frac12$-H\"older continuous in $\xi$. This finishes the proof of the proposition.
\end{proof}

\subsection{The space $\W$} 

In \cite{DKW:LLT}, a new function space adapted to our situation was introduced. This  space mixes the norms from $W^{1,2}$ and $\Cc^{\log^{p-1}}$, and its definition takes into account the spectral gap of $\oP_0$ on $W^{1,2}$. The advantage of working with $\W$ is that its norm automatically yields uniform pointwise estimates. See \cite[Section 6]{DKW:LLT} for the precise definition of the norm $\| \cdot \|_\W$ and more details. 

\begin{theorem} \label{thm:spectral-gap-W}
Let $\mu$ be a non-elementary probability measure on $G =\SL_2(\C)$. Assume that $\int_G \log^p \|g\| \, \diff \mu(g) <\infty$, where $p > 3 \slash 2$. Then, there exists a Banach space $( \W, \|  \cdot \|_\W)$ of functions on $\P^1$ with the following properties: 

\begin{enumerate}
\item The following inequalities hold: $\| \cdot \|_{W^{1,2}} \lesssim \| \cdot \|_\W $, $\| \cdot \|_{\infty} \lesssim \| \cdot \|_\W $ and  $ \| \cdot \|_\W \lesssim \max \lbrace \| \cdot \|_{W^{1,2}}, \| \cdot \|_{\log^{p-1}} \rbrace$. In particular, we have continuous embeddings $$ W^ {1,2} \cap \Cc^{\log^{p-1}} \subset\W \subset W^ {1,2}\cap \Cc^0;$$

\item  If an operator $\mathcal T$ acts continuously both on $W^{1,2}$ and $\Cc^{\log^{p-1}}$, then  it also acts continuously on $\W$ and $\| \mathcal T\|_\W \leq \max \big\{ \| \mathcal T\|_{W^{1,2}}, \| \mathcal T\|_{\log^{p-1}} \big\}$;

\item The operator $\oP_0$ has a spectral gap on $\W$ as in Theorem \ref{thm:spectrum-Pxi-sobolev}-(1);
\item For every $\xi \in \R$, $\oP_\xi$ defines a bounded operator on $\W$; 
\item If moreover $p \geq 2$, then the family $\xi \mapsto \oP_\xi$ acting on  $\W$ is locally $\frac12$-H\" older continuous.
\end{enumerate}
\end{theorem}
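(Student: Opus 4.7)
Given that $W^{1,2}$ and $\Cc^{\log^{p-1}}$ sit at opposite extremes (the former yields a spectral gap for $\oP_0$ but insufficient pointwise control for the later arguments, while the latter lacks a spectral gap but enjoys polynomial estimates), the plan is to build $\W$ as a Banach space interpolating between these two in a way consistent with the dynamics of $\oP_0$. A model construction uses the spectral gap of $\oP_0$ on $W^{1,2}$ (Theorem \ref{thm:spectrum-Pxi-sobolev}-(1)) to write $\oP_0=N_0+U_0$, where $N_0 u=\bigl(\int u\,\diff\nu\bigr)\mathbf{1}$ is the rank-one spectral projector onto the leading eigenspace and $U_0:=\oP_0-N_0$ has spectral radius $\rho<1$ on $W^{1,2}$. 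Fix $\lambda\in(\rho,1)$; I would then define $\|u\|_{\W}$ as a mixed norm that dominates $\|u\|_{W^{1,2}}$ and $\|u\|_\infty$ and weighs iterates $U_0^n u$ in the $\Cc^{\log^{p-1}}$-direction against the geometric factor $\lambda^{-n}$. The point of this calibration is that the polynomial $\Cc^{\log^{p-1}}$-growth from Proposition \ref{prop:P_t-logp} is absorbed by the geometric weight, while the norm remains compatible with \emph{arbitrary} operators bounded on both component spaces.

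With such a norm in hand, properties (1) and (2) are essentially built in. For (1), the inequalities $\|\cdot\|_{W^{1,2}}\lesssim\|\cdot\|_\W$ and $\|\cdot\|_\infty\lesssim\|\cdot\|_\W$ are imposed by construction; the upper bound $\|\cdot\|_\W\lesssim\max\{\|\cdot\|_{W^{1,2}},\|\cdot\|_{\log^{p-1}}\}$ follows by applying Proposition \ref{prop:P_t-logp} term-by-term to the envelope defining $[u]_\W$. For (2), any $\mathcal T$ that is bounded on both $W^{1,2}$ and $\Cc^{\log^{p-1}}$ acts termwise through the envelope with respective bounds $\|\mathcal T\|_{W^{1,2}}$ and $\|\mathcal T\|_{\log^{p-1}}$, which combine to give $\|\mathcal T\|_\W\leq\max\{\|\mathcal T\|_{W^{1,2}},\|\mathcal T\|_{\log^{p-1}}\}$. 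The main technical obstacle is precisely this calibration: the envelope defining $[\,\cdot\,]_\W$ must be chosen so that property (2) holds without requiring $\mathcal T$ to commute with $U_0$ or $N_0$, while still recording enough of the spectral structure of $\oP_0$ to yield a gap on $\W$. This is the only step that is not routine, and it is the technical heart of \cite{DKW:LLT}.

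Given (1) and (2), the remaining properties follow almost mechanically. For (3), on $\ker N_0$ the iterates $\oP_0^n=U_0^n$ contract at rate $\rho^n$ in $W^{1,2}$ by the spectral gap and grow at most polynomially in $\Cc^{\log^{p-1}}$ by Proposition \ref{prop:P_t-logp}; the geometric weight $\lambda^{-n}$ (with $\lambda>\rho$) absorbs the polynomial factor and yields $\|\oP_0^n u\|_{\W}\lesssim\lambda^n\|u\|_{\W}$ on $\ker N_0$, giving the spectral gap on $\W$. Property (4) is immediate from (2) together with Theorem \ref{thm:spectrum-Pxi-sobolev}-(2) and Proposition \ref{prop:P_t-logp}, which supply boundedness of $\oP_\xi$ on each of the two component spaces. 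Finally, for (5) with $p\geq 2$, the differentiability of $\xi\mapsto\oP_\xi$ on $W^{1,2}$ from Theorem \ref{thm:spectrum-Pxi-sobolev}-(3) gives local Lipschitz and hence local $\tfrac12$-H\"older continuity there, while Proposition \ref{prop:P_t-logp-regularity} supplies local $\tfrac12$-H\"older continuity on $\Cc^{\log^{p-1}}$; applying (2) to the operator $\oP_\xi-\oP_\eta$ then yields the required local $\tfrac12$-H\"older continuity of $\xi\mapsto\oP_\xi$ on $\W$.
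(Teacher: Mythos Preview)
Your approach matches the paper's: statements (1)--(4) are deferred to \cite[Theorem 6.1]{DKW:LLT} (whose construction you have correctly sketched), and (5) is obtained exactly as you indicate, by applying property (2) to $\oP_\xi-\oP_\eta$ and invoking Theorem \ref{thm:spectrum-Pxi-sobolev}-(3) on $W^{1,2}$ together with Proposition \ref{prop:P_t-logp-regularity} on $\Cc^{\log^{p-1}}$.

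One point deserves care: you say the inequality $\|\cdot\|_\infty\lesssim\|\cdot\|_\W$ is ``imposed by construction''. If one literally includes $\|u\|_\infty$ as a summand in $\|u\|_\W$, then property (2) is in jeopardy, since a generic $\mathcal T$ bounded on $W^{1,2}$ and $\Cc^{\log^{p-1}}$ need not be bounded on $\Cc^0$. In the actual construction the sup-norm control is \emph{not} built in but is \emph{derived} a posteriori: the $\W$-norm records only $W^{1,2}$ and $\Cc^{\log^{p-1}}$ information about the iterates $U_0^n u$, and the inclusion $\W\subset\Cc^0$ then follows from Lemma \ref{lemma:logp-pre-equidistribution} (a Moser--Trudinger argument turning exponential $W^{1,2}$-decay plus polynomial $\Cc^{\log^{p-1}}$-growth into exponential sup-norm decay). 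The paper's proof singles this out as the one nontrivial ingredient in (1).
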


\begin{proof}
Statements (1)--(4) are contained in \cite[Theorem 6.1]{DKW:LLT}. They all follow directly from the definition $\W$, with the exception of the inclusion $\W \subset W^ {1,2}\cap \Cc^0$, which is a consequence of Lemma \ref{lemma:logp-pre-equidistribution} below. Therefore, we only need to prove (5).  We need to show that $\| \oP_\xi - \oP_\eta \|_\W \lesssim |\xi-\eta|^{1 \slash 2}$ whenever $\xi,\eta$ belong to a bounded interval in $\R$. In view of (2), this will follow from the analogous property for $\| \oP_\xi - \oP_\eta \|_{W^{1,2}}$ and $\| \oP_\xi - \oP_\eta \|_{\log^{p-1}}$. The former follows from Theorem \ref{thm:spectrum-Pxi-sobolev}-(3), while the latter is the content of Proposition \ref{prop:P_t-logp-regularity}. This proves (5).
\end{proof}

As mentioned before, the regularity of the family $\xi \mapsto \oP_\xi$ acting on $\W$ provided by the above theorem allows us to apply the theory of perturbations of linear operators (see \cite{kato:book}), yielding the following decomposition of $\oP_\xi$ on $\W$ for $\xi$ near zero. When $\mu$ has a finite exponential moment a similar decomposition holds on some H\"older space, see \cite{benoist-quint:book,bougerol-lacroix,lepage:theoremes-limites}.

\begin{corollary} \label{cor:P_t-decomp-W}
Let $\mu$ be a non-elementary probability measure on $G =\SL_2(\C)$. Assume that $\int_G \log^p \|g\| \, \diff \mu(g) <\infty$ for some $p \geq 2$. Let $\W$ be as in Theorem \ref{thm:spectral-gap-W}. Then there exists an $\epsilon_0 > 0$ such that, for $\xi \in [-\epsilon_0,\epsilon_0]$, one has a decomposition
\begin{equation} \label{eq:P_t-decomp}
\oP_\xi = \lambda_\xi \oN_\xi + \oQ_\xi,
\end{equation}
where $\lambda_\xi \in \C$, and $\oN_\xi,\oQ_\xi$ are bounded operators on $\W$ with the following properties: 
\begin{enumerate}
\item $\lambda_0 = 1$ and $\oN_0 u = \int u \, \diff \nu$, where $\nu$ is the unique $\mu$-stationary measure;
\item $\rho:= \displaystyle \lim_{n \to \infty } \|\oP_0^n - \oN_0\|_\W^{1 \slash n} < 1$;

\item $\lambda_\xi$ is the unique eigenvalue of maximum modulus of $\oP_\xi$ in $\W$, $\oN_\xi$ is a rank-one projection and $\oN_\xi \oQ_\xi = \oQ_\xi \oN_\xi = 0$;

\item the maps  $\xi \mapsto \oN_\xi$ and $\xi \mapsto \oQ_\xi$ are locally $\frac12$-H\"older continuous;

\item  $\frac{2 + \rho}{3}<|\lambda_\xi| \leq 1 $ and there exists a constant $c > 0$ such that $\|\oQ_\xi^n\|_\W \leq c \big( \frac{1 + 2 \rho}{3} \big)^n$  for every  $n \geq 0$;

\item The map $\xi \mapsto \lambda_\xi$ is differentiable and has an expansion
\begin{equation} \label{eq:lambda-expansion-2}
\lambda_\xi = 1 + i  \gamma \xi - (a^2 + \gamma^2) \frac{\xi^2}{2}+ \psi(\xi) 
\end{equation}
for a  continuous function $\psi$  such that $
\lim_{\xi\to 0 }\xi^{-2}\psi(\xi) =0$ and a number $a > 0$, where $\gamma$ is the Lyapunov exponent of $\mu$.
\end{enumerate}
\end{corollary}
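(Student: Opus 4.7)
The plan is to apply the standard perturbation theory of linear operators (in the spirit of Kato's book) to the continuous family $\xi \mapsto \oP_\xi$ acting on $\W$. By Theorem \ref{thm:spectral-gap-W}-(3), the operator $\oP_0$ admits $1$ as a simple isolated eigenvalue separated from the rest of its spectrum by a gap. I would fix a small positively oriented Jordan curve $\Gamma \subset \C$ enclosing $1$ and no other spectral value of $\oP_0$. The local $\frac{1}{2}$-H\"older continuity from Theorem \ref{thm:spectral-gap-W}-(5) ensures that $(zI - \oP_\xi)^{-1}$ remains uniformly bounded on $\Gamma$ for $|\xi| \leq \epsilon_0$ with $\epsilon_0>0$ small, so that the Riesz projection
$$\oN_\xi := \frac{1}{2\pi i} \oint_\Gamma (zI - \oP_\xi)^{-1} \, \diff z$$
is a rank-one projection commuting with $\oP_\xi$. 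Setting $\oQ_\xi := \oP_\xi(I - \oN_\xi)$ and letting $\lambda_\xi$ be the unique eigenvalue of $\oP_\xi$ enclosed by $\Gamma$, one obtains the decomposition \eqref{eq:P_t-decomp}. Properties (1), (2), and (3) then follow from standard arguments, using that $\oN_0 u = \int u\,\diff\nu$ since $1$ and $\nu$ are the right and left eigenvectors of $\oP_0$ at $\lambda_0 = 1$.

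For (4), I would use the resolvent identity
$$(zI - \oP_\xi)^{-1} - (zI - \oP_\eta)^{-1} = (zI - \oP_\xi)^{-1}(\oP_\xi - \oP_\eta)(zI - \oP_\eta)^{-1}$$
to transfer the $\frac{1}{2}$-H\"older continuity of $\oP_\xi$ to $\oN_\xi$, and hence to $\lambda_\xi$ (read off by integrating $\oP_\xi \oN_\xi 1 = \lambda_\xi \oN_\xi 1$ against $\nu$) and $\oQ_\xi = \oP_\xi - \lambda_\xi \oN_\xi$. For (5), by continuity of $\lambda_\xi$ and upper semicontinuity of the spectrum under norm-continuous perturbations, one can ensure $|\lambda_\xi| > (2+\rho)/3$ and the spectral radius of $\oQ_\xi$ below $(1+2\rho)/3$ after further shrinking $\epsilon_0$, yielding the uniform norm bound on $\|\oQ_\xi^n\|_\W$ via Gelfand's formula together with a compactness argument.

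For the expansion (6), I would first note that $\W$ embeds continuously into $W^{1,2}$ by Theorem \ref{thm:spectral-gap-W}-(1), and on $W^{1,2}$ the family $\xi \mapsto \oP_\xi$ is differentiable with $\oP_0^{(1)} u = \int_G i\sigma_g \, g^*u \,\diff\mu(g)$ by Theorem \ref{thm:spectrum-Pxi-sobolev}-(3). Differentiating the identity $\lambda_\xi = \int \oP_\xi \oN_\xi 1 \, \diff\nu \big/ \int \oN_\xi 1 \, \diff\nu$ at $\xi = 0$ and using $\oN_0 1 = 1$ gives
$$\lambda_0' \;=\; \int_{\P^1} \oP_0^{(1)} 1 \,\diff\nu \;=\; \int_G \int_{\P^1} i\sigma_g(x)\,\diff\nu(x)\,\diff\mu(g) \;=\; i\gamma,$$
by the standard integral formula for the Lyapunov exponent. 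The quadratic coefficient would then be extracted from the second-order Kato identity $\lambda_0'' = \nu(\oP_0^{(2)} 1) + 2\,\nu(\oP_0^{(1)} v_0')$, where $v_0'$ solves the Poisson equation $(I - \oP_0) v_0' = \oP_0^{(1)} 1 - i\gamma$. The convergence $v_0' = i \sum_{k \geq 0} (\oP_0^k - \oN_0)(\sigma_\mu)$, with $\sigma_\mu(x) := \int \sigma_g(x)\,\diff\mu(g) - \gamma$, is guaranteed by the spectral gap (2). Identifying the resulting scalar with the asymptotic variance in the Benoist--Quint central limit theorem yields $\lambda_0'' = -(a^2 + \gamma^2)$, with the strict positivity $a>0$ being a standard consequence of the non-elementary hypothesis, which rules out a cohomological coboundary equation for $\sigma$.

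The main technical obstacle is to control the remainder $\psi(\xi) = o(\xi^2)$ under only the finite second moment hypothesis, since Theorem \ref{thm:spectrum-Pxi-sobolev}-(3) provides only one derivative of $\oP_\xi$ on $W^{1,2}$ when $p = 2$, so a direct operator-level second-order Taylor expansion is unavailable. The workaround is to expand $\lambda_\xi$ through the scalar pairings above rather than at the operator level, and to control the second-order Taylor remainder of $e^{i\xi \sigma_g}$ using the pointwise bound
$$\bigl| e^{i\xi\sigma_g} - 1 - i\xi\sigma_g + \tfrac{1}{2}\xi^2 \sigma_g^2 \bigr| \;\leq\; \xi^2 \sigma_g^2 \wedge |\xi\sigma_g|^3$$
together with $\sigma_g^2 \in L^1(\mu \otimes \nu)$ and Lebesgue's dominated convergence to squeeze the remainder after division by $\xi^2$ down to $o(1)$ as $\xi \to 0$.
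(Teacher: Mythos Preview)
Your proposal is correct and follows essentially the same approach as the paper: Kato perturbation theory applied to the $\frac12$-H\"older family $\xi\mapsto\oP_\xi$ on $\W$, with $\oN_\xi$ given by the Riesz projection $\frac{1}{2\pi i}\oint_\Gamma (z-\oP_\xi)^{-1}\,\diff z$. The paper's own proof is much shorter only because it defers items (1), (2), (3), (5), (6) to the companion paper \cite{DKW:LLT} and proves just (4) here, via the analytic dependence of the resolvent on the operator --- which is exactly your resolvent-identity argument. Your more explicit treatment of (6), extracting $\lambda_\xi$ through the scalar pairing with $\nu$ and controlling the $o(\xi^2)$ remainder by the pointwise bound $|e^{i\xi\sigma_g}-1-i\xi\sigma_g+\tfrac12\xi^2\sigma_g^2|\leq \xi^2\sigma_g^2\wedge|\xi\sigma_g|^3$ together with dominated convergence, is the standard route under a bare second-moment hypothesis and matches what is done in \cite{DKW:LLT}.
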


\begin{proof}
All statements except (4) are included in \cite[Corollary 6.4]{DKW:LLT}, so we only need to prove (4). The operator $\oN_\xi$ is given by the integral $(2\pi i)^{-1} \int_\Gamma R_\xi(z) \diff z$ where $\Gamma$ is a small circle enclosing $1$ and $R_\xi(z) = (z - \oP_\xi)^{-1}$ is the resolvent of $\oP_\xi$, see \cite{kato:book}. Since $\xi \mapsto \oP_\xi$ is locally $\frac12$-H\" older continuous by Theorem \ref{thm:spectral-gap-W} and the resolvent of an operator $\mathcal T$ depends analytically in $\mathcal T$ (cf. \cite[\S 4.3.3]{kato:book}), it follows that $\xi \mapsto \oN_\xi$ is locally $\frac12$-H\" older continuous. Finally, from the fact that $\oQ_\xi = \oP_\xi - \lambda_\xi \oN_\xi$ and (6), the H\" older regularity of $\xi \mapsto \oQ_\xi$ follows. This finishes the proof.
\end{proof}

\begin{remark}
The number $a > 0$ appearing in the expansion \eqref{eq:lambda-expansion-2} coincides with the variance in the CLT for the norm cocycle. See e.g.\ \cite[Lemma 4.7]{DKW:LLT}.
\end{remark}

\subsection{Asymptotic behavior of Markov operator}
 The next result concerns the behavior of the series $\sum_{n=0}^\infty \oP_\xi^n$ as $\xi$ approaches zero. This asymptotic behavior  will be central in the study of the renewal operators in Section \ref{sec:renewal}. We equip $W^{1,2} \cap \Cc^{\log^{p-1}}$ with the norm $\max \big\{\|\cdot\|_{W^{1,2}},\|\cdot\|_{\log^{p-1}}\big\}$ and  $W^{1,2} \cap \Cc^0$ with the norm $\max \big\{\|\cdot\|_{W^{1,2}},\|\cdot\|_{\infty}\big\}$.

\begin{proposition}\label{prop:sum-Pxi-n-decomposition}
Let $\mu$ be a non-elementary probability measure on $G =\SL_2(\C)$. Assume that $\int_G \log^p \|g\| \, \diff \mu(g) <\infty$ for some $p \geq 2$.  Then, for every $\xi \neq 0$ there is a bounded operator $\oU_\xi: W^{1,2} \cap \Cc^{\log^{p-1}} \to W^{1,2} \cap \Cc^0 $ such that
 $$\sum_{n=0}^\infty \oP_\xi^n={\oN_0 \over -i\gamma\xi} +\oU_\xi \quad \text{and} \quad \|\oU_\xi\|_{W^{1,2} \cap \Cc^{\log^{p-1}} \to W^{1,2} \cap \Cc^0} = O(|\xi|^{- 1 \slash 2}) \quad \text{as} \quad \xi \to 0.$$
Moreover, the family of operators $\xi \mapsto \oU_\xi$ from  $W^{1,2} \cap\Cc^{\log^{p-1}}$ to $W^{1,2} \cap \Cc^0$ is locally $\frac12$-H\"older continuous  on $\R \setminus \{0\}$.
\end{proposition}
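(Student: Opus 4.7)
My strategy is to use the spectral decomposition $\oP_\xi = \lambda_\xi \oN_\xi + \oQ_\xi$ from Corollary \ref{cor:P_t-decomp-W} for small $\xi$, sum the geometric series explicitly, and isolate the pole at $\xi = 0$. For $\xi \in [-\epsilon_0, \epsilon_0] \setminus \{0\}$, the projector relations $\oN_\xi \oQ_\xi = \oQ_\xi \oN_\xi = 0$ and $\oN_\xi^2 = \oN_\xi$ give $\oP_\xi^n = \lambda_\xi^n \oN_\xi + \oQ_\xi^n$ for every $n \geq 0$. The expansion \eqref{eq:lambda-expansion-2} yields $|\lambda_\xi|^2 = 1 - a^2 \xi^2 + o(\xi^2) < 1$ for small non-zero $\xi$, so, combined with the geometric decay of $\oQ_\xi^n$ from Corollary \ref{cor:P_t-decomp-W}-(5), both series converge absolutely on $\W$, summing to
\begin{equation*}
\sum_{n=0}^\infty \oP_\xi^n \;=\; \frac{\oN_\xi}{1 - \lambda_\xi} + (I - \oQ_\xi)^{-1},
\end{equation*}
with the second summand bounded uniformly on $\W$ as $\xi \to 0$.

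Next, I isolate the singularity. From \eqref{eq:lambda-expansion-2} one has $1 - \lambda_\xi = -i\gamma\xi + O(\xi^2)$, so the numerator of
\begin{equation*}
\frac{1}{1-\lambda_\xi} - \frac{1}{-i\gamma\xi} \;=\; \frac{-i\gamma\xi - (1-\lambda_\xi)}{(1-\lambda_\xi)(-i\gamma\xi)}
\end{equation*}
is $O(\xi^2)$ while the denominator is $\asymp \xi^2$, which gives $O(1)$ as $\xi \to 0$. I then split
\begin{equation*}
\frac{\oN_\xi}{1 - \lambda_\xi} \;=\; \frac{\oN_0}{-i\gamma\xi} \;+\; \frac{\oN_\xi - \oN_0}{-i\gamma\xi} \;+\; \oN_\xi \left( \frac{1}{1-\lambda_\xi} - \frac{1}{-i\gamma\xi} \right).
\end{equation*}
The local $\tfrac12$-H\"older continuity of $\xi \mapsto \oN_\xi$ on $\W$ from Corollary \ref{cor:P_t-decomp-W}-(4) gives $\|\oN_\xi - \oN_0\|_\W = O(|\xi|^{1/2})$, so the middle term is $O(|\xi|^{-1/2})$ while the last one is $O(1)$ on $\W$. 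Setting $\oU_\xi := \sum_n \oP_\xi^n - \oN_0 / (-i\gamma\xi)$ then yields $\|\oU_\xi\|_\W = O(|\xi|^{-1/2})$, and the continuous embeddings $W^{1,2} \cap \Cc^{\log^{p-1}} \hookrightarrow \W \hookrightarrow W^{1,2} \cap \Cc^0$ from Theorem \ref{thm:spectral-gap-W}-(1) transfer this bound to the required operator norm.

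For $\xi$ in a compact subset $K$ of $\R \setminus \{0\}$, I will instead use the spectral gap of $\oP_\xi$ on $W^{1,2}$ from Theorem \ref{thm:spectrum-Pxi-sobolev}-(4), which makes $\sum_n \oP_\xi^n$ converge in the $W^{1,2}$ operator norm and gives a natural candidate $\oU_\xi := (I - \oP_\xi)^{-1} - \oN_0 / (-i\gamma\xi)$. To land in $\Cc^0$ I need to upgrade the spectral gap to $\W$: any eigenvalue of modulus one of $\oP_\xi$ on $\W$ would, via the embedding $\W \hookrightarrow W^{1,2}$, also be one on $W^{1,2}$, contradicting Theorem \ref{thm:spectrum-Pxi-sobolev}-(4); combined with quasi-compactness of $\oP_\xi$ on $\W$ (derived from a Lasota--Yorke-type estimate using Theorem \ref{thm:spectral-gap-W}-(4) and the compact inclusion into a larger Sobolev space), this forces its spectral radius on $\W$ to be strictly less than one. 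Local $\tfrac12$-H\"older continuity of $\xi \mapsto \oU_\xi$ on $\R \setminus \{0\}$ then follows, near $0$, from the corresponding regularity of $\lambda_\xi$, $\oN_\xi$ and $\oQ_\xi$ in Corollary \ref{cor:P_t-decomp-W}-(4), and, away from $0$, from Theorem \ref{thm:spectral-gap-W}-(5) together with analyticity of the resolvent map in the operator. The main obstacle is exactly this last upgrade to $\W$ for $\xi$ bounded away from $0$: since the $\Cc^{\log^{p-1}}$-operator norm of $\oP_\xi^n$ grows polynomially in $n$ (Proposition \ref{prop:P_t-logp}), the Neumann series does not converge directly in $\W$, and one genuinely needs quasi-compactness together with the absence of peripheral spectrum inherited from $W^{1,2}$ to close the argument.
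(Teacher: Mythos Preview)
Your treatment of the small-$\xi$ regime is correct and matches the paper's argument: the spectral decomposition on $\W$ from Corollary \ref{cor:P_t-decomp-W}, the expansion of $1/(1-\lambda_\xi)$, and the $\tfrac12$-H\"older bound $\|\oN_\xi-\oN_0\|_\W=O(|\xi|^{1/2})$ together give $\|\oU_\xi\|_\W=O(|\xi|^{-1/2})$, which then transfers via the embeddings of Theorem \ref{thm:spectral-gap-W}-(1).

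The gap is exactly where you locate it: for $\xi$ bounded away from $0$ you assert quasi-compactness of $\oP_\xi$ on $\W$ (and hence spectral radius $<1$ via the embedding into $W^{1,2}$), but you do not prove it, and nothing in the paper provides it. Theorem \ref{thm:spectral-gap-W}-(4) only gives boundedness of $\oP_\xi$ on $\W$, not a Lasota--Yorke inequality, and the polynomial growth $\|\oP_\xi^n\|_{\log^{p-1}}\lesssim n^p$ from Proposition \ref{prop:P_t-logp} makes any direct Neumann series on $\W$ hopeless. Without invertibility of $I-\oP_\xi$ on $\W$, neither your construction of $\oU_\xi$ nor your resolvent argument for H\"older continuity away from $0$ goes through.

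The paper sidesteps the whole issue and never claims spectral radius $<1$ on $\W$ for general $\xi$. Instead it works directly in $W^{1,2}\cap\Cc^0$ via Lemma \ref{lemma:logp-pre-equidistribution} (a Moser--Trudinger consequence): from $\|\oP_\xi^n u\|_{W^{1,2}}\le c_1\rho_\xi^n$ (Theorem \ref{thm:spectrum-Pxi-sobolev}-(4)) and $\|\oP_\xi^n u\|_{\log^{p-1}}\le c_2 n^p$ (Proposition \ref{prop:P_t-logp}) one obtains $\|\oP_\xi^n u\|_\infty\le c_\xi\tau_\xi^n$ for some $0<\tau_\xi<1$, so $\sum_n\oP_\xi^n u$ converges in $W^{1,2}\cap\Cc^0$. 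For the H\"older continuity away from $0$, the paper telescopes $\oP_\xi^n-\oP_\eta^n=\sum_{k}\oP_\xi^{n-k-1}(\oP_\xi-\oP_\eta)\oP_\eta^k$, bounds it by $\theta^n|\xi-\eta|$ in $W^{1,2}$ and by $n^{2p+1}|\xi-\eta|^{1/2}$ in $\Cc^{\log^{p-1}}$, and again feeds both bounds into Lemma \ref{lemma:logp-pre-equidistribution} to get exponential decay of $\|\oP_\xi^n u-\oP_\eta^n u\|_\infty$ times $|\xi-\eta|^{1/2}$. This interpolation lemma is the missing ingredient in your approach; it replaces the spectral upgrade you were trying to manufacture.
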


When $\mu$ has a finite exponential moment an analogous decomposition holds in some H\"older space. In this case, the situation is considerably better and the family $\oU_\xi$ extends analytically through $\xi = 0$, see for instance \cite{boyer,li:fourier,li-sahlsten}. The weak control  from Proposition \ref{prop:sum-Pxi-n-decomposition} makes our analysis more involved.

In the proof, we will use the following lemma contained in \cite{DKW:LLT}. It is a consequence of Moser-Trudinger's estimate in $W^{1,2}$.

\begin{lemma} \label{lemma:logp-pre-equidistribution}
Let $p > 3 \slash 2$. Let $u_n$, $n \geq 1$, be a sequence of continuous functions. Assume that there are constants $c_1,c_2 > 0$, $0<\lambda<1$ and $q > 0$ such that $\|u_n\|_{W^{1,2}} \leq c_1 \lambda^n$ and $[u_n]_{\log^{p-1}} \leq c_2 n^q$. Then, there are constants $c > 0$ and $0< \tau < 1$ depending only on $c_1,c_2,\lambda,p$ and $q$ such that  $$\|u_n \|_\infty \leq c \tau^n \quad \text{ for every } n \geq 1.$$ 
\end{lemma}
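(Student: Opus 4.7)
The plan is to combine the Moser--Trudinger exponential integrability on the compact surface $\P^1$ with the $\log^{p-1}$-modulus of continuity via a point-of-maximum argument. Heuristically, if $|u_n|$ is large somewhere, then the hypothesis $[u_n]_{\log^{p-1}}\leq c_2 n^q$ forces it to remain at least half that value on a small disc around the maximum; but Moser--Trudinger demands that such super-level sets have measure exponentially small in $(\|u_n\|_\infty/\|u_n\|_{W^{1,2}})^2$. Balancing these two facts against $\|u_n\|_{W^{1,2}}\leq c_1\lambda^n$ then forces $\|u_n\|_\infty$ to decay exponentially.

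First I would record the consequence of Moser--Trudinger that we need: there exist constants $\alpha,C_{MT}>0$, depending only on $\P^1$, such that for every $u\in W^{1,2}(\P^1)$ and $M>0$,
$$|\{x\in\P^1:|u(x)|\geq M\}|\leq C_{MT}\exp\!\bigl(-\alpha M^2/\|u\|_{W^{1,2}}^2\bigr).$$
Next, set $M_n := \|u_n\|_\infty$ (assuming $M_n>0$) and pick $x_0\in\P^1$ where it is attained. Using $[u_n]_{\log^{p-1}}\leq c_2 n^q$, one has $|u_n|\geq M_n/2$ throughout the disc $\D(x_0,\rho_n)$ with
$$\rho_n := \exp\!\bigl(-(2c_2 n^q/M_n)^{1/(p-1)}\bigr),$$
provided $M_n\leq 2c_2 n^q$, which holds for large $n$ by the trivial a priori bound $M_n\leq c_1+c_2 n^q$ (obtained by comparing $u_n$ at its maximum with a point $y$ where $|u_n(y)|\leq \|u_n\|_{L^2}\leq c_1\lambda^n$ and invoking the $\log^{p-1}$ estimate). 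Since such discs in $\P^1$ have area $\gtrsim \rho_n^2$, Moser--Trudinger combined with $\|u_n\|_{W^{1,2}}\leq c_1\lambda^n$ gives
$$c_3\rho_n^2 \leq C_{MT}\exp\!\bigl(-\alpha M_n^2/(4c_1^2\lambda^{2n})\bigr).$$

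Taking logarithms and substituting the definition of $\rho_n$, this rearranges to
$$\frac{\alpha M_n^2}{4c_1^2\lambda^{2n}} \leq C' + 2\bigl(2c_2 n^q/M_n\bigr)^{1/(p-1)}.$$
A preliminary check shows $M_n\to 0$: otherwise the LHS would grow exponentially in $n$ while the RHS stays polynomial. Once $M_n\to 0$, the second term on the right dominates, and rearranging yields
$$M_n^{(2p-1)/(p-1)} \lesssim n^{q/(p-1)}\lambda^{2n},\qquad\text{hence}\qquad M_n \lesssim n^{q/(2p-1)}\lambda^{2n(p-1)/(2p-1)}.$$
Setting $\tau_0 := \lambda^{2(p-1)/(2p-1)}$, we have $\tau_0\in(0,1)$ for every $p>1$ (in particular for $p>3/2$), so for any $\tau\in(\tau_0,1)$ the polynomial prefactor is absorbed and $\|u_n\|_\infty\leq c\tau^n$ for $n$ beyond some $n_0$ depending only on $c_1,c_2,\lambda,p,q$; for $n<n_0$ the a priori bound $M_n\leq c_1+c_2 n^q$ suffices after enlarging $c$.

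The main obstacle is that the modulus-of-continuity side deteriorates only polynomially in $n$, while we need to convert the exponential decay of $\|u_n\|_{W^{1,2}}$ into an exponential decay of $\|u_n\|_\infty$. This is precisely where the exponential -- rather than merely polynomial -- nature of Moser--Trudinger is indispensable: any weaker substitute, such as the Sobolev embedding $W^{1,2}\hookrightarrow L^r$ for finite $r$, would yield only a power-type control on the measure of super-level sets and would not be strong enough to produce a rate $\tau<1$ independent of $n$.
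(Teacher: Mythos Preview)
Your argument is correct and follows essentially the approach indicated in the paper: the lemma is quoted from \cite{DKW:LLT} as a consequence of Moser--Trudinger, and your proof is precisely the natural implementation of that idea (super-level set bound from Moser--Trudinger versus the disc on which the $\log^{p-1}$ modulus forces $|u_n|\geq M_n/2$, then balancing). One incidental remark: your computation actually only uses $p>1$, since $\tau_0=\lambda^{2(p-1)/(2p-1)}\in(0,1)$ for all such $p$; the hypothesis $p>3/2$ in the paper's statement is there for consistency with the ambient setting rather than for this lemma itself.
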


\begin{proof}[Proof of Proposition \ref{prop:sum-Pxi-n-decomposition}]
Fix $\xi \neq 0$. Let $u$ be a function in the unit ball of $W^{1,2} \cap \Cc^{\log^{p-1}}$. From Theorem \ref{thm:spectrum-Pxi-sobolev}-(4), there is a number $0< \rho_\xi < 1$ and a constant $c_1 >0$ such that $\|\oP_\xi^n u \|_{W^{1,2}} \leq c_1 \rho_\xi^n$ and Proposition \ref{prop:P_t-logp} gives that $\|\oP_\xi^n u \|_{\log^{p-1}} \leq c_2 n^p$ for some constant $c_2>0$. It follows from Lemma \ref{lemma:logp-pre-equidistribution} that  $\|\oP_\xi^n u \|_{\infty} \leq c_\xi \tau_\xi^n$ for some $0< \tau_\xi < 1$ and $c_{\xi} >0$. These estimates imply that the series $\sum_{n=0}^\infty \oP_\xi^n u$ converges in $W^{1,2} \cap \Cc^0$ and that the operator defined by $\oU_\xi : =  \sum_{n=0}^\infty \oP_\xi^n  - {\oN_0\over -i\gamma\xi}$ is bounded from $W^{1,2} \cap \Cc^{\log^{p-1}}$ to $W^{1,2} \cap \Cc^0$.

\vskip3pt

Let us show that $\|\oU_\xi\|_{W^{1,2} \cap \Cc^{\log^{p-1}} \to W^{1,2} \cap \Cc^0}$ is   $O(|\xi|^{-1 \slash 2})$ as  $\xi \to 0$. Let $u$ be as above. Recall from Theorem \ref{thm:spectral-gap-W} that there are continuous embeddings $W^ {1,2} \cap \Cc^{\log^{p-1}} \subset\W \subset W^ {1,2}\cap \Cc^0$. In particular, $u$ has finite $\W$-norm. Using the decomposition $\oP_\xi=\lambda_\xi \oN_\xi+\oQ_\xi$ for $\xi$ near zero as in \eqref{eq:P_t-decomp}, one has
 	$$ \sum_{n=0}^\infty \oP_\xi^n u =\sum_{n=0}^\infty \big( \lambda_\xi^n \oN_\xi u  +\oQ_\xi^n u\big) ={\oN_\xi u \over 1-\lambda_\xi} +\sum_{n=0}^\infty \oQ_\xi^n u.$$
 	 
 	We have that both $\|\oQ_\xi^n u \|_\infty$ and $\|\oQ_\xi^n u \|_{W^{1,2}}$ are $\lesssim \|\oQ_\xi^n u \|_\W$ and the last quantity is exponentially small in $n$, uniformly in $u$ by Corollary \ref{cor:P_t-decomp-W}-(5). Therefore, the operator $\sum_{n=0}^\infty \oQ_\xi^n$ from  $W^{1,2} \cap \Cc^{\log^{p-1}}$ to $W^{1,2} \cap \Cc^0$  is well-defined and its norm is uniformly bounded as $\xi \to 0$. Therefore,  the desired bound will follow if we show that  
 	\begin{equation}\label{N-xi-N-0}
 	\Big\| \frac{\oN_\xi}{1-\lambda_\xi}  - \frac{\oN_0}{-i\gamma\xi} \Big \|_{W^{1,2} \cap \Cc^{\log^{p-1}} \to W^{1,2} \cap \Cc^0} = O(|\xi|^{-1 \slash 2}) \quad \text{as } \,\, \xi \to 0.
 	\end{equation} 

 From  Corollary \ref{cor:P_t-decomp-W}-(4), we can write $\oN_\xi =\oN_0 + \mathcal M_\xi$ with  $\|\mathcal M_\xi\|_\W = O(|\xi|^{1 \slash 2})$ and by (\ref{eq:lambda-expansion-2}), we have $\lambda_\xi =1+i\gamma \xi +O(|\xi|^2)$. This gives
 	$$ {\oN_\xi\over 1-\lambda_\xi} - {\oN_0 \over -i\gamma\xi}={\oN_0 + \mathcal M_\xi \over -i\gamma \xi+O(|\xi|^2) }+ {\oN_0\over i\gamma\xi}={i\gamma \xi \mathcal M_\xi + O(|\xi|^2) \oN_0 \over \gamma^2 \xi^2+O(|\xi|^3) } =  \frac{\mathcal M_\xi}{-i \gamma \xi + O(|\xi|^2)} + O(1) \oN_0 .$$
 	As $\gamma>0$ and $\|\mathcal M_\xi\|_{\W} = O(|\xi|^{1 \slash 2})$, we conclude that the $\W$-norm of the above operator is $O(|\xi|^{-1 \slash 2})$. From the inclusions $ W^ {1,2} \cap \Cc^{\log^{p-1}} \subset\W \subset W^ {1,2}\cap \Cc^0$, the same estimate holds true for the norm $\| \cdot \|_{W^{1,2} \cap \Cc^{\log^{p-1}} \to W^{1,2} \cap \Cc^0}$,   thus giving \eqref{N-xi-N-0}.
 	
 	\vskip3pt
 	
 	We now show that $\xi \mapsto \oU_\xi$ is locally $\frac12$-H\"older continuous in $\R \setminus \{0\}$.  By the definition of $\oU_\xi$, it is enough to show the same property for $\sum_{n=0}^\infty \oP_\xi^n$ instead. Let $\xi, \eta$ belong to a fixed compact subset $K$ of $\R \setminus \{0\}$. For every $n \geq 1$, we can write $$ \oP_\xi^n - \oP_\eta^n = \sum_{k=0}^{n-1} \oP_\xi^{n-k-1}(\oP_\xi - \oP_\eta)\oP_\eta^k.$$
 	
 	From Theorem \ref{thm:spectrum-Pxi-sobolev}-(3), we have that $\|\oP_\xi - \oP_\eta\|_{W^{1,2}} \lesssim |\xi - \eta|$ and from Theorem  \ref{thm:spectrum-Pxi-sobolev}-(4), one has $\|\oP_\zeta^\ell\|_{W^{1,2}} \leq c_1 \rho^\ell$ for some  $0< \rho < 1$ independent of $\zeta \in K$. This gives, for $u$ in the unit ball of $W^{1,2} \cap \Cc^{\log^{p-1}}$,
 	 $$\| \oP_\xi^n u - \oP_\eta^n u \|_{W^{1,2}} \leq \sum_{k=0}^{n-1} \rho^{n-k-1} |\xi-\eta|\rho^k = |\xi - \eta | \sum_{k=0}^{n-1} \rho^{n-1} =n \rho^{n-1} |\xi - \eta | \lesssim \theta^n \, |\xi - \eta|$$ 
 	 for any $\rho<\theta<1$. 
 	Proposition \ref{prop:P_t-logp-regularity} gives that $\|\oP_\xi - \oP_\eta\|_{\log^{p-1}} \lesssim |\xi - \eta|^{1 \slash 2}$ and, by Proposition \ref{prop:P_t-logp}, one has $\|\oP_\zeta^\ell\|_{\log^{p-1}} \leq c_3 \ell^{p}$ for some constant $c_3>0$ independent of of $\zeta \in K$. Therefore, 
 	$$\| \oP_\xi^n u - \oP_\eta^n u\|_{\log^{p-1}} \lesssim |\xi - \eta|^{1 \slash 2} \sum_{k=0}^{n-1}  (n-k-1)^{p} k^{p}\leq |\xi - \eta|^{1/2} n^{2p+1}.$$ 
  Coupling with the above estimate on the Sobolev norm and using Lemma \ref{lemma:logp-pre-equidistribution}, we conclude that 
  $$\| \oP_\xi^n u - \oP_\eta^n u \|_{\infty} \leq c \lambda^n |\xi - \eta|^{1 \slash 2}$$ 
  for some $c>0$ and $0<\lambda<1$ independent of $u$. By the last estimate and triangle inequality,  we conclude that $|\xi - \eta|^ {-1 \slash 2} \big\| \sum_{n=0}^\infty  \oP_\xi^n u - \sum_{n=0}^\infty  \oP_\eta^n u \big\|_{\infty}$ is bounded in the unit ball of $W^{1,2} \cap \Cc^{\log^{p-1}}$ uniformly in $\xi,\eta$. This implies that  $ \xi \mapsto \sum_{n=0}^\infty \oP_\xi^n$ is a locally $\frac12$-H\"older continuous family of bounded operators from  $W^{1,2} \cap \Cc^{\log^{p-1}}$ to $W^{1,2} \cap \Cc^0$. This finishes the proof of the proposition.
\end{proof}

We now estimate the $\Cc^{1/2}$-norm of $\xi \mapsto \oU_\xi$ near $\xi = 0$. Fix a compact set $K \subset \R$. For $\delta >0$ small enough, set 
$$H_K(\delta):=\sup_{\xi\neq\eta\in K \setminus (-\delta,\delta)}  { \norm{\oU_\xi-\oU_\eta}_{W^{1,2} \cap \Cc^{\log^{p-1}} \to W^{1,2} \cap \Cc^0}  \over |\xi-\eta|^{1 \slash 2}}.$$
By Proposition \ref{prop:sum-Pxi-n-decomposition},  $H_K(\delta)$ is finite for all $\delta\neq 0$.

\begin{lemma}\label{holder-delta-2}
	Under the same assumptions of Proposition \ref{prop:sum-Pxi-n-decomposition}, we have that $H_K(\delta)\leq C \delta^{-3/2}$ for some constant $C>0$ independent of $\delta$.
\end{lemma}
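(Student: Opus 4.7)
The plan is to reuse the decomposition from the proof of Proposition \ref{prop:sum-Pxi-n-decomposition}. For $\xi$ in a small neighborhood of $0$, I split
\[
\oU_\xi = G(\xi) + \oN_0\,F(\xi) + \sum_{n=0}^\infty \oQ_\xi^n, \qquad G(\xi):=\frac{\oN_\xi-\oN_0}{1-\lambda_\xi}, \qquad F(\xi):=\frac{1}{1-\lambda_\xi}-\frac{1}{-i\gamma\xi},
\]
and estimate the H\"older seminorm of each piece on $K\setminus(-\delta,\delta)$ separately. For pairs $\xi,\eta$ such that at least one lies outside a fixed small neighborhood of $0$, the local H\"older-$\tfrac12$ continuity of $\oU$ on $\R\setminus\{0\}$ together with the pointwise bound $\|\oU_\xi\|\lesssim|\xi|^{-1/2}$ from Proposition \ref{prop:sum-Pxi-n-decomposition} trivially give the claim, so the crux is the behavior near $0$.

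For the $G$ part, I will use the telescoping identity
\[
G(\xi)-G(\eta)=\frac{\oN_\xi-\oN_\eta}{1-\lambda_\xi}+(\oN_\eta-\oN_0)\cdot\frac{\lambda_\xi-\lambda_\eta}{(1-\lambda_\xi)(1-\lambda_\eta)},
\]
and combine the H\"older-$\tfrac12$ bounds on $\oN_\xi$ and on $\lambda_\xi$ from Corollary \ref{cor:P_t-decomp-W}-(4) with the lower bound $|1-\lambda_\xi|\gtrsim|\xi|\geq\delta$ coming from \eqref{eq:lambda-expansion-2}. The second summand, carrying two factors $|1-\lambda|\gtrsim\delta$ in the denominator, dominates and gives $\|G(\xi)-G(\eta)\|_\W\lesssim|\xi-\eta|^{1/2}/\delta^{3/2}$. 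The tail $\sum_n\oQ_\xi^n$ is H\"older-$\tfrac12$ in $\xi$ uniformly in $\delta$, by the exponential decay of $\|\oQ_\xi^n\|_\W$ and the H\"older-$\tfrac12$ continuity of $\xi\mapsto\oQ_\xi$ provided by Corollary \ref{cor:P_t-decomp-W}-(4)-(5), so it contributes only $O(1)$.

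The delicate piece is $\oN_0\,F(\xi)$, which reduces to estimating the scalar $|F(\xi)-F(\eta)|$. The naive H\"older-$\tfrac12$ computation yields only $|\xi-\eta|^{1/2}/\delta^2$, which is insufficient. The key improvement is to exploit that, as a scalar function, $\xi\mapsto\lambda_\xi$ is $\Cc^1$: while $\oP_\xi$ is only H\"older-$\tfrac12$ on $\W$ and $\Cc^{\log^{p-1}}$, on $W^{1,2}$ it is $\Cc^1$ by Theorem \ref{thm:spectrum-Pxi-sobolev}-(3) (since $p\geq 2$), and since $\lambda_\xi$ is a simple isolated eigenvalue with spectral gap, perturbation theory (as used in the proof of Corollary \ref{cor:P_t-decomp-W}) then yields $\lambda_\xi\in\Cc^1$. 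Differentiating gives $F'(\xi)=\lambda'(\xi)/(1-\lambda_\xi)^2-1/(i\gamma\xi^2)$, and the bounds $|\lambda'(\xi)|\lesssim 1$ and $|1-\lambda_\xi|\gtrsim|\xi|$ imply $|F'(\xi)|\lesssim 1/|\xi|^2$. Integrating along any interval of constant sign inside $K\setminus(-\delta,\delta)$ gives the Lipschitz bound $|F(\xi)-F(\eta)|\lesssim|\xi-\eta|/\delta^2$, while the opposite-sign case is absorbed by the uniform bound on $F$ together with $|\xi-\eta|\geq 2\delta$. Interpolating the Lipschitz bound against the uniform bound $|F|\lesssim 1$ via $|F(\xi)-F(\eta)|^2\leq \|F\|_\infty\cdot C|\xi-\eta|/\delta^2$ then yields $|F(\xi)-F(\eta)|\lesssim|\xi-\eta|^{1/2}/\delta$, so $\oN_0\,F$ contributes only $O(\delta^{-1})$.

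Summing the three contributions gives $H_K(\delta)\lesssim\delta^{-3/2}$, dominated by the $G$ part. The main obstacle is the Lipschitz-type bound on $F$: relying only on the H\"older-$\tfrac12$ control of $\oP_\xi$ available on $\W$ and $\Cc^{\log^{p-1}}$ produces the suboptimal exponent $\delta^{-2}$, and one genuinely needs the scalar $\Cc^1$ regularity of $\lambda_\xi$ — extracted from the $\Cc^1$-differentiability of the family $\oP_\xi$ on the Sobolev space $W^{1,2}$ — together with interpolation against the uniform bound on $F$, in order to upgrade the H\"older constant of the singular scalar factor from $\delta^{-2}$ to $\delta^{-1}$.
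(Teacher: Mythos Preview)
Your proof is correct and follows essentially the same route as the paper: both localize to $\xi,\eta$ near $0$, use the decomposition $\oU_\xi=\frac{\oN_0}{i\gamma\xi}+\frac{\oN_\xi}{1-\lambda_\xi}+\sum_n\oQ_\xi^n$, bound the tail $\sum_n\oQ_\xi^n$ uniformly in $\delta$ via Corollary~\ref{cor:P_t-decomp-W}-(4)-(5), and combine the H\"older-$\tfrac12$ regularity of $\oN_\xi$ with the differentiability of $\lambda_\xi$ (Corollary~\ref{cor:P_t-decomp-W}-(6)) together with $|1-\lambda_\xi|\gtrsim|\xi|$ to control the singular pieces. The only cosmetic difference is that the paper keeps $\frac{\oN_0}{i\gamma\xi}$ and $\frac{\oN_\xi}{1-\lambda_\xi}$ separate and bounds each by $O(\delta^{-3/2})$ directly, whereas you regroup them as $G(\xi)+\oN_0 F(\xi)$ and work a bit harder to obtain the (unneeded but correct) sharper bound $O(\delta^{-1})$ on the scalar piece $F$.
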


\begin{proof} 
	It is enough to consider $\delta$ small. Since we want an upper bound for $H_K(\delta)$ and the family $\xi \mapsto U_\xi$ is locally $\frac12$-H\"older continuous away from the origin, it is enough to assume that $\xi,\eta\in (-\ep_0,\ep_0)\setminus (-\delta,\delta)$ and $K \supset (-\ep_0,\ep_0)$.  By Corollary \ref{cor:P_t-decomp-W}, there is a decomposition  $\oP_\xi=\lambda_\xi \oN_\xi+\oQ_\xi$ for $|\xi| \leq \ep_0$ as operator in $\W$ and the families $\xi\mapsto\oN_\xi,\xi\mapsto \oQ_\xi$ acting on $\W$ are locally $\frac12$-H\"older continuous. From the embeddings $W^ {1,2} \cap \Cc^{\log^{p-1}} \subset\W \subset W^ {1,2}\cap \Cc^0$, the same is true when seeing them as  families of bounded operators from  $W^{1,2} \cap \Cc^{\log^{p-1}}$ to $W^{1,2} \cap \Cc^0$. We may assume that $\delta < \ep_0 \slash 2$.
	
	  From the proof of Proposition \ref{prop:sum-Pxi-n-decomposition}, we have
	$$  \oU_\xi   = {\oN_0 \over i\gamma\xi}+ {\oN_\xi  \over 1-\lambda_\xi} +\sum_{n=0}^\infty \oQ_\xi^n   \quad \text{for } \,\, 0< |\xi| <\ep_0  . $$
	
	Observe that, since the  $\Cc^{1/2}$-norm of the function $t\mapsto 1/t$ on  $K\setminus (-\delta,\delta)$ is $O(\delta^{-3/2})$,  the  $\Cc^{1/2}$-norm of $\xi\mapsto\oN_0 /( i\gamma\xi)$ on $K\setminus (-\delta,\delta)$ is  $O(\delta^{-3/2})$. Therefore, we only need to handle the last two terms in the above expression for $\oU_\xi$.

For the last term, we can write $$\oQ_\xi^n - \oQ_\eta^n = \sum_{k=0}^{n-1} \oQ_\xi^{n-k-1}(\oQ_\xi - \oQ_\eta)\oQ_\eta^k.$$ Recall, from Corollary \ref{cor:P_t-decomp-W}-(5),  that  $\|\oQ_\xi^n\|_{\mathscr W}$ is exponentially small, when $n \to \infty$, uniformly in $\xi$. Arguing as in the proof of Proposition \ref{prop:sum-Pxi-n-decomposition}, we obtain that	$\norm{\oQ_\xi^n - \oQ_\eta^n}_{\mathscr W}\leq c \theta^n|\xi-\eta|^{1 \slash 2} $ for some $c>0$ and $0<\theta<1$ independent of $\xi,\eta$.  Hence  $$\Big\| \sum_{n=0}^\infty \oQ_\xi^n-\sum_{n=0}^\infty \oQ_\eta^n \Big\|_{\mathscr W} \leq {c\over 1-\theta} |\xi-\eta|^{1 \slash 2}$$ and, from the inclusions $W^ {1,2} \cap \Cc^{\log^{p-1}} \subset\W \subset W^ {1,2}\cap \Cc^0$, the same estimate holds  for  the norm $\| \cdot \|_{W^{1,2} \cap \Cc^{\log^{p-1}} \to W^{1,2} \cap \Cc^0}$. Therefore, the $\Cc^{1/2}$-norm of $\sum_{n=0}^\infty \oQ_\xi^n$  over $K\setminus (-\delta,\delta)$ is bounded independently of $\delta$. In particular, it is $O(\delta^{-3/2})$.

For the remaining term, we write
	$$ { \oN_\xi \over 1-\lambda_\xi}    -{\oN_\eta\over 1-\lambda_\eta } =\Big(  { \oN_\xi \over 1-\lambda_\xi} - {\oN_\eta\over 1-\lambda_\xi }  \Big)  +   \Big( {\oN_\eta\over 1-\lambda_\xi }   -{\oN_\eta\over 1-\lambda_\eta }   \Big).$$
	 
	From the expansion $1-\lambda_\xi=-i\gamma\xi +O(\xi^2)$, we see that 
	$$   \Big\| { \oN_\xi \over 1-\lambda_\xi} - {\oN_\eta\over 1-\lambda_\xi } \Big\|_{\mathscr W} \lesssim { |\xi-\eta|^{1/2} \over |\xi|        } \leq    { |\xi-\eta|^{1/2} \over \delta  } \leq     { |\xi-\eta|^{1/2} \over \delta ^{3/2}       }    $$
	and
	$$    \Big\|    {\oN_\eta\over 1-\lambda_\xi }   -{\oN_\eta\over 1-\lambda_\eta }            \Big\|_{\mathscr W}    \lesssim \Big| {   1   \over  1-\lambda_\xi  }    - { 1 \over  1-\lambda_\eta }   \Big|  \lesssim      { |\xi-\eta|^{1/2} \over \delta^{3/2} },$$ where we have use again that the  $\Cc^{1/2}$-norm of $t\mapsto 1/t$ on $K\setminus (-\delta,\delta)$ is $O(\delta^{-3/2})$. As before, the same estimate holds for  the norm $\| \cdot \|_{W^{1,2} \cap \Cc^{\log^{p-1}} \to W^{1,2} \cap \Cc^0}$. This finishes the proof of the lemma.
\end{proof}

\subsection{Asymptotic behavior of the leading eigenvalue}
We end this section with  the following result on the asymptotic behavior of the powers of the leading eigenvalue $\lambda_\xi$ of $\oP_\xi$ for $\xi$ near $0$. This result will be used in Subsection \ref{subsec:R}.

\begin{lemma}\label{lemma-b-ep}
	Let $\mu$ be a non-elementary probability measure on $G =\SL_2(\C)$. Assume that $\int_G \log^p \|g\| \, \diff \mu(g) <\infty$ for some $p \geq 2$. 
	Let $\lambda_\xi$  be the leading eigenvalue of $\oP_\xi$ as in Corollary \ref{cor:P_t-decomp-W}. For $k\in \N$ and $\ep >0$ small enough, denote 
	$$B_\ep^k:=\int_0^\ep \Big(  {\lambda_\xi^{k}\over i\xi} +{\lambda_{-\xi}^{k}\over -i\xi}\Big) \diff \xi.$$ Then, there exists an $\ep_1>0$ independent of $k$ such that, for all $0<\ep<\ep_1$, one has $\lim_{k\to \infty}  B_\ep^k=\pi$.
\end{lemma}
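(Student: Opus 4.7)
The plan is first to use the fact that complex conjugation intertwines $\oP_\xi$ with $\oP_{-\xi}$, which by uniqueness of the dominant eigenvalue in Corollary~\ref{cor:P_t-decomp-W} forces $\lambda_{-\xi}=\overline{\lambda_\xi}$. This turns $B_\ep^k$ into the purely real integral
$$B_\ep^k = 2\int_0^\ep \frac{\Im \lambda_\xi^k}{\xi}\,\diff\xi,$$
whose limit as $k\to\infty$ is what we must evaluate. From the expansion \eqref{eq:lambda-expansion-2}, after choosing $\ep_1>0$ sufficiently small, I will extract two facts: (i) $|\lambda_\xi|\leq 1-a^2\xi^2/4$ for $|\xi|\leq \ep_1$, so $|\lambda_\xi|^k\leq e^{-ka^2\xi^2/4}$; and (ii) the factorisation $\lambda_\xi = e^{i\gamma\xi - a^2\xi^2/2}(1+\phi(\xi))$ with $\phi(\xi)=o(\xi^2)$, obtained by comparing \eqref{eq:lambda-expansion-2} term by term with the Taylor expansion of $e^{i\gamma\xi-a^2\xi^2/2}$.

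\textbf{Tail estimate.} Fix any $0<\ep<\ep_1$ and introduce an auxiliary parameter $M>0$ that will eventually be sent to infinity. I would split $B_\ep^k = I_1(k,M) + I_2(k,M)$ with $I_1(k,M) := 2\int_0^{M/\sqrt k} \Im\lambda_\xi^k/\xi \,\diff\xi$. Using (i) and the change of variables $u=\sqrt k\,\xi$,
$$|I_2(k,M)| \leq 2\int_{M/\sqrt k}^\ep \frac{e^{-ka^2\xi^2/4}}{\xi}\,\diff\xi = 2\int_M^{\ep\sqrt k}\frac{e^{-a^2u^2/4}}{u}\,\diff u \leq h(M),$$
where $h(M):=2\int_M^\infty e^{-a^2u^2/4}/u\,\diff u$ is independent of $k$ and tends to $0$ as $M\to\infty$.

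\textbf{Main term.} For $I_1(k,M)$ I would introduce the monotone envelope $f(r):=\sup_{|\eta|\leq r}|\phi(\eta)|/\eta^2$, which tends to $0$ as $r\to 0$. For $|\xi|\leq M/\sqrt k$ one has the uniform bound $k|\phi(\xi)|\leq M^2 f(M/\sqrt k)$, which is $\leq 1/2$ once $k$ is large; the elementary estimate $|(1+z)^k-1|=O(k|z|)$ for bounded $k|z|$, together with $|e^{ik\gamma\xi - ka^2\xi^2/2}|=e^{-ka^2\xi^2/2}$, then gives
$$|\lambda_\xi^k - e^{ik\gamma\xi - ka^2\xi^2/2}| \leq C u^2 f(M/\sqrt k)\,e^{-a^2u^2/2}, \qquad u:=\sqrt k\,\xi\in[0,M],$$
for some absolute constant $C$. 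Dividing by $u$ and applying the substitution $u=\sqrt k\,\xi$,
$$I_1(k,M) = 2\int_0^M \frac{e^{-a^2u^2/2}\sin(\gamma\sqrt k\,u)}{u}\,\diff u + O\bigl(f(M/\sqrt k)\bigr).$$
The remaining integral is handled by writing $e^{-a^2u^2/2}=1+(e^{-a^2u^2/2}-1)$: the substitution $v=\gamma\sqrt k\,u$ turns the first piece into $2\int_0^{\gamma\sqrt k M}\sin v/v\,\diff v$, which converges to $\pi$ by the classical Dirichlet integral; the second piece pairs the smooth kernel $(e^{-a^2u^2/2}-1)/u$ on $[0,M]$ with $\sin(\gamma\sqrt k\,u)$ and therefore vanishes as $k\to\infty$ by the Riemann--Lebesgue lemma. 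Consequently $I_1(k,M) \to \pi$ for every fixed $M$.

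\textbf{Passage to the limit and main obstacle.} Combining the two pieces gives $\limsup_{k\to\infty}|B_\ep^k-\pi|\leq h(M)$ for every $M>0$, and letting $M\to\infty$ completes the proof. The main subtlety is that $\psi$ (and hence $\phi$) is only $o(\xi^2)$ with no quantitative rate, which prevents invoking any single quantitative bound on $(1+\phi(\xi))^k-1$ uniformly in $k$; the monotone envelope $f$ paired with the natural scaling window $|\xi|\lesssim 1/\sqrt k$ (the scale set by the Gaussian factor in $\lambda_\xi^k$) is precisely what is needed to absorb this error, while the Gaussian bound in (i) suppresses the contribution beyond this window.
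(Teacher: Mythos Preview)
Your proof is correct and follows a route that is structurally similar to the paper's but more elementary in its execution. Both arguments split the integral at a scale comparable to $1/\sqrt k$, bound the outer piece via the Gaussian estimate $|\lambda_\xi|^k\leq e^{-ka^2\xi^2/4}$, and handle the inner piece by absorbing the $o(\xi^2)$ remainder in $\lambda_\xi$ on that shrinking window. The differences are as follows. First, you invoke the conjugation symmetry $\lambda_{-\xi}=\overline{\lambda_\xi}$ to reduce $B_\ep^k$ to a single real integral of $\Im\lambda_\xi^k/\xi$, whereas the paper keeps both terms and expands $|\lambda_\xi|^k$ and $\Arg\lambda_\xi$ separately. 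Second, you split at the explicit scale $M/\sqrt k$ with a free parameter $M$ and pass to the limit in $M$ at the very end; the paper instead constructs, via an auxiliary lemma (Lemma~\ref{xi-k-lemma}), a specific sequence $\xi_k\to 0$ with $k\xi_k^2\to\infty$ on which the $o(\xi^2)$ error is uniformly controlled. Third, after the substitution $u=\sqrt k\,\xi$ both arrive at an integral of $e^{-a^2u^2/2}\sin(\gamma\sqrt k\,u)/u$, but you evaluate its limit by splitting off the Dirichlet integral $\int_0^\infty \sin v/v\,\diff v=\pi/2$ and killing the remainder with Riemann--Lebesgue, while the paper rewrites it as a Cauchy principal value and applies the Hilbert-transform identity of Lemma~\ref{f/x} together with the explicit Fourier transform of the Gaussian. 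Your approach has the advantage of being self-contained (it avoids Lemmas~\ref{xi-k-lemma} and~\ref{f/x}); the paper's approach has the advantage of reusing the Fourier machinery it has already set up in Section~\ref{sec:prelim}.
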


We state a technical lemma first.

\begin{lemma}\label{xi-k-lemma}
	Let $\phi$ be a real valued function on $[0,1]$ such that $\lim_{\xi\to 0}\phi(\xi)=0$. There exists a sequence of positive numbers $\xi_k$ decreasing to $0$, such that $\lim_{k\to\infty} k\xi_k^2=\infty$, and for every $k$, one has $k\xi^2|\phi(\xi)|\leq 1$ for all $0 \leq \xi\leq \xi_k$.
\end{lemma}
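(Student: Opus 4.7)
Plan: I will construct $\xi_k$ as half of the threshold where the non-decreasing envelope of $\xi \mapsto \xi^2 |\phi(\xi)|$ crosses $1/k$.

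First, since $\phi(\xi) \to 0$ as $\xi \to 0$, fix $\delta_0 \in (0,1]$ with $|\phi(\xi)| \leq 1$ on $(0, \delta_0]$, and set $\Psi(\xi) := \sup_{0 < \eta \leq \xi} \eta^2 |\phi(\eta)|$ for $\xi \in (0, \delta_0]$. Then $\Psi$ is non-decreasing. For each $\epsilon>0$ the hypothesis yields $\delta_\epsilon \in (0, \delta_0]$ with $|\phi(\eta)| \leq \epsilon$ on $(0, \delta_\epsilon]$, giving $\Psi(\xi) \leq \epsilon \xi^2$ on that interval, so $\Psi(\xi) \to 0$ as $\xi \to 0^+$. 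The degenerate case in which $|\phi|\equiv 0$ on some $(0, \delta]$ is trivial (choose $\xi_k := \min(\delta/2, k^{-1/4})$), so from now on I assume $\Psi(\xi) > 0$ for every $\xi \in (0, \delta_0]$.

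Now for each $k \geq 1$ define $\sigma_k := \sup\{\xi \in (0,\delta_0] : \Psi(\xi) \leq 1/k\}$ and $\xi_k := \sigma_k/2$. By monotonicity of $\Psi$, the inequality $\Psi(\xi) \leq 1/k$ holds for every $\xi < \sigma_k$, so $k\xi^2 |\phi(\xi)| \leq k \Psi(\xi_k) \leq 1$ for every $\xi \in [0, \xi_k]$ (with the case $\xi=0$ being trivial). The sequence $(\xi_k)$ is non-increasing because the defining set shrinks with $k$, and it tends to $0$: if $\sigma_k \geq 2L > 0$ held for every $k$, we would obtain $\Psi(L) \leq 1/k$ for all $k$, forcing $\Psi(L)=0$, a contradiction.

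The delicate point is to show $k \xi_k^2 \to \infty$. By the defining property of $\sigma_k$, $\Psi(\xi) > 1/k$ whenever $\xi > \sigma_k$; once $k$ is large enough that $2\sigma_k \leq \delta_0$, this gives $\sup_{0 < \eta \leq 2\sigma_k} \eta^2|\phi(\eta)| > 1/k$, hence there exists $\eta_k^* \in (0, 2\sigma_k]$ with $(\eta_k^*)^2 |\phi(\eta_k^*)| > 1/k$. Using $(\eta_k^*)^2 \leq 4 \sigma_k^2 = 16 \xi_k^2$ one obtains $16 k \xi_k^2 \cdot |\phi(\eta_k^*)| > 1$. Since $\eta_k^* \leq 2 \sigma_k \to 0$, the hypothesis forces $|\phi(\eta_k^*)| \to 0$, and the preceding inequality yields $k\xi_k^2 \to \infty$. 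I expect this last argument to be the main obstacle: the crossing of $\Psi$ past $1/k$ must be witnessed by a point close to $\sigma_k$ where $|\phi|$ is already small, and that smallness has to be converted into the required super-linear growth; the remaining steps are just bookkeeping about non-decreasing envelopes.
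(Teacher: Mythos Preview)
Your proof is correct. Both your argument and the paper's rest on the same idea---pass to a monotone envelope and invert the level $1/k$---but the implementations differ. The paper takes a continuous increasing majorant $\Phi$ of $|\phi|$ with $\Phi(0)=0$ and uses the intermediate value theorem to solve $\xi_k^2\Phi(\xi_k)=1/k$ exactly; then $k\xi_k^2=1/\Phi(\xi_k)\to\infty$ is immediate from $\Phi(\xi_k)\to 0$. You instead form the non-decreasing envelope $\Psi(\xi)=\sup_{0<\eta\le\xi}\eta^2|\phi(\eta)|$ and take $\xi_k$ at half the crossing threshold. Since your $\Psi$ need not be continuous, you cannot read off $k\xi_k^2\to\infty$ directly and must extract a witness $\eta_k^*\le 2\sigma_k$ with $(\eta_k^*)^2|\phi(\eta_k^*)|>1/k$; combining this with $|\phi(\eta_k^*)|\to 0$ then gives the divergence. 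The paper's route is slightly shorter because continuity of the envelope turns an inequality into an equation, while your approach avoids constructing a continuous majorant at the cost of the extra ``factor of $2$'' argument. A minor point: your $(\xi_k)$ is only non-increasing, not strictly decreasing, but this is harmless for the intended application (and can be forced by replacing $\xi_k$ with $\min(\xi_k,\xi_{k-1}/2)$ if needed).
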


\begin{proof}
	 Take a non-negative continuous increasing function $\Phi$ on $[0,1]$ such that $\Phi(0)=0$ and $|\phi|\leq \Phi$ outside the origin. It is enough to find $\xi_k$ satisfying $\lim_{k\to\infty} k\xi_k^2=\infty$ and $k\xi_k^2\Phi(\xi_k)\leq 1$.
	
	Since $\Phi$ is continuous,  there exists a sequence of positive numbers $\{\xi_k\}$ decreasing to $0$,  such that $\xi_k^2 \Phi(\xi_k)=1/k$ for $k$ large enough. Hence $k\xi_k^2\Phi(\xi_k)=1$ for large $k$. From the fact that $\Phi(0)=0$, we get $\lim_{k\to\infty} k\xi_k^2 =\infty$. 
\end{proof}

\begin{proof} [Proof of Lemma \ref{lemma-b-ep}]
	Recall from (\ref{eq:lambda-expansion-2}) that $\lambda_\xi =1+i\gamma \xi -(a^2+\gamma^2)\xi^2/2+o(\xi^2)$ as $\xi\to 0$. This gives that $|\lambda_\xi|^ 2 =1-a^2\xi^2+o(\xi^2)$ and
	$$\mathrm{Arg}\lambda_\xi=\arctan {\gamma\xi +o(\xi^2)\over 1+O(\xi^2)}=\arctan\big(\gamma\xi+o(\xi^2)\big)=\gamma\xi +o(\xi^2).$$ 
	It follows that
	$$|\lambda_\xi|^k=\big(1-a^2\xi^2+o(\xi^2)\big)^{k/2}= e^{\frac{k}{2} \log (1-a^2\xi^2+o(\xi^2))} = e^{-ka^2\xi^2/2+o(k\xi^2)}=e^{-ka^2\xi^2/2+ k\xi^2 \phi(\xi)}$$ for some $\phi$ satisfying $\lim_{\xi\to 0} \phi(\xi)=0$, and
	\begin{equation} \label{eq:exp-arg-lambda}
	e^{ik \mathrm{Arg}\lambda_{\xi}}=e^{ik\gamma \xi}+o(k\xi^2).
	\end{equation}

	By Lemma \ref{xi-k-lemma} applied to the function $\widetilde\phi(\xi):=\max\big(|\phi(\xi)|,|\phi(-\xi)|\big)$,	there exists a sequence of positive numbers $\xi_k$  bounded by $\ep$ and decreasing to $0$, such that $\lim_{k\to \infty}k\xi_k^2=\infty$, and for every $k$,  $k\xi^2 |\phi(\xi)|\leq 1$ for all $|\xi|\leq \xi_k$. We split the integral $B_\ep^k$ into two parts, one with $0\leq\xi\leq \xi_k$ and the other with $\xi_k< \xi\leq \ep$ and denote them by $B_1^k$ and $B_2^k$, respectively.
   \vskip 3pt
	
	For the first part, since $k\xi^2|\phi(\xi)|\leq 1$ and $e^s \lesssim 1 + |s|$ when $|s| \leq 1$, we have 
	$$|\lambda_\xi|^k=e^{-ka^2\xi^2/2}e^{ k\xi^2 \phi(\xi)}=e^{-ka^2\xi^2/2}\big(1+O(k\xi^2\phi(\xi))\big)=e^{-ka^2\xi^2/2}+O\big( k\xi^2\phi(\xi)e^{-ka^2\xi^2/2} \big).$$
	So, $\lambda_\xi^k-\lambda_{-\xi}^k=|\lambda_\xi|^k  e^{ik \mathrm{Arg}\lambda_\xi} -|\lambda_{-\xi}|^k e^{ik \mathrm{Arg}\lambda_{-\xi}}$, which from \eqref{eq:exp-arg-lambda} equals
	\begin{align*}
	2i\sin(k\gamma\xi)e^{-ka^2\xi^2/2}+O\big(k\xi^2 \widetilde\phi(\xi)e^{-ka^2\xi^2/2}\big)+o\big(k\xi^2e^{-ka^2\xi^2/2}\big).
	\end{align*}
	Therefore, $$ B_1^k=\int_{0}^{\xi_k} {2\sin(k\gamma\xi)\over \xi} e^{-ka^2\xi^2/2}\diff \xi+\int_{0}^{\xi_k} O\big(k\xi\widetilde\phi(\xi) e^{-ka^2\xi^2/2}\big) \diff \xi+\int_{0}^{\xi_k} o\big(k\xi e^{-ka^2\xi^2/2}\big) \diff \xi .$$
	By  changing variables with $\eta=\sqrt{k}a \xi$, it is easy to see that the second and third terms are $o(1)$ as $k\to\infty$ because  $\eta e^{-\eta^2/2}$ is integrable and $\widetilde\phi$ is small on $[0,\xi_k]$.
		The first term is equal to 
	\begin{align*}
	\int_0^{\sqrt{k}a\xi_k} {2\sin(\sqrt{k}\gamma \eta/a)\over \eta} e^{-\eta^2/2}   \diff \eta&=\Big(\int_0^\infty  -\int_ {\sqrt{k}a\xi_k} ^\infty\Big)  {2\sin(\sqrt{k}\gamma \eta/a)\over \eta} e^{-\eta^2/2}   \diff \eta\\
	&=\int_0^\infty    {2\sin(\sqrt{k}\gamma \eta/a)\over \eta} e^{-\eta^2/2}   \diff \eta +  o(1)
	\end{align*}
	as $k\to \infty$, where the last equality holds because  $\lim_{k\to\infty}\sqrt k\xi_k=\infty$.
	Using the formula $2i\sin(t)=e^{it}-e^{-it}$ and the fact that ${e^{i(\sqrt{k}\gamma \eta/a)}\over i\eta}e^{-\eta^2/2}$ is integrable over $[\ep,\infty)$ for every $\ep>0$, we can write the above integral as  
	$$\lim_{\ep\to 0^+}\int_\ep^\infty    {2\sin(\sqrt{k}\gamma \eta/a) \over \eta} e^{-\eta^2/2}  \diff \eta=\lim_{\ep\to 0^+} \Big(\int_{-\infty}^{-\ep}{e^{i(\sqrt{k}\gamma \eta/a)}\over i\eta} e^{-\eta^2/2}d\eta+\int_\ep^\infty {e^{i(\sqrt{k}\gamma \eta/a)}\over i\eta} e^{-\eta^2/2}  \diff \eta\Big), $$
	which is the Cauchy principal value of $\int_{-\infty}^\infty {e^{i(\sqrt{k}\gamma \eta/a)}\over i\eta}e^{-\eta^2/2}  \diff \eta$. 
	Recall that the Fourier transform of $e^{-u^2/2}$  is $\sqrt{2\pi}e^{-\xi^2/2}$. Applying Lemma \ref{f/x} with $t=-\sqrt k \gamma/a$ and $\xi=\eta$, we get 
	$$\mathrm{p.v.}\int_{-\infty}^\infty {e^{i(\sqrt{k}\gamma \eta/a)}\over i\eta}e^{-\eta^2/2}  \diff \eta=\pi\int_{-\infty}^{{\sqrt{k}\gamma \over a}} {e^{-u^2/2} \over\sqrt{2\pi}}\diff u -\pi\int^{\infty}_{{\sqrt{k}\gamma \over a}} {e^{-u^2/2}\over\sqrt{2\pi}} \diff u=\pi+o(1)$$
		as $k\to \infty$.
		Therefore,  we conclude that  $\lim_{k\to\infty}B_1^k=\pi$.
  \vskip 3pt
	
	In order to finish the proof, it remains to show that $B_2^k$ tends to zero as $k \to \infty$. From the choice of $\xi_k$, one has $\xi_k\gtrsim 1/\sqrt k$. On the other hand, from the expansion of $|\lambda_\xi|^k$ we get that $|\lambda_\xi|^k\lesssim e^{-ka^2\xi^2/4}$ for $|\xi|\leq\ep_1$ and $\ep_1 >0$ small enough. Therefore, we have
	$$|B_2^k|\lesssim \int_{\xi_k}^\ep {1\over \xi_k} e^{-ka^2\xi^2/4} \diff \xi\lesssim \int_{\xi_k}^\ep \sqrt k  e^{-ka^2\xi^2/4}\diff \xi \lesssim \int_{\sqrt k a\xi_k}^{\infty} e^{-\eta^2/4}\diff \eta = o(1)$$
	as $k\to\infty$, because $\sqrt k \xi_k \to \infty$. This concludes the proof of the proposition.
\end{proof}

\section{Renewal operators}\label{sec:renewal}

In this section, we follow \cite{li:fourier} by introducing several ``renewal operators'' and studying their asymptotic behaviors.  These results are the technical core of the present work and will be used to obtain the key estimates in the proof of our main theorem and its more general version Theorem \ref{thm:fourier-general}, see the beginning of Subsection \ref{subsec:fourier-A}. The probabilistic interest of these operators is that they can be used to study the stopping time function $n_t(x):= \inf \{n \in \N : \sigma(S_n,x) > t\}$ and associated random processes, such as $\sigma(S_{n_t(x)},x) - t$ and so on. Renewal operators associated to products of random matrices and more general Markov chains have been studied by various authors. The reader may refer to \cite{kesten:renewal,boyer,guivarch-lepage,li:fourier,li:fourier-2}.

The available results in the setting of random walks on Lie groups only work under exponential moment conditions. In this case, one works with H\"older continuous observables and use the available spectral analysis of the operators $\oP_\xi$.  In order to be able to prove renewal theorems in our setting of low moments, we are required to work with the spaces $W^{1,2}$, $\Cc^{\log^{p-1}}$ and $\W$ that appeared above and to reprove most of the estimates for the corresponding norms, although in a weaker form.  As we dispose of a low regularity for the family of operators $\xi \mapsto \oP_\xi$ (in contrast with the analyticity observed in the case of finite exponential moment), our analysis requires some effort and new ideas. 
 
 \medskip
 
As in the end of Section \ref{sec:markov}, we equip the spaces $W^{1,2} \cap \Cc^{\log^1}$ and $W^{1,2} \cap \Cc^0$ with the norms $\max \big\{\|\cdot\|_{W^{1,2}},\|\cdot\|_{\log^1}\big\}$ and $\max \big\{\|\cdot\|_{W^{1,2}},\|\cdot\|_{\infty}\big\}$ respectively.

 \medskip

\noindent\textbf{Standing assumption:} Throughout this section, $\mu$ is a non-elementary measure on $G = \SL_2(\C)$ having a finite second moment, i.e., \  $\int_G \log^2 \|g\| \,\diff \mu(g) < \infty$.

\subsection{First renewal operator} \label{subsec:R}
For a bounded function $f$ on $\P^1\times \R$, we define the renewal operator $\oR$ by
$$\oR f(x,t):=\sum_{n\geq 0}\int_{G} f\big(gx,\sigma_g(x)-t\big) \diff \mu^{*n}(g),$$
where, by convention, we set the measure  $\mu^{*0}$ to be the point mass at the identity matrix. Note that the series defining $\oR f$ may be divergent. However, we will only work with functions $f$ for which the above series converge.

When $f(x,u) = \mathbf 1_{u \in [a,b]}$ is the indicator function of an interval $[a,b]$ in $\R$, the value $\oR f(x,t)$ counts, for all $n \geq 0$, the random products $g_n \cdots g_1$ for which the cocycle value $\sigma(g_n \cdots g_1,x)$ falls in the interval $[a+t,b+t]$. Various estimates of such quantities are obtained below.

\vskip3pt

For a function $f$ on $\P^1\times \R$ such that  $f(x,\cdot)\in L^1( \R)$ for all $x\in \P^1$, we write $$\oF_uf(x,\xi):=\int_{-\infty}^{\infty}f(x,u)e^{- iu\xi}  \,\diff u,$$
 which is the Fourier transform with respect to the second coordinate.

The first result of this section concerns the asymptotic behavior of $\oR f(x,t)$ for large values of $t$. Its main feature is the appearance of an absolutely continuous measure in the $u$ variable in the limit distribution. This is the most fundamental result in this section and the asymptotic estimates for the various operators defined later will follow from this one. 

 The following proposition is analogous to results from \cite{li:fourier,li:fourier-2}, but our proof is largely different from the ones there. The main reasons are that we do not dispose of analytic families of operators and that, when working with the spaces $W^{1,2}$ and $\W$ and under low moments conditions, it is not possible to perturb the Markov operator $\oP_0$ along real directions. For these reasons, we need to use new arguments and deal with some singular families of operators.

\begin{proposition}\label{renewal-1}
	Let $f$ be a  bounded continuous function on $\P^1\times \R$ such that $\oF_uf(x,\cdot)$ is well-defined and of class $\Cc^1$ for all $x$. Assume that $\oF_uf(\cdot,\xi)$ and $\partial_\xi\oF_uf(\cdot,\xi)$ are in  $W^{1,2}\cap \Cc^{\log^1}$ with $W^{1,2}\cap\Cc^{\log^1}$-norms  bounded by a positive constant $M$ for all $\xi\in\R$. Assume moreover   $\supp(\oF_uf)$  is contained $\P^1\times K$ for some compact set $K$. Then,  for all $x\in\P^1$ and $t>0$, we have
	$$\Big|\oR f(x,t)-{1\over \gamma}\int_{\P^1}\int_{-t}^\infty f(y,u)\, \diff u \diff \nu(y)\Big|\leq {C_KM\over (1+t)^{1/4}},$$
	where $C_K>0$ is a constant independent of $f,M,x,t$.
\end{proposition}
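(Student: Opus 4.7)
The plan is to use Fourier analysis in the second variable, combined with the spectral decomposition $\sum_n \oP_\xi^n = \oN_0/(-i\gamma\xi) + \oU_\xi$ from Proposition \ref{prop:sum-Pxi-n-decomposition}. Setting $\varphi_\xi := \oF_u f(\cdot, \xi)$ and exploiting that $\oF_u f$ is $\Cc^1$ with compact support in $\xi$, Fourier inversion together with Fubini's theorem yields, for each fixed $n\geq 0$,
\[\int_G f(gx, \sigma_g(x) - t) \, d\mu^{*n}(g) = \frac{1}{2\pi}\int e^{-it\xi} \oP_\xi^n \varphi_\xi(x) \, d\xi.\]
Summing in $n$ is delicate because $\sum_n \oP_\xi^n$ converges pointwise for $\xi \neq 0$ but not uniformly near $0$. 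The plan is to use Abel regularization: for $r \in (0,1)$, sum $\sum_n r^n \oP_\xi^n = (I - r\oP_\xi)^{-1}$ and let $r \to 1^-$. A Sokhotski-Plemelj analysis, using that the pole of $(1 - r\lambda_\xi)^{-1}$ approaches $\xi = 0$ from the lower half plane (by the expansion \eqref{eq:lambda-expansion-2}), produces the distributional limit $\mathrm{p.v.}(-i\gamma\xi)^{-1} + (\pi/\gamma)\delta(\xi)$ at the singular pole. Writing $\phi(u) := \int f(y, u) d\nu(y)$ so that $\oN_0\varphi_\xi = \widehat\phi(\xi)$, this leads to
\[\oR f(x, t) = \frac{1}{2\pi}\,\mathrm{p.v.}\int \frac{\widehat\phi(\xi)\, e^{-it\xi}}{-i\gamma\xi} \, d\xi \; + \; \frac{\widehat\phi(0)}{2\gamma} \; + \; \frac{1}{2\pi}\int e^{-it\xi}\, \oU_\xi\varphi_\xi(x) \, d\xi.\]

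Applying Lemma \ref{f/x} to the principal value integral yields $(2\gamma)^{-1}\bigl(\int_{-t}^\infty \phi - \int_{-\infty}^{-t} \phi\bigr)$. Adding the Sokhotski-Plemelj constant $\widehat\phi(0)/(2\gamma) = (2\gamma)^{-1}\int_\R \phi$ recovers exactly $\gamma^{-1}\int_{-t}^\infty \phi(u)\, du$, i.e.\ the stated main term.

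It then remains to bound the error $(2\pi)^{-1}\int e^{-it\xi} \oU_\xi \varphi_\xi(x)\, d\xi$ by $C_K M(1+t)^{-1/4}$, which I would do by applying Lemma \ref{lemma:fourier-decay-holder} with $q = 3/2$ (giving decay exponent $1/(4q-2) = 1/4$) to the function $\xi \mapsto \oU_\xi \varphi_\xi(x)$. The hypotheses of that lemma are checked as follows: (i) compact support in $K$ is inherited from $\oF_u f$; (ii) the pointwise bound $|\oU_\xi \varphi_\xi(x)| \lesssim M|\xi|^{-1/2}$ follows from the norm estimate in Proposition \ref{prop:sum-Pxi-n-decomposition} together with the uniform bound $\|\varphi_\xi\|_{W^{1,2}\cap\Cc^{\log^1}} \leq M$; (iii) the local $\Cc^{1/2}$-estimate $\|\oU_\xi \varphi_\xi(x)\|_{\Cc^{1/2}(K\setminus(-\delta,\delta))} \lesssim M\delta^{-3/2}$ comes from the expansion $\oU_\xi\varphi_\xi - \oU_\eta\varphi_\eta = (\oU_\xi - \oU_\eta)\varphi_\xi + \oU_\eta(\varphi_\xi - \varphi_\eta)$, estimating the first piece via Lemma \ref{holder-delta-2} ($H_K(\delta) \lesssim \delta^{-3/2}$) and the second using the uniform Lipschitz regularity of $\xi \mapsto \varphi_\xi$ supplied by the hypothesis $\|\partial_\xi \varphi_\xi\|_{W^{1,2}\cap\Cc^{\log^1}} \leq M$.

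The main obstacle is the Abel-regularization/Sokhotski-Plemelj step: a naive pointwise use of the decomposition $\sum_n \oP_\xi^n = \oN_0/(-i\gamma\xi) + \oU_\xi$ inside the Fourier integral, interpreting the singular term purely as a principal value, would yield only half of the stated main term. The missing half comes from a $\delta$-distribution contribution at $\xi = 0$ which is invisible to pointwise arithmetic but essential distributionally, and which encodes the fact that $\oP_\xi$ has a leading eigenvalue $\lambda_\xi = 1 + i\gamma\xi + O(\xi^2)$ whose perturbation has a definite sign. Once this distributional identity is carefully justified via the Abel limit, the H\"older analysis developed in Section \ref{sec:markov} (in particular Lemma \ref{holder-delta-2}) combined with Lemma \ref{lemma:fourier-decay-holder} quickly delivers the sharp $(1+t)^{-1/4}$ decay of the error.
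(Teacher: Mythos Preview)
Your plan is correct and largely parallels the paper's proof; in particular, the error-term analysis via Lemma \ref{lemma:fourier-decay-holder} with $q=3/2$, fed by the H\"older bound from Lemma \ref{holder-delta-2} and the $O(|\xi|^{-1/2})$ estimate from Proposition \ref{prop:sum-Pxi-n-decomposition}, is exactly the paper's Claim~2. The only substantive difference is how the singular contribution at $\xi=0$ is extracted. You propose Abel regularization $\sum_n r^n\oP_\xi^n=(I-r\oP_\xi)^{-1}$ followed by a Sokhotski--Plemelj limit on $(1-r\lambda_\xi)^{-1}$ as $r\to 1^-$. The paper instead splits the $\xi$-integral at $|\xi|=\epsilon$, keeps the \emph{partial sums} $\sum_{n=0}^k$ on $(0,\epsilon)$, and passes to the double limit $k\to\infty$ then $\epsilon\to 0$ via the explicit computation of Lemma \ref{lemma-b-ep}, which shows $\lim_{k\to\infty}\int_0^\epsilon\big(\lambda_\xi^k/(i\xi)+\lambda_{-\xi}^k/(-i\xi)\big)\,d\xi=\pi$. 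That lemma is, in effect, a direct hands-on derivation of the same $(\pi/\gamma)\delta_0$ contribution you isolate distributionally; both arguments rest on the sign of $\Im(\lambda_\xi-1)\sim\gamma\xi$. Your formulation is conceptually cleaner, but the paper's route sidesteps two technical wrinkles you would still need to resolve: justifying that the Abel limit coincides with $\oR f(x,t)$ (i.e., that the defining series converges in the first place), and carrying out Sokhotski--Plemelj when the test function $e^{-it\xi}\oN_\xi\varphi_\xi(x)$ is only $\tfrac12$-H\"older in $\xi$ and $\lambda_\xi$ has merely the $o(\xi^2)$ expansion \eqref{eq:lambda-expansion-2} rather than full $C^2$ regularity.
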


\begin{proof}
	Fix an $x\in\P^1$. We can assume $M = 1$ for simplicity.	We begin by using the inversion formula for $\oF_u$ to rewrite
	\begin{align*}f\big(gx,\sigma_g(x)-t\big)&={1\over {2\pi}}\int_{-\infty}^\infty \oF_u f(gx,\xi)e^{ i(\sigma_g(x)-t)\xi}\diff \xi
	={1\over {2\pi}}\int_{-\infty}^\infty e^{ i\xi\sigma_g(x)}\oF_u f(gx,\xi)e^{- it\xi}\diff \xi \\
	&={1\over {2\pi}} \int_0^{\infty} \Big[e^{ i\xi\sigma_g(x)}\oF_u f(gx,\xi)e^{- it\xi} +e^{- i\xi\sigma_g(x)}\oF_u f(gx,-\xi)e^{it\xi} \Big]\diff \xi,
	\end{align*}
where the last identity is valid because the last two functions in the right hand side are integrable.
	
 Put $\Phi(g,x,\xi,t):= e^{ i\xi\sigma_g(x)}\oF_u f(gx,\xi)e^{- it\xi}$. Recall that $\oP^n_\xi \psi  (x) = \int_G e^{i \xi \sigma_g(x)} \psi (gx) \, \diff \mu^{*n}(g)$ for $n \geq 0$. For all small $\ep>0$, we have
	\begin{align}
	\oR f(x,t)&={1\over {2\pi}}\sum_{n\geq 0}\int_G \int^{\infty}_0 \Big[ \Phi(g,x,\xi,t)+\Phi(g,x,-\xi,t)\Big] \diff \xi \diff \mu^{*n}(g) \nonumber	\\
	&={1\over {2\pi}}\sum_{n\geq 0} \int_{0}^\infty \int_G \Big[\Phi(g,x,\xi,t)+\Phi(g,x,-\xi,t)\Big] \diff \mu^{*n}(g) \diff \xi	\nonumber\\
	&={1\over {2\pi}}\sum_{n\geq 0} \int_{0}^\infty \Big[\oP^n_\xi \oF_u f(x,\xi)e^{- it\xi} +\oP^n_{-\xi} \oF_u f(x,-\xi)e^{ it\xi} \Big]\diff \xi \nonumber\\
	&={1\over {2\pi}}\sum_{n\geq 0} \Big(\int_0^\ep+\int_{\ep}^\infty \Big) \Big[ \oP^n_\xi \oF_u f(x,\xi)e^{- it\xi}+ \oP^n_{-\xi} \oF_u f(x,-\xi)e^{ it\xi}\Big] \diff \xi  
       \nonumber\\
	&=  {1\over {2\pi}}\sum_{n\geq 0}  \int_0^{\ep} \Big[ \oP^n_\xi \oF_u f(x,\xi)e^{- it\xi}+ \oP^n_{-\xi} \oF_u f(x,-\xi)e^{ it\xi}\Big] \diff \xi  \label{0-ep} \\
& \quad\quad + {1\over {2\pi}}\int_{\ep}^\infty \sum_{n\geq 0} \Big[ \oP^n_\xi \oF_u f(x,\xi)e^{- it\xi} +\oP^n_{-\xi} \oF_u f(x,-\xi)e^{ it\xi}\Big] \diff \xi  . \label{interchange}
	\end{align}
	Notice that, in the second equality above we have used Fubini's theorem to interchange the order of integration. This is justified by the fact that $\oF_uf(\cdot,\cdot)$ is bounded by $M=1$ and $\oF_uf(x,\cdot)$ is supported on $K$ for all $x$.  We have also used Fubini's theorem in \eqref{interchange}. In order to justify that, one needs to find an integrable function on $[\ep,\infty)$ that dominates $\sum_{n= 0}^k \big[ \oP^n_\xi \oF_u f(x,\xi)e^{- it\xi} +\oP^n_{-\xi} \oF_u f(x,-\xi)e^{ it\xi}\big]$ for every $k \geq 0$. As in the proof of Proposition \ref{prop:sum-Pxi-n-decomposition}, we have that $\|\oP^n_\xi \oF_u f\|_\infty$ is exponentially small as $n$ tends to infinity,  uniformly in $\xi\in K\setminus (-\ep,\ep)$. Since $\oF_u f(x,\xi)$ vanishes for $\xi$ outside $K$ by assumption, we can take $C \mathbf 1_{\xi\in K}$ for some suitable constant $C>0$ as the dominating function.
	
	\medskip
	
	\noindent \textbf{Claim 1.} The limit of  \eqref{0-ep} as $\ep\to 0$ is equal to ${1\over {2\gamma}} \int_{\P^1}\int_{-\infty}^\infty  f(y,u) \diff u\diff \nu(y)$.
		\proof[Proof of Claim 1]  We can assume that $\ep\leq \min(\ep_0,t^{-2},1)$, where $\ep_0>0$ is as in Corollary \ref{cor:P_t-decomp-W}. In particular, we can use the decomposition $\oP_\xi=\lambda_\xi \oN_\xi+\oQ_\xi$ on $\W$ given by that corollary.
	It follows from our assumptions that $\xi \mapsto \oF_uf(\cdot,\xi)$ is a $\Cc^1$-curve in $W^{1,2}\cap\Cc^{\log^1}$ whose $\Cc^1$-norm is bounded by $2$ (recall that $M=1$), so  $\big\|\oF_u f(\cdot,\xi) - \oF_u f(\cdot,0)\big\|_{W^{1,2}\cap\Cc^{\log^1}}  = O(|\xi|)$.
	Then, $\big|\oN_0 \oF_u f(x,\xi)e^{- it\xi}-\oN_0 \oF_u f(x,0)\big|$ is bounded by 
	\begin{align*}
	\big\|\oN_0 \oF_u f(\cdot,\xi)(e^{- it\xi}-1)\big\|_\infty+\big\|\oN_0\big( \oF_u f(\cdot,\xi)- \oF_u f(\cdot,0)\big)\big\|_\infty 
	\lesssim  t|\xi| + |\xi| .
	\end{align*}
	Using this and \eqref{N-xi-N-0} from the proof of Proposition \ref{prop:sum-Pxi-n-decomposition}, we obtain
	\begin{align*} 
	&\Big\| \frac{\oN_\xi \oF_u f(\cdot,\xi)e^{- it\xi}}{1-\lambda_\xi}  \,- \, \frac{\oN_0  \oF_u f(\cdot,0)}{-i\gamma\xi} \Big \|_{\infty} \\
	&= \Big\| \Big(\frac{\oN_\xi}{1-\lambda_\xi}  - \frac{\oN_0}{-i\gamma\xi}\Big)\oF_u f(\cdot,\xi)e^{- it\xi} \Big \|_{\infty} + \Big\| \frac{\oN_0}{-i\gamma\xi} \Big( \oF_u f(\cdot,\xi)e^{- it\xi} - \oF_u f(\cdot,0)  \Big) \Big \|_{\infty} \\
	&\lesssim  |\xi|^{-1/2}+ t+ 1\leq |\xi|^{-1/2}+ \ep^{-1/2} + \ep^{-1/2}\lesssim |\xi|^{-1/2},
	\end{align*}
	where the constants involved are independent of $\ep$. Recall that $0<\xi<\ep$ here.

	On the other hand, we have $|\lambda_\xi|\leq 1$ and $\sum_{n=0}^k \oQ_{\xi}^n \oF_u f(x,\xi)e^{ -it\xi}=O(1)$ where the constants involved are independent of $k$. This holds because, by Corollary \ref{cor:P_t-decomp-W}-(5),  $\|\oQ^n_\xi\|_{\W}$ is exponentially small when $n \to \infty$, uniformly in $\xi$. 
	
	From the above discussion and the identity $\sum_{n=0}^k \lambda_\xi^n = \frac{1-\lambda_\xi^{k+1}}{1-\lambda_\xi}$,  the sum of \eqref{0-ep} up to $k$  equals $(2\pi)^{-1}$ times 
 \begin{align}
\int_{0}^\ep & \sum_{n=0}^k \Big[ \oP^n_\xi \oF_u  f(x,\xi)e^{- it\xi} + \oP^n_{-\xi} \oF_u f(x,-\xi)e^{ it\xi} \Big] \diff \xi \nonumber\\
 & =\int_{0}^\ep \Big[	{(1-\lambda_\xi^{k+1})\oN_\xi \oF_u f(x,\xi)e^{- it\xi} \over 1-\lambda_\xi} + \sum_{n=0}^k \oQ_\xi^n \oF_u f(x,\xi)e^{- it\xi}  +\nonumber\\
 &\quad\quad\quad\quad\quad\quad\quad\quad\quad\quad\quad\,\,\,\,\,	{(1-\lambda_{-\xi}^{k+1})\oN_\xi \oF_u f(x,-\xi)e^{ it\xi} \over 1-\lambda_{-\xi}} +\sum_{n=0}^k \oQ_{-\xi}^n \oF_u f(x,-\xi)e^{ it\xi} \Big]\diff \xi   \nonumber\\
 &=\int_{0}^\ep  \Big[  {(1-\lambda_\xi^{k+1})\oN_0 \oF_u f(x,0) \over -i\gamma\xi}+ {(1-\lambda_{-\xi}^{k+1}) \oN_0 \oF_u f(x,0) \over i\gamma\xi}+O(|\xi|^{-1/2})+O(1)\Big] \diff \xi   \nonumber\\
 &= {\oN_0 \oF_u f(x,0)\over \gamma} \int_{0}^\ep \Big( {\lambda_\xi^{k+1} \over i\xi} +{\lambda_{-\xi}^{k+1} \over -i\xi}\Big) \diff \xi + O(\ep^{1/2}) + O(\ep),
 \end{align}
  where the implicit in $O(\ep)$ and $O(\ep^{1/2})$ are independent of $k$. We use here that $|\lambda_\xi| \leq 1$.

    Letting first $k \to \infty$ then $\ep \to 0$ and using Lemma \ref{lemma-b-ep}  gives that \eqref{0-ep}  tends to 
    $${1\over {2\pi}}\cdot {\pi\over \gamma}\oN_0\oF_u f(x,0) =  {1\over {2\gamma}} \int_{\P^1}\int_{-\infty}^\infty  f(y,u) \diff u\diff \nu(y)$$
    as $\ep \to 0$.   This proves the claim.\endproof

	We now treat the term  \eqref{interchange}. By Proposition \ref{prop:sum-Pxi-n-decomposition} and the definition of Cauchy principal value, when $\ep\to 0$, this term becomes 
	\begin{align*}
	&\lim_{\ep\to 0^+}{1\over {2\pi}}\int_{\ep}^\infty \Big[\Big( {\oN_0 \over {-\gamma i\xi}}+\oU_\xi\Big) \oF_u f(x,\xi)  e^{- it\xi} +\Big({\oN_0 \over {\gamma i\xi}}+\oU_{-\xi}\Big)  \oF_u f(x,-\xi) e^{ it\xi} \Big]\diff \xi \\
	& =	{1\over {2\pi}}\,\mathrm{p.v.}\int_{-\infty}^\infty  {\oN_0 \oF_u f(x,\xi)\over {-\gamma i\xi}}   e^{- it\xi} \diff \xi    +   {1\over {2\pi}}  \int_{-\infty}^\infty  \oU_\xi \oF_u f(x,\xi)  e^{- it\xi} \diff \xi.
	\end{align*}
By Lemma \ref{f/x} and Fubini's theorem, 	the first term above is equal to 
	\begin{align*}
-\frac {1}{2\gamma}\int_{\P^1}\int_{-\infty}^{-t} f(y,u) \diff u  \diff \nu(y)
	& +\frac {1}{2\gamma}\int_{\P^1}\int_{-t}^\infty f(y,u) \diff u  \diff \nu(y).
	\end{align*}
	 Combining with Claim 1 yields	
$$\oR f(x,t)={1\over \gamma}\int_{\P^1}\int_{-t}^\infty f(y,u) \,\diff u \diff \nu(y)+  {1\over {2\pi}}  \int_{-\infty}^\infty  \oU_\xi \oF_u f(x,\xi)  e^{- it\xi} \diff \xi.$$  
		In order to finish the proof, it remains to estimate the last integral. 
	\vskip 5pt

\noindent \textbf{Claim 2.}  The map $\xi \mapsto  \oU_\xi\oF_u f(x,\xi)$ is locally $\frac12$-H\"older continuous on $\R \setminus \{0\}$ and its $\Cc^{1/2}$-norm over $K \setminus (-\delta,\delta)$ is  $O(\delta^{-3/2})$.

\proof[Proof of Claim 2] In order to simplify the notation, denote $E_1:=W^{1,2} \cap \Cc^{\log^1}$, $E_2:=W^{1,2} \cap \Cc^0$ and $\phi_\xi(\,\cdot\,):= \oF_u f(\cdot,\xi)$. We want to estimate  
$$ \sup_{\xi\neq\eta \in K \setminus (-\delta,\delta)}{\|\oU_\xi \phi_\xi - \oU_\eta \phi_\eta\|_\infty    \over |\xi - \eta|^{1 \slash 2} }.$$ 
By assumption, $\xi \mapsto \phi_\xi \in E_1$ is a $\Cc^1$--curve whose $\Cc^1$-norm is bounded by $2$, so $\|\phi_\xi - \phi_\eta\|_{E_1}\leq 2 |\xi-\eta|$. By Proposition \ref{prop:sum-Pxi-n-decomposition}, we have $\|\oU_\xi\|_{E_1 \to E_2} = O(|\xi|^{- 1 \slash 2})$, and from Lemma \ref{holder-delta-2}, the map $\xi \mapsto \oU_\xi \in \text{Hom}(E_1,E_2)$ is  $\frac12$-H\"older continuous on $\R \setminus \{0\}$ and its $\Cc^{1/2}$-norm on $K \setminus (-\delta,\delta)$ is  $O(\delta^{-3/2})$. Therefore,  for $\xi, \eta \in K \setminus (-\delta,\delta)$, we have
\begin{align*}
\|\oU_\xi \phi_\xi - \oU_\eta \phi_\eta\|_\infty &\leq \|\oU_\xi \phi_\xi - \oU_\eta \phi_\eta\|_{E_2} \leq \|\oU_\xi \phi_\xi - \oU_\xi \phi_\eta\|_{E_2} + \|\oU_\xi \phi_\eta - \oU_\eta \phi_\eta\|_{E_2} \\ 
&\leq \|\oU_\xi\|_{E_1 \to E_2} \|\phi_\xi - \phi_\eta\|_{E_1} + \|\oU_\xi - \oU_\eta\|_{E_1 \to E_2} \|\phi_\eta\|_{E_1} \\ 
& \lesssim  \delta^{-1 \slash 2} |\xi - \eta|  +    \delta^{-3 \slash 2}  |\xi - \eta|^{1 \slash 2} \lesssim  \delta^{-3 \slash 2}  |\xi - \eta|^{1 \slash 2} .
\end{align*}
This proves the claim. \endproof

Since $\|\oU_\xi\|_{W^{1,2} \cap \Cc^{\log^1} \to W^{1,2} \cap \Cc^0} = O(|\xi|^{- 1 \slash 2})$ and $\norm{\oF_u f(\cdot,\xi)}_{W^{1,2} \cap \Cc^{\log^1}}\leq 1$, we have $\norm{\oU_\xi\oF_u f(\cdot,\xi)}_{\infty} =O(|\xi|^{-1\slash 2})$  when $|\xi|\to 0$.
On the other hand, by assumption,  $\supp\big(\oF_uf (x, \cdot)\big)$ is contained in the compact set $K$.  From these estimates and Claim 2, we can apply Lemma \ref{lemma:fourier-decay-holder} with $q=3/2$. This gives
	\begin{align*}
	\Big|\int_{-\infty}^\infty \oU_\xi\oF_u f(x,\xi)  e^{- it\xi} \,\diff \xi \Big|\leq {C_K\over (1+t)^{1/4}}.
	\end{align*}
	
	Notice that all the estimates are independent of $x$. This completes the proof of the proposition.
\end{proof}

\begin{remark}
When $\mu$ has a finite exponential moment, the analogue of Proposition \ref{renewal-1} was obtained by Li in \cite{li:fourier}. There, the error term is $O(t^{-1})$. This was later improved to an exponential error term $O(e^{-\ep t})$ in  \cite{li:fourier-2}. In both cases the family $\xi \to \oU_\xi$ is analytic in $\xi$, even across $\xi = 0$.
\end{remark}

As a consequence of Proposition \ref{renewal-1}, we obtain useful estimates for $\oR$ acting on indicator functions. These results will occupy the remainder of this subsection.

\begin{lemma}\label{renewal-3-lemma-1}
  Let $0<\delta<1$ be a constant and assume $b_2-b_1\geq 2\delta$. Then for all $x\in\P^1$ and $t>0$, we have
	$$\oR(\mathbf 1_{u\in[b_1,b_2]})(x,t)\leq C( b_2-b_1)+C_\delta(b_2-b_1)(|b_2|+|b_1|+1)(1+t)^{-1/4},$$ 
	where $C_\delta>0$ is a constant independent of $b_1,b_2,x,t$, and $C>0$ is a constant independent of $\delta,b_1,b_2,x,t$.
\end{lemma}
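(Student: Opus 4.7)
The strategy is to sandwich $\mathbf 1_{u\in[b_1,b_2]}$ between $0$ and a smooth majorant $f$ whose Fourier transform in $u$ is compactly supported, so that Proposition \ref{renewal-1} can be applied. Concretely, I would define $f(x,u):=(\mathbf 1_{[b_1-\delta,b_2+\delta]}*\vartheta_\delta)(u)$, viewed as a function on $\P^1\times\R$ that is constant in $x$. Because $\vartheta_\delta$ is an even, strictly positive approximate identity with $\int_{|w|\geq\delta}\vartheta_\delta(w)\,\diff w\leq c\delta$ by Lemma \ref{l:vartheta}, a direct computation shows that for $u\in[b_1,b_2]$ the integration interval contains $[-\delta,\delta]$, which gives $f(u)\geq 1-c\delta$. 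Hence $\mathbf 1_{u\in[b_1,b_2]}\leq(1-c\delta)^{-1}f(u)$ pointwise, and therefore $\oR(\mathbf 1_{u\in[b_1,b_2]})(x,t)\leq(1-c\delta)^{-1}\oR f(x,t)$.

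Next I would verify the hypotheses of Proposition \ref{renewal-1} for $f$. One has $\oF_u f(x,\xi)=\widehat{\mathbf 1_{[b_1-\delta,b_2+\delta]}}(\xi)\cdot\widehat{\vartheta_\delta}(\xi)$, which is $\Cc^1$ (indeed $\Cc^\infty$) in $\xi$ and supported in $\xi\in K:=[-\delta^{-2},\delta^{-2}]$ by Lemma \ref{l:vartheta}. Since $f$ is constant in $x$ and the $W^{1,2}\cap\Cc^{\log^1}$-norm of a constant function on $\P^1$ is just its modulus, the required bound $M$ reduces to $\sup_\xi\max\{|\oF_u f(x,\xi)|,|\partial_\xi \oF_u f(x,\xi)|\}$. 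We have $|\widehat{\mathbf 1_{[b_1-\delta,b_2+\delta]}}(\xi)|\leq b_2-b_1+2\delta\leq 2(b_2-b_1)$ and
$$|\partial_\xi\widehat{\mathbf 1_{[b_1-\delta,b_2+\delta]}}(\xi)|\leq\int_{b_1-\delta}^{b_2+\delta}|u|\,\diff u\lesssim(b_2-b_1)(|b_1|+|b_2|+1),$$
which, combined with the uniform $\Cc^1$-bound on $\widehat{\vartheta_\delta}$ from Lemma \ref{l:vartheta}, gives $M\lesssim(b_2-b_1)(|b_1|+|b_2|+1)$.

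Applying Proposition \ref{renewal-1} and using that $\nu$ is a probability measure together with the fact that $f$ is constant in $x$, one gets
$$\oR f(x,t)\leq\frac{1}{\gamma}\int_\R f(u)\,\diff u+C_K M(1+t)^{-1/4}=\frac{b_2-b_1+2\delta}{\gamma}+C_K M(1+t)^{-1/4}.$$
The hypothesis $b_2-b_1\geq 2\delta$ forces $b_2-b_1+2\delta\leq 2(b_2-b_1)$. Multiplying by $(1-c\delta)^{-1}\leq 2$ (for $\delta$ small; for $\delta$ bounded away from zero the constant $C_\delta$ absorbs everything) and setting $C_\delta:=C_K\cdot$ constant yields the claimed inequality.

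The only real subtlety is producing a genuine smooth \emph{upper} bound for $\mathbf 1_{u\in[b_1,b_2]}$: convolving the indicator directly with $\vartheta_\delta$ gives a function always $\leq 1$, which is the wrong direction. Widening the support by $\delta$ on each side before convolving is the natural fix, and the assumption $b_2-b_1\geq 2\delta$ is exactly what guarantees both that $f\geq 1-c\delta$ on $[b_1,b_2]$ and that the widened mass $b_2-b_1+2\delta$ remains comparable to $b_2-b_1$. Everything else is a routine application of Proposition \ref{renewal-1}, made cheap by the fact that $f$ is constant in $x$ so all Sobolev and logarithmic-H\"older norms collapse to $L^\infty$-norms of explicit Fourier integrals.
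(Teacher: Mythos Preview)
Your proof is correct and follows essentially the same route as the paper: build a smooth majorant of $\mathbf 1_{[b_1,b_2]}$ by convolving with $\vartheta_\delta$, then apply Proposition~\ref{renewal-1}. The only difference is that the paper convolves $\mathbf 1_{[b_1,b_2]}$ directly rather than first widening to $[b_1-\delta,b_2+\delta]$; your remark that the direct convolution ``is the wrong direction'' is unnecessary, because on $[b_1,b_2]$ one has $\phi(u)=\int_{u-b_2}^{u-b_1}\vartheta_\delta\geq\int_0^\delta\vartheta_\delta=\int_0^{1/\delta}\vartheta\geq\int_0^1\vartheta$, a fixed positive constant independent of $\delta$. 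This cleaner lower bound sidesteps the factor $(1-c\delta)^{-1}$ and the attendant hand-waving for $\delta$ not small (your sentence ``$C_\delta$ absorbs everything'' does not actually cover the first term, though the gap is trivially fixable by reducing to a smaller $\delta'$).
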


\begin{proof} Observe that $\mathbf 1_{u\in[b_1,b_2]}(x,u)$ is independent of $x$. This will be the case of all the function appearing in this proof. In order to use Proposition \ref{renewal-1}, we need to approximate $\mathbf 1_{u\in[b_1,b_2]}$ by a function having a compactly supported Fourier transform. For this, let $\vartheta$ and $\vartheta_\delta$, $0<\delta < 1$, be the functions from Lemma \ref{l:vartheta}. Recall from that lemma that the Fourier transform  of $\vartheta$ (resp. $\vartheta_\delta$) is supported by $[-1,1]$ (resp. $[-\delta^{-2},\delta^{-2}]$). 

	Consider the function 
	$$\phi(x,u):=\vartheta_\delta*\mathbf1_{u\in [b_1,b_2]}(x,u):=\int_{-\infty}^\infty \mathbf 1_{w\in [b_1,b_2]} (x,w) \vartheta_\delta (u-w) \diff w.$$

	Note that for $u\in[b_1,b_2]$, the interval $[u-b_2,u-b_1]$ contains either $[-\delta,0]$ or $[0,\delta]$. Since $\vartheta_\delta$ is positive and even, we have
	$$\phi(x,u)=\int_{b_1}^{b_2} \vartheta_\delta(u-w)\diff w\geq \int_0^\delta \vartheta_\delta (w)\diff w$$ 
	and the last quantity is bounded from below by a fixed positive constant by Lemma \ref{l:vartheta}.
	Therefore, one has $\mathbf1_{u\in [b_1,b_2]}(x,u)\lesssim \phi(x,u)$. We have that $\oF_u  \phi=\widehat{\vartheta_\delta}\cdot \oF_u \mathbf 1_{u\in[b_1,b_2]}$ by the convolution formula, so  the projection of  $\supp(\oF_u \phi)$ to $\R_\xi$ is contained in  $[-\delta^{-2},\delta^{-2}]$.
	
	We now estimate $\norm{\oF_u \phi(\cdot,\xi)}_{\Cc^1}$ and $\norm{\partial_\xi\oF_u \phi(\cdot,\xi)}_{\Cc^1}$. By the identities
	$$ \oF_u \phi(x,\xi)=\widehat{\vartheta_\delta}(\xi)\int_{-\infty}^\infty \mathbf 1_{u\in[b_1,b_2]} e^{-iu\xi} \diff u =\widehat{\vartheta_\delta}(\xi)\int_{b_1}^{b_2} e^{-iu\xi} \diff u $$ and the fact that $|\widehat{\vartheta_\delta}|\leq 1$, we deduce that $\norm{\oF_u \phi(\cdot,\xi)}_{\Cc^1}\leq b_2-b_1.$
	
	 In order to estimate $\norm{\partial_\xi\oF_u \phi(\cdot,\xi)}_{\Cc^1}$ we first note that, from Lemma \ref{l:vartheta}, $\|\widehat{\vartheta_\delta}\|_{\Cc^1}$ is bounded by a constant independent of $\delta$. Consequently, we have 
	$$\norm{\partial_\xi\oF_u \phi(\cdot,\xi)}_{\Cc^1}\lesssim \norm{\partial_\xi \oF_u \mathbf 1_{u\in[b_1,b_2]}(\cdot,\xi)}_{\Cc^1}+\norm{\oF_u \mathbf 1_{u\in[b_1,b_2]}(\cdot,\xi)}_{\Cc^1}.$$
		From the identity
	$\partial_\xi \oF_u \mathbf 1_{u\in[b_1,b_2]}(\cdot,\xi)=\int_{b_1}^{b_2} -iu e^{-iu\xi}\diff u$, we obtain  $$\norm{\partial_\xi\oF_u \phi(\cdot,\xi)}_{\Cc^1}\lesssim (b_2-b_1)(|b_2|+|b_1|+1).$$
	Using that the $W^{1,2}\cap\Cc^{\log^1}$-norm is bounded by $\Cc^1$-norm, we deduce that
	$$\norm{\oF_u \phi(\cdot,\xi)}_{W^{1,2}\cap\Cc^{\log^1}}\lesssim b_2-b_1 \quad\text{and}\quad \norm{\partial_\xi\oF_u \phi(\cdot,\xi)}_{W^{1,2}\cap\Cc^{\log^1}}\lesssim (b_2-b_1)(|b_2|+|b_1|+1).$$
	
 Applying Proposition \ref{renewal-1}, using that $$\int_{\P^1}\int_{-t}^\infty \phi \,\diff u\diff \nu = \int_{\P^1}\int_{-t}^\infty \vartheta_\delta*\mathbf 1_{u\in[b_1,b_2]} \,\diff u\diff \nu\leq \int_{\P^1}\int_{-\infty}^\infty \vartheta_\delta*\mathbf 1_{u\in[b_1,b_2]} \,\diff u\diff \nu=b_2-b_1$$  and recalling that $\mathbf1_{u\in [b_1,b_2]} \lesssim \phi$ gives that
	$$\oR(\mathbf 1_{u\in[b_1,b_2]})(x,t)\lesssim \oR \phi(x,t)\leq {{b_2-b_1}\over \gamma}+ C_\delta {(b_2-b_1)(|b_2|+|b_1|+1)\over (1+t)^{1/4}}$$ for a constant $C_\delta>0$. 
	The desired inequality follows.
\end{proof}

In this next result, we observe that negative values of $t$ are also allowed, which will useful later.

\begin{lemma}\label{R-b-b}
	Assume $ b>0$. For all $x\in \P^1$ and $t\in\R$, we have
	$$\oR(\mathbf 1_{u\in [-b,b]}) (x,t)\leq C (b+1)^2$$
	for some constant $C>0$ independent of $b,x,t$.
\end{lemma}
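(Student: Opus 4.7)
The plan is to split into the regimes $t \geq 1$ and $t \leq 1$. For the first, Lemma \ref{renewal-3-lemma-1} applies directly to the fixed interval $[-b,b]$ and yields a bound uniform in $t$, because the factor $(|b_1|+|b_2|+1)$ appearing there depends only on $b$ (not on $t$) once we take $b_1 = -b$, $b_2 = b$. For the second regime, we bypass the renewal lemma entirely: the event $\sigma_g(x) \in [t-b,t+b]$ is contained in $\{\sigma_g(x) \leq 1+b\}$, and the large deviation estimate of Proposition \ref{prop:BQLDT} handles this directly.

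\textbf{Case $t\geq 1$.} If $b \geq 1$, apply Lemma \ref{renewal-3-lemma-1} with $b_1 = -b$, $b_2 = b$, and $\delta = 1$ (the condition $b_2 - b_1 \geq 2\delta$ is then satisfied). Using $(1+t)^{-1/4} \leq 1$, we obtain
$$\oR(\mathbf 1_{u\in[-b,b]})(x,t) \leq 2Cb + 2C_1\, b(2b+1)(1+t)^{-1/4} \leq 2Cb + 2C_1\,b(2b+1) \lesssim (b+1)^2.$$
If $b<1$, use the pointwise inequality $\mathbf 1_{u\in[-b,b]} \leq \mathbf 1_{u\in[-1,1]}$ and the positivity of $\oR$ to reduce to the previous estimate with $b$ replaced by $1$, obtaining an $O(1)$ bound.

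\textbf{Case $t\leq 1$.} Since $[t-b,t+b] \subseteq (-\infty,1+b]$, we have
$$\oR(\mathbf 1_{u\in[-b,b]})(x,t) \leq \sum_{n\geq 0}\mu^{*n}\big\{g \in G :\sigma_g(x)\leq 1+b\big\}.$$
Set $\epsilon := \gamma/2$ and $N := \lceil 2(1+b)/\gamma \rceil$. For $n \geq N$ one has $n\gamma - (1+b) \geq n\gamma/2 = \epsilon n$, so $\{\sigma_g(x)\leq 1+b\}\subseteq \{|\sigma_g(x)-n\gamma|\geq \epsilon n\}$ and Proposition \ref{prop:BQLDT} gives $\mu^{*n}\{\sigma_g(x)\leq 1+b\}\leq C_n$, with $\sum_n C_n < \infty$. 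For $n<N$, bound each summand trivially by $1$, contributing at most $N \leq 2(1+b)/\gamma +1 = O(b+1)$ in total. The two contributions together produce a bound of order $b+1 \leq (b+1)^2$.

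Combining both cases gives the desired $\oR(\mathbf 1_{u\in[-b,b]})(x,t) \leq C(b+1)^2$. There is no genuine obstacle here: the main insight is that the $(1+t)^{-1/4}$ factor in Lemma \ref{renewal-3-lemma-1} kills any dependence on $t$ as soon as $t$ is bounded away from $0$, while for $t \leq 1$ the required event is automatically rare at large $n$ by the Lyapunov drift $\gamma > 0$.
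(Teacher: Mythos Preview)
Your proof is correct and follows essentially the same route as the paper: reduce small $b$ to $b=1$ by monotonicity, invoke Lemma \ref{renewal-3-lemma-1} for $t\geq 1$, and for $t\leq 1$ bound the event by $\{\sigma_g(x)\leq 1+b\}$ and split the sum at $N\approx 2(1+b)/\gamma$ using the large deviation estimates. The only cosmetic difference is that the paper packages the $t\leq 1$ step through Lemma \ref{large-n} (with the threshold $t'=\max(0,b+t)$), whereas you appeal directly to Proposition \ref{prop:BQLDT}; the underlying argument is identical.
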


\begin{proof}
Since the left hand  is increasing in $b$, the case where $0<b<1$ can be deduced from the case $b=1$. So, we can assume $b\geq 1$. If $t\geq 1$, Lemma \ref{renewal-3-lemma-1} yields  $\oR(\mathbf 1_{u\in [-b,b]}) (x,t)\lesssim b^2+b$. Thus, we can assume $t<1$.
 Let $ t':=\max(0,b+t)$. Observe that $t\leq t'\lesssim b$. Using the definition of $\oR$ and Lemma \ref{large-n}, we have 
	$$\oR(\mathbf 1_{u\in [-b,b]}) (x,t)\leq\sum_{n<\lceil 2t'/\gamma\rceil}\mu^{*n}\{\sigma_g(x)<t' \}+ \sum_{n\geq \lceil 2t'/\gamma\rceil }\mu^{*n}\{\sigma_g(x)<t' \}\leq \frac{2t'}{\gamma}+\varepsilon_0(t')\lesssim b.$$
The proof of the lemma is complete.
\end{proof}

\begin{lemma}\label{integral-R-l}
	Let $l:=\lfloor t/(3\gamma ) \rfloor,0<\delta<1$ and assume $b\geq \delta$. Then, for all $x\in\P^1$ and $t>0$,  we have
	$$\int_{G} \oR(\mathbf 1_{u\in [-b,b]})\big(g x,t-\log\norm{g}\big) \diff \mu^{*l}(g)\leq C\big[C_l(b+1)^2+b\big]+C_\delta {(b+1)^2\over  (1+t)^{1/4}},$$
	where $C_l:=C_{l,\gamma/2}$ is the constant in Proposition \ref{prop:BQLDT} corresponding to $\ep=\gamma/2$, $C>0$ is a constant independent of $\delta,b,x,t$, and  $C_\delta>0$ is a constant independent of $b,x,t$.
\end{lemma}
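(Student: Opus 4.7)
The plan is to split the integration domain $G$ into two regions based on the size of $\log\|g\|$ and apply a different bound on each piece. Set $\epsilon := \gamma/2$ in Proposition \ref{prop:BQLDT} and decompose $G = G_{\mathrm{good}} \sqcup G_{\mathrm{bad}}$, where
\[
G_{\mathrm{bad}} := \big\{g \in G : \big|\log\|g\| - l\gamma\big| \geq l\gamma/2\big\}, \qquad G_{\mathrm{good}} := G \setminus G_{\mathrm{bad}}.
\]
By Proposition \ref{prop:BQLDT}, $\mu^{*l}(G_{\mathrm{bad}}) \leq C_{l,\gamma/2} = C_l$. On $G_{\mathrm{good}}$ one has $l\gamma/2 \leq \log\|g\| \leq 3l\gamma/2$; combining this with $l = \lfloor t/(3\gamma)\rfloor \leq t/(3\gamma)$ yields
\[
\log\|g\| \leq \tfrac{3l\gamma}{2} \leq \tfrac{t}{2},
\]
so that $t - \log\|g\| \geq t/2 > 0$ on $G_{\mathrm{good}}$. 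This positivity is what makes the two-piece strategy work: one piece avoids negative shifts so Lemma \ref{renewal-3-lemma-1} applies, while the other is forced to carry the $C_l$ cost but is harmless because $G_{\mathrm{bad}}$ has small measure.

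On $G_{\mathrm{bad}}$, the shifted argument $t - \log\|g\|$ can be arbitrary (even negative), so I would appeal to Lemma \ref{R-b-b}, which gives the crude but universal bound $\oR(\mathbf{1}_{u\in[-b,b]})(gx, t-\log\|g\|) \leq C(b+1)^2$ with no positivity assumption on the second argument. Integrating over $G_{\mathrm{bad}}$ contributes at most $C \, C_l \, (b+1)^2$.

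On $G_{\mathrm{good}}$, since $t-\log\|g\| > 0$ and $b \geq \delta$ (so the hypothesis $b_2 - b_1 = 2b \geq 2\delta$ of Lemma \ref{renewal-3-lemma-1} is satisfied with $b_1 = -b$, $b_2 = b$), Lemma \ref{renewal-3-lemma-1} yields
\[
\oR(\mathbf{1}_{u\in[-b,b]})(gx,\, t-\log\|g\|) \,\leq\, 2Cb \,+\, 2C_\delta\, b\,(2b+1)\,(1 + t - \log\|g\|)^{-1/4}.
\]
Using $t - \log\|g\| \geq t/2$ on this set gives $(1+t-\log\|g\|)^{-1/4} \leq 2^{1/4}(1+t)^{-1/4}$, so integrating over $G_{\mathrm{good}}$ (whose $\mu^{*l}$-measure is at most $1$) contributes at most $C b + C'_\delta (b+1)^2 (1+t)^{-1/4}$. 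Summing the two pieces produces exactly the stated inequality. The only minor edge case is $l = 0$ (i.e.\ $t < 3\gamma$), where $\mu^{*0}$ is the Dirac mass at the identity and Lemma \ref{renewal-3-lemma-1} can be applied directly at $(x,t)$ to obtain the same bound (recall $C_l \geq 1$). There is no genuine obstacle here; the only subtlety requiring care is to arrange that the good set is defined so that $t-\log\|g\|$ stays bounded below by a fixed fraction of $t$, which is precisely why the choice $l = \lfloor t/(3\gamma)\rfloor$ matches the deviation scale $\epsilon = \gamma/2$ in the large deviation estimate.
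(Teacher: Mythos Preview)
Your proof is correct and follows essentially the same route as the paper's: split $G$ according to whether $\log\|g\|$ is close to $l\gamma$ or not via the large deviation estimate with $\epsilon=\gamma/2$, apply Lemma~\ref{R-b-b} on the bad set (contributing the $C_l(b+1)^2$ term) and Lemma~\ref{renewal-3-lemma-1} on the good set where $t-\log\|g\|\geq t/2$. The paper uses the one-sided set $\bB_l=\{\log\|g\|\leq 3\gamma l/2\}$ instead of your two-sided $G_{\mathrm{good}}$, but this is cosmetic; your parenthetical remark ``recall $C_l\geq 1$'' in the $l=0$ edge case is not needed (and not generally true), since for $l=0$ the integral reduces to $\oR(\mathbf 1_{u\in[-b,b]})(x,t)$ with $t>0$ and Lemma~\ref{renewal-3-lemma-1} alone already gives the stated bound.
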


\begin{proof}
Set  $\bB_l:=\big\{g\in G:\, \log\norm{g}\leq 3\gamma l/2\big\}$. Applying Proposition \ref{prop:BQLDT} with $\ep=\gamma/2$ gives that $\mu^{*l}(\bB_l)\geq 1-C_l$.  Since $l=\lfloor t/(3\gamma ) \rfloor$,  we have that  $t-\log\norm{g}\geq t/2$ for all $g \in \bB_l$. 
		Therefore, using Lemmas \ref{renewal-3-lemma-1} and \ref{R-b-b}, the integral we want to estimate is bounded by a constant times
		\begin{align*}
		& \Big( \int_{G \setminus \bB_l}+\int_{\bB_l} \Big) \Big[\oR(\mathbf 1_{u\in [-b,b]})\big(g x,t-\log\norm{g}\big)  \Big] \diff \mu^{*l}(g) \\
		&\lesssim C_l(b+1)^2+ \int_{\bB_l} \Big[ b+C_\delta{b(b+1)\over(1+t-\log\norm{g})^{1/4}} \Big] \diff \mu^{*l}(g)\\
		&\lesssim C_l(b+1)^2+b+ \int_{\bB_l} C_\delta {b(b+1)\over  (1+t)^{1/4}} \diff \mu^{*l}(g)
		\lesssim C_l(b+1)^2+b+C_\delta {(b+1)^2\over  (1+t)^{1/4}},
		\end{align*}
	which is the desired result.
\end{proof}

We stress the notation $C_\delta$ and $C_l$ for the constants appearing the above lemma are not conflicting, since $0 < \delta < 1$ while $l \geq 1$ is an integer.

\begin{lemma} \label{logs-lemma}
	There exists a decreasing rate function $\zeta(s)$ such that 
	$$\oR( \mathbf 1_{\D(y,e^{-s})\times [-\log s-4,\log s+4]})(x,t)\leq \zeta(s)\quad\text{for all} \quad s>1, \,\, t\geq s^{8} \,\,  \text{ and } \, x,y\in \P^1.$$
\end{lemma}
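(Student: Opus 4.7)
The plan is to split the sum defining $\oR$ according to the size of $n$, handle the extreme ranges via the temporal large-deviation bound, and treat the middle range using the density-point property combined with the regularity of the Furstenberg measure. Set $t_1 := t-\log s-4$, $t_2 := t+\log s+4$, and write
$$\oR\big(\mathbf 1_{\D(y, e^{-s})\times [-\log s-4,\log s+4]}\big)(x,t) = \sum_{n\geq 0}\mu^{*n}(E_n),$$
where $E_n := \{g\in G:\, gx\in \D(y,e^{-s}),\, \sigma_g(x)\in [t_1,t_2]\}$. Since $t\geq s^8$ and $s>1$, we have $t_1\geq s^8/2$ for $s$ large enough. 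Split the sum at $N_1 := \lfloor 2t_1/(3\gamma)\rfloor$ and $N_2 := \lceil 2t_2/\gamma\rceil$.

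First I would handle the low and high ranges. Since $g\in E_n$ implies $\sigma_g(x)\in [t_1,t_2]$, Lemma \ref{large-n} gives $\sum_{n<N_1}\mu^{*n}(E_n)\leq \varepsilon_0(t_1)\leq \varepsilon_0(s^8/2)$ and $\sum_{n>N_2}\mu^{*n}(E_n)\leq \varepsilon_0(t_2)\leq \varepsilon_0(s^8/2)$, both rate functions of $s$.

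Next, for the middle range, I would invoke a density-point reduction. For $g\in E_n$ one has $\log\|g\|\geq \sigma_g(x)\geq t/2\geq s^8/2$, and a direct computation from Cartan's decomposition (using \eqref{g^-2}) gives
$$d(gx,z^M_g)\leq e^{-\log\|g\|-\sigma_g(x)}\leq e^{-t}\leq e^{-s^8},$$
which is vastly smaller than $e^{-s}$. Hence $gx\in \D(y,e^{-s})$ forces $z^M_g\in \D(y,2e^{-s})$, replacing an $x$-dependent spatial condition by one on $g$ alone.

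To bound the middle sum $\sum_{N_1\leq n\leq N_2}\mu^{*n}(E_n)$, I would combine the splitting $g=g_2g_1$ with $g_1\sim \mu^{*n_0}$ for $n_0 := \lceil c_0 s\rceil$ and $c_0$ large, Fourier inversion of the temporal indicator $\phi = \mathbf 1_{[-\log s-4,\log s+4]}$, and the spectral decomposition $\oP_\xi = \lambda_\xi \oN_\xi + \oQ_\xi$ from Corollary \ref{cor:P_t-decomp-W}. The idea is to smooth the spatial indicator $\mathbf 1_{\D(y,2e^{-s})}$ via $\oP^{n_0}$ applied to a $\Cc^1$-majorant $\chi_s$ with $\|\chi_s\|_{\Cc^1}\lesssim e^s$: by Theorem \ref{thm:equi-dis} and Proposition \ref{regularity}, the function $\oP^{n_0}\chi_s$ is bounded uniformly by $C/s$ on $\P^1$. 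Coupled with the renewal bound $\sum_n\mu^{*n}\{\sigma_g(x)\in[t_1,t_2]\}\lesssim \log s$ underlying Lemma \ref{renewal-3-lemma-1}, the middle sum is expected to be of order $(\log s)/s$, a rate function of $s$.

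The main technical difficulty lies in the middle range: a direct Cauchy-Schwarz estimate in the $g_2$-integral loses a factor of order $\sqrt t$, which is unacceptable since $t\geq s^8$. Sidestepping this requires combining the Fourier structure of $\phi$ with the spatial smoothing via $\oP^{n_0}$, while keeping all intermediate objects in the Banach spaces for which the spectral analysis of Section \ref{sec:markov} applies. One cannot work with $\chi_s$ itself since $\|\chi_s\|_{W^{1,2}}\sim e^{s/2}$, but $\oP^{n_0}\chi_s$ has a manageable uniform bound of order $1/s$ at the cost of extra care in the renewal argument.
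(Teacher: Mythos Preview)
Your proposal has a genuine gap, and it rests on a misconception that causes you to miss the much simpler direct route the paper takes.

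The misconception is your claim that $\|\chi_s\|_{W^{1,2}}\sim e^{s/2}$. This is false: a smooth bump $\phi$ supported on $\D(y,2e^{-s})$, equal to $1$ on $\D(y,e^{-s})$, with $\|\phi\|_{\Cc^1}\lesssim e^s$, satisfies $\|\phi\|_{W^{1,2}}\lesssim 1$, because $|\partial\phi|\lesssim e^s$ lives on an annulus of area $\sim e^{-2s}$. One also checks $\|\phi\|_{\log^1}\lesssim s$ by splitting according to whether $d(x_0,y_0)\gtrless e^{-2s}$. The paper exploits exactly this: after mollifying the temporal indicator by $\vartheta_{1/2}$, the product $\Phi(x,u):=(\vartheta_{1/2}*\mathbf 1_{[-\log s-4,\log s+4]})(u)\,\phi(x)$ satisfies the hypotheses of Proposition~\ref{renewal-1} with $M\lesssim s(\log s+1)^2$, and that proposition gives directly
\[
\oR\Phi(x,t)\ \lesssim\ \nu\big(\D(y,2e^{-s})\big)(\log s+1)\ +\ \frac{s(\log s+1)^2}{(1+t)^{1/4}}\ \lesssim\ \frac{(\log s+1)^2}{s}
\]
for $t\geq s^8$, using Proposition~\ref{regularity} for the first term. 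No splitting of the $n$-range, no density-point reduction, and no decoupling $g=g_2g_1$ are needed.

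As for your middle-range argument itself, the sketch does not close. Writing $g=g_2g_1$ and averaging $\chi_s(g_2g_1x)$ over the last $n_0$ steps does yield $\oP^{n_0}\chi_s(g_1x)\lesssim 1/s$, but the temporal constraint $\sigma_{g_2g_1}(x)\in[t_1,t_2]$ is coupled to $g_2$. Absorbing $\sigma_{g_2}(g_1x)$ (of typical size $\sim n_0\gamma\sim s$) into the window widens it from length $\sim\log s$ to length $\sim s$, so the renewal count becomes $\sim s$ and the product is only $O(1)$. The Fourier/spectral route you allude to would require controlling $\oP_\xi^{n_0}\chi_s$ rather than $\oP_0^{n_0}\chi_s$, and there is no reason the former is small in $L^\infty$. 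The decoupling you propose loses exactly the factor it is meant to gain; the correct move is to feed the spatial bump straight into Proposition~\ref{renewal-1}, which is possible precisely because $\|\phi\|_{W^{1,2}\cap\Cc^{\log^1}}\lesssim s$ rather than $e^{s/2}$.
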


\begin{proof} We want to apply Proposition \ref{renewal-1}. For this, we need to regularize the function $\mathbf 1_{\D(y,e^{-s})}$ on $\P^1$ with a control on its $W^{1,2}\cap\Cc^{\log^1}$-norm. It is not hard to see that we can find a real smooth function $\phi$ on $\P^1$ supported by $\D(y,2e^{-s})$ such that $0 \leq \phi \leq 1$, $\phi=1$ on $\D(y,e^{-s})$, $\norm{\phi}_{W^{1,2}}\leq 1$ and $\norm{\phi}_{\Cc^1}\leq 4e^s$. In particular, we have $ \mathbf 1_{\D(y,e^{-s})\times [-\log s-4,\log s+4]}\leq\mathbf 1_{u\in [-\log s-4,\log s+4]}\cdot \phi(x)$. Consider the function 
	$$\Phi(x,u):=\vartheta_\delta*\mathbf 1_{u\in [-\log s-4,\log s+4]}(u)\cdot\phi(x),$$ 
	where $\delta=1/2$, $\vartheta_\delta$ is the function from Lemma \ref{l:vartheta} and the convolution acts only on $\R_u$. Observe that
	$\mathbf 1_{u\in [-\log s-4,\log s+4]}(u)\lesssim \vartheta_\delta*\mathbf 1_{u\in [-\log s-4,\log s+4]}(u)$ by  the same arguments as in the  proof of Lemma \ref{renewal-3-lemma-1}.	Therefore, we have
	$$\oR( \mathbf 1_{\D(y,e^{-s})\times [-\log s-4,\log s+4]})\leq \oR(\mathbf 1_{u\in [-\log s-4,\log s+4]}\cdot \phi)\lesssim \oR(\vartheta_\delta*\mathbf 1_{u\in [-\log s-4,\log s+4]}\cdot \phi)=\oR\Phi.$$ 
	
	We now estimate the ${W^{1,2}\cap\Cc^{\log^1}}$-norm of $\oF_u \Phi(\cdot,\xi)$ and $\partial_\xi\oF_u \Phi(\cdot,\xi)$. Recall that  $\norm{\phi}_{W^{1,2}}\leq 1$. We now estimate $\norm{\phi}_{\log^1}$ which by definition is the supremum of 
	$$ \big|\phi(x_0) - \phi(y_0)\big| \log^\star d(x_0,y_0) 
	\quad\text{over}\quad x_0 \neq y_0\in\P^1 .$$
	
	If $d(x_0,y_0) > e^{-2s}$,  using that $|\phi| \leq 1$ it easily follows that the above quantity is $\lesssim s$.  For $d(x_0,y_0) \leq e^{-2s}$,  the fact that $\norm{\phi}_{\Cc^1}\leq 4e^s$ and the mean value theorem give that the above quantity is $\lesssim 1$.
	This proves that $\norm{\phi}_{\Cc^{\log^1}} \lesssim s$, so  $\norm{\phi}_{W^{1,2}\cap\Cc^{\log^1}} \lesssim s$. By  computations analogous to the ones used in the proof of Lemma \ref{renewal-3-lemma-1}, one gets
	$$\norm{\oF_u \Phi(\cdot,\xi)}_{W^{1,2}\cap\Cc^{\log^1}}\lesssim s(\log s+1) \quad  \text{and} \quad \norm{\partial_\xi\oF_u \Phi(\cdot,\xi)}_{W^{1,2}\cap\Cc^{\log^1}}\lesssim s(\log s+1)^2.$$
	
	We can thus apply Proposition \ref{renewal-1} to $\Phi$, yielding
	\begin{equation}\label{PHI}
	\oR\Phi(x,t)\lesssim \nu\big(\D(y,2e^{-s})\big)(\log s+1)+s(\log s+1)^2 (1+t)^{-1/4}.
	\end{equation}

	By the regularity of $\nu$ (cf. Proposition \ref{regularity}), we have that $\nu\big(\D(y,2e^{-s})\big)\lesssim 1/s$. Hence when $t\geq s^{8}$,
	$$\oR( \mathbf 1_{\D(y,e^{-s})\times [-\log s-4,\log s+4]})(x,t)\lesssim (\log s+1)/s +(\log s+1)^2/s.$$  

 Taking  $\zeta(s):=C(\log s+1)^2/s$ for some large constant $C>0$ yields the desired estimate. This finishes the proof of the lemma.
\end{proof}

Consider now the following variation of the renewal operator $\oR$ defined by
$$\oL f(x,t):=\sum_{n=\lceil 2t/(3\gamma)\rceil }^{\lfloor 2t/\gamma\rfloor }\int_G f\big(gx,\log\norm{g}-t\big) \diff \mu^{*n}(g)  \quad \text{for}\quad t\geq 3\gamma.$$
In comparison with $\oR$, we use $\log\norm{g}$ instead of $\sigma_g(x)$ and the sum is truncated. By the large deviation estimates of Lemma \ref{diff-log}, the difference between $\oR$ and $\oL$ can be controlled, which yields the following estimate.

\begin{lemma} \label{operator-L}
	Let $b > 0$. Then $\oL (\mathbf 1_{u\in [-b,b]})(x,t)\lesssim 1+b^2$ for all $x\in\P^1$ and $t\geq 3\gamma$.
\end{lemma}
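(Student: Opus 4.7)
The plan is to convert the $\oL$-estimate (which features $\log\|g\|$ and a truncated sum over $n$) into an estimate for the full renewal operator $\oR$ (which features $\sigma_g$), and then invoke Lemma~\ref{integral-R-l}. The bridge between the two cocycles is the refined large deviation estimate of Lemma~\ref{diff-log}, which says that $\log\|g_2 g_1\|$ is close to $\sigma_{g_2}(g_1 x) + \log\|g_1\|$ with high probability under $\mu^{*(n-l)} \otimes \mu^{*l}$.

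By the monotonicity of $b \mapsto \oL(\mathbf 1_{u\in [-b,b]})$, it suffices to treat $b \geq 1$. Set $l := \lfloor t/(3\gamma)\rfloor$, which satisfies $l \geq 1$ since $t \geq 3\gamma$. For every $n$ appearing in the defining sum of $\oL$, i.e. $\lceil 2t/(3\gamma)\rceil \leq n \leq \lfloor 2t/\gamma\rfloor$, a short arithmetic check gives $l < n \leq 100 l$, so Lemma~\ref{diff-log} applies to the decomposition $g = g_2 g_1$ with $g_2 \sim \mu^{*(n-l)}$ and $g_1 \sim \mu^{*l}$. Writing $b' := b + 4 e^{-\gamma l} \leq b + 4$, on the good set $\bS_{n,l,x}$ one has
$$\mathbf 1_{|\log\|g_2 g_1\| - t| \leq b}(g_2,g_1) \leq \mathbf 1_{|\sigma_{g_2}(g_1 x) - (t-\log\|g_1\|)| \leq b'}(g_2,g_1) + \mathbf 1_{\bS_{n,l,x}^c}(g_2,g_1),$$
while $\mu^{*(n-l)}\otimes \mu^{*l}(\bS_{n,l,x}^c) \leq D_n + D_l$.

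Integrating, summing over $n$ in the above range, and bounding the (truncated) sum in the $g_2$ variable by the full series defining $\oR$, we obtain
$$\oL(\mathbf 1_{u \in [-b,b]})(x,t) \leq \int_G \oR(\mathbf 1_{u \in [-b',b']})(g_1 x,\, t-\log\|g_1\|)\, d\mu^{*l}(g_1) \;+\; \sum_n (D_n + D_l).$$
Lemma~\ref{integral-R-l} applied with $\delta := 1/2$ (legitimate since $b' \geq 1 > 1/2$) and the same $l = \lfloor t/(3\gamma)\rfloor$ bounds the first term by a constant times $C_l(b'+1)^2 + b' + C_{1/2}(b'+1)^2/(1+t)^{1/4}$. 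Since $C_l$ is bounded uniformly in $l$ (because $\sum C_n < \infty$) and $b' \leq b + 4$, this is $\lesssim (b+1)^2 \lesssim 1 + b^2$. The error term has $O(l)$ summands and is dominated by $\sum_{n \geq 1} D_n + O(l\, D_l) = O(1)$: with $p = 2$, Lemma~\ref{diff-log} gives both $\sum_n D_n < \infty$ and $n D_n \to 0$.

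The only delicate point is coordinating three constraints on the splitting length $l$: the exponential error $4 e^{-\gamma l}$ in the approximation of $\log\|g_2 g_1\|$ must be absorbable into a harmless dilation of $b$; the range $l < n \leq 100 l$ required by Lemma~\ref{diff-log} must cover \emph{every} $n$ in the $\oL$-sum; and the accumulated large-deviation error, summed over all $O(l)$ values of $n$, must remain $O(1)$. The choice $l = \lfloor t/(3\gamma)\rfloor$---the same one dictated by Lemma~\ref{integral-R-l}---threads all three requirements simultaneously, which is why this is the natural choice and not an obstacle.
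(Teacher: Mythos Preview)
Your proof is correct and follows essentially the same route as the paper: split $g=g_2g_1$ with $l=\lfloor t/(3\gamma)\rfloor$, invoke Lemma~\ref{diff-log} on each $n$ in the range $\lceil 2t/(3\gamma)\rceil\le n\le\lfloor 2t/\gamma\rfloor$ to replace $\log\|g_2g_1\|$ by $\sigma_{g_2}(g_1x)+\log\|g_1\|$ up to an error $4e^{-\gamma l}\le 4$, then bound the resulting cocycle sum by $\int_G\oR(\mathbf 1_{u\in[-b-4,b+4]})(g_1x,t-\log\|g_1\|)\,\diff\mu^{*l}(g_1)$ plus the large-deviation cost $\sum_n D_n + O(l)D_l=O(1)$. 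The only difference is in the final step: the paper bounds the integral directly by Lemma~\ref{R-b-b} (which gives $\lesssim (b+5)^2$ uniformly in $t$ and $g_1$), whereas you quote the more refined Lemma~\ref{integral-R-l} with $\delta=1/2$ and then observe that $C_l$ and $(1+t)^{-1/4}$ are uniformly bounded; your route is a slight detour but equally valid.
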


\proof	Let $l:=\lfloor t/(3\gamma)\rfloor$. By Lemma \ref{diff-log},  for every $n$ between $l$ and $100l$, we can find $\bS_{n,l,x}\subseteq G\times G$ depending on $x$ such  that  $\mu^{*(n-l)}\otimes \mu^{*l} (\bS_{n,l,x})\geq 1-D_{n}-D_l$, and for all $(g_2,g_1)\in \bS_{n,l,x}$, one has
\begin{equation}\label{dominate-equation-1}
\big| \log\norm{g_2g_1}-\sigma_{g_2}(g_1x)-\log\norm{g_1}\big| \leq 4e^{-\gamma l}\leq 4,
\end{equation}
where the constants $D_n$ satisfy $\sum_{n\geq 0} D_n<\infty$ and $\lim_{n\to \infty} nD_n=0$.

Using \eqref{dominate-equation-1}, writing $\mu^{*n} = \mu^{*(n-l)} \ast \mu^{*l}$ and treating the integral along $\bS_{n,l,x}$ and its complement separately,  we obtain that $\oL (\mathbf 1_{u\in [-b,b]})(x,t)$ is bounded by 
\begin{align*}
\sum_{n=\lceil 2t/(3\gamma)\rceil }^{\lfloor 2t/\gamma\rfloor} &\int_{\bS_{n,l,x}} \mathbf 1_{u\in [-b,b]}\big(g_2g_1x,\log\norm{g_2g_1}-t\big) \, \diff \mu^{*(n-l)}(g_2)\diff \mu^{*l}(g_1)+\sum_{n=\lceil 2t/(3\gamma)\rceil }^{\lfloor 2t/\gamma\rfloor }(D_n+D_l)\\
&\lesssim \sum_{n=\lceil 2t/(3\gamma)\rceil }^{\lfloor 2t/\gamma\rfloor}\int_{\bS_{n,l,x}}\mathbf 1_{-b\leq \log\norm{g_2g_1}-t\leq b}  \,\diff \mu^{*(n-l)}(g_2)\diff \mu^{*l}(g_1)+\sum_{n\geq 0} D_n +tD_l\\
&\lesssim \sum_{n=\lceil 2t/(3\gamma)\rceil }^{\lfloor 2t/\gamma\rfloor}\int_{\bS_{n,l,x}} \mathbf 1_{-b-4\leq \sigma_{g_2}(g_1x)-t+\log\norm{g_1}\leq b+4} \, \diff \mu^{*(n-l)}(g_2)\diff \mu^{*l}(g_1) +1 \\
&\leq \int_{G}   \oR(\mathbf 1_{u\in[-b-4,b+4]})\big(g_1x, t-\log\norm{g_1}\big)  \,   \diff \mu^{*l}(g_1) +1.
\end{align*}

Applying Lemma \ref{R-b-b}, we get that the last integral is bounded by a constant times $(b+5)^2$. The lemma follows.\endproof

\subsection{Residual process}

Let $f(x,v,u)$ be a bounded function on $\P^1\times \R_v \times \R_u$. Define the residual renewal operator by 
$$\oE f(x,t):=\sum_{n\geq 0}\int_{G^{2}}f\big(hgx,\sigma_h(gx),\sigma_g(x)-t\big)\diff \mu^{*n}(g)\diff \mu(h).$$
Notice that the above definition is similar to the one of $\oR$ from the previous section. Here we take into account one additional step in the random walk using $h \in G$. Observe that $\oE$ and $\oR$ coincide when acting on functions depending only on the $u$ variable.

Similarly as before, we define$$\oF_u f(x,v,\xi):=\int_{-\infty}^\infty f(x,v,u)e^{-iu\xi}\,\diff u,$$ which is the Fourier transform on the $\R_u$ factor.

The following result is the analogue of Proposition \ref{renewal-1} for the operator $\oE$.

\begin{proposition}\label{renewal-2}
Let $f$ be a  bounded continuous function on $\P^1\times \R_v \times \R_u$ such  that $\oF_u f(x,v,\cdot)$ is well-defined  and $\oF_u f(x,v,\cdot)\in \Cc^1$ for all $x,v$. Assume  that $\oF_u f(\cdot,\cdot,\xi)$ and $\partial_\xi\oF_u f(\cdot,\cdot,\xi)$ are of class $\Cc^1$ with $\Cc^1$-norms bounded by a positive constant $M$ for all $\xi$.  Assume moreover that the projection of $\supp(\oF_u f)$ to $\R_\xi$ is contained in a compact set $K$.  Then, for all $x\in\P^1$ and $t>0$, we have
\begin{align*}
\Big| \oE f(x,t)-{1\over \gamma}\int_{\P^1}\int_G\int_{-t}^\infty f\big(hy,\sigma_h(y),u\big) \diff u \diff \mu(h) \diff \nu(y) \Big| \leq {C_K M\over (1+t)^{1/4}},
\end{align*}
where $C_K>0$ is a constant independent of $f,M,x,t$.
\end{proposition}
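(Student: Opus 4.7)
The plan is to reduce Proposition \ref{renewal-2} to Proposition \ref{renewal-1} by averaging the last step of the walk into the observable. Define
$$F(y,u) := \int_G f\big(hy,\sigma_h(y),u\big) \, \diff \mu(h).$$
By Fubini and the definition of $\oE$ and $\oR$, one has $\oE f(x,t)=\oR F(x,t)$, and
$$\int_{\P^1}\int_{-t}^\infty F(y,u)\, \diff u\, \diff\nu(y) \;=\; \int_{\P^1}\int_G\int_{-t}^\infty f\big(hy,\sigma_h(y),u\big)\,\diff u\,\diff\mu(h)\,\diff\nu(y),$$
so the conclusion of Proposition \ref{renewal-1} applied to $F$ is exactly what we need, provided we can verify its hypotheses with a constant of the form $C\cdot M$. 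Note also that $\oF_u F(y,\xi) = \int_G \oF_u f(hy,\sigma_h(y),\xi)\,\diff\mu(h)$, so its support in the $\xi$-variable is contained in $K$.

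The main work is therefore to bound $\|\oF_u F(\cdot,\xi)\|_{W^{1,2}\cap\Cc^{\log^1}}$ and $\|\partial_\xi\oF_u F(\cdot,\xi)\|_{W^{1,2}\cap\Cc^{\log^1}}$ by a constant times $M$, uniformly in $\xi$. Writing $\phi_\xi(z,v):=\oF_u f(z,v,\xi)$, which is $\Cc^1$ with $\|\phi_\xi\|_{\Cc^1}\leq M$, the quantity $\Psi_h(y):=\phi_\xi(hy,\sigma_h(y))$ satisfies $\|\Psi_h\|_\infty\leq M$. For the $W^{1,2}$ estimate I would split $\partial\Psi_h$ into two pieces: one coming from differentiating in the first argument (which produces $h^*\omega$ for a bounded $(1,0)$-form $\omega$, whose $L^2$ norm is preserved by the unitary action of $h$), and one of the form $(\partial_v\phi_\xi)(hy,\sigma_h(y))\,\partial\sigma_h$, which is bounded in $L^2$ by Lemma \ref{lemma:sigma-estimates}(1) by $CM(1+\log^{1/2}\|h\|)$. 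Integrating over $h$ using Minkowski's inequality and the (more than) finite first moment of $\log\|h\|$ yields the desired bound on $\|F(\cdot,\xi)\|_{W^{1,2}}$.

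For the $\Cc^{\log^1}$ estimate on $\Psi_h$, the triangle inequality gives
$$|\Psi_h(y)-\Psi_h(y')|\;\leq\; M\bigl[d(hy,hy') + |\sigma_h(y)-\sigma_h(y')|\bigr].$$
Lemma \ref{lemma:sigma-estimates}(3) (with $p=2$) bounds the second term by $CM(1+\log^2\|h\|)/\log^\star d(y,y')$. For the first term, I would observe via \eqref{d-gxgy} that $d(hy,hy')\leq \min(\|h\|^2 d(y,y'),1)$, and then a short case split (according to whether $d(y,y')$ is larger or smaller than $\|h\|^{-2}$) shows that $d(hy,hy')\log^\star d(y,y')\lesssim 1+\log\|h\|$. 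Combining gives $[\Psi_h]_{\log^1}\lesssim M(1+\log^2\|h\|)$, which is $\mu$-integrable precisely by the finite second moment assumption, so after averaging we get $[F(\cdot,\xi)]_{\log^1}\lesssim M$. The same argument applied to $\partial_\xi\phi_\xi$ (whose $\Cc^1$ norm is also bounded by $M$) handles $\partial_\xi\oF_u F(\cdot,\xi)$.

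With these bounds in hand, Proposition \ref{renewal-1} applies to $F$ with constant of the form $C_K\cdot(C M)$, and the conclusion follows by relabeling the constant. The main obstacle is purely technical: showing that the $\Cc^{\log^1}$-seminorm of $\Psi_h$ grows only like $1+\log^2\|h\|$ in $h$, so that integration against $\mu$ converges under the finite second moment hypothesis. This is exactly the threshold where the argument works, and no stronger moment condition is needed.
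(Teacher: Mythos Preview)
Your proposal is correct and follows essentially the same route as the paper's proof: both reduce to Proposition~\ref{renewal-1} via the averaged observable $F=\oS f$ and then verify the $W^{1,2}$ and $\Cc^{\log^1}$ bounds using Lemma~\ref{lemma:sigma-estimates} together with the unitary action of $h$ on $L^2_{(1,0)}$. The only cosmetic difference is that for the $\Cc^{\log^1}$ bound on the first-slot term you argue directly via the case split on $d(y,y')\lessgtr\|h\|^{-2}$, whereas the paper invokes Lemma~\ref{lemma:sigma-estimates}(4); these yield the same $1+\log\|h\|$ growth.
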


\begin{proof}
Observe that $$\oE f(x,t)=\sum_{n\geq 0}\int_{G} \oS f\big(gx,\sigma_g(x)-t\big)\diff \mu^{*n}(g)=\oR(\oS f)(x,t),$$ 
where 
$$\oS f(x,u):=\int_G f\big(hx,\sigma_h(x),u\big)\diff \mu(h).$$  Therefore, the desired estimate for $\oE$ will follow directly from  Proposition \ref{renewal-1} once we show that $\oS f$ satisfies its hypotheses, possibly with a different constant $M' = O(M)$. 

Using Lebesgue's dominated convergence theorem, it is easy to see that the function $\oS f$ is bounded, continuous and $\oF_u \oS f(x,\cdot)$ is well-defined for all $x$.  By  Fubini's theorem, one has 
$$\oF_u \oS f(x,\xi)=\int_{-\infty}^\infty \int_G f\big(hx,\sigma_h(x),u\big)e^{-iu\xi} \, \diff \mu(h)\diff u=\int_G \oF_u f\big(hx,\sigma_h(x),\xi\big) \diff \mu(h).$$ 
Hence, the projection of $\supp(\oF_u \oS f)$ to $\R$ is contained in $K$. From the assumption that $\oF_u f(x,v,\cdot)\in \Cc^1$ and $\|\partial_\xi\oF_u f(\cdot,\cdot,\xi)\|_{\infty}\leq M$, we deduce that $\oF_u \oS f(x,\cdot)\in \Cc^1$.

It remains to bound $\norm{\oF_u \oS f(\cdot,\xi)}_{W^{1,2}\cap \Cc^{\log^1}}$  and $\norm{\partial_\xi\oF_u \oS f(\cdot,\xi)}_{W^{1,2}\cap \Cc^{\log^1}}$.
By the definition of $\oS f$, the triangle inequality and the assumption that $\mu$ has a finite second moment, it is enough to show that, for all $\xi$, we have
\begin{itemize}
\item[(i)] $\big\|\oF_u f\big(h \cdot \, ,\sigma_h( \, \cdot \, ),\xi \big) \big\|_{W^{1,2}}$ and  $\big\|\partial_\xi \oF_u f\big(h \cdot \, ,\sigma_h( \, \cdot \, ),\xi \big) \big\|_{W^{1,2}}$ are $\lesssim M \big(1 + \log^{1 \slash 2} \|h\|\big)$;
 \item[(ii)] $\big\|\oF_u f\big(h \cdot \, ,\sigma_h( \, \cdot \, ),\xi \big) \big\|_{\log^1}$ and  $\big\|\partial_\xi \oF_u f\big(h \cdot \, ,\sigma_h( \, \cdot \, ),\xi \big) \big\|_{\log^1}$ are $\lesssim M \big(1 + \log^2 \|h\|\big)$.
\end{itemize}

For (i), we observe first that $\big\|\oF_u f\big(h \cdot \, ,\sigma_h( \, \cdot \, ),\xi \big) \big\|_{L^1} \lesssim M$  since $\oF_uf $ is bounded by $M$ and the same reasoning applies to $\big\|\partial_\xi \oF_u f\big(h \cdot \, ,\sigma_h( \, \cdot \, ),\xi \big) \big\|_{L^1}$. Therefore, the only non-trivial estimates concern $\big \|\partial \oF_u f\big(h \cdot \, ,\sigma_h( \, \cdot \, ),\xi \big) \big \|_{L^2},\big \|\dbar \oF_u f\big(h \cdot \, ,\sigma_h( \, \cdot \, ),\xi \big) \big \|_{L^2}$ and $\big\| \partial \big[\partial_\xi \oF_u f\big(h \cdot \, ,\sigma_h( \, \cdot \, ),\xi \big) \big] \big \|_{L^2},\big\| \dbar \big[\partial_\xi \oF_u f\big(h \cdot \, ,\sigma_h( \, \cdot \, ),\xi \big) \big] \big \|_{L^2}$. We'll only estimate the terms involving $\partial$, the terms involving $\overline \partial$ being analogous.

From $h_* \sigma_h = -\sigma_{h^{-1}}$, we get $$\oF_u f\big(hx,\sigma_h(x),\xi\big) =h^*\big(\oF_u f(x,-\sigma_{h^{-1}}(x),\xi) \big).$$

Since $h$ acts unitarily on $L^2_{(1,0)}$, it is enough to estimate  $\big \|\partial \oF_u f \big(\, \cdot \, ,-\sigma_{h^{-1}}( \, \cdot \, ),\xi \big) \big \|_{L^2}$ and similarly for the $\xi$-derivative. By the chain rule, one has
\begin{align*}
(\partial_x \oF_u f)\big(x,-\sigma_{h^{-1}}(x),\xi\big)=\Big(\partial_x \oF_u f(x,v,\xi)+{\partial \oF_u f(x,v,\xi)\over \partial v}\partial_x(-\sigma_{h^{-1}}(x))\Big)\Big|_{v=-\sigma_{h^{-1}}(x)}.
\end{align*}
Using the assumption that $\norm{\oF_u f(\cdot,\cdot,\xi)}_{\Cc^1}\leq M$ and Cauchy-Schwarz inequality, we get
 \begin{align*}
 \big( i \partial \oF_uf \wedge \overline{ \partial\oF_uf } \big) \big(\, \cdot \, ,-\sigma_{h^{-1}}( \, \cdot \, ),\xi \big) \lesssim  M^2\omegaFS +M^2i\partial\sigma_{h^{-1}} \wedge \dbar\sigma_{h^{-1}}.
 \end{align*}
 
 From Lemma \ref{lemma:sigma-estimates}-(1) and the fact that $\|h\|=\|h^{-1}\|$, we get $\|\partial\sigma_{h^{-1}}\|^2_{L^2} \lesssim 1 + \log \|h\|$. Therefore, integrating the above inequality over $\P^1$ yields 
 $$\big \|\partial \oF_u f \big(\, \cdot \, ,-\sigma_{h^{-1}}( \, \cdot \, ),\xi \big) \big \|_{L^2} \lesssim M \big(1 + \log^{1 \slash 2} \|h\|\big).$$ This implies the first estimate in (i). Using that $\big\| \partial_\xi\oF_u f(\cdot,\cdot,\xi) \big\|_{\Cc^1}\leq M$, the second estimate is obtained analogously.

We now prove (ii). For the first estimate we need to prove that $$\big|\oF_u f\big(hx,\sigma_h(x),\xi\big) - \oF_u f\big(hy,\sigma_h(y),\xi\big)\big|  \lesssim M\big(1 + \log^2\|h\|\big) \cdot \big(\log^\star d(x,y) \big)^{-1}.$$
for all $x \neq y$ in $\P^1$. By the triangle inequality, it is enough to estimate 
$$\big|\oF_u f\big(hx,\sigma_h(x),\xi\big) - \oF_u f\big(hy,\sigma_h(x),\xi\big)\big|\quad\text{and}\quad \big|\oF_u f\big(hy,\sigma_h(x),\xi\big) - \oF_u f\big(hy,\sigma_h(y),\xi\big)\big|$$ separately. Since the $\Cc^1$-norm of $\oF_u f(\cdot,v,\xi)$ is bounded by $M$, the same is true for its $\Cc^{\log^1}$-norm. Using Lemma \ref{lemma:sigma-estimates}-(4), one concludes that the first term is $\lesssim M\big(1 + \log \|h\|\big)  \big(\log^\star d(x,y) \big)^{-1}$. Using again that $\|\oF_u f(\cdot,\cdot,\xi)\|_{\Cc^1}\leq M$ and Lemma \ref{lemma:sigma-estimates}-(3), we see that the second term is $\lesssim M\big(1 + \log^2 \|h\|\big) \big(\log^\star d(x,y) \big)^{-1}$. This proves the first estimate in (ii). The second estimate is proven similarly using the assumption $\big \| \partial_\xi\oF_u f(\cdot,\cdot,\xi) \big \|_{\Cc^1}\leq M$. The proof of the proposition is finished.
\end{proof}

\subsection{Residual process with cut-off} \label{subsec:E_1^+}

We now consider an operator obtained from $\oE$ which, for each $t>0$, takes into account only the trajectories for which the value of the cocycle jumps past $t$. More precisely, for a bounded function $f(x,v,u)$ on $\P^1\times \R_v\times \R_u$, we define  
$$\oE_1^+ f(x,t):=\sum_{n\geq 0}\int_{\sigma_g(x)<t\leq \sigma_{hg}(x)}f\big(hgx,\sigma_h(gx),\sigma_g(x)-t\big) \diff \mu^{*n}(g)\diff \mu(h).$$

Notice that, since $\sigma_g(x)+\sigma_h(gx)=\sigma_{hg}(x)$, the condition $\sigma_g(x)<t\leq \sigma_{hg}(x)$ is equivalent to $-\sigma_h(gx)\leq \sigma_g(x)-t <0$. Therefore, 
$$\oE_1^+ f(x,t)=\oE f_C(x,t), \quad \text{where} \quad f_C(x,v,u):=\mathbf 1_{-v\leq u<0}f(x,v,u)$$
 and $\oE$ is the residual operator introduced in the last subsection.
In particular, we have the following useful estimate, which says $\oE_1^+ $ is finite on bounded functions. We note that negative values of $t$ are allowed here.

	\begin{lemma}\label{renewal-3-lemma-2}
	For every $x\in\P^1$ and $t\in\R$, we have  	
	$$\big|\oE_1^+ f(x,t)\big|=\big|\oE(\mathbf 1_{-v\leq u<0}f)(x,t)\big|\leq C \norm{f}_{L^\infty},$$ where $C>0$ is a constant independent of $f,x,t$.
\end{lemma}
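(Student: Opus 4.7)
The plan is to reduce the estimate to the already-established bound in Lemma \ref{R-b-b} by enlarging the indicator so that the $h$-dependence factorizes.

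First, since $|f| \leq \|f\|_\infty$ pointwise, it suffices to prove the bound when $f \equiv 1$, i.e., to show that
\[
\oE(\mathbf 1_{-v \leq u < 0})(x,t) = \sum_{n\geq 0}\int \mathbf 1_{-\sigma_h(gx)\leq \sigma_g(x)-t <0}\,\diff\mu^{*n}(g)\diff\mu(h) \leq C
\]
uniformly in $x \in \P^1$ and $t \in \R$.

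Next, I will exploit the elementary bound $|\sigma_h(y)| \leq \log\|h\|$ (valid for every $y \in \P^1$, see the paragraph following the definition of the norm cocycle) to enlarge the indicator:
\[
\mathbf 1_{-\sigma_h(gx)\leq \sigma_g(x)-t <0}\ \leq\ \mathbf 1_{-\log\|h\|\leq \sigma_g(x)-t <0}.
\]
After this replacement, the new indicator depends on $h$ only through $\log\|h\|$, which allows me to apply Fubini and pull the $h$-integral outside. Summing over $n$ and integrating over $g$ first yields
\[
\oE(\mathbf 1_{-v \leq u < 0})(x,t) \ \leq\ \int_G \oR\big(\mathbf 1_{u\in [-\log\|h\|,\,0)}\big)(x,t)\,\diff\mu(h).
\]

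The final step is to invoke Lemma \ref{R-b-b} with $b := \log\|h\| \geq 0$ (recall $\|h\|\geq 1$ since $h\in \SL_2(\C)$). Since $[-\log\|h\|, 0) \subset [-\log\|h\|, \log\|h\|]$, the lemma gives
\[
\oR\big(\mathbf 1_{u\in [-\log\|h\|,\,0)}\big)(x,t) \ \leq\ \oR\big(\mathbf 1_{u\in [-\log\|h\|,\,\log\|h\|]}\big)(x,t) \ \leq\ C\,(\log\|h\|+1)^2,
\]
with $C$ independent of $h,x,t$. Integrating against $\mu$ and using that $(\log\|h\|+1)^2 \leq 2\log^2\|h\|+2$, the finite second moment hypothesis on $\mu$ gives
\[
\int_G (\log\|h\|+1)^2 \,\diff\mu(h) \ \leq\ 2\int_G \log^2\|h\|\,\diff\mu(h) + 2 \ <\ \infty,
\]
which yields the desired uniform bound. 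There is no real obstacle here; the only subtlety is noticing that the naïve interchange of sum and integral works precisely because the $h$-dependence of the cocycle can be harmlessly replaced by $\log\|h\|$, converting the residual operator into the already-controlled renewal operator $\oR$ on a symmetric interval.
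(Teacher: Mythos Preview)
Your proof is correct and follows essentially the same approach as the paper: reduce to $f=\mathbf 1$, replace $\sigma_h(gx)$ by $\log\|h\|$ to factor out the $h$-dependence, recognize the resulting sum as $\oR(\mathbf 1_{u\in[-\log\|h\|,0)})(x,t)$, bound it via Lemma~\ref{R-b-b}, and integrate using the finite second moment. The paper's argument is identical in structure and in the lemmas invoked.
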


\begin{proof}
	It is enough to assume $f=\mathbf 1$.
	By definition, we have
	\begin{align*}
	&\oE_1^+ \mathbf 1(x,t)=\sum_{n\geq 0}\mu\otimes\mu^{*n}\big\{(h,g)\in G^2:\, -\sigma_h(gx)\leq  \sigma_g(x)-t<0\big\}\\
	&\leq \sum_{n\geq 0} \mu\otimes\mu^{*n}  \big\{(h,g)\in G^2:\, -\log\norm{h}\leq\sigma_g(x)-t<0\big\}=\int_G \oR(\mathbf 1_{u\in[-\log\norm{h},0)})(x,t)\,\diff \mu(h),
	\end{align*}
where $\oR$ is the first renewal operator introduced in Subsection \ref{subsec:R}.
	
 By Lemma \ref{R-b-b},  we have
 $$\oR(\mathbf 1_{u\in[-\log\norm{h},0)})(x,t) \leq \oR(\mathbf 1_{u\in[-\log\norm{h},\log\norm{h}]})(x,t) \lesssim (1+\log\norm{h})^2.$$
  Since $\int_G \log^2\norm{h} \diff \mu(h)<\infty$ by assumption,  the lemma follows.
\end{proof}

The asymptotic behavior of $\oE_1^+ f(x,t)$ for $t$ large is given by the next result. Notice that, in contrast to Propositions \ref{renewal-1} and \ref{renewal-2}, here we require conditions on $f$ rather than on its Fourier transform.

\begin{proposition}\label{renewal-3}
	Let $f$ be a $\Cc^1$ function on $\P^1\times \R_v \times \R_u$ such that $\norm{f}_{\Cc^1}\leq M$. Assume that  the projection of $\supp (f)$ to $\R_v$ is contained in $[-\kappa,\kappa]$ for some $\kappa>0$. Then, for all $x\in\P^1$,  $0<\delta< 1$ and $t>\kappa+\delta$, we have
	\begin{align*}
	\Big|\oE_1^+ f(x,t)-{1\over{\gamma}}\int_{\P^1}\int_G \int_{-\sigma_h^+(y)}^0 f\big(hy,\sigma_h(y),u\big) \diff u \diff \mu(h) \diff \nu(y) \Big|
	\leq  C_\delta{{M(\kappa+1)^2}\over (1+t)^{1/4}}+C \delta M,
	\end{align*} 
where $\sigma_h^+(\,\cdot\,):=\max\big(\sigma_h(\,\cdot\,),0\big)$, $C_\delta>0$ is a constant independent of $f,M,\kappa,x,t$, and $C>0$ is a constant independent of $\delta,f,M,\kappa,x,t$.
\end{proposition}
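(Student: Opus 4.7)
The plan is to apply Proposition~\ref{renewal-2} to a smooth approximation of $f_C(x,v,u):=\mathbf{1}_{-v\le u<0}f(x,v,u)$, so that $\oE_1^+ f = \oE f_C$. The difficulty is that $f_C$ is discontinuous in $u$ at $u=0$ and $u=-v$ and, even after being smoothed in $u$, its Fourier transform in $u$ is not compactly supported in $\xi$; both features are required by Proposition~\ref{renewal-2}. I therefore proceed in two smoothing steps.

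First, I will smooth the jumps. Writing $\mathbf{1}_{-v\le u<0} = H(u+v)-H(u)$ with $H$ the Heaviside function, I replace $H$ by a $\Cc^1$ approximation $H_\delta$ that vanishes on $(-\infty,-\delta]$, equals $1$ on $[\delta,\infty)$ and satisfies $\|H_\delta'\|_\infty\lesssim 1/\delta$. Setting $\tilde f_C(x,v,u) := [H_\delta(u+v)-H_\delta(u)] f(x,v,u)$, one has $|\tilde f_C - f_C|\lesssim M\bigl(\mathbf{1}_{|u|\le\delta}+\mathbf{1}_{|u+v|\le\delta}\bigr)$. Applying $\oE$ to the first indicator gives $\oR(\mathbf{1}_{u\in[-\delta,\delta]})$, while for the second, the push-forward $(h,g)\mapsto hg$ (which sends $\mu\otimes\mu^{*n}$ to $\mu^{*(n+1)}$) allows one to rewrite $\oE(\mathbf{1}_{|u+v|\le\delta})$ in terms of the same $\oR$. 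Lemma~\ref{renewal-3-lemma-1} then yields an error of order $CM\delta + C_\delta M(1+t)^{-1/4}$ for this step.

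Second, to compactify the $\xi$-support of $\oF_u$, I will convolve $\tilde f_C$ in $u$ with $\vartheta_\delta$ from Lemma~\ref{l:vartheta}, obtaining $F^\delta(x,v,u) := \bigl(\tilde f_C(x,v,\cdot)*\vartheta_\delta\bigr)(u)$, whose Fourier transform $\widehat{\vartheta_\delta}(\xi)\oF_u \tilde f_C(\cdot,\cdot,\xi)$ is supported in $\xi\in[-\delta^{-2},\delta^{-2}]$. A direct computation shows that $\|\oF_u F^\delta\|_{\Cc^1}$ and $\|\partial_\xi \oF_u F^\delta\|_{\Cc^1}$ are bounded by $C_\delta M(\kappa+1)^2$: one factor $(\kappa+1)$ comes from the length of the $u$-support of $\tilde f_C$, and the other from the factor of $u$ produced by $\partial_\xi$. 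Proposition~\ref{renewal-2} then gives the renewal error $C_\delta M(\kappa+1)^2(1+t)^{-1/4}$. The difference $F^\delta - \tilde f_C$ satisfies $|F^\delta-\tilde f_C|\lesssim M\delta$ pointwise (combining $\|\partial_u\tilde f_C\|_\infty\lesssim M/\delta$ with $\int|w|\vartheta_\delta(w)\,\diff w=O(\delta^2)$), and its $\oE$-contribution is controlled via the renewal estimates of Subsection~\ref{subsec:R}.

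Finally, using $\int\vartheta_\delta=1$ and the assumption $t>\kappa+\delta$ (so that $[-t,\infty)$ contains the essential $u$-support of $F^\delta$), the Proposition~\ref{renewal-2} main term reduces to $\gamma^{-1}\int_{\P^1}\int_G\int\tilde f_C(hy,\sigma_h(y),u)\,\diff u\,\diff\mu(h)\,\diff\nu(y)$ up to a tail error of size $O(M\delta)$; this integral in turn differs from $\gamma^{-1}\int_{\P^1}\int_G\int_{-\sigma_h^+(y)}^0 f(hy,\sigma_h(y),u)\,\diff u\,\diff\mu(h)\,\diff\nu(y)$ by $O(M\delta)$ coming from the jump smoothing. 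Summing all the error contributions produces the claimed estimate. The main technical obstacle is the bookkeeping of the $\kappa$- and $\delta$-dependences: the $(\kappa+1)^2$ factor in the renewal error is forced by the $\Cc^1$-estimates fed into Proposition~\ref{renewal-2}, while the mollification correction in the interior region (whose $u$-length is of order $\kappa$) must be organized so that the resulting $\delta$-error does not pick up an extra factor of $\kappa$ outside the $C\delta M$ constant.
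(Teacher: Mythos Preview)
Your two-step smoothing strategy is close in spirit to the paper's, but there is a real gap in Step~2. The pointwise bound $|F^\delta-\tilde f_C|\lesssim M\delta$ is correct, yet it does \emph{not} by itself control $\oE(|F^\delta-\tilde f_C|)$: the operator $\oE$ applied to a constant (or to $\mathbf 1_{|v|\le\kappa}$) is infinite. If instead you use the $u$-support $[-\kappa-\delta,\delta]$ of $\tilde f_C$ and invoke the only applicable estimate in Subsection~\ref{subsec:R}, namely Lemma~\ref{R-b-b}, you obtain $\oE(\mathbf 1_{|u|\lesssim\kappa})\lesssim(\kappa+1)^2$, and the interior contribution becomes $M\delta(\kappa+1)^2$ rather than $M\delta$. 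You flag this danger in your last sentence but never say how to defuse it; referring to ``the renewal estimates of Subsection~\ref{subsec:R}'' is precisely what produces the unwanted $\kappa$-factor.

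The missing ingredient is Lemma~\ref{renewal-3-lemma-2} (which lives in Subsection~\ref{subsec:E_1^+}, not~\ref{subsec:R}): the uniform bound $\oE(\mathbf 1_{-v\le u<0})=\oE_1^+\mathbf 1(x,t)\le C$, valid for all $t$ and all $x$, with $C$ independent of $\kappa$. This is exactly what makes the interior error $\kappa$-free. The paper does a \emph{single} convolution $f_\delta:=f_C*\vartheta_\delta$ and then, in its Claim~3, splits $|f_C-f_\delta|$ according to Lemma~\ref{renewal-3-lemma-3} into an interior piece (of size $M\delta$ on $\{-v+\delta\le u\le-\delta\}\subset\{-v\le u<0\}$, handled by Lemma~\ref{renewal-3-lemma-2}), two boundary strips (handled as in your Step~1 by $\oR(\mathbf 1_{|u|\le\delta})$), and an exterior tail (bounded by $(\mathbf 1_{[-v,0]}*\vartheta_\delta)\cdot\mathbf 1_{u\notin[-v-\delta,\delta]}$ and reduced, via Fubini in the convolution variable, back to $\int_{|w|\ge\delta}\vartheta_\delta(w)\,\oE_1^+\mathbf 1(x,t+w)\,\diff w\lesssim\delta$). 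Your two-step route can be repaired the same way, but only after you bring in Lemma~\ref{renewal-3-lemma-2}; without it the proof does not close.
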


\begin{proof}
As noted above, we have that $\oE_1^+ f(x,t)=\oE f_C(x,t)$, where $f_C =\mathbf 1_{-v\leq u<0}f $. Notice also that
 $$  \int_{-\sigma_h^+(y)}^0 f\big(hy,\sigma_h(y),u\big) \diff u   = \int_{-\infty}^\infty \mathbf 1_{-\sigma_h^+(y)\leq u< 0} f\big(hy,\sigma_h(y),u\big) \diff u =\int_{-t}^\infty f_C \big(hy,\sigma_h(y),u\big) \diff u$$
 because $t>\kappa$ and $f(x,v,u)=0$ for $v\notin[-\kappa,\kappa]$. 
  Therefore, the desired inequality would follow directly from Proposition \ref{renewal-2} applied to $f_C$. However, we cannot apply this proposition directly,  because $\oF_uf_C(x,v,\cdot)$ is not $\Cc^1$ and it might not have compact support in $\R_\xi$. In order to remedy the situation we use the functions from Lemma \ref{l:vartheta} in order to approximate $f_C$ by a function with these properties. This approximation will give rise to the extra terms $ C_\delta (\kappa + 1)^2 M$ and $C\delta M$.
	
	For $0<\delta < 1$, let $\vartheta_\delta$ be the functions from Lemma \ref{l:vartheta}. Set
	 $$f_\delta(x,v,u):=f_C*\vartheta_\delta(x,v,u):=\int_{-\infty}^\infty f_C(x,v,w)\vartheta_\delta(u-w)\diff w.$$
		Notice that $f_\delta$ vanishes for $v$ outside $[0,\kappa]$.
		
	\vskip5pt
	
	\noindent	\textbf{Claim 1.} The function $f_\delta$ satisfies the hypotheses of Proposition \ref{renewal-2} with a new constant $M' \lesssim  (\kappa + 1)^2 M$ instead $M$.

	\proof[Proof of Claim 1] The convolution formula yields $\oF_u f_\delta=\widehat{\vartheta_\delta} \cdot\oF_u f_C$, so the projection of  $\supp(\oF_u f_\delta)$ to $\R_\xi$ is contained in  $[-\delta^{-2},\delta^{-2}]$. It is easy to see that $f_\delta$ is a  bounded continuous function and $\oF_u f_\delta(x,v,\cdot)$ is well-defined and $\Cc^1$ for all $x,v$.

	As $f_\delta$ vanishes for $v$ outside $[0,\kappa]$, in order to estimate $\norm{\oF_uf_\delta(\cdot,\cdot,\xi)}_{\Cc^1}$ and  $\norm{\partial_\xi\oF_uf_\delta(\cdot,\cdot,\xi)}_{\Cc^1}$, it is enough to consider $v\in [0,\kappa]$.
    From the identities
   $$\oF_uf_C(x,v,\xi)=\int f_C(x,v,u)e^{-iu\xi}\diff u=\int_{-v}^0f(x,v,u)e^{-iu\xi}\diff u,$$
   we get that $\norm{\oF_uf_C(\cdot,v,\xi)}_{\Cc^1} \leq $ $\int_{-v}^0 \norm{f}_{\Cc^1} \diff u$ $\leq \kappa \norm{f}_{\Cc^1}$ and $\norm{\oF_uf_C(x,\cdot,\xi)}_{\Cc^1} \leq$ $\int_{-\kappa}^0\norm{f}_{\Cc^1}\diff u+\norm{f}_{\Cc^0}\leq (\kappa+1)\norm{f}_{\Cc^1}$.  Since $\|\widehat{\vartheta_\delta}\|_{\Cc^1}$ is bounded independently of $\delta$ and $\widehat{\vartheta_\delta}$ is independent of $x$ and $v$, we obtain 
   $$\norm{\oF_uf_\delta(\cdot,\cdot,\xi)}_{\Cc^1}=\norm{\widehat{\vartheta_\delta} \oF_u f_C(\cdot,\cdot,\xi)}_{\Cc^1}\lesssim (\kappa+1)\norm{f}_{\Cc^1}.$$
   
Using again that $\|\widehat{\vartheta_\delta}\|_{\Cc^1}$ is bounded independently of $\delta$,  we get
   $$\norm{\partial_\xi \oF_uf_\delta(\cdot,\cdot,\xi)}_{\Cc^1}\lesssim \norm{\partial_\xi \oF_uf_C(\cdot,\cdot,\xi)}_{\Cc^1}+\norm{\oF_uf_C(\cdot,\cdot,\xi)}_\infty.$$
   Then, by the identity $\partial_\xi \oF_uf_C(x,v,\xi)=\int_{-v}^0 -iuf(x,v,u)e^{-iu\xi}\diff u$, the fact that $|u|\leq \kappa$ in the domain of integration and an argument similar to the one above, we obtain  $$\norm{\partial_\xi\oF_uf_\delta(\cdot,\cdot,\xi)}_{\Cc^1}\lesssim (\kappa^2+\kappa)\norm{f}_{\Cc^1}.$$ This proves the claim. \endproof

\noindent	\textbf{Claim 2.} We have $$\Big| \int_{\P^1}\int_G \Big( \int_{-t}^\infty f_\delta\big(hy,\sigma_h(y),u\big) \diff u - \int_{-\sigma_h^+(y)}^0  f\big(hy,\sigma_h(y),u\big) \diff u \Big)  \diff \mu(h) \diff \nu(y) \Big| \lesssim \delta M.$$

\proof[Proof of Claim 2] For $v\in[0,\kappa]$,	using $\int_\R \vartheta_\delta(u) \diff u =1$ and Fubini's theorem, we obtain
	\begin{align}
		\int_{-t}^\infty f_\delta (x,v,u) \diff u&=\int_{-t}^\infty \int_{-\infty}^\infty f_C(x,v,w)\vartheta_\delta(u-w)\diff w \diff u  = \int_{-t}^\infty  \int_{-v}^0   f(x,v,w)\vartheta_\delta(u-w) \diff w\diff u \nonumber\\
		&=\int_{-v}^0f(x,v,w)\diff w -\int_{-v}^0 f(x,v,w) \Big(\int_{-\infty}^{-t-w}\vartheta_\delta (u) \diff u \Big) \diff w. \label{renewal-3-1.5}
	\end{align}

	Since $t> \kappa+\delta$ by assumption, we have $-t-w\leq -t+v\leq -t+\kappa< -\delta$. Therefore,
	$$\int_{-\infty}^{-t-w}\vartheta_\delta (u) \diff u \leq \int_{-\infty}^{-\delta} \vartheta_\delta(u) \diff u   \lesssim \delta.$$ 
	Putting $x = hy$ and $v = \sigma_h(y)$ in \eqref{renewal-3-1.5}, we get that
	$$\Big| \int_{-t}^\infty f_\delta\big(hy,\sigma_h(y),u\big) \diff u - \int_{-\sigma_h^+(y)}^0  f\big(hy,\sigma_h(y),u\big) \diff u \Big| \lesssim \delta \int_{-\sigma_h^+(y)}^0 \big| f\big(hy,\sigma_h(y),u\big)\big| \diff u .$$
	Note that when $\sigma_h(y)<0$, the left hand side vanishes.
	
Since  $\sigma_h^+(y)\leq \log\norm{h}$ and $f$ is bounded by $M$, the last integral is bounded by $M \log\|h\|$. Integrating over $y$ and $h$ and using that $\int_G \log \norm{h} \diff \mu(h) < \infty$ gives the claim.  \endproof

Applying Proposition \ref{renewal-2} to $f_\delta$  and using Claims 1 and 2 yields
	\begin{align}
\Big|\oE (f_\delta)(x,t)-{1\over{\gamma}}\int_{\P^1}\int_G\int_{-\sigma_h^+(y)}^0 f\big(hy,\sigma_h(y),u\big) \diff u \diff \mu(h) \diff \nu(y)\Big|  
\lesssim \delta M+C_\delta {{M(\kappa+1)^2}\over (1+t)^{1/4}}  \label{renewal-3-4}
\end{align}
for some constant $C_\delta>0$ independent of $f,M,x,t$.
\vskip 5pt
	
	\noindent	\textbf{Claim 3.} 
There is a constant $C_\delta >0$ such that $\oE\big(|f_C-f_\delta|\big)(x,t)\lesssim \delta M+C_\delta M (1+t)^{-1/4}$.

\proof[Proof of Claim 3] We'll use  Lemmas \ref{renewal-3-lemma-3}  and \ref{renewal-3-lemma-2} above. Recall that $f_C(x,v,u)=\mathbf 1_{-v\leq u<0}f(x,v,u)$ and $\norm{f}_{\Cc^1}\leq M$. By Lemma \ref{renewal-3-lemma-3}, we have
\begin{align}
\oE\big(|f_C-f_\delta| \big)\lesssim M\oE\big(\delta \cdot\mathbf 1_{u\in[-v+\delta,-\delta]}\big) 
+M \oE &\big(\mathbf 1_{u\in[-v-\delta,-v+\delta]\cup [-\delta,\delta]}\big) \nonumber \\
&+ M \oE\big((  \mathbf 1_{u\in[-v,0]} *\vartheta_\delta)\cdot\mathbf 1_{u\notin[-v-\delta,\delta]}\big) .\label{renewal-3-lemma-3-1}
\end{align}
	
	For the first term in \eqref{renewal-3-lemma-3-1}, note that $u\in[-v+\delta,-\delta]$ implies $-v\leq u<0$. So  Lemma \ref{renewal-3-lemma-2} gives 
	$$\oE\big(\delta \cdot\mathbf 1_{u\in[-v+\delta,-\delta]}\big)\leq \oE\big(\delta\cdot\mathbf 1_{-v\leq u<0}\big)\lesssim \delta .$$
	
	For the second term in \eqref{renewal-3-lemma-3-1}, we observe that $\oE(\mathbf 1_{u\in [-\delta,\delta]})(x,t)=\oR(\mathbf 1_{u\in[-\delta,\delta]})(x,t)$ because $\mathbf 1_{u\in [-\delta,\delta]}$ is independent of $x$ and $v$. Thus,
	\begin{align*}
   &\oE\big(\mathbf 1_{u\in[-v-\delta,-v+\delta]\cup [-\delta,\delta]}\big)(x,t)\leq \oE(\mathbf 1_{u\in [-\delta,\delta]})(x,t)+\oE(\mathbf 1_{u\in[-v-\delta,-v+\delta]})(x,t)\\
	&=\oR(\mathbf 1_{u\in[-\delta,\delta]})(x,t)+\sum_{n\geq 0}\mu\otimes \mu^{*n}\big\{(h,g):\, -\sigma_h(gx)-\delta\leq \sigma_g(x)-t\leq -\sigma_h(gx)+\delta\big\}\\
	&=\oR(\mathbf 1_{u\in[-\delta,\delta]})(x,t)+\sum_{n\geq 0}\mu^{*(n+1)}\big\{g':\,-\delta\leq \sigma_{g'}(x)-t\leq \delta\big\}
	\\
	&\leq 2\oR(\mathbf 1_{u\in[-\delta,\delta]})(x,t)\lesssim \delta +C_\delta\delta(\delta+1) (1+t)^{-1/4}  \lesssim \delta+C_\delta (1+t)^{-1/4},
	\end{align*}
	where in the last line we have used Lemma \ref{renewal-3-lemma-1}, the assumption that $t > \kappa + \delta > \delta$ and that $\delta < 1$.
	
	For the last  term in \eqref{renewal-3-lemma-3-1}, we'll use the identity  $ \mathbf 1_{u\in[-v,0]}*\vartheta_\delta(u)=\int_u^{u+v}\vartheta_\delta (w) \diff w$. Notice that  $\sigma_g(x)< -\sigma_h(gx)-\delta$ if and only if $\sigma_{hg}(x)<-\delta$. We also have $w\notin [-\delta,\delta]$ when $u\notin [-v-\delta,\delta]$ and $u\leq w\leq u+v$. Therefore, using Fubini's theorem,  the last term in \eqref{renewal-3-lemma-3-1} is equal to
	\begin{align*}
&\sum_{n\geq 0}\int_{G^{2}} \mathbf 1_{\sigma_g(x)-t \notin [-\sigma_h(gx)-\delta,\delta]}\int_{\sigma_g(x)-t}^{\sigma_g(x)-t+\sigma_h(gx)} \vartheta_\delta(w) \,\diff w \diff \mu^{*n}(g) \diff \mu(h)  \\
& \leq \int_{w\notin [-\delta,\delta]} \vartheta_\delta(w) \sum_{n\geq 0}\int_{G^{2}}\mathbf 1_{\sigma_g(x)\leq w+t\leq \sigma_{hg}(x)} \,\diff \mu^{*n}(g) \diff \mu(h) \diff w \\
&=\int_{w\notin [-\delta,\delta]} \vartheta_\delta(w) \cdot\oE_1^+ \mathbf 1 (x,w+t)\,\diff w \lesssim \delta,
	\end{align*}
	where in the last inequality we used Lemma \ref{renewal-3-lemma-2} and the fact that $\int_{|w| \geq \delta}  \vartheta_\delta (w)\diff w \lesssim \delta$. We conclude that the right hand side of  \eqref{renewal-3-lemma-3-1} is $\lesssim \delta M+C_\delta M (1+t)^{-1/4}$, thus proving the claim. \endproof

The proof the proposition is completed by writing $\oE^+_1f = \oE f_C = \oE(f_C - f_\delta) + \oE(f_\delta)$ and coupling \eqref{renewal-3-4} with Claim 3.
\end{proof}

     Symmetrically,	we define $$\oE_1^-f(x,t):=\sum_{n\geq 0}\int_{\sigma_{hg}(x)<t\leq \sigma_g(x)} f\big(hgx,\sigma_h(gx),\sigma_g(x)-t\big)\diff \mu^{*n}(g)\diff \mu(h),$$
 which enjoys a similar asymptotic as $\oE_1^+$.
	
\begin{proposition} \label{renewal-3-remark}
		Let $f$ be a $\Cc^1$ function on $\P^1\times \R_v \times \R_u$ such  that $\norm{f}_{\Cc^1}\leq M$ and the projection of $\supp (f)$ to $\R_v$ is contained in $[-\kappa,\kappa]$ for some $\kappa>0$. Then, for all  $x\in\P^1$, $0<\delta< 1$ and $t>\kappa+\delta$, we have
	\begin{align*}
	\Big|\oE_1^-f(x,t)-{1\over{\gamma}}\int_{\P^1}\int_G \int_0^{\sigma_h^-(y)} f\big(hy,\sigma_h(y),u\big) \diff u \diff \mu(h) \diff \nu(y) \Big|
	\leq C_\delta{{M(\kappa+1)^2}\over (1+t)^{1/4}}+C\delta M,
	\end{align*} 
	where $\sigma_h^-(\,\cdot\,):=\max\big(-\sigma_h(\,\cdot\,),0\big)$, $C_\delta>0$ is a constant independent of $f,M,\kappa,x,t$, and $C>0$ is a constant independent of $\delta,f,M,\kappa,x,t$.
\end{proposition}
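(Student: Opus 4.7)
The plan is to mirror the argument used in the proof of Proposition \ref{renewal-3}, adapting the cut-off function to reflect the reversed inequality $\sigma_{hg}(x)<t\leq \sigma_g(x)$. Using the cocycle identity $\sigma_{hg}(x)=\sigma_g(x)+\sigma_h(gx)$, this inequality becomes $0<\sigma_g(x)-t\leq -\sigma_h(gx)$. Writing $u=\sigma_g(x)-t$ and $v=\sigma_h(gx)$, we therefore have
$$\oE_1^- f(x,t)=\oE \tilde f_C(x,t), \quad\text{where}\quad \tilde f_C(x,v,u):=\mathbf 1_{0<u\leq -v}\,f(x,v,u).$$
Note that $\tilde f_C$ is supported in $\{v\leq 0\}$, so that $-v=\sigma_h^-(y)$ when $v=\sigma_h(y)$, and $\int_{-t}^\infty \tilde f_C(hy,\sigma_h(y),u)\,\diff u=\int_0^{\sigma_h^-(y)} f(hy,\sigma_h(y),u)\,\diff u$ provided $t>\kappa$.

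Next, following the proof of Proposition \ref{renewal-3} verbatim with $\tilde f_C$ in place of $f_C$, I regularize by setting $\tilde f_\delta:=\tilde f_C*\vartheta_\delta$ (convolution in the $u$ variable only), where $\vartheta_\delta$ is from Lemma \ref{l:vartheta}. Since $\tilde f_C$ is, as before, bounded by $M$ and supported in $\{v\in[-\kappa,0]\}$ with the same cut-off behavior in $u$ (now on the positive side), the verification that $\tilde f_\delta$ satisfies the hypotheses of Proposition \ref{renewal-2} with a new constant $M'\lesssim (\kappa+1)M$ is identical to Claim 1 in the proof of Proposition \ref{renewal-3}. Similarly, the Fourier support of $\oF_u \tilde f_\delta$ lies in $[-\delta^{-2},\delta^{-2}]$.

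Then I apply Proposition \ref{renewal-2} to $\tilde f_\delta$ and compare with the desired limit. For $t>\kappa+\delta$, the analogue of Claim 2 of the proof of Proposition \ref{renewal-3} gives
$$\Big|\int_{-t}^\infty \tilde f_\delta(hy,\sigma_h(y),u)\,\diff u -\int_0^{\sigma_h^-(y)} f(hy,\sigma_h(y),u)\,\diff u \Big|\lesssim \delta M \log^\star\|h\|,$$
by using $\int_\R \vartheta_\delta=1$, Fubini, and the fact that the remaining tail integral $\int_{-\infty}^{-t-w}\vartheta_\delta$ is bounded by $\int_{-\infty}^{-\delta}\vartheta_\delta\lesssim \delta$ because $-t-w<-\delta$ on the relevant $w$-range. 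Integration against $\diff\mu(h)$ then uses that $\mu$ has a finite first moment.

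Finally, the control of $\oE(|\tilde f_C-\tilde f_\delta|)$ proceeds exactly as in Claim 3 of the proof of Proposition \ref{renewal-3}, using Lemmas \ref{renewal-3-lemma-3} and \ref{renewal-3-lemma-2}: the decomposition of $|\tilde f_C-\tilde f_\delta|$ into the near-boundary pieces (on $u\in[-\delta,\delta]\cup[-v-\delta,-v+\delta]$), the interior piece, and the far-from-support piece gives a bound of the form $\delta M+C_\delta M(1+t)^{-1/4}$ after using Lemma \ref{renewal-3-lemma-1} with $\delta$ replaced by a fixed small constant. Combining these estimates with the application of Proposition \ref{renewal-2} to $\tilde f_\delta$ yields the stated inequality. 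There is no new obstacle compared with the proof of Proposition \ref{renewal-3}; the whole argument is merely the symmetric transcription, since the relevant estimates on $\oR$ and $\oE$ do not distinguish between the two sides of the jump.
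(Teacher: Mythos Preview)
Your proposal is correct and follows exactly the same approach as the paper, which simply defines $f_C^-:=\mathbf 1_{0\leq u<-v}f$ and states that the proof of Proposition~\ref{renewal-3} can be repeated verbatim. Your cut-off $\mathbf 1_{0<u\leq -v}$ differs from the paper's $\mathbf 1_{0\leq u<-v}$ only on a set of measure zero, and your outline of the three claims is precisely the symmetric transcription the paper has in mind.
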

	
	\begin{proof}
		Define $f_C^-:=\mathbf 1_{0\leq u<-v} f$, then $\oE_1^- f(x,t)=\oE f_C^-(x,t)$. We can thus repeat the proof of Proposition \ref{renewal-3} in the current situation and obtain the desired asymptotic. 
		\end{proof}

\subsection{Second residual process with  cut-off}\label{subseq:E_2^+}
We now introduce a second residual operator with cut-off. There are a few important differences between this new operator and the operator $\oE^+_1$ from last section. Firstly, we consider the jumps of the norms $\log\norm{g}$ instead of jumps of the norm cocycle $\sigma_g(x)$. Secondly, this new operator also takes into account the reverse random walk on $\P^1$ induced by the probability measure $\check \mu$, which is by definition the image of $\mu$ under the inversion map $g \mapsto g^{-1}$.

Let $f(\check x,x,v,u)$ be a bounded function on $\P^1\times\P^1\times\R_v\times\R_u$. Define 
\begin{align*}
\oE_2^+ f(\check x,x,t):&=\sum_{n\geq 0}\int_{\log\norm{g}<t\leq \log\norm{hg}} f\Big((hg)^{-1}\check x,hgx,\log{\norm{hg}\over \norm{g}},\log\norm{g}-t\Big) \diff \mu^{*n}(g)\diff \mu(h).
\end{align*} 

Observe that, since $\mu$ is non-elementary and has a finite second moment, the same properties hold for $\check \mu$. We will denote by $\check \nu$ the corresponding stationary measure of $\check\mu$. In the next proposition, as in Proposition \ref{renewal-3}, we require conditions on $f$ and not on its Fourier transform.

\begin{proposition}\label{renewal-4}
		Let $f$ be a  $\Cc^1$ function on $\P^1\times\P^1\times \R_v \times \R_u$ such that $\norm{f}_{\Cc^1}\leq M$. Assume that the projection of $\supp(f)$ to $\R_v$ is contained in $[-\kappa,\kappa]$ for some $\kappa>0$.   Then, for all $\check x,x\in\P^1$, $0<\delta< 1$ and $t>2\kappa+2\delta$, we have
	\begin{align*}
	\Big|\oE_2^+ f(\check x,x,t)-{1\over{\gamma}}\int_{\P^1}\int_{\P^1}\int_G \int_{-\sigma_h^+(y)}^0 f\big(\check y,hy,\sigma_h(y),u\big) \diff u \diff \mu(h) \diff \nu(y)\diff \check\nu(\check y)\Big| \\
	\leq C_\delta{M(\kappa+1)^2\over (1+t)^{1/4}} +C\delta M+\varrho(t)M,
	\end{align*}
	 where $\varrho(t)$ is some rate function independent of $\delta,f,M,\kappa,\check x, x$, $C_\delta>0$ is a constant independent of $f,M,\kappa,\check x,x,t$, and $C>0$ is a constant independent of $\delta,f,M,\kappa,\check x, x,t$. 
\end{proposition}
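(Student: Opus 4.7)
The strategy is to reduce the asymptotic for $\oE_2^+$ to the one for $\oE_1^+$ already obtained in Proposition \ref{renewal-3}, while handling the extra dependence on $\check x$ through the equidistribution of the reverse random walk on $\check\nu$. The main tool is Lemma \ref{diff-log}, which lets us trade $\log\|\cdot\|$ for expressions involving the norm cocycle.

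By Proposition \ref{prop:BQLDT} and Lemma \ref{large-n}, the cutoff $\log\|g\|<t\leq\log\|hg\|$ restricts the effective range of the sum over $n$ to $O(t)$ indices around $t/\gamma$. For such $n$, choose $l$ of order $t$ (say $l=\lfloor n/2\rfloor$) and split $g=g'g''$ with $g''\in\mu^{*l}$ applied first and $g'\in\mu^{*(n-l)}$. Applying Lemma \ref{diff-log} to the products $g=g'g''$ and $hg=(hg')g''$ (viewing $hg'\in\mu^{*(n-l+1)}$) gives, outside a set of $\mu^{*(n-l)}\otimes\mu^{*l}\otimes\mu$-measure bounded by $D_l+D_{n+1}$,
\[
\log\|g\|=\sigma_{g'}(g''x)+\log\|g''\|+O(e^{-\gamma l}),\quad \log\|hg\|=\sigma_{hg'}(g''x)+\log\|g''\|+O(e^{-\gamma l}).
\]
Setting $y:=g''x$ and $t':=t-\log\|g''\|$, these turn the original cutoff into the $\oE_1^+$-style cutoff $\sigma_{g'}(y)<t'\leq\sigma_{hg'}(y)$, turn $\log\|hg\|/\|g\|$ into $\sigma_h(gx)$, and turn $\log\|g\|-t$ into $\sigma_{g'}(y)-t'$. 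Up to a controlled error, this yields
\[
\oE_2^+ f(\check x,x,t)\approx\int_G \oE_1^+\bigl[\tilde f_{g''}\bigr](y,t')\,d\mu^{*l}(g''),\quad \tilde f_{g''}(\check y',y_0,v,u):=f\bigl((g'')^{-1}\check y',y_0,v,u\bigr),
\]
with $\check y'$ representing the coupled variable $(hg')^{-1}\check x$. The first argument is then handled via the reverse walk: the law of $(hg')^{-1}\check x$ under $\mu^{\otimes(n-l+1)}$ equals the image of $\check\mu^{*(n-l+1)}$ at $\check x$, which by Theorem \ref{thm:equi-dis} applied to the non-elementary measure $\check\mu$ converges exponentially to $\check\nu$. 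Since $\check\nu$ is $\check\mu$-stationary, $\int (g'')^{-1}_*\check\nu\,d\mu^{*l}(g'')=\check\nu$, so one may replace $f(\check y,y_0,v,u)$ by $F(y_0,v,u):=\int f(\check y,y_0,v,u)\,d\check\nu(\check y)$ up to a rate-function error in $n-l$. Since $F$ inherits the $\Cc^1$-regularity and $v$-support of $f$, Proposition \ref{renewal-3} applied to $F$ produces exactly the stated limit.

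The main obstacles are threefold: (i) controlling the exceptional measure $D_l+D_n$ summed over the $O(t)$ relevant values of $n$, which is $o(1)$ since $\lim_{n\to\infty}nD_n=0$ in the second-moment regime; (ii) dealing with atypical values of $\log\|g''\|$ for which the shifted time $t'=t-\log\|g''\|$ becomes too small for Proposition \ref{renewal-3} to apply directly, which is treated by the large deviation estimates of Proposition \ref{prop:BQLDT} together with Lemma \ref{large-n}; and most delicately, (iii) the quantitative coupling of the equidistribution of $(hg')^{-1}\check x$ to $\check\nu$ with the other integrand factors. Since $f$ is only $\Cc^1$, this last step requires smoothing $f$ in its $\check x$-slot before invoking Theorem \ref{thm:equi-dis}, and the convergence rate thus obtained is what produces the rate function $\varrho(t)$ in the statement.
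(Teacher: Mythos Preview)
Your broad architecture matches the paper's: truncate the sum in $n$ to an $O(t)$ window via Lemma \ref{large-n}, split $g=\g_2\g_1$ with $\g_1\in G^l$, use Lemma \ref{diff-log} to pass from $\log\|\cdot\|$ to the cocycle, reduce to $\oE_1^+$, and finish by equidistributing toward $\check\nu$ via Theorem \ref{thm:equi-dis}. The paper does exactly this (with the fixed choice $l=\lfloor t/(3\gamma)\rfloor$ rather than your $l=\lfloor n/2\rfloor$, which simplifies bookkeeping since the inner $\oE_1^+$ is then applied at a single shifted time).

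However, there is a genuine gap in your handling of the $\check x$-variable. You propose to replace $(hg')^{-1}\check x$ by a $\check\nu$-sample using equidistribution of the reverse walk after $n-l+1$ steps. But $(hg')^{-1}\check x$ is \emph{not} decoupled from the rest of the integrand: the cutoff $\sigma_{g'}(y)<t'\leq\sigma_{hg'}(y)$ and the arguments $hg'y$, $\sigma_h(g'y)$, $\sigma_{g'}(y)-t'$ all depend on the same pair $(h,g')$. Theorem \ref{thm:equi-dis} gives $\int_G\varphi(g^{-1}\check x)\,\diff\check\mu^{*m}(g)\to\int\varphi\,\diff\check\nu$ only for test functions $\varphi$ of the single point $g^{-1}\check x$; it says nothing when the test function depends jointly on $g$ and $g^{-1}\check x$. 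So the step ``replace $f(\check y,\ldots)$ by $F(\ldots)=\int f(\check y,\ldots)\,\diff\check\nu(\check y)$'' is not justified, and no amount of smoothing in the $\check x$-slot fixes this (indeed $f$ is already $\Cc^1$, which is all Theorem \ref{thm:equi-dis} needs).

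The missing idea is a \emph{pointwise contraction} argument that decouples before equidistributing. Writing $(h\g)^{-1}\check x=\g_1^{-1}\bigl((h\g_2)^{-1}\check x\bigr)$ and using that $\|\g_1\|\approx e^{l\gamma}$ together with \eqref{d-gxgy}--\eqref{g^-2}, one shows on a high-probability set that
\[
d\bigl((h\g)^{-1}\check x,\ \g_1^{-1}\check x\bigr)\ \lesssim\ e^{-c\,t},
\]
so that the first argument of $f$ can be replaced by $\g_1^{-1}\check x$, a quantity depending only on $\g_1$. This is the content of Lemma \ref{diff-E_C} in the paper. Once that replacement is made, $\oE_1^+$ acts on a genuine three-variable function with $\g_1^{-1}\check x$ frozen; Proposition \ref{renewal-3} then applies directly, and \emph{afterwards} one integrates over $\g_1\in\mu^{*l}$ and invokes Theorem \ref{thm:equi-dis} for the single-variable function $\check y\mapsto f(\check y,\cdot,\cdot,\cdot)$ to obtain $\check\nu$ in the limit.
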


The proof of Proposition \ref{renewal-4} is rather long and will follow several intermediate steps. We will prove the result by approximating $\oE_2^+$ by a number of operators, denoted below by $\widetilde\oE_2$, $\oE_T$ and $\oE_Q$. The last operator $\oE_Q$ is closely related to the operator $\oE_1^+$ and its asymptotic will be obtained using Proposition \ref{renewal-3} (see Lemma \ref{lemma:E_T^l-asymptotic} below). This will give the main term in the asymptotic of $\oE_2^+$. The error terms will come from the error term in Proposition \ref{renewal-3} and the intermediate approximations by $\widetilde\oE_2$, $\oE_T$ and $\oE_Q$.

\vskip5pt

From now until the end to this section, we fix $f$ as in the statement of Proposition \ref{renewal-4}. After dividing $f$ by $M$ we may assume that $\norm{f}_{\Cc^1}\leq 1$. From Lemma \ref{large-n}, it is easy to see that $\sum_{n\geq 0}\mu^{*n}\otimes \mu \big\{ \log\norm{g}<t\leq \log\norm{hg} \big\}$ is bounded   for $t\leq 3\gamma$. It follows that $\oE_2^+ f(\check x,x,t)$ is bounded for $t\leq 3\gamma$. On the other hand, the integral in the proposition is finite because $f(\check x,x,v,u)=0$ for $v\notin[-\kappa,\kappa]$. Hence we can assume   $t\geq 3\gamma$. 

Define the sets
\begin{equation*} 
\bN_t^+:=\bigsqcup_{n\geq 0} \big\{(h,g_n,\dots,g_1)\in G^{n+1}:\, \log\norm{\g}<t\leq \log\norm{h\g}\big\} \quad \text{and} \quad \bN_{t,n}^+:=\bN_t^+\cap G^{ n+1}.
\end{equation*}
From now on, we will write $\mu^{\otimes n}$ as $\mu^n$ to ease the notation.
Consider the operator
$$\widetilde\oE_2 f(\check x,x,t):=\sum_{n=\lceil 2t/(3\gamma)\rceil}^{ \lfloor 2t/\gamma\rfloor}\int_{\bN_{t,n}^+} f\Big((h\g)^{-1}\check x,h\g x,\log{\norm{h\g}\over \norm{\g}},\log\norm{\g}-t\Big) \diff \mu^{n}(\g)\diff \mu(h).$$

Observe that $\widetilde\oE_2$ is obtained from $\oE_2^+$ by considering only the sum for $n$   between $2t/(3\gamma)$ and $2t/\gamma$. Out of this range, the probability of falling in the set $\bN_{t,n}^+$ is small. More precisely, by Lemma \ref{large-n} applied to both $t$ and $t + \gamma / 2 $, there exists a rate function $\widetilde \varepsilon_0(t)$ such that $$ \sum_{n\leq \lfloor 2t/(3\gamma)\rfloor }\mu^{n+1}(\bN_{t,n}^+)+\sum_{n\geq \lceil 2t/\gamma\rceil}\mu^{n+1}(\bN_{t,n}^+)\leq \widetilde \varepsilon_0(t) \quad\text{for all} \quad t>0.$$ 
Since $|f|\leq 1$,  we deduce that 
\begin{equation}\label{diff-E_L}
\big|\oE_2^+ f(\check x,x,t)-\widetilde\oE_2 f(\check x,x,t)\big|\leq \norm{f}_{\infty} \, \widetilde \varepsilon_0(t)\leq \widetilde \varepsilon_0(t).
\end{equation}
Therefore, we can replace $\oE_2^+$ by $\widetilde\oE_2$ in the statement of Proposition \ref{renewal-4} without affecting the desired error term.  

\vskip5pt

We now set some more notation that will be used until the end of this section. Let $l:=\lfloor t/(3\gamma)\rfloor$ and, for $\g=(g_n,\cdots,g_1)\in G^{n}$, denote
$$\g_1:=(g_l,\dots, g_1) \quad\text{and} \quad \g_2:=(g_n,\dots,g_{l+1}).$$ 
We'll write the product $g_{n}\cdots g_1$ as $\g$ or $\g_2\g_1$ if there is no ambiguity. Define the set  $$\bT_n:=\bT_{n,x,t}:=\big\{(h,\g_2,\g_1)\in G^{n+1}:\, \sigma_{\g_2}(\g_1x)< t-\log\norm{\g_1} \leq\sigma_{h\g_2}(\g_1x)\big\},$$ 
and the operator
$$\oE_T f(\check x,x,t):=\sum_{n=\lceil 2t/(3\gamma)\rceil}^{ \lfloor 2t/\gamma\rfloor}\int_{\bT_{n}} f\big((h\g)^{-1}\check x,h\g x,\sigma_h(\g x),\sigma_{\g_2}(\g_1x)-t+\log\norm{\g_1}\big) \diff \mu^{n}(\g)\diff \mu(h).$$ 
Note that in the definition of $\oE_T$ we consider  the cocycle instead  the norms that appear in the expression of $\widetilde\oE_2$.  Also, in $\oE_T$ there's a shift by $\log \|\g_1\|$ both in the definition of $\bT_n$ and in the argument of $f$.

\begin{lemma}\label{renewal-4-lemma-1}
There exists a rate function $\varrho_1(t)$ such that, for all $\check x,x\in\P^1$, $0<\delta< 1$ and $t \geq 3\gamma$, we have
 $$\big|\widetilde\oE_2 f(\check x,x,t)-\oE_T f(\check x,x,t)\big|\lesssim \delta + C_\delta (1+t)^{-1/4}+\varrho_1(t),$$
 where $C_\delta>0$ is the constant in Lemma \ref{integral-R-l}.
\end{lemma}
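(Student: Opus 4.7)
The plan is to use Lemma~\ref{diff-log} to compare the norm quantities appearing in $\widetilde\oE_2 f$ with the cocycle quantities appearing in $\oE_T f$ on a large subset of $G^{n+1}$, and then to bound the remaining symmetric--difference contributions via the renewal estimate of Lemma~\ref{integral-R-l}. After dividing by $M$, I may assume $\norm{f}_{\Cc^1}\leq 1$. For $l:=\lfloor t/(3\gamma)\rfloor$ and each $n$ in the range $[\lceil 2t/(3\gamma)\rceil,\lfloor 2t/\gamma\rfloor]$ one has $l<n\leq n+1\leq 100l$, so two applications of Lemma~\ref{diff-log} (once to the splitting $\g=\g_2\g_1$ of lengths $n-l$ and $l$, once to $h\g=(h\g_2)\g_1$ of lengths $n-l+1$ and $l$) yield a subset $\bS_n\subseteq G^{n+1}$ with $\mu^{n+1}(\bS_n^c)\leq D_n+D_{n+1}+2D_l$ on which, writing $\epsilon_l:=4e^{-\gamma l}$,
$$
\bigl|\log\norm{\g}-\sigma_{\g_2}(\g_1x)-\log\norm{\g_1}\bigr|\leq\epsilon_l \quad\text{and}\quad \bigl|\log\norm{h\g}-\sigma_{h\g_2}(\g_1x)-\log\norm{\g_1}\bigr|\leq\epsilon_l.
$$
The total contribution of $\bigcup_n\bS_n^c$ is bounded by the tail $\sum_{n\geq l}(D_n+D_{n+1})$ (a rate function) plus $O(l\,D_l)=o(1)$, using $\lim_n nD_n=0$.

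On $\bS_n$, the cocycle identity $\sigma_{h\g_2}(\g_1x)-\sigma_{\g_2}(\g_1x)=\sigma_h(\g x)$ combined with the two inequalities above gives $|\log\norm{h\g}-\log\norm{\g}-\sigma_h(\g x)|\leq 2\epsilon_l$; hence the third arguments of $f$ in $\widetilde\oE_2 f$ and $\oE_T f$ differ by at most $2\epsilon_l$ and the fourth by at most $\epsilon_l$. Let $A$ and $B$ denote the indicators of $\{\log\norm{\g}<t\leq\log\norm{h\g}\}$ and $\{\sigma_{\g_2}(\g_1x)<t-\log\norm{\g_1}\leq\sigma_{h\g_2}(\g_1x)\}$ respectively. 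I would split the integrand on $\bS_n$ into (i) the part where $A=B=1$, on which $\norm{f}_{\Cc^1}\leq 1$ yields a pointwise error at most $3\epsilon_l$ and hence a total contribution at most $3\epsilon_l\cdot\widetilde\oE_2\mathbf 1(\check x,x,t)=O(t\,e^{-\gamma l})$, which is a rate function; and (ii) the part where $A\neq B$, which on $\bS_n$ forces one of $\sigma_{\g_2}(\g_1x)+\log\norm{\g_1}$ or $\sigma_{h\g_2}(\g_1x)+\log\norm{\g_1}$ to lie in $[t-\epsilon_l,\,t+\epsilon_l]$.

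For $t$ large enough that $\epsilon_l\leq\delta$, the contribution (ii) is bounded, after integrating out $h$ in the first indicator and after absorbing $h$ into the product $h\g_2$ (with law $\mu^{*(n-l+1)}$) in the second, by the renewal sum
$$
\int_G \oR(\mathbf 1_{u\in[-\delta,\delta]})(\g_1 x,\, t-\log\norm{\g_1})\,\diff\mu^{*l}(\g_1),
$$
which Lemma~\ref{integral-R-l} (with $b=\delta$) bounds by $C[C_l(\delta+1)^2+\delta]+C_\delta(\delta+1)^2(1+t)^{-1/4}\lesssim \delta+C_l+C_\delta(1+t)^{-1/4}$; the term $C_l$ is a rate function in $t$ since $C_l\to 0$ as $l\to\infty$. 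Gathering the bad--set bound, the contribution from (i), and the $C_l$ term into a single rate function $\varrho_1(t)$, and retaining $\delta$ and $C_\delta(1+t)^{-1/4}$ as the stated main terms, yields the lemma. The main obstacle is the case analysis in (ii): one has to check carefully, using both approximations from Lemma~\ref{diff-log} simultaneously, that every discrepancy between $A$ and $B$ on $\bS_n$ actually pins one of the two cocycle expressions to the small interval $[t-\epsilon_l,t+\epsilon_l]$, so that the resulting boundary integrals genuinely fall under Lemma~\ref{integral-R-l}.
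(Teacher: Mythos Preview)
Your approach is essentially that of the paper: restrict to a good set $\bS_n$ via Lemma~\ref{diff-log}, bound $|f_1-f_2|$ pointwise on $\bS_n\cap\{A=B=1\}$, and control the symmetric difference $\{A\neq B\}$ on $\bS_n$ by a renewal integral handled through Lemma~\ref{integral-R-l}. Your use of two applications of Lemma~\ref{diff-log} (both at the split $l$) together with the cocycle identity $\sigma_{h\g_2}(\g_1x)-\sigma_{\g_2}(\g_1x)=\sigma_h(\g x)$ to obtain $\bigl|\log(\norm{h\g}/\norm{\g})-\sigma_h(\g x)\bigr|\leq 2\epsilon_l$ is a mild simplification over the paper, which instead invokes a third instance of Lemma~\ref{diff-log} at the split $h\cdot\g$ of lengths $1$ and $n$.

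There is one gap. You invoke Lemma~\ref{integral-R-l} with $b=\delta$ under the restriction ``$t$ large enough that $\epsilon_l\leq\delta$'', but the statement must hold for \emph{all} $t\geq 3\gamma$ and all $0<\delta<1$, with $\varrho_1$ independent of $\delta$; you do not cover the regime $\epsilon_l>\delta$, and this cannot simply be pushed into $\varrho_1$ since the threshold depends on $\delta$. The paper avoids this by padding: it bounds $\mathbf 1_{u\in[-\epsilon_l,\epsilon_l]}\leq\mathbf 1_{u\in[-\epsilon_l-\delta,\,\epsilon_l+\delta]}$ and applies Lemma~\ref{integral-R-l} with $b=\epsilon_l+\delta\geq\delta$, which is valid for every $t\geq 3\gamma$. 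Since $\epsilon_l\leq 4$ and $\delta<1$, the resulting bound is $\lesssim C_l+\epsilon_l+\delta+C_\delta(1+t)^{-1/4}$, and the extra $\epsilon_l=4e^{-\gamma l}$ term is absorbed into $\varrho_1(t)$.
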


\begin{proof}
In abridged notations, we have that $$\widetilde\oE_2 f  = \sum_{n=\lceil 2t/(3\gamma)\rceil}^{ \lfloor 2t/\gamma\rfloor} \int_{\bN_{t,n}^+} f_1 \, \diff \mu^{n}(\g)\diff \mu(h) \,\, \text{ and } \,\, \oE_T f  = \sum_{n=\lceil 2t/(3\gamma)\rceil}^{ \lfloor 2t/\gamma\rfloor}  \int_{\bT_n} f_2 \, \diff \mu^{n}(\g)\diff \mu(h),$$
where
$$ f_1 :=  f\big((h\g)^{-1}\check x,h\g x,\log\norm{h\g}-\log \norm{\g},\log\norm{\g}-t\big),$$ 
 $$ f_2: = f\big((h\g)^{-1}\check x,h\g x,\sigma_h(\g x),\sigma_{\g_2}(\g_1x)-t+\log\norm{\g_1}\big).$$

 The idea  is to find a set $\bS_n$ having high probability where the difference between $f_1$ and $f_2$ can be effectively compared and use the elementary estimate
\begin{equation} \label{eq:int-f1-int-f2}
\Big|\int_{\bN_{t,n}^+}f_1-\int_{\bT_n}f_2\Big|\leq \int_{\bN_{t,n}^+\setminus (\bN_{t,n}^+ \cap \bS_n \cap \bT_n)}|f_1|+\int_{\bT_n  \setminus(\bN_{t,n}^+ \cap \bS_n  \cap \bT_n)}|f_2|+\int_{\bN_{t,n}^+\cap \bS_n  \cap \bT_n} |f_1-f_2|.
\end{equation}

We will use Lemma \ref{diff-log}. Let $\bS_{n+1,l,x}\subseteq G^{n+1-l}\times G^{ l}$, $\bS_{n,l,x}\subseteq G^{n-l}\times G^{ l}$, $\bS_{n+1,n,x}\subseteq G\times G^n$ be as in that lemma. Notice that $\mu^{*n}$ corresponds to $\mu^n$ in our situation. Define $$\bS_n:=\bS_{n+1,l,x} \cap (G\times \bS_{n,l,x})\cap \bS_{n+1,n,x} \subset G^{n+1}.$$ Then, by Lemma \ref{diff-log}, $\mu^{n+1}(\bS_n)\geq 1-2D_{n+1}-2D_n-2D_l$ and  for all $(h,\g_2,\g_1)\in \bS_n$ one has 
	\begin{align}
	&\big|\log\norm{\g_2\g_1}-\sigma_{\g_2}(\g_1x)-\log\norm{\g_1}\big|\leq 4e^{-\gamma l}, \label{S-n-1}\\
    &\big|\log\norm{h\g_2\g_1}-\sigma_{h\g_2}(\g_1x)-\log\norm{\g_1}\big|\leq 4e^{-\gamma l}, \label{S-n-2} \\
    &\big| \log\norm{h\g}-\sigma_{h}(\g x)-\log\norm{\g} \big| \leq 4e^{-\gamma n}\leq  4e^{-\gamma l}. \label{S-n-3} 
	\end{align}

	From \eqref{S-n-1} and \eqref{S-n-2}, it follows that
	$$\bN_{t,n}^+\cap \bS_n\subseteq \big\{\sigma_{\g_2}(\g_1x)< t+4e^{-\gamma l}-\log\norm{\g_1},\, \sigma_{h\g_2}(\g_1x)\geq t-4e^{-\gamma l}-\log\norm{\g_1}\big\},$$ 
	$$\bN_{t,n}^+ \supseteq \bS_n\cap \big\{\sigma_{\g_2}(\g_1x)< t-4e^{-\gamma l}-\log\norm{\g_1},\, \sigma_{h\g_2}(\g_1x)\geq t+4e^{-\gamma l}-\log\norm{\g_1}\big\}.$$
	Hence, we have	$(\bN_{t,n}^+\cap \bS_n) \setminus \bT_n\subseteq \bA_n $, where 
	\begin{align*}
	\bA_n:=\big\{t-\log\norm{\g_1}\leq\sigma_{\g_2}(\g_1x)&\leq t-\log\norm{\g_1}+4e^{-\gamma l}\big\} \cup \\
	     &\big\{t-\log\norm{\g_1}- 4e^{-\gamma l}\leq      \sigma_{h\g_2}(\g_1x)    \leq t-\log\norm{\g_1}       \big\},
	\end{align*}
	and $\bT_n\cap \bS_n\setminus \bN_{t,n}^+\subseteq \bB_n$, where
		\begin{align*}
	\bB_n:=\big\{t-\log\norm{\g_1}-4e^{-\gamma l}\leq\sigma_{\g_2}&(\g_1x)\leq t-\log\norm{\g_1}\big\} \cup \\
	&\big\{t-\log\norm{\g_1}\leq      \sigma_{h\g_2}(\g_1x)    \leq t-\log\norm{\g_1}  + 4e^{-\gamma l}     \big\}.
	\end{align*}
	
	Therefore, we obtain
	\begin{align*}
	\mu^{n+1}\big(\bN_{t,n}^+\setminus (\bN_{t,n}^+\cap \bS_n\cap \bT_n)\big)&\leq \mu^{n+1} (\bN_{t,n}^+\setminus \bS_n) +       \mu^{n+1}  \big(\bN_{t,n}^+\cap \bS_n\setminus \bT_n\big)      \\ &\leq 2D_{n+1}+2D_n+2D_l+  \mu^{ n+1}(\bA_n)
	\end{align*}
	because  $\mu^{n+1}(\bS_n)\geq 1-2D_{n+1}-2D_n-2D_l$. Similarly,  
	$$\mu^{n+1}\big(\bT_n\setminus (\bN_{t,n}^+\cap \bS_n\cap \bT_n)\big)\leq 2D_{n+1}+2D_n+2D_l+          \mu^{n+1}(\bB_n).$$
	
	Now, using the mean value theorem, the fact that $\|f\|_{\Cc^1} \leq 1$ and estimates \eqref{S-n-1} and \eqref{S-n-3} we get that, for all $(h,\g_2,\g_1)\in \bS_n$ and every $\check x,x,t$,
\begin{align*}
|f_1-f_2|= \big|f\big((h\g)^{-1}\check x,h\g x,&\log\norm{h\g}-\log \norm{\g},\log\norm{\g}-t\big)\,-  \\
   &f\big((h\g)^{-1}\check x,h\g x,\sigma_h(\g x),\sigma_{\g_2}(\g_1x)-t+\log\norm{\g_1}\big)\big|\lesssim e^{-\gamma l}.
 \end{align*}
 By using inequality  \eqref{eq:int-f1-int-f2}, we deduce that $\big|\widetilde\oE_2 f(\check x,x,t)-\oE_T f(\check x,x,t)\big|$ is bounded by a  constant times
 \begin{align*}
 &   \sum_{n=\lceil 2t/(3\gamma)\rceil}^{\lfloor 2t/\gamma\rfloor}  \Big(e^{-\gamma l} +  D_{n+1}+D_n+D_l+          \mu^{n+1}(\bA_n)       +    \mu^{n+1}(\bB_n) \Big) \\
&\lesssim te^{-\gamma l}+\sum_{n\geq\lceil 2t/(3\gamma)\rceil}D_n+tD_l+\sum_{n\geq\lceil 2t/(3\gamma)\rceil}\big(\mu^{n+1}(\bA_n)+\mu^{n+1}(\bB_n) \big).
 \end{align*}

  For the last term, notice that $\mu^{n+1}(\bA_n)$ and $\mu^{n+1}(\bB_n)$ both are bounded by
 \begin{align*}
\mu^{ n}  \big\{(\g_2,\g_1):\, -4e^{-\gamma l}\leq  &\sigma_{\g_2}(\g_1x) -t+  \log\norm{\g_1} \leq   4e^{-\gamma l}  \big\}\,+ \\
 &\mu^{n+1}  \big\{(h,\g_2,\g_1):\,-4e^{-\gamma l}\leq  \sigma_{h\g_2}(\g_1x) -t+  \log\norm{\g_1} \leq   4e^{-\gamma l}  \big\}.
 \end{align*}
 Hence, 
 $$\sum_{n\geq l}\big(\mu^{n+1}(\bA_n)+\mu^{n+1}(\bB_n)\big)\leq 4\int_{G^l} \oR(\mathbf 1_{u\in [-4e^{-\gamma l},4e^{-\gamma l}]})\big(\g_1x,t-\log\norm{\g_1}\big) \diff \mu^{l}(\g_1),$$
where $\oR$ is the first renewal operator introduced in Subsection \ref{subsec:R}. 
Using that $\oR(\mathbf 1_{u\in [-4e^{-\gamma l},4e^{-\gamma l}]})\leq \oR(\mathbf 1_{u\in [-4e^{-\gamma l}-\delta,4e^{-\gamma l}+\delta]})$ and applying Lemma \ref{integral-R-l}, we get that last integral is bounded by some constant times
 $$C_l(e^{-\gamma l}+\delta+1)^2+e^{-\gamma l}+\delta +C_\delta{ (e^{-\gamma l}+\delta+1)^2 \over  (1+t)^{1/4}}\lesssim C_l +e^{-\gamma l}+\delta + {C_\delta\over (1+t)^{1/4}}.$$

Finally, set $\varrho_1(t):=te^{-t/3}+\sum_{n\geq\lceil 2t/(3\gamma)\rceil}D_n+tD_l+C_l$. This is a rate function by Proposition \ref{prop:BQLDT} and Lemma \ref{diff-log}. Recall that $l:=\lfloor t/(3\gamma)\rfloor$. From the preceding estimates we have that 
$$\big|\widetilde\oE_2 f(\check x,x,t) -\oE_T f(\check x,x,t) \big|\lesssim \delta + C_\delta  (1+t)^{-1/4}+\varrho_1(t).$$ This concludes the proof of the lemma.
\end{proof}

Consider now the sets $$\bQ_{l}:=\bQ_{l,\check x}:=\big\{\g_1\in G^{l}:\,  \big|\log\norm{\g_1} -l\gamma \big|\leq \gamma l/4,\, d(z^M_{\g_1},\check x)> e^{-\gamma l/4} \big\},  $$ 
$$ \bQ_{n,l}:=\bQ_{n,l,\check x,x,t}:=(G^{n-l+1}\times \bQ_{l}) \cap \bT_{n},$$
where $z^M_{\g_1}$ is the density point defined in Section \ref{sec:prelim}.

Observe that $\bQ_{n,l}$ contains only the words $h\g_2 \g_1$ in $\bT_n$ for which $\log \norm{\g_1}$ is close to the expected value $l\gamma$ and $z^M_{\g_1}$  is not to too close to $\check{x}$. By Proposition \ref{prop:BQLDT}, this happens with high probability.
 
Define also the operator
$$\oE_Q f(\check x,x,t):=\sum_{n=\lceil 2t/(3\gamma)\rceil}^{ \lfloor 2t/\gamma\rfloor}\int_{\bQ_{n,l}} f\big((h\g)^{-1}\check x,h\g x,\sigma_h(\g x),\sigma_{\g_2}(\g_1x)-t+\log\norm{\g_1}\big) \diff \mu^{n}(\g)\diff \mu(h),$$ which differs from $\oE_T$ only in its domain of integration.

\begin{lemma} \label{lemma:diff-E_L^T}
There exists a rate function $\varrho_2(t)$ such that
\begin{equation*}
\big|\oE_Q f(\check x,x,t)-\oE_T f(\check x,x,t)\big| \lesssim \varrho_2(t) \quad\text{for all}\quad \check x,x\in\P^1,\, t>0.
\end{equation*}
\end{lemma}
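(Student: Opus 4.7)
The plan is to express the difference $\oE_Q f - \oE_T f$ as an integral of $f$ over the symmetric difference of the two integration domains, which is $\bT_n \setminus \bQ_{n,l}$ since $\bQ_{n,l} \subseteq \bT_n$ by definition. Recall that throughout this subsection we have normalized $f$ so that $\|f\|_{\Cc^1} \leq 1$; in particular $|f| \leq 1$. Hence
\[
\big|\oE_Q f(\check x,x,t) - \oE_T f(\check x,x,t)\big| \leq \sum_{n=\lceil 2t/(3\gamma)\rceil}^{\lfloor 2t/\gamma\rfloor} \mu^{n+1}(\bT_n \setminus \bQ_{n,l}),
\]
and since $\bQ_{n,l} = (G^{n-l+1} \times \bQ_l) \cap \bT_n$, one has $\bT_n \setminus \bQ_{n,l} \subseteq G^{n-l+1} \times (G^l \setminus \bQ_l)$, intersected with $\bT_n$.

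The next step is to use Fubini's theorem, separating the integration in $\g_1 \in G^l$ from that in $(h,\g_2) \in G^{n-l+1}$. For any fixed $\g_1$, the set of $(h,\g_2)$ such that $(h,\g_2,\g_1) \in \bT_n$ is precisely the set where the norm cocycle based at $\g_1 x$ jumps past the level $t - \log\|\g_1\|$ at step $n-l$. Exchanging the order of summation and integration yields
\[
\sum_{n=\lceil 2t/(3\gamma)\rceil}^{\lfloor 2t/\gamma\rfloor} \mu^{n+1}(\bT_n \setminus \bQ_{n,l}) \leq \int_{G^l \setminus \bQ_l} \oE_1^+ \mathbf{1}\big(\g_1 x,\, t-\log\|\g_1\|\big) \, \diff \mu^l(\g_1).
\]
By Lemma \ref{renewal-3-lemma-2}, $\oE_1^+ \mathbf{1}$ is uniformly bounded on $\P^1 \times \R$ by an absolute constant $C > 0$ (crucially, this bound is valid for all real values of the second argument, including negative ones, which is needed here because $t - \log\|\g_1\|$ may be negative or very small). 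Thus the above integral is bounded by $C \cdot \mu^l(G^l \setminus \bQ_l)$.

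Finally, I will show that $\mu^l(G^l \setminus \bQ_l)$ is a rate function in $t$. By the definition of $\bQ_l$, its complement in $G^l$ is contained in the union of $\{\g_1 : |\log\|\g_1\| - l\gamma| > \gamma l /4\}$ and $\{\g_1 : d(z^M_{\g_1}, \check x) \leq e^{-\gamma l/4}\}$. Applying Proposition \ref{prop:BQLDT} with $\epsilon = \gamma/4$, each of these sets has $\mu^l$-measure bounded by $C_{l,\gamma/4}$, and $C_{l,\gamma/4} \to 0$ as $l \to \infty$. Since $l = \lfloor t/(3\gamma)\rfloor \to \infty$ as $t \to \infty$, setting $\varrho_2(t) := 2C \cdot C_{\lfloor t/(3\gamma) \rfloor, \gamma/4}$ (and upper-bounding it by its supremum from the right, to make it decreasing) gives the desired rate function. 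The estimate is uniform in $\check x$ and $x$ because the bound from Lemma \ref{renewal-3-lemma-2} is uniform and Proposition \ref{prop:BQLDT} gives bounds independent of the base point. There is no serious obstacle here; the main point is simply recognizing that the sum over $n$ at fixed $\g_1$ is precisely the quantity controlled by Lemma \ref{renewal-3-lemma-2}.
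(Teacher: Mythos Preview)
Your proof is correct and follows essentially the same approach as the paper: bound the difference by the measure of $\bT_n \setminus \bQ_{n,l}$, recognize via Fubini that the sum over $n$ at fixed $\g_1$ is dominated by $\oE_1^+\mathbf{1}(\g_1 x, t-\log\|\g_1\|)$, apply Lemma~\ref{renewal-3-lemma-2} to bound this uniformly, and conclude using $\mu^l(G^l\setminus\bQ_l)\leq 2C_{l,\gamma/4}$ from Proposition~\ref{prop:BQLDT}. Your explicit remark that Lemma~\ref{renewal-3-lemma-2} is valid for all real values of the second argument is exactly the point the paper relies on as well.
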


\begin{proof}
As mentioned above, $\oE_Q$ and  $\oE_T$  differ only by their domain of integration ($\bQ_{n,l}$ and $\bT_n$ respectively). Note that $\bQ_{n,l} \subset \bT_n$. By the definition of these sets, one has
 $$\bT_n\setminus \bQ_{n,l}=\big\{(h,\g_2,\g_1)\in G^{n+1}:\,\sigma_{\g_2}(\g_1x)< t-\log\norm{\g_1} \leq\sigma_{h\g_2}(\g_1x),\, \g_1\notin \bQ_{l}\big\}.$$
By Proposition \ref{prop:BQLDT}, we get that $\mu^l(\bQ_{l})\geq 1-2C_l$, where the constants $C_l$ correspond to the choice $\ep=\gamma/4$.
We now can use the residual operator $\oE_1^+$ from Subsection \ref{subsec:E_1^+} and Lemma \ref{renewal-3-lemma-2} from the same section to obtain the estimate
$$\sum_{n\geq l}\mu^{n+1}(\bT_n\setminus \bQ_{n,l})\leq \int_{\g_1\notin \bQ_{l}}\oE_1^+ \mathbf 1\big(\g_1 x, t-\log\norm{\g_1}\big) \diff \mu^{l}(\g_1)\lesssim \mu^l(G^l\setminus \bQ_{l})\lesssim C_l.$$

Recall that $l=\lfloor t/(3\gamma)\rfloor$, so $C_l$ tends to zero as $t$ tends to infinity. Setting
$\varrho_2(t):=C_l$ and recalling that $|f| \leq 1$, we get that $\big|\oE_Q f(\check x,x,t)-\oE_T f(\check x,x,t)\big|\lesssim \varrho_2(t)$ and  $\varrho_2(t)$ is a rate function. This completes the proof of the lemma.
\end{proof}

The next step in the proof of Proposition \ref{renewal-4} is to obtain the asymptotic of the operator $\oE_Q$.  We'll do so by relating it with the operator  $\oE_1^+$ from last subsection and using Proposition \ref{renewal-3}.  
 In order to achieve that, define the operator
$$\widetilde\oE_3 f(\check x,x,t):=\sum_{n'=\lceil 2t/(3\gamma)\rceil-l}^{\lfloor 2t/\gamma\rfloor-l} \int_{\sigma_\g(x)<t\leq \sigma_{h\g}(x)} f\big((h\g)^{-1}\check x,h\g x,\sigma_h(\g x),\sigma_\g(x)-t \big)\diff \mu^{n'}(\g)\diff \mu(h). $$
In comparison with $\oE_1^+$, in the definition of $\widetilde\oE_3$, the sum over $n'$ is shifted by $l$ and we also take into account the inverse action $(h\g)^{-1}$ on the variable $\check x$. 

\medskip

For each $\check x\in \P^1$, we denote by $f_{\check x}$ the function on $\P^1\times \R_v \times \R_u$ given by $f_{\check x}(x,v,u):=f(\check x,x,v,u)$. We also define the function $F_{\g}$ on  $\P^1\times\P^1\times \R_v \times \R_u$ by $F_{\g}(\check x,x,v,u):=f(\g\check x,x,v,u)$. By replacing $\g,n'$  by $\g_2, n-l$ in the definition of $\widetilde\oE_3$, we obtain
\begin{equation}\label{integral-G_l}
\oE_Q f(\check x,x,t)=\int_{\bQ_{l}} \widetilde\oE_3 F_{\g_1^{-1}}\big(\check x,\g_1 x, t-\log\norm{\g_1}\big) \diff \mu^l(\g_1).
\end{equation}
We warn that $\widetilde\oE_3 F_{\g_1^{-1}}\big(\check x,\g_1 x, t-\log\norm{\g_1}\big)$ is not equal to $\widetilde\oE_3 f\big(\g_1^{-1}\check x,\g_1 x, t-\log\norm{\g_1}\big)$.

\begin{lemma}\label{diff-E_C}
There exists a rate function $\varrho_3(t)$ such that for all  $\g_1\in \bQ_l$, $\check x,x\in\P^1$ and $t>0$,  one has
	$$\big|\widetilde\oE_3 F_{\g_1^{-1}}\big(\check x,\g_1x,t-\log\norm{\g_1}\big)-\oE_1^+ f_{\g_1^{-1} \check x}\big(\g_1x,t-\log\norm{\g_1}\big)\big|\lesssim  \varrho_3(t).$$ 
\end{lemma}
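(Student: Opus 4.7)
The plan is to compare the two operators by isolating their two sources of discrepancy: the summation range on $n'$, and the first argument of $f$, which equals $(h\g\g_1)^{-1}\check x$ in $\widetilde\oE_3 F_{\g_1^{-1}}$ but the constant $\g_1^{-1}\check x$ in $\oE_1^+ f_{\g_1^{-1}\check x}$. Set $\tau := t-\log\|\g_1\|$. The condition $\g_1\in\bQ_l$ forces $\log\|\g_1\|\in[3l\gamma/4,5l\gamma/4]$, so $\tau$ is comparable to $2t/3$; an elementary inclusion then shows that the summation range $R_t := [\lceil 2t/(3\gamma)\rceil-l,\lfloor 2t/\gamma\rfloor-l]$ of $\widetilde\oE_3$ contains the typical range $[\lceil 2\tau/(3\gamma)\rceil,\lfloor 2\tau/\gamma\rfloor]$ for $t$ large.

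First I will bound the contribution of $n'\notin R_t$ coming from $\oE_1^+$. Using $\|f\|_\infty\leq 1$, the $n'$-th term is dominated by $\mu^{n'+1}\{\sigma_\g(\g_1 x)<\tau\leq\sigma_{h\g}(\g_1 x)\}$, which in turn is $\leq\mu^{n'+1}\{\sigma_{h\g}(\g_1 x)\geq\tau\}$ for $n'$ below the range and $\leq\mu^{n'}\{\sigma_\g(\g_1 x)\leq\tau\}$ for $n'$ above. Applying Lemma \ref{large-n} with $\tau$ in place of $t$ gives a total bound $\lesssim\varepsilon_0(\tau)$, a rate function of $t$ since $\tau\gtrsim t$ on $\bQ_l$.

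For the in-range difference, each term is bounded by $\int d((h\g\g_1)^{-1}\check x,\g_1^{-1}\check x)\,\mathbf 1_{\text{jump}}\,d\mu^{n'+1}$, using $\|f\|_{\Cc^1}\leq 1$. I split the integration domain into $\bB_{n'}:=\big\{d\big((h\g)^{-1}\check x,z^M_{\g_1}\big)\leq e^{-\gamma l/2}\big\}$ and its complement. The key distance estimate, obtained from \eqref{g^-2} applied to $g=\g_1^{-1}$ together with $z^m_{\g_1^{-1}}=z^M_{\g_1}$ and a brief Cartan-decomposition computation, is
$$d\big(\g_1^{-1}w,z^M_{\g_1^{-1}}\big)\leq \frac{1}{\|\g_1\|^2\,d(w,z^M_{\g_1})}\quad\text{for every }\,w\in\P^1.$$
Applying this with $w=\check x$ yields $d(\g_1^{-1}\check x,z^M_{\g_1^{-1}})\leq e^{-5\gamma l/4}$ on $\bQ_l$, and with $w=(h\g)^{-1}\check x$ on $\bB_{n'}^c\cap\bQ_l$ yields $d(\g_1^{-1}(h\g)^{-1}\check x,z^M_{\g_1^{-1}})\leq e^{-\gamma l}$. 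The triangle inequality through $z^M_{\g_1^{-1}}$ then gives $d((h\g\g_1)^{-1}\check x,\g_1^{-1}\check x)\leq 2e^{-\gamma l}$ on $\bB_{n'}^c\cap\bQ_l$. Summing over $n'\in R_t$ and invoking Lemma \ref{renewal-3-lemma-2} to bound $\oE_1^+\mathbf 1(\g_1 x,\tau)$ by a constant, the total contribution of the complements $\bB_{n'}^c$ is $O(e^{-\gamma l})$.

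On the bad sets $\bB_{n'}$, I bound the integrand crudely by $2$. The last line of \eqref{BQregular1} in Proposition \ref{prop:BQLDT}, applied with $g=h\g$, $x=\check x$, $y=z^M_{\g_1}$ and with $\ep>0$ fixed small enough that $\ep(n'+1)\leq\gamma l/2$ for every $n'\in R_t$ (possible since $n'+1\lesssim l$ on $R_t$, so any $\ep\leq\gamma/12$ works), yields $\mu^{n'+1}(\bB_{n'})\leq C_{n'+1,\ep}$. As the left endpoint of $R_t$ tends to infinity with $t$, the sum $\sum_{n'\in R_t}C_{n'+1,\ep}$ is a tail of the convergent series $\sum_n C_{n,\ep}$ and is therefore a rate function of $t$. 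Collecting the three contributions yields the required $\varrho_3(t)$. The main technical point is the distance estimate on the good sets, which captures quantitatively the dual role of $z^M_{\g_1}$ as both the attracting point of $\g_1$ and the repelling point of $\g_1^{-1}$, combined with the non-concentration built into $\bQ_l$.
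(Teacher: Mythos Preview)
Your argument is correct and follows essentially the same two-step structure as the paper: first handle the discrepancy in the summation range via Lemma~\ref{large-n}, then control the in-range difference by a good-set/bad-set split governed by how close $(h\g)^{-1}\check x$ comes to $z^M_{\g_1}=z^m_{\g_1^{-1}}$, invoking the last line of \eqref{BQregular1} for the bad set. The differences from the paper's proof are minor and cosmetic. The paper estimates $d\big((h\g\g_1)^{-1}\check x,\g_1^{-1}\check x\big)$ directly via \eqref{d-gxgy} and \eqref{g^-2}, whereas you pass through the attracting point $z^M_{\g_1^{-1}}$ using the inequality $d(gx,z^M_g)\leq \|g\|^{-2}d(x,z^m_g)^{-1}$ and the triangle inequality; both routes give the same exponential smallness in $l$. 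The paper also uses an $n$-dependent threshold $e^{-\gamma(n+1-l)/24}$ for the good/bad split, while your fixed threshold $e^{-\gamma l/2}$ works just as well (and is arguably cleaner) because $n'+1\leq 6l$ on $R_t$. Finally, on the good set the paper simply multiplies the pointwise bound by the number of terms $O(t)$, while you bound the total jump-event mass by $\oE_1^+\mathbf 1\lesssim 1$ via Lemma~\ref{renewal-3-lemma-2}; your estimate is sharper but either suffices.
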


\begin{proof}
	Set
	$$\widetilde\oE_1 f(x,t):=\sum_{n'=\lceil 2t/(3\gamma)\rceil-l}^{ \lfloor 2t/\gamma\rfloor-l} \int_{\sigma_{\g}(x)<t\leq \sigma_{h\g}(x)} f\big(h\g x,\sigma_h(\g x),\sigma_{\g}(x)-t\big) \diff \mu^{n'}(\g)\diff \mu(h),$$
	which is a truncation $\oE_1^+ f(x,t)$.
	
	Fix a $\g_1\in \bQ_l$. Arguing as in the proof of the estimate \eqref{diff-E_L}, we obtain 
	\begin{equation}\label{d_1}
	\big|\widetilde\oE_1 f_{\g_1^{-1} \check x}\big(\g_1 x,t-\log\norm{\g_1}\big)-\oE_1^+ f_{\g_1^{-1} \check x}\big(\g_1 x,t-\log\norm{\g_1}\big)\big|\leq \eta_1(t),
	\end{equation}
	for some rate function $\eta_1(t)$.
	
	Consider the following sets, which depend on $\g_1$:
	$$\bQ^{\g_1}_{n-l}:=\big\{(h,\g_2)\in G^{n+1-l}:\, (h,\g_2,\g_1)\in \bQ_{n,l}\big\},$$
	$$\bP^{\g_1}_{n-l}:=\bQ^{\g_1}_{n-l}\cap \big\{(h,\g_2)\in G^{n+1-l}:\, d\big(z^m_{\g_1^{-1}},(h\g_2)^{-1}\check x\big)> e^{-\gamma (n+1-l)/24}\big\}.$$
	Then in abridged notations, we have that for every fixed $\g_1\in\bQ_l$,
	$$\widetilde\oE_3 F_{\g_1^{-1}}\big(\check x,\g_1x,t-\log\norm{\g_1}\big) = \sum_{n=\lceil 2t/(3\gamma)\rceil}^{ \lfloor 2t/\gamma\rfloor} \int_{\bQ^{\g_1}_{n-l}} f_3 \, \diff \mu^{n-l}(\g_2)\diff \mu(h),$$  
	$$ \widetilde\oE_1 f_{\g_1^{-1} \check x}\big(\g_1 x,t-\log\norm{\g_1}\big) = \sum_{n=\lceil 2t/(3\gamma)\rceil}^{ \lfloor 2t/\gamma\rfloor}  \int_{\bQ^{\g_1}_{n-l}} f_4 \, \diff \mu^{n-l}(\g_2)\diff \mu(h),$$
	where
	$$ f_3 :=  f\big((h\g_2\g_1)^{-1}\check x,h\g_2\g_1 x,\sigma_h(\g_2\g_1 x),\sigma_{\g_2}(\g_1x)-t+\log\norm{\g_1}\big),$$ 
	$$   f_4 := f\big(\g_1^{-1}\check x,h\g_2\g_1 x,\sigma_h(\g_2\g_1 x),\sigma_{\g_2}(\g_1x)-t+\log\norm{\g_1}\big).$$
	
	Similarly to the proof of Lemma \ref{renewal-4-lemma-1}  we'll see that $\bP_{n-l}^{\g_1}$ has high probability in  $\bQ^{\g_1}_{n-l}$ and $f_3$ and $f_4$ can be compared over $\bP_{n-l}^{\g_1}$. We'll then apply the elementary estimate
	\begin{equation} \label{eq:int-f1-int-f2-2}
	\Big|\int_{\bQ^{\g_1}_{n-l}}f_3-\int_{\bQ^{\g_1}_{n-l}}f_4\Big|\leq \int_{\bP^{\g_1}_{n-l}}|f_3-f_4|+ \int_{\bQ^{\g_1}_{n-l}\setminus \bP^{\g_1}_{n-l}}|f_3|+ \int_{\bQ^{\g_1}_{n-l}\setminus \bP^{\g_1}_{n-l}}|f_4|.
	\end{equation}

	Applying Proposition \ref{prop:BQLDT} with $\ep=\gamma/24$, we get $\mu^{n+1-l}\big(\bQ^{\g_1}_{n-l}\setminus \bP^{\g_1}_{n-l}\big)\leq C_{n+1-l}$. 
	Since $\g_1\in \bQ_{l}$, we have
	$ \big|\log\norm{\g_1} -l\gamma\big|\leq \gamma l/4$ and $ d(z^m_{\g_1^{-1}},\check x)\geq e^{-\gamma l/4}$. Let $\check x=[\check w]\in\P^1$ with $\check w \in \C^2 \setminus \{0\}$. Then, by \eqref{d-gxgy} and \eqref{g^-2}, we obtain that, for $(h,\g_2)\in \bP^{\g_1}_{n-l}$, one has
	\begin{align*}
	&d\big((h\g_2\g_1)^{-1}\check x,\g_1^{-1}\check x\big)=  d\big((h\g_2)^{-1}\check x,\check x\big)\cdot{\norm{(h\g_2)^{-1}\check w}\over\norm{\g_1^{-1}(h\g_2)^{-1}\check w}}\cdot {\norm{\check w}\over\norm{\g_1^{-1}\check w}}  \\
	&\leq 1\cdot{1\over d(z^m_{\g_1^{-1}},(h\g_2)^{-1}\check x)}\cdot{1\over d(z^m_{\g_1^{-1}},\check x)}\cdot{1\over \norm{\g_1}^2}\leq e^{\gamma (n+1-l)/24}e^{\gamma l/4}e^{-3\gamma l/2}\lesssim e^{-t/(3\gamma)},
	\end{align*}
where in the last step we have used that $n\leq 2t/\gamma$ and $l=\lfloor t/(3\gamma)\rfloor$.
	
	By the mean value theorem and the fact that $\norm{f}_{\Cc^1}\leq 1$, we conclude that $|f_3-f_4| \lesssim e^{-t/(3\gamma)}$ over $\bP^{\g_1}_{n-l}$.	Then, using \eqref{eq:int-f1-int-f2-2}, we deduce that
	\begin{align*}
	&\big|\widetilde\oE_3 F_{\g_1^{-1}}\big(\check x,\g_1x,t-\log\norm{\g_1}\big)-\widetilde\oE_1 f_{\g_1^{-1} \check x}\big(\g_1 x,t-\log\norm{\g_1}\big)\big| \\
	&\lesssim \sum_{n= \lceil 2t/(3\gamma)\rceil} ^{ \lfloor 2t/\gamma\rfloor}    \Big(e^{-t/(3\gamma)}  +\mu^{n+1-l}\big(\bQ^{\g_1}_{n-l}\setminus \bP^{\g_1}_{n-l} \big)\Big)     
	\lesssim te^{-t/(3\gamma)}+\sum_{n\geq \lceil 2t/(3\gamma)\rceil}  C_{n+1-l}=:\eta_2(t).
	\end{align*}
	Recall that $l=\lfloor t/(3\gamma)\rfloor$, so  we have $\eta_2(t) \to 0$ as $t\to \infty$. 
	Combining with \eqref{d_1}, we get 
	$$\big|\widetilde\oE_3 F_{\g_1^{-1}}\big(\check x,\g_1x,t-\log\norm{\g_1}\big)-\oE_1^+ f_{\g_1^{-1} \check x}\big(\g_1 x,t-\log\norm{\g_1}\big)\big|\lesssim \eta_1(t)+\eta_2(t).$$
	Taking $\varrho_3(t):=\eta_1(t)+\eta_2(t)$ gives the desired estimate and finishes the proof of the lemma.
\end{proof}

We can now obtain the desired asymptotic for $\oE_Q$.

\begin{lemma} \label{lemma:E_T^l-asymptotic}
Under the assumptions of Proposition \ref{renewal-4} with $M=1$, we have
\begin{align*}
 \Big| \oE_Q f(\check x,x,t)- {1\over{\gamma}} \int_{\bQ_{l}} \int_{\P^1}\int_G \int_{-\sigma_h^+(y)}^0 f\big(\g_1^{-1}\check x,hy,\sigma_h(y),u\big)  \diff u \diff \mu(h) \diff \nu(y) \diff \mu^l(\g_1)  \Big|\\
  \lesssim C_\delta{(\kappa+1)^2\over (1+t)^{1/4}} +\delta +\varrho_3(t)
 \end{align*}
 for all $\check x,x\in\P^1$ and $0<\delta< 1, t>2\kappa+2\delta$,  where $\varrho_3(t)$ is the rate function from Lemma \ref{diff-E_C} and $C_\delta>0$ is a constant independent of $f,\kappa,\check x,x,t$.
\end{lemma}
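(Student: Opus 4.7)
The plan is to chain together the two preparatory results that have already been set up: the identity \eqref{integral-G_l} expresses $\oE_Q f(\check x,x,t)$ as an $\mu^l$-integral of $\widetilde\oE_3 F_{\g_1^{-1}}(\check x,\g_1 x,t-\log\norm{\g_1})$ over $\bQ_l$; Lemma \ref{diff-E_C} lets us replace this integrand, uniformly in $\g_1 \in \bQ_l$, by $\oE_1^+ f_{\g_1^{-1}\check x}(\g_1 x, t-\log\norm{\g_1})$ at the cost of an additive error $\varrho_3(t)$; and Proposition \ref{renewal-3}, applied pointwise in $\g_1$ to the function $f_{\g_1^{-1}\check x}$, gives the desired asymptotic for $\oE_1^+$. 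Integrating everything back over $\bQ_l$ and then noting that $\mu^l(\bQ_l)\leq 1$, we obtain the conclusion.

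The only delicate point is to ensure that Proposition \ref{renewal-3} can indeed be applied to $f_{\g_1^{-1}\check x}$ at the shifted time $t-\log\norm{\g_1}$ for every $\g_1\in\bQ_l$, and that its error term is dominated by $C_\delta(\kappa+1)^2/(1+t)^{1/4}+C\delta$. First, $f_{\g_1^{-1}\check x}$ is a $\Cc^1$ function on $\P^1\times\R_v\times\R_u$ whose $\Cc^1$-norm is bounded by $\norm{f}_{\Cc^1}\leq 1$ (so $M=1$), and whose $v$-support is contained in $[-\kappa,\kappa]$. Next, by definition of $\bQ_l$, we have $\log\norm{\g_1}\leq \tfrac{5\gamma l}{4}$, and since $l=\lfloor t/(3\gamma)\rfloor\leq t/(3\gamma)$, we conclude
\[
t-\log\norm{\g_1}\ \geq\ t-\tfrac{5t}{12}\ =\ \tfrac{7t}{12}.
\]
Under the standing assumption $t>2\kappa+2\delta$, this yields $t-\log\norm{\g_1}>\kappa+\delta$, so Proposition \ref{renewal-3} applies, and its error term $C_\delta(\kappa+1)^2/(1+t-\log\norm{\g_1})^{1/4}+C\delta\ \lesssim\ C_\delta(\kappa+1)^2/(1+t)^{1/4}+\delta$, uniformly in $\g_1\in\bQ_l$.

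Putting everything together, \eqref{integral-G_l} combined with Lemma \ref{diff-E_C} yields
\[
\oE_Q f(\check x,x,t) = \int_{\bQ_l}\oE_1^+ f_{\g_1^{-1}\check x}\big(\g_1 x, t-\log\norm{\g_1}\big)\,\diff\mu^l(\g_1)\ +\ O(\varrho_3(t)),
\]
and applying Proposition \ref{renewal-3} pointwise in $\g_1$ inside the integral replaces $\oE_1^+ f_{\g_1^{-1}\check x}(\g_1 x, t-\log\norm{\g_1})$ by $\tfrac1\gamma\int_{\P^1}\!\int_G\!\int_{-\sigma_h^+(y)}^0 f(\g_1^{-1}\check x,hy,\sigma_h(y),u)\,\diff u\,\diff\mu(h)\,\diff\nu(y)$ plus the same uniform error. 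Integrating that uniform error over $\bQ_l$ (of total mass $\leq 1$) and summing with the $O(\varrho_3(t))$ contribution produces exactly the bound in the statement. No step is especially hard here; the only thing to verify carefully is the lower bound on $t-\log\norm{\g_1}$ that allows the pointwise invocation of Proposition \ref{renewal-3} and keeps its error term controlled by $(1+t)^{-1/4}$ rather than by something depending on $\g_1$.
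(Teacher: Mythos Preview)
Your proof is correct and follows essentially the same approach as the paper: use \eqref{integral-G_l}, pass from $\widetilde\oE_3 F_{\g_1^{-1}}$ to $\oE_1^+ f_{\g_1^{-1}\check x}$ via Lemma \ref{diff-E_C}, then apply Proposition \ref{renewal-3} pointwise in $\g_1\in\bQ_l$ after checking $t-\log\norm{\g_1}>\kappa+\delta$ from the definition of $\bQ_l$. The paper bounds $\log\norm{\g_1}\leq 5\gamma l/4\leq t/2$ rather than your sharper $5t/12$, but the argument is otherwise identical.
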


\begin{proof}
Recall that $\norm{f}_{\Cc^1}\leq 1$. Then $\norm{f_{\g_1^{-1}\check x}}_{\Cc^1}\leq 1$ for all $\g_1\in \bQ_{l}$ and $\check x\in \P^1$. In particular, $f_{\g_1^{-1}\check x}$ satisfies all the conditions of Proposition \ref{renewal-3}.  Moreover, for $\g_1\in \bQ_{l}$, one has $\log\norm{\g_1}\leq 5\gamma l/4\leq t/2$. Combining with $t>2\kappa+2\delta$ gives $t-\log\norm{\g_1}>\kappa+\delta$. Therefore, we  can apply  Proposition \ref{renewal-3}  and get that, for every $\g_1\in\bQ_l$ and $\check x\in\P^1$,
\begin{align*}
\Big| \oE_1^+ f_{\g_1^{-1} \check x}  \big(\g_1 x,t-\log\norm{\g_1}\big)- {1\over{\gamma}}\int_{\P^1}\int_G \int_{-\sigma_h^+(y)}^0 f_{\g_1^{-1}\check x} \big(hy,\sigma_h(y),u\big) \diff u \diff \mu(h) \diff \nu(y)     \Big|\\
\lesssim C_\delta{(\kappa+1)^2\over (1+t-\log\norm{\g_1})^{1/4}}+\delta \lesssim  C_\delta{(\kappa+1)^2\over (1+t)^{1/4}} +\delta .
\end{align*}

Applying Lemma \ref{diff-E_C}, integrating over $\g_1\in \bQ_{l}$ and using the identity \eqref{integral-G_l} yields the desired result.
\end{proof}

We are now in position to finish the proof of Proposition \ref{renewal-4}.

\begin{proof}[End of the proof of Proposition \ref{renewal-4}] Notice that the main term in the asymptotic given by Lemma \ref{lemma:E_T^l-asymptotic} almost coincides with the main term in the statement of Proposition \ref{renewal-4}. The difference is the integral of $\g_1^{-1}\check x$ against $\mu^l$ in the lemma in contrast with the desired integral of $\check y$ against $\check \nu$. This is handled by the  equidistribution property of Theorem \ref{thm:equi-dis}.

Applying that theorem to $\check\mu,\check\nu$ instead of $\mu,\nu$, we get 
 $$\Big|\int_{G^l}\varphi(\g_1^{-1}\check x) \,\diff \mu^l(\g_1) - \int_{\P^1} \varphi \, \diff \check\nu \Big|\lesssim \lambda^l\norm{\varphi}_{\Cc^1}$$
 for every $\Cc^1$ function $\varphi$ on $\P^1$, where $0<\lambda<1$ is independent of $\varphi$. Together with the fact that $\mu^l(\bQ_{l}) \geq 1 - 2C_l$ this gives that 
$$\Big|\int_{\bQ_{l}}\varphi(\g_1^{-1}\check x) \,\diff \mu^l(\g_1) - \int_{\P^1} \varphi \,\diff \check\nu \Big|\lesssim \lambda^l\norm{\varphi}_{\Cc^1}+C_l\norm{\varphi}_{\infty}.$$
 Applying the above estimate to the function $\varphi(\,\cdot\,):= f\big(\,\cdot\,,hy,\sigma_h(y),u\big)$, whose $\Cc^1$-norm is bounded by $1$ uniformly,  gives that, for every $h,y,u$, one has
 $$\Big| \int_{\bQ_l} f\big(\g_1^{-1}\check x,hy,\sigma_h(y),u\big) \diff \mu^l(\g_1) - \int_{\P^1}f\big(\check y,hy,\sigma_h(y),u\big) \diff \nu(\check y)\Big|\lesssim \lambda^l+C_l.$$
We deduce, using Lemma \ref{lemma:E_T^l-asymptotic}, that
 \begin{align}
 \Big| \oE_Q f(\check x,x,t)- {1\over{\gamma}}  \int_{\P^1}\int_{\P^1}\int_G \int_{-\sigma_h^+(y)}^0 f\big(\check y,hy,\sigma_h(y),u\big) \diff u \diff \mu(h) \diff \nu(y) \diff \check\nu(\check y)  \Big|\nonumber\\
 \lesssim C_\delta{(\kappa+1)^2\over (1+t)^{1/4}} +\delta +\varrho_3(t)+\kappa (\lambda^l+C_l) \label{diff-last} ,
 \end{align}
  where the extra factor $\kappa$ comes from the fact that $f\big(y',hy,\sigma_h(y),u\big) =0$ when  $\sigma_h(y)\notin [-\kappa,\kappa]$, so only the case $\sigma_h^+(y)\leq\kappa$ contributes to the integral.

  Coupling \eqref{diff-last} with Lemmas \ref{lemma:diff-E_L^T}  and  \ref{renewal-4-lemma-1}, inequality \eqref{diff-E_L} and recalling that  $t>2\kappa$, finally gives

  \begin{align*}
  \Big| \oE_2^+ f(\check x,x,t)- {1\over{\gamma}}  \int_{\P^1}\int_{\P^1}\int_G \int_{-\sigma_h^+(y)}^0 f\big(\check y,hy,\sigma_h(y),u\big) \diff u \diff \mu(h) \diff \nu(y) \diff \check\nu(\check y)  \Big|\nonumber\\
  \lesssim C_\delta{(\kappa+1)^2\over (1+t)^{1/4}} +\delta +\widetilde \varepsilon_0(t)+\varrho_1(t)+\varrho_2(t)+\varrho_3(t)+t\lambda^l+tC_l.
  \end{align*}
  
  Setting  $\varrho(t):=\widetilde \varepsilon_0(t)+\varrho_1(t)+\varrho_2(t)+\varrho_3(t)+t\lambda^l+tC_l,$ it is clear that $\lim_{t\to\infty} \varrho(t) =0$ and the desired asymptotic follows.  This completes the proof of Proposition \ref{renewal-4}. 
\end{proof}

Before ending this section, we introduce the ``symmetric'' version of $\oE_2^+ $ as we did with with $\oE_2^+$ earlier.
Define 
\begin{align*}\oE_2^- f(\check x,x,t):&=\sum_{n\geq 0}\int_{ \log\norm{hg}<t\leq \log\norm{g}} f\Big((hg)^{-1}\check x,hgx,\log{\norm{hg}\over \norm{g}},\log\norm{g}-t\Big) \diff \mu^{*n}(g)\diff \mu(h) .
\end{align*} 

We have the following analogue of Proposition \ref{renewal-4}.

\begin{proposition} \label{E_L^-}

	Let    $f$ be a $\Cc^1$ function on $\P^1\times\P^1\times \R_v\times \R_u$ such that  $\norm{f}_{\Cc^1}\leq M$. Assume the projection of $\supp (f)$ to $\R_v$ is contained in $[-\kappa,\kappa]$ for some $\kappa>0$. Then for all $\check x,x\in\P^1$ and $0<\delta< 1,t>2\kappa+2\delta$, we have
	\begin{align*}
	\Big|\oE_2^-f(\check x,x,t)-{1\over{\gamma}}\int_{\P^1}\int_{\P^1}\int_G \int_0^{\sigma_h^-(y)} f\big(\check y,hy,\sigma_h(y),u\big) \diff u \diff \mu(h) \diff \nu(y)\diff \check\nu(\check y)\Big| \\
	\leq C_\delta{M(\kappa+ 1)^2 \over (1+t)^{1/4}} +C\delta M+\varrho(t)M,\end{align*}
	 where  $\varrho(t)$ is a rate function independent of $\delta,f,M,\kappa,\check x,x$, $C_\delta>0$ is a constant independent of $f,M,\kappa,\check x,x,t$, and $C>0$ is a constant independent of $\delta,f,M,\kappa,\check x, x,t$.  
\end{proposition}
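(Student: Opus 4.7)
The plan is to mirror the proof of Proposition \ref{renewal-4}, replacing the role of $\oE_1^+$ by $\oE_1^-$ and invoking Proposition \ref{renewal-3-remark} in place of Proposition \ref{renewal-3}. Concretely, I would define the truncation
\[
\widetilde\oE_2^- f(\check x,x,t):=\sum_{n=\lceil 2t/(3\gamma)\rceil}^{ \lfloor 2t/\gamma\rfloor}\int_{\bN_{t,n}^-} f\Big((h\g)^{-1}\check x,h\g x,\log\tfrac{\norm{h\g}}{\norm{\g}},\log\norm{\g}-t\Big) \diff \mu^{n}(\g)\diff \mu(h),
\]
where $\bN_{t,n}^-:=\{\log\norm{h\g}<t\leq \log\norm{\g}\}$. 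Applying Lemma \ref{large-n} to $t$ and $t-\gamma/2$, the tails of the sum contribute at most a rate function, so $|\oE_2^- f - \widetilde\oE_2^- f| \leq \widetilde\varepsilon_0(t)$, as in \eqref{diff-E_L}.

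Next, using Lemma \ref{diff-log} with $l:=\lfloor t/(3\gamma)\rfloor$ one defines
\[
\bT_n^-:=\{(h,\g_2,\g_1)\in G^{n+1}:\,\sigma_{h\g_2}(\g_1x)<t-\log\norm{\g_1}\leq\sigma_{\g_2}(\g_1x)\},
\]
and the corresponding operator $\oE_T^- f$ obtained from $\widetilde\oE_2^-$ by substituting $\bT_n^-$ for $\bN_{t,n}^-$ and replacing $\log\|h\g\|-\log\|\g\|$ (resp.\ $\log\|\g\|-t$) by $\sigma_h(\g x)$ (resp.\ $\sigma_{\g_2}(\g_1x)-t+\log\|\g_1\|$) in the arguments of $f$. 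Repeating the computation of Lemma \ref{renewal-4-lemma-1} verbatim (the estimates \eqref{S-n-1}--\eqref{S-n-3} are symmetric under swapping the direction of the jump) yields
\[
|\widetilde\oE_2^- f - \oE_T^- f|\lesssim \delta M + C_\delta M (1+t)^{-1/4}+\varrho_1(t)M.
\]
Then, introducing the analogue $\oE_Q^-$ by intersecting $\bT_n^-$ with $G^{n-l+1}\times \bQ_l$, Lemma \ref{lemma:diff-E_L^T} carries over unchanged (it only uses $\mu^l(\bQ_l)\geq 1-2C_l$ and the boundedness of $\oE_1^+\mathbf 1$, which has its counterpart $\oE_1^-\mathbf 1$ bounded by the same reasoning as in Lemma \ref{renewal-3-lemma-2}), giving $|\oE_Q^- f - \oE_T^- f|\lesssim \varrho_2(t)M$.

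The main step is then the identity
\[
\oE_Q^- f(\check x,x,t)=\int_{\bQ_l}\widetilde\oE_3^- F_{\g_1^{-1}}(\check x,\g_1 x,t-\log\norm{\g_1})\,\diff\mu^l(\g_1),
\]
where $\widetilde\oE_3^-$ is the analogue of $\widetilde\oE_3$ defined with the condition $\sigma_{h\g}(x)<t\leq \sigma_\g(x)$. Arguing exactly as in Lemma \ref{diff-E_C}, using the definition of $\bQ_l$ to control $d\bigl((h\g_2\g_1)^{-1}\check x,\g_1^{-1}\check x\bigr)$, one replaces $\widetilde\oE_3^- F_{\g_1^{-1}}$ by $\oE_1^- f_{\g_1^{-1}\check x}$ up to a rate function $\varrho_3(t)M$. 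Since for $\g_1\in\bQ_l$ we have $t-\log\|\g_1\|>\kappa+\delta$, Proposition \ref{renewal-3-remark} applies to give the asymptotic of $\oE_1^- f_{\g_1^{-1}\check x}$ with main term $\frac{1}{\gamma}\int\int_0^{\sigma_h^-(y)} f_{\g_1^{-1}\check x}(hy,\sigma_h(y),u)\,\diff u\,\diff\mu(h)\,\diff\nu(y)$. Finally, the equidistribution Theorem \ref{thm:equi-dis} applied to $(\check\mu,\check\nu)$, exactly as in the end of the proof of Proposition \ref{renewal-4}, converts the remaining integral against $\mu^l$ on $\bQ_l$ into an integral against $\check\nu$, producing the desired main term with an additional error $M(\kappa\lambda^l+\kappa C_l)$.

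The only point that requires care, and the natural place where one could slip, is ensuring that the sign conventions and the domain of $u$ come out correctly: in $\oE_2^-$ the cut-off forces $\sigma_h(\g x)<0$, so it is $\sigma_h^-(y)=\max(-\sigma_h(y),0)$ rather than $\sigma_h^+$ that appears, and correspondingly Proposition \ref{renewal-3-remark} (with $f_C^-=\mathbf 1_{0\leq u<-v}f$) is the right input. Once this is checked, collecting all error terms and setting $\varrho(t):=\widetilde\varepsilon_0(t)+\varrho_1(t)+\varrho_2(t)+\varrho_3(t)+t\lambda^l+tC_l$ yields the stated bound.
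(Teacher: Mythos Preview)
Your proposal is correct and follows exactly the same approach as the paper: the paper's proof simply says to repeat the argument of Proposition~\ref{renewal-4} with $\oE_1^-$, $\bN_t^-$ in place of $\oE_1^+$, $\bN_t^+$ and invoke Proposition~\ref{renewal-3-remark} instead of Proposition~\ref{renewal-3}, which is precisely what you carry out in detail. Your care about the sign conventions (so that $\sigma_h^-$ and $f_C^-=\mathbf 1_{0\leq u<-v}f$ appear) is well placed and correctly handled.
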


\begin{proof}
	Define
	\begin{equation*} 
	\bN_t^-:=\bigsqcup_{n\geq 0} \big\{(h,g_n,\dots,g_1)\in G^{n+1}:\, \log\norm{h\g}<t\leq \log\norm{\g}\big\} .
	\end{equation*}

 We repeat the proof of Proposition \ref{renewal-4} by using $\oE_1^-,\bN_t^-$ instead of $\oE_1^+,\bN_t^+$ and applying Proposition \ref{renewal-3-remark} instead of Proposition \ref{renewal-3}.
\end{proof}

\section{Decay of Fourier coefficients} \label{sec:fourier}

This section is devoted to the proof of Theorem \ref{thm:main-fourier}. We first present the main part of the proof, up to the proof of Proposition \ref{x-y>es}, which is the central and most difficult estimate. The proof of this last result is rather long and is discussed separately in Subsection \ref{subsec:fourier-B} below.

\vskip5pt

We'll obtain here the following more general version of Theorem \ref{thm:main-fourier}. Recall from the introduction that we identify $\R \P^1 \subset \P^1$ with a circle $\bC$ and that the action of $g\in\SL_2(\R)$ preserves $\bC$. 

\begin{theorem}\label{thm:fourier-general}
Let $\mu$ be a non-elementary probability measure on $G_\R = \SL_2(\R)$. Assume $\mu$ has a finite second moment, that is, $\int_{G_\R} \log^2 \|g\| \, \diff \mu(g) < \infty$. Let $\nu$ be the associated stationary measure on $\bC \subset \P^1$. Let $c>0$ be a constant and $\varphi\in\Cc^2(\bC )$, $\psi\in\Cc^1(\bC )$ be real-valued functions such that
	$$|\varphi'|\geq 1/c>0 \,\, \text{ on } \,\,  \supp (\psi), \quad \norm{\varphi}_{\Cc^2(\bC )}\leq c \quad \text{ and }   \quad \norm{\psi}_{\Cc^1(\bC )}\leq c.$$
Then we have for $k\in\R$,
	$$\lim_{|k|\to\infty} \int_{\bC }e^{ik\varphi}\psi \, \diff \nu = 0.$$
Moreover, the convergence is uniform in $\varphi,\psi$ satisfying the above conditions.
\end{theorem}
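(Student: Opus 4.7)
Set $I(k) := \int_{\bC} e^{ik\varphi}\psi\, d\nu$. By the stationarity of $\nu$, one has $I(k) = \int_{\bC}\int_G e^{ik\varphi(gx)}\psi(gx)\,d\mu^{*n}(g)\,d\nu(x)$ for every $n\geq 0$. The natural way to produce a useful decomposition is to sum over $n$ with a suitable weight and introduce the stopping time $n_t(x) = \inf\{n : \log\|S_n\| > t\}$ for a threshold $t = t(|k|)$ that we will choose to grow like a small power of $\log|k|$. Equivalently, writing the $n$-th trajectory as a product $h\cdot g$ with $\log\|g\| < t \leq \log\|hg\|$ places the problem into the domain of the two-variable residual operator $\oE_2^+$ (and its mirror $\oE_2^-$) of Subsection \ref{subseq:E_2^+}. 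Because $\|hg\|$ is large on this event, $hg$ is strongly contracting: by the Cartan decomposition and \eqref{g^-2}, on the complement of a small neighborhood of $z^m_{hg}$, the function $x\mapsto\varphi(hgx)$ has derivative of order $\|hg\|^{-2}\,\varphi'(hgx)$, so its oscillation at frequency $k$ is governed by the balance between $k$ and $e^{2t}$.

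Next, I would apply the Cauchy--Schwarz inequality in the $x$ variable to $|I(k)|^2$, which after expanding the square yields a double integral over pairs of trajectories. After a change of variables, this double integral is precisely of the form acted on by $\oE_2^\pm$, with the two variables $\check x$ and $x$ playing the roles of the endpoints of the two trajectories (the first one inverted). Inserting the asymptotics of Propositions \ref{renewal-4} and \ref{E_L^-} replaces the double sum by a smooth main term in $(\check x,x)$ plus errors that are small by the low-moment renewal machinery. The heart of the remaining argument is then to show that this main term, integrated against the phase $e^{ik\Delta\varphi}$ produced by the Cauchy--Schwarz expansion, decays as $|k|\to\infty$. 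This reduces to a single geometric estimate, stated as Proposition \ref{x-y>es} below, bounding the contribution from pairs $(x,y)$ with $d(x,y) \geq e^{-s}$ for a suitable scale $s = s(|k|)$; the complementary pairs with $d(x,y) < e^{-s}$ are harmless thanks to the logarithmic modulus of continuity of $\nu$ given by Proposition \ref{regularity}.

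\textbf{Main obstacle.} The hard part is Proposition \ref{x-y>es}. The hypothesis $|\varphi'|\geq 1/c$ on $\supp(\psi)$ must be leveraged to give effective oscillation of $e^{ik\Delta\varphi}$, via an integration-by-parts or van der Corput argument, and this effective cancellation has to be combined with the renewal asymptotics whose error terms degrade only like $O((1+t)^{-1/4})$ under the finite second moment hypothesis. The delicate point is to balance simultaneously the three scales at play: the stopping threshold $t$, the separation scale $e^{-s}$ on the diagonal of $\nu\otimes\nu$, and the smoothing scale $\delta$ inherent to the renewal estimates, so that each error term tends to zero as $|k|\to\infty$ while still extracting a genuine decaying rate from the oscillatory integral. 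Unlike the exponential moment case treated by Li, we cannot afford many losses in the error terms, which forces a careful bookkeeping and, most likely, an approximation step where $\psi$ is smoothed at a scale comparable to $e^{-t}$ so that the $\Cc^1$ hypotheses needed by Propositions \ref{renewal-4} and \ref{E_L^-} apply uniformly.
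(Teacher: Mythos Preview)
Your overall architecture is close to the paper's, but two linked steps are off and would prevent the argument from closing.

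\medskip

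\textbf{Cauchy--Schwarz is in the trajectory variable, not in $x$.} After Proposition~\ref{decomp} the paper writes $\int_{\bC} e^{ik\varphi}\psi\,d\nu=\int_{\bM_t^\pm}\big(\int_{\bC}e^{ik\varphi(\g x)}\psi(\g x)\,d\nu(x)\big)\,d\mu^\N(\g)$ and applies Cauchy--Schwarz in $\g$, producing a single trajectory $\g$ and \emph{two} points $x_0,y_0$ drawn from $\nu$:
\[
\int_{\bC}\int_{\bC}\int_{\bN_t^+} e^{ik(\varphi(\g^{-1}x_0)-\varphi(\g^{-1}y_0))}\psi(\g^{-1}x_0)\psi(\g^{-1}y_0)\,d\check\mu^\N(\g)\,d\nu(x_0)\,d\nu(y_0).
\]
Your ``Cauchy--Schwarz in the $x$ variable'' would instead give one point and two independent trajectories $\g_1,\g_2$; then $\g_1x$ and $\g_2x$ are both contracted near their (unrelated) density points $z^M_{\g_1},z^M_{\g_2}$, the phase difference $\varphi(\g_1x)-\varphi(\g_2x)$ does not linearize, and there is no joint renewal structure matching $\oE_2^\pm$. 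The split into close/far pairs and the use of Proposition~\ref{regularity} that you describe are correct, but they apply to pairs $(x_0,y_0)$, not pairs of trajectories.

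\medskip

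\textbf{What the variables in $\oE_2^+$ really are, and the missing phase linearization.} In the paper, for each \emph{fixed} far pair $(x_0,y_0)$ one studies $\Gamma(\g)=e^{ik(\varphi(\g^{-1}x_0)-\varphi(\g^{-1}y_0))}\psi(\g^{-1}x_0)\psi(\g^{-1}y_0)$. The key step you are missing is the explicit linearization of the phase (Proposition~\ref{prop:li-approximation}): on a large set $\bN_{t,s}^\pm(x_0,y_0)$, $\Gamma(\g)$ is replaced by $\Lambda(\g)$ in which the phase becomes $k\cdot\sgn(\g x_0,x_0,y_0)\,\varphi'(\g^{-1}x_0)\,\|\g\|^{-2}\,\dfrac{d(x_0,y_0)}{d(\g x_0,x_0)d(\g x_0,y_0)}$. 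This is precisely what can be written as $F(\g^{-1}x_0,\g x_0,\log\|\g\|-\log\|L\g\|,\log\|L\g\|-t)$: here $\check x$ tracks the \emph{backward} orbit $\g^{-1}x_0$ (where $\varphi'$ and $\psi^2$ live) and $x$ tracks the \emph{forward} orbit $\g x_0$ (close to $z^M_\g$, entering through the geometric factor), both for the \emph{same} trajectory and the \emph{same} base point $x_0$; the points $x_0,y_0$ are parameters baked into $F$. After cutting off $F$ to make it $\Cc^1$ with $\|F\|_{\Cc^1}\lesssim e^{2s}$ and compactly supported in $v$, Proposition~\ref{renewal-4} applies. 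The remaining main term is then killed by the elementary oscillatory estimate $\big|\int_{b_1}^{b_2}\exp[iw e^{-u}]\,du\big|\le (2e^{b_1}+2e^{b_2})/|w|$ (Lemma~\ref{inequality-exp}), using $|w|\gtrsim e^{8s/9}$ from $|\varphi'|\ge 1/c$ and $d(x_0,y_0)>e^{-s/9}$; this is where the hypothesis on $\varphi'$ is spent, not via van der Corput on $\bC$. Finally one sets $k=e^{2t+s}$ (so $t\sim\tfrac12\log|k|$, not a small power of $\log|k|$) and $\delta=e^{-3s}$ to balance the errors. No smoothing of $\psi$ is needed; the only smoothing is of the auxiliary function $F$ via the cutoffs $\chi,\tau$.
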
 

\subsection{The proofs of Theorem \ref{thm:fourier-general} and Theorem \ref{thm:main-fourier}}  \label{subsec:fourier-A}

Before getting to the actual proof of Theorem \ref{thm:fourier-general}, let us highlight its main ideas. We follow \cite{li:fourier}.  Our goal is to estimate the integral $\int_{\bC }e^{ik\varphi(x)}\psi(x) \, \diff \nu(x)$. The analysis consists of two main steps. First, we use the invariance of $\nu$ and a parameter $t>0$ in order to express the last integral as a difference between two integrals of the type $$\int_{\P^1} \int_{ \bM^\pm_t} e^{ik \varphi(\g x)} \psi(\g x) \,\diff \mu^\N(\g) \diff \nu(x),$$ along sets $\bM^\pm_t$ where the quantities $\log\|g_n \cdots g_1\|$ cross the value $t$. See Proposition \ref{decomp} below. The second step is to use Cauchy-Schwarz inequality to estimate these integrals in terms of an integral of the form $$\int_{\bC}\int_{\bC} \int_{\bM^\pm_t} e^{ik\varphi(\g x)-ik\varphi(\g y)}\psi(\g x)\psi(\g y)\, \diff \mu^\N(\g)\diff \nu(x)\diff \nu(y),$$
see \eqref{inequality-Nt} below.

The most technical part of the proof consists in estimating this last integral. This is done in Subsection \ref{subsec:fourier-B}. The key idea is to replace the integrand by a suitable approximation allowing us to  translate the resulting quantity in terms of the operator $\oE_2^+$ studied in Section \ref{sec:renewal}. The desired estimate will then follow by applying Proposition \ref{renewal-4}, after carefully choosing the parameter $t$ and other auxiliary parameters.

\medskip

We now begin  the  proof of Theorem \ref{thm:fourier-general}. We start by setting some notation. Recall that $G = \SL_2(\C)$. Let $\mu$ be a non-elementary probability measure on $G$ with finite second moment, not necessarily supported by $G_\R$. An element $(g_n,\dots,g_1)$ of $G^{n}$ for some $n\geq 1$ will be denoted by $\g$. When there is no ambiguity, we'll also denote by $\g$ the product $g_n \cdots g_1$. For $\g=(g_n,\dots,g_1) \in G^n$, we define $L\g:=(g_{n-1},\dots,g_1)$, which deletes the leftmost entry of $\g$, and $\g^{-1}:=(g_1^{-1},\dots,g_n^{-1})$. As above, we'll also write $L\g$ to represent the product $g_{n-1}\cdots g_1$. For convenience, we let  $L\g=\mathrm {Id}$ for $\g\in G^1$.

For $t > 0$, define
$$\bN_t^+:=\bigsqcup_{n\geq 1} \big\{\g\in G^n:\, \log\norm{L\g}<t\leq \log\norm{\g}\big\},$$
$$\bN_t^-:=\bigsqcup_{n\geq 1} \big\{\g\in G^n:\, \log\norm{\g}<t\leq \log\norm{L\g}\big\}$$
and 
$$\bM_t^+:=\{\g^{-1}:\, \g\in \bN_t^+\} , \quad     \bM_t^-:=\{\g^{-1}:\, \g\in \bN_t^-\}.$$
The above notations are compatible with the ones from Subsection \ref{subseq:E_2^+}.
\vskip 3pt

Let $\check \mu $ be the image $\mu$ under the map $g\mapsto g^{-1}$. Notice that $\check \mu$ is also non-elementary and has a finite second moment. Let  $\check\nu$ be the stationary measure corresponding to $\check \mu $. Denote by $\mu^{\N}$ (resp. $\check \mu ^\N$) the positive measure on $\bigsqcup_{n\geq 1} G^n$ given by $\sum_{n\geq 1} \mu^{\otimes n}$ (resp. $\sum_{n\geq 1} \check \mu ^{\otimes n}$).  By definition, $\mu^{\N}(\bM^+_t)=\check \mu ^\N(\bN^+_t)$ and $\mu^{\N}(\bM^-_t)=\check \mu ^\N(\bN^-_t)$. Observe that $\mu^{\N}$ and $\check \mu ^\N$ might have infinite mass over some sets. However, we'll only work with sets having finite mass. In particular, $\mu^\N (\bM_t^+)$ and $\mu^\N (\bM_t^-)$ are finite for every $t > 0$ after Lemma \ref{large-n}.

We actually have a better estimate on the mass of the above sets.

\begin{lemma}\label{dominate-s1}
	Let $\mu$ be a non-elementary probability measure on $G$ with  a finite second moment.
	Then there exists a decreasing rate function $\varepsilon_1(s)$ such that $$\mu^\N\big\{\g\in \bN_t^\pm:\,  \big|\log\norm{\g}-\log\norm{L\g}\big| \geq s\big\}\leq \varepsilon_1 (s) \quad \text{for all}\quad t\geq s\geq 0.$$
	 In particular $\mu^\N (\bN_t^\pm)$ are bounded uniformly in $t$. The same estimates hold after replacing $\bN_t^\pm$  by $\bM_t^\pm$, or $\mu$ by $\check\mu$.
\end{lemma}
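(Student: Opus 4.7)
The plan is to reduce the problem to a ``renewal-type'' sum for $\log\norm{g}$ and then bound this sum uniformly in $t$ using Lemmas~\ref{operator-L} and~\ref{large-n}, together with the finite second moment of $\mu$.

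For $\g=(g_n,\ldots,g_1)\in G^n$, sub-multiplicativity of the matrix norm combined with $\norm{g_n^{-1}}=\norm{g_n}$ yields $\big|\log\norm{\g}-\log\norm{L\g}\big|\leq \log\norm{g_n}$, so the event in question forces $\log\norm{g_n}\geq s$. For $\bN_t^+$, the $n=1$ contribution is at most $\mu\{\log\norm{g}\geq t\}\leq \mu\{\log\norm{g}\geq s\}$. For $n\geq 2$, I set $h:=g_n$ and $\g':=L\g$ and apply Fubini; the bound $\log\norm{h\g'}\leq\log\norm{h}+\log\norm{\g'}$ together with $\log\norm{\g'}<t\leq\log\norm{h\g'}$ forces $\log\norm{\g'}\in[t-\log\norm{h},t)$. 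This gives
\begin{equation*}
\mu^\N\big\{\g\in\bN_t^+:\big|\log\norm{\g}-\log\norm{L\g}\big|\geq s\big\} \leq \mu\{\log\norm{g}\geq s\}+\int_{\log\norm{h}\geq s} N\big(t,\log\norm{h}\big)\,\diff\mu(h),
\end{equation*}
where $N(t,r):=\sum_{m\geq 0}\mu^{*m}\{g\in G:\log\norm{g}\in[t-r,t)\}$.

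The heart of the argument is the Key Estimate: there exists $C>0$ such that $N(t,r)\leq C(1+r)^2$ uniformly in $t,r\geq 0$. When $t\geq r$, I split the sum over $m$ into three pieces. For $m\leq\lfloor 2(t-r)/(3\gamma)\rfloor$, Lemma~\ref{large-n} controls $\sum\mu^{*m}\{\log\norm{g}\geq t-r\}$ by the rate function $\varepsilon_0(t-r)$; for $m\geq\lceil 2t/\gamma\rceil$, the same lemma controls $\sum\mu^{*m}\{\log\norm{g}<t\}$ by $\varepsilon_0(t)$. For the middle range $[\lceil 2(t-r)/(3\gamma)\rceil,\lfloor 2t/\gamma\rfloor]$, Lemma~\ref{operator-L} applied with reference point $t$ and with $t-r$ (taking $b=r$ in both) produces two ``windows'' each contributing $\lesssim 1+r^2$. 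If $r>2t/3$ a gap between these windows appears; its length is $O(r)$, and the trivial bound $\mu^{*m}\{\cdot\}\leq 1$ contributes $O(r)$ there. In the remaining corner case $r>t$, the interval reduces to $[0,t)$ and Lemma~\ref{large-n} with the trivial bound gives $N(t,r)\lesssim 1+t\lesssim 1+r$. Combining all cases yields the Key Estimate.

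Substituting it into the inequality above bounds the left-hand side by $\mu\{\log\norm{g}\geq s\}+C\int_{\log\norm{h}\geq s}(1+\log\norm{h})^2\,\diff\mu(h)$, which is independent of $t$ and tends to $0$ as $s\to\infty$ by the finite second moment of $\mu$. Taking suprema over $u\geq s$ produces a decreasing rate function $\varepsilon_1(s)$; specializing to $s=0$ gives the uniform bound on $\mu^\N(\bN_t^+)$. The case of $\bN_t^-$ is symmetric, with the interval $[t-\log\norm{h},t)$ replaced by $[t,t+\log\norm{h})$. The cases of $\bM_t^\pm$ follow from the identity $\mu^\N(\bM_t^\pm)=\check\mu^\N(\bN_t^\pm)$ obtained via the change of variables $\g\mapsto\g^{-1}$, using that $\check\mu$ is also non-elementary with finite second moment.

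The main obstacle is the Key Estimate. The analogous renewal bound for the cocycle $\sigma_g(x)$ follows directly from Lemma~\ref{R-b-b}, but $\log\norm{g}$ is not a cocycle on $\P^1$ and Lemma~\ref{operator-L} covers only a narrow middle window of $m$. Stitching together shifted applications of that lemma with the tail bounds from Lemma~\ref{large-n} is the main technical point; in particular, one must verify that the gap between the shifted windows remains under control even in the regime $r>2t/3$.
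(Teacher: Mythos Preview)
Your argument is correct and follows the same overall strategy as the paper: reduce via Fubini to an integral $\int_{\log\|h\|\geq s} N(t,\log\|h\|)\,\diff\mu(h)$, bound the renewal count $N(t,r)$ by $C(1+r)^2$ uniformly, and conclude from the finite second moment. The paper obtains the renewal bound more directly, however: it first applies the head--tail estimate from Lemma~\ref{large-n} to $\bN_t^+$ itself (unconditionally, with the single threshold $t$), producing a rate-function term $\widetilde\varepsilon_0(t)\leq\widetilde\varepsilon_0(s)$, and then a \emph{single} application of Lemma~\ref{operator-L} at reference point $t$ with $b=\log\|h\|$ handles the entire middle range $\lceil 2t/(3\gamma)\rceil\leq n\leq\lfloor 2t/\gamma\rfloor$. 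This avoids your two-window argument, the shifted application at $t-r$, and the gap analysis altogether. Your route works but is more laborious; two small points to tidy up: Lemma~\ref{operator-L} requires its reference point to be $\geq 3\gamma$, so the application at $t-r$ needs the side case $0\leq t-r<3\gamma$ handled separately (easy, since then the head range has at most two terms and the remaining indices number $O(t)=O(r)$); and the bound $\varepsilon_0(t-r)$ from Lemma~\ref{large-n} is not uniformly bounded near $0$, so for small $t-r$ you should use the trivial count $\lfloor 2(t-r)/(3\gamma)\rfloor+1$ instead.
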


\begin{proof} 
We will only prove that
 $$\mu^\N\big\{\g\in \bN_t^+:\,  \log\norm{\g}-\log\norm{L\g} \geq s\big\}\leq \varepsilon_1 (s) \quad\text{for all}\quad t\geq s\geq 0$$ 
  for some decreasing rate function $\varepsilon_1(s)$.  The other cases can be obtained in the same way. From Lemma \ref{large-n}, it is easy to see that $\mu^\N(\bN_t^+)$ is bounded for  $t\leq 3\gamma$. Hence we can assume   $t\geq 3\gamma$. 

From Lemma \ref{large-n} applied to both $t$ and $t + \gamma / 2 $, we have 
\begin{equation} \label{head-tail-2}
\sum_{n\leq \lfloor 2t/(3\gamma)\rfloor }\mu^{ n}(\bN_t^+)+\sum_{n\geq \lceil 2t/\gamma\rceil}\mu^{ n}(\bN_t^+)   \leq \widetilde \varepsilon_0(t) 
\end{equation}
for some decreasing rate function $\widetilde \varepsilon_0(t)$.
Let $\oL $ be the operator defined before Lemma \ref{operator-L}. Using   the fact that $\big|\log\norm{hg}-\log\norm{g}\big|\leq \log\norm{h}$ and the assumption $t \geq s$, we get that
\begin{align*}
&\mu^\N\big\{\g\in \bN_t^+:\, \log\norm{\g}-\log\norm{L\g}\geq s\big\} =\sum_{n\geq 0}\int_{\log\norm{g}<t\leq \log\norm{hg}} \mathbf 1_{\log\norm{hg}-\log\norm{g}\geq s}\, \diff \mu^{*n}(g)\diff \mu(h) \\
&\leq \widetilde \varepsilon_0(t)+\sum_{n=\lceil 2t/(3\gamma)\rceil }^{\lfloor 2t/\gamma\rfloor} \int_{G^2} \mathbf 1_{-\log\norm{h}\leq \log\norm{g}-t <0,\,\log\norm{h}\geq s}\, \diff \mu^{*n}(g)\diff \mu(h) \\
&\leq \widetilde \varepsilon_0(s)+\int_{\log\norm{h}\geq s} \oL (\mathbf 1_{u\in [-\log\norm{h},0]}) (x,t) \,\diff \mu(h) 
\lesssim \widetilde\varepsilon_0(s)+\int_{\log\norm{h}\geq s}\big(1+\log^2\norm{h} \big)\, \diff \mu(h),
\end{align*}
where in the last inequality we have used Lemma \ref{operator-L}. 
Since $\int_G\big(1+\log^2\norm{h} \big) \diff \mu(h)$ is finite by assumption, the last  integral above defines a decreasing rate function  $\eta(s)$.  This gives the desired result. 
\end{proof}

The first step, as in \cite{li:fourier}, is to use the sets $\bM^\pm_t$ to split the integral against $\nu$. 

\begin{proposition}\label{decomp}
		Let $\mu$ be a non-elementary probability measure on $G$ with  a finite second moment. Let $\nu$ be the associated stationary measure. Then, for every continuous function $f$ on $\P^1$ and  $t>0$, we have
	$$\int_{\P^1}f(x)\,\diff \nu(x) =\int_{\P^1}\Big(\int_{ \bM^+_t}f(\g x) \,\diff \mu^\N(\g) -\int_{ \bM^-_t} f(\g x) \,\diff \mu^\N(\g) \Big)\,\diff \nu(x).$$ 
	Moreover, if $\supp(\mu)\subset \SL_2(\R)$, the condition that $f$ is continuous on $\P^1$ can be replaced by $f$ being continuous on $\bC$ and the above integrals can be restricted to $\bC$. 
\end{proposition}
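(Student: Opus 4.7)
The plan is to expand both sides of the identity as series indexed by the length $n$ of $\g$, translate the conditions $\g\in\bM_t^\pm\cap G^n$ into conditions on the norms of the partial products $g_n\cdots g_2$ and $g_n\cdots g_1$, and recognize the result as a telescoping sum whose value is forced by the stationarity of $\nu$.

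Because Lemma \ref{dominate-s1} gives $\mu^\N(\bM_t^\pm)<\infty$ and $f$ is bounded, Fubini applies and the right-hand side of the proposition equals
$$\sum_{n\geq 1}\int_{\P^1}\int_{G^n}\big(\mathbf{1}_{\bM_{t,n}^+}(\g)-\mathbf{1}_{\bM_{t,n}^-}(\g)\big)\, f(\g x)\,\diff \mu^{\otimes n}(\g)\,\diff \nu(x),$$
where $\bM_{t,n}^\pm:=\bM_t^\pm\cap G^n$. Unraveling the definitions of $\bN_t^\pm$ and of $\g\mapsto\g^{-1}$, and using the identity $\|g\|=\|g^{-1}\|$ valid on $\SL_2(\C)$, a direct inspection gives
$$\mathbf{1}_{\bM_{t,n}^+}(\g)-\mathbf{1}_{\bM_{t,n}^-}(\g)=\mathbf{1}_{\log\|g_n\cdots g_1\|\geq t}-\mathbf{1}_{\log\|g_n\cdots g_2\|\geq t},$$
with the convention $\log\|g_n\cdots g_2\|:=0$ when $n=1$.

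Denote by $a_n(x)$ and $b_n(x)$ the integrals over $G^n$ of $f(\g x)$ paired with the first and second indicator above, respectively, and set $a_0\equiv 0$. Integrating $g_1$ out first in $b_n$ gives $\int_G f(hg_1x)\,\diff\mu(g_1)=\oP(f\circ h)(x)$ with $h=g_n\cdots g_2$; the invariance $\int\oP u\,\diff\nu=\int u\,\diff\nu$ (equivalent to the stationarity of $\nu$), combined with independence, then yields $\int b_n\,\diff\nu=\int a_{n-1}\,\diff\nu$ for every $n\geq 1$. Hence the partial sums of the right-hand side telescope to $\int a_N\,\diff\nu$. Writing
$$\int a_N\,\diff\nu=\int f\,\diff\nu-\int_{\log\|\g\|<t}\int f(\g x)\,\diff\nu(x)\,\diff\mu^{\otimes N}(\g)$$
(where the first term comes from iterating the stationarity identity $N$ times) and bounding the remainder by $\|f\|_\infty\,\mu^{\otimes N}\{\log\|\g\|<t\}$, which tends to $0$ by Lemma \ref{large-n}, one obtains $\int a_N\,\diff\nu\to\int f\,\diff\nu$ as $N\to\infty$, proving the identity.

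The real case follows verbatim: $\bC$ is preserved by the group action and, $\nu$ being the unique $\mu$-stationary measure, is supported on $\bC$, so all integrals restrict cleanly to $\bC$ and only continuity of $f$ on $\bC$ is used. The main obstacle I expect is the first step, namely writing $\mathbf{1}_{\bM_{t,n}^+}-\mathbf{1}_{\bM_{t,n}^-}$ as a difference of indicators on the norms of $g_n\cdots g_1$ and $g_n\cdots g_2$: because $\bM_t^\pm$ are defined through both the deletion operator $L$ and the inversion map on tuples, getting the correct expression requires careful tracking of how these operations interact with $\|\cdot\|$. Once this is settled, the remainder is a formal telescoping argument driven by the invariance of $\nu$.
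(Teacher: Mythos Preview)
Your proof is correct and follows essentially the same telescoping-by-stationarity idea that the paper invokes. The paper itself does not write out the argument: it simply cites \cite[Proposition 3.5]{li:fourier} and observes that the proof carries over under a second moment using Proposition \ref{prop:BQLDT} and Lemma \ref{large-n}; your explicit computation of $\mathbf{1}_{\bM_{t,n}^+}-\mathbf{1}_{\bM_{t,n}^-}$ and the resulting telescoping sum is exactly that argument made self-contained.
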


\begin{proof}
When $\mu$ has a finite exponential moment this is \cite[Proposition 3.5]{li:fourier}.  The proof given there still holds under the second moment assumption thanks to Proposition \ref{prop:BQLDT} and Lemma \ref{large-n} above. We warn that there is a slight difference in the notation used in the definition of $\bM^\pm_t$ given here and the one from \cite{li:fourier}. However, the integrals we consider are the same.  If $\supp(\mu)\subset \SL_2(\R)$, its elements preserve $\bC$ and $\supp(\nu)\subset \bC$. Hence, all the quantities involved only depend on the values of $f$ on $\bC$.
\end{proof}

\proof[Proof of Theorem \ref{thm:fourier-general}] First observe that $e^{ik\varphi}=\overline{e^{-ik\varphi}}$, so it is enough to prove the theorem for $k>0$. We'll show that for every $\ep>0$, there exists a large constant $s$ and another large constant $t_0$ depending on $s$ such that, if $k = e^{2t+s}$ and $t > t_0$ then $|\int_{\bC } e^{ik\varphi}\psi \,\diff \nu |<\ep$. This clearly implies the result.
\vskip 3pt

Applying Proposition \ref{decomp} to $e^{ik\varphi}\psi$, we have 
\begin{align}
\Big|\int_{\bC} e^{ik\varphi}\psi \,\diff \nu \Big|=\Big|     \int_{\bC}\Big(\int_{ \bM^+_t}e^{ik\varphi(\g x)}\psi(\g x) \,\diff \mu^\N(\g) -\int_{ \bM^-_t} e^{ik\varphi(\g x)}\psi(\g x) \,\diff \mu^\N(\g) \Big)\diff \nu(x)       \Big|  \nonumber\\
\leq  \Big|     \int_{ \bM^+_t}\int_{\bC}e^{ik\varphi(\g  x)}\psi(\g x) \,\diff \nu(x) \diff \mu^\N(\g)  \Big| + \Big|     \int_{ \bM^-_t}\int_{\bC}e^{ik\varphi(\g x)}\psi(\g x)\, \diff \nu(x) \diff \mu^\N(\g)  \Big| . \label{main-inequality}
\end{align}
We have used that 	$\mu^\N(\bM_t^\pm)$ is finite (cf. Lemma \ref{dominate-s1}) and that the integrands are bounded functions, in order to apply Fubini's theorem.
\vskip3pt

We will first bound the first term of \eqref{main-inequality} for $k$, $s$ and $t$ as above. The second term will be treated similarly in the end of the proof. Recall that $\check\nu$ is the stationary measure corresponding to $\check \mu $. Using Cauchy-Schwarz inequality and the fact that $\mu^\N(\bM^+_t)$ is bounded uniformly in $t$ (cf. Lemma \ref{dominate-s1}), the  first term of \eqref{main-inequality} is bounded by 
\begin{align}
&\mu^\N(\bM^+_t)^{1/2}\Big(\int_{\bM^+_t}\Big|\int_{\bC} e^{ik\varphi(\g x)}\psi(\g x)\,\diff \nu(x) \Big|^2\diff \mu^\N(\g)\Big)^{1/2}\nonumber\\
&\lesssim \Big(\int_{\bC}\int_{\bC} \int_{\bM^+_t} e^{ik\varphi(\g x)-ik\varphi(\g y)}\psi(\g x)\psi(\g y)\, \diff \mu^\N(\g)\diff \nu(x)\diff \nu(y) \Big)^{1/2}\nonumber\\
&=\Big(\int_{\bC}\int_{\bC} \int_{\bN^+_t} e^{ik(\varphi(\g^{-1}x)-\varphi(\g^{-1}y))}\psi(\g^{-1}x)\psi(\g^{-1}y)\, \diff \check \mu ^\N(\g)\diff \nu(x)\diff \nu(y) \Big)^{1/2}. \label{inequality-Nt}
\end{align}

Let $s>20$ and $t >  s^{20}$ be large constants whose values will be determined later.  Set $k:=e^{2t+s}$. By the regularity of $\nu$ given by Proposition \ref{regularity}, we have 
$$\iint_{d(x,y)\leq e^{-s/9}} \mathbf 1 \,\diff \nu(x)\diff \nu(y) \lesssim 1/s.$$ 
Since the  integrand in \eqref{inequality-Nt} is a bounded function and $\check\mu^\N (\bN_t^+)$ is bounded uniformly in $t$ by Lemma \ref{dominate-s1}, we conclude that 
\begin{equation*}
\Big|\iint_{d(x,y)\leq e^{-s/9}} \int_{\bN^+_t} e^{ik(\varphi(\g^{-1}x)-\varphi(\g^{-1}y))}\psi(\g^{-1}x)\psi(\g^{-1}y)\, \diff \check \mu ^\N(\g)\diff \nu(x)\diff \nu(y)\Big| \leq C /s
\end{equation*}
for some constant $C$.

When  $d(x,y)> e^{-s/9}$, we have by Proposition \ref{x-y>es} below that
\begin{align*}
\Big|\iint_{d(x,y)> e^{-s/9}} \int_{\bN^+_t} e^{ik(\varphi(\g^{-1}x)-\varphi(\g^{-1}y))}\psi(\g^{-1}x)\psi(\g^{-1}y)\, \diff \check \mu ^\N(\g)\diff \nu(x)\diff\nu(y)\Big| 
\leq \varepsilon(s)+\varepsilon_{s}(t),
\end{align*}
where $\varepsilon(s)$ (resp. $\varepsilon_{s}(t)$) tends to zero when $s$ (resp. $t$) tends to infinity. We conclude that \eqref{inequality-Nt} is bounded by  $C/s+\varepsilon(s)+\varepsilon_{s}(t)$.

Now, fix $\ep>0$. Take $s$ large enough so that $C/s+\varepsilon(s) \ll \ep^2/8$. Then, choose  $t_0$ depending on $s$ such that $\varepsilon_{s}(t) \ll \ep^2/8$ for all $t\geq t_0$. The above estimates show that the first term of \eqref{main-inequality} is smaller than $\ep \slash 2$ for $k = e^{2t+s}$ and $t > t_0$.

The second term of \eqref{main-inequality} is treated analogously. In this case we apply Proposition \ref{x-y>es} for $\bN_t^-$ instead of $\bN_t^+$. All the other estimates remain unchanged. We conclude that \eqref{main-inequality} is smaller than $\ep$ for $k = e^{2t+s}$ and $t > t_0$ chosen as before. As $\ep >0$ is arbitrary, this gives that $\int_{\bC} e^{ik\varphi}\psi \,\diff \nu \to 0$ as $k \to \infty$, concluding the proof of the theorem. Observe that all the constants involved the proof are uniform in $\varphi$ and $\psi$ satisfying the conditions of the theorem.
\endproof

\begin{proposition} \label{x-y>es}
	Let $\mu,\varphi,\psi$ be as in Theorem \ref{thm:fourier-general} and let
	let $k:=e^{2t+s}$.  There exist a rate function $\varepsilon(s)$ and, for each $s$, a rate function $\varepsilon_{s} (t)$ such that 
	$$\Big| \int_{\bN^\pm_t} e^{ik(\varphi(\g^{-1}x_0)-\varphi(\g^{-1}y_0))}\psi(\g^{-1}x_0)\psi(\g^{-1}y_0)\, \diff \check \mu ^\N(\g) \Big| \leq \varepsilon(s)+\varepsilon_{s}(t) $$
	for all $x_0,y_0\in \bC $ with $d(x_0,y_0)> e^{-s/9}$ and $s>20,t> s^{20}$.
\end{proposition}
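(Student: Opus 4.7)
The plan is to approximate the integrand on a good set, translate the resulting integral into the framework of the operator $\check\oE_2^+$ (the $\check\mu$-analog of $\oE_2^+$ from Subsection \ref{subseq:E_2^+}), and then apply the asymptotic from Proposition \ref{renewal-4} (and Proposition \ref{E_L^-} for the $\bN_t^-$ case). The main term produced by this asymptotic will be estimated by a non-stationary phase argument in the variable $u = \log\|\widetilde\g\|-t$, using that the phase grows like $e^{s}$ times a smooth bounded function of the other variables. I will describe the argument for $\bN_t^+$, the case $\bN_t^-$ being analogous.

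I would first decompose $\g\in\bN_t^+$ as $(h,\widetilde\g)$ with $h$ the leftmost entry and $\widetilde\g=L\g$, so that $\g^{-1}=\widetilde\g^{-1}h^{-1}$ and the condition defining $\bN_t^+$ becomes $\log\|\widetilde\g\|<t\leq\log\|h\widetilde\g\|$. Setting $u:=\log\|\widetilde\g\|-t$ and $v:=\log\|h\widetilde\g\|-\log\|\widetilde\g\|$, I would restrict the integration to a good set where $|v|\leq\kappa:=\log s$, $\big|\log\|h\widetilde\g\|-\log\|\widetilde\g\|-\sigma_h(\widetilde\g x_*)\big|$ is controlled via Lemma \ref{diff-log}, and the density points $z^m_{\widetilde\g},z^M_{\widetilde\g}$ are at distance at least $e^{-s/10}$ from $h^{-1}x_0,h^{-1}y_0$ and from a fixed base point $x_*\in\P^1$. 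By Proposition \ref{prop:BQLDT} and Lemma \ref{dominate-s1}, the complement contributes at most a rate function in $s$ to the total integral.

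On the good set I would use the Cartan decomposition of $\widetilde\g$ together with \eqref{g^-2} and \eqref{d-gxgy} to derive, in a local chart of $\bC$ near $z^m_\g$, an expansion of the form $\g^{-1}x_0-\g^{-1}y_0=e^{-2\log\|\g\|}\,\Omega(h,\widetilde\g,x_0,y_0)+O(e^{-4t})$, where $\Omega$ is a smooth, bounded function of its arguments. Taylor expanding $\varphi\in\Cc^2$ around $z^m_\g$ and approximating $\psi(\g^{-1}x_0),\psi(\g^{-1}y_0)$ by $\psi(z^m_\g)$ (all contributing $O(e^{s-2t})$ errors after multiplication by $k=e^{2t+s}$), the phase takes the form $k[\varphi(\g^{-1}x_0)-\varphi(\g^{-1}y_0)]=e^{s-2(u+v)}\Xi+O(e^{s-2t})$, with $\Xi$ depending on $z^m_\g,z^M_\g,h,x_0,y_0$. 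Observing that $\check y:=\g^{-1}x_0\approx z^m_\g$ and $y:=\g x_*\approx z^M_\g$ on the good set (again via \eqref{g^-2}), the function $\Xi$ can be rewritten to leading order as a smooth bounded function $\widetilde\Xi(\check y,y,v)$, and the integral matches $\check\oE_2^+ f(x_0,x_*,t)$ for $f(\check y,y,v,u):=e^{i\Phi_s(\check y,y,v,u)}\,\psi(\check y)^2\,\chi(\check y,y,v)$, where $\Phi_s:=e^{s-2(u+v)}\widetilde\Xi$ and $\chi$ is a $\Cc^1$ cutoff encoding the good-set conditions. The function $f$ is $\Cc^1$ with norm $M\lesssim e^{s}$ up to polynomial factors in $s$, and its $v$-support lies in $[-\kappa,\kappa]$.

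Applying the $\check\mu$-analog of Proposition \ref{renewal-4} with $\delta:=e^{-2s}$ then gives $\check\oE_2^+ f(x_0,x_*,t)=\mathrm{MT}_s+O(\varepsilon_s^{(1)}(t))$ where $\varepsilon_s^{(1)}(t)\to 0$ as $t\to\infty$ for fixed $s$ (this uses $t>s^{20}$). To estimate $|\mathrm{MT}_s|$ I would integrate by parts in $u$: since $\partial_u\Phi_s=-2\Phi_s$, a single integration by parts produces boundary and bulk terms each of magnitude $\lesssim 1/|\Phi_s|$. On the effective support of $\chi$, the hypotheses $|\varphi'|\geq 1/c$ on $\supp\psi$ and $d(x_0,y_0)>e^{-s/9}$, combined with the lower bounds on $d(y,x_0)$ and $d(y,y_0)$ from the good-set restrictions, force $|\Phi_s|\gtrsim e^{8s/9}$, so that $|\mathrm{MT}_s|\lesssim e^{-8s/9}$. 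Combining this with the bad-set contribution (a rate function in $s$), the approximation errors from the Taylor-expansion step, and the error $\varepsilon_s^{(1)}(t)$, one obtains the desired bound $\varepsilon(s)+\varepsilon_s(t)$. The main obstacles will be: carefully tracking the approximation errors in the linearization step, which hinges on the geometry of Möbius transformations through the Cartan decomposition; keeping the $\Cc^1$-norm of the constructed $f$ small enough in $s$ so that the renewal error $\varepsilon_s^{(1)}(t)$ remains manageable; and establishing the uniform lower bound $|\Phi_s|\gtrsim e^{8s/9}$ that drives the non-stationary phase argument.
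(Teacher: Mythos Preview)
Your approach is correct and essentially identical to the paper's: restrict to the good set $\bN^\pm_{t,s}(x_0,y_0)$ of \eqref{N-t-s}, linearize the phase (the paper imports this step as Proposition~\ref{prop:li-approximation} from \cite{li:fourier}, producing the explicit function $\Lambda(\g)$ in \eqref{eq:def-Lambda}), package the result as $\check\oE_2^\pm$ applied to a $\Cc^1$ function with compact $v$-support, invoke Propositions~\ref{renewal-4} and~\ref{E_L^-}, and bound the resulting main term by the oscillatory-integral estimate of Lemma~\ref{inequality-exp}, which is exactly your integration by parts. The only differences are cosmetic parameter choices (the paper takes $\kappa=2s/9$, $M\lesssim e^{2s}$, $\delta=e^{-3s}$, and uses the single point $x_0$ for both the forward and backward arguments of $\oE_2^+$ rather than an auxiliary $x_*$).
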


\begin{proof}
The proof is given in Subsection \ref{subsec:fourier-B} below.
\end{proof}

Theorem \ref{thm:main-fourier} can now be easily deduced from Theorem \ref{thm:fourier-general}.

\begin{proof}[Proof of Theorem \ref{thm:main-fourier}]
	We parametrize $\mathbf C$ by $e^{i\theta}$ with $0\leq \theta<2\pi$.
Consider a partition of unity given by smooth functions $\chi_1,\chi_2$ on $\bC$ such that $\chi_1+\chi_2=1$ and  each of them is supported on an arc of $\bC$. Choose $\varphi_1,\varphi_2$ smooth on $\mathbf C$ such that $\varphi_j(\theta)=\theta$ or $\theta\pm 2\pi$ on $\supp(\chi_j)$. Applying Theorem \ref{thm:fourier-general} to $\varphi_1,\chi_1$ and $\varphi_2,\chi_2$ respectively instead of $\varphi,\psi$ gives that
$$\int_{\bC} e^{ik \varphi_1} \chi_1 \,\diff\nu \longrightarrow 0 \quad\text{and}\quad \int_{\bC} e^{ik \varphi_2} \chi_2 \,\diff\nu \longrightarrow 0    \quad\text{as}\quad     |k|\to\infty.$$
Observe that $\widehat\nu (k)$ is the sum  of these two integrals (recall that we assume here that $k\in\Z$). The theorem follows.
\end{proof}

\subsection{ The proof of Proposition \ref{x-y>es}} \label{subsec:fourier-B}

We now discuss the proof of Proposition \ref{x-y>es} above. As discussed in the previous subsection, the idea is to replace the integral in the statement of the proposition by a quantity that can be encoded by a suitable renewal operator. This will allow us to use the analysis carried out in Section \ref{sec:renewal}. The main difference from our work and \cite{li:fourier} is that the estimates given there, which are usually of exponential decay, are no longer valid in our setting and must replaced by their weaker analogues obtained in Section \ref{sec:renewal}.

\medskip

We start by fixing two points $x_0,y_0 \in \bC$ such that $d(x_0,y_0)> e^{-s/9}$. We'll prove Proposition \ref{x-y>es} for these points and ensure that all the estimates are uniform in $x_0$ and $y_0$.
Recall that $k:=e^{2t+s}$ and $\supp(\check \mu)\subset \SL_2(\R)$. From now until the end of the paper, we assume that all matrices are in $\SL_2(\R)$.

  Our goal is to estimate the integral of
\begin{equation} \label{eq:def-Gamma}
\Gamma(\g):= e^{ik(\varphi(\g^{-1}x_0)-\varphi(\g^{-1}y_0))}\psi(\g^{-1}x_0)\psi(\g^{-1}y_0)
\end{equation} 
 with respect to $\check \mu^\N$ over $\g\in\bN_t^\pm$ when $t> 2s^8$ and $s$ tends to infinity. 
 Using the mean value theorem, the term $e^{ik(\varphi(\g^{-1}x_0)-\varphi(\g^{-1}y_0))}$ can be estimated from the derivative $\varphi'$ and the length of the smallest arc joining $\g^{-1}x_0$ and $\g^{-1}y_0$. In order to give a proper meaning to that, we need to take into account the orientation of $\bC$. For this, we consider the \textit{sign function} defined by 
 \begin{align*}
 \sgn (x,y,z) &=
  \begin{cases}
   0        & \text{if } x,y \text{ and } z \text{ are not pairwise distinct,} \\
   1        & \text{if } x,y \text{ and } z \text{ are in counterclockwise order,}   \\
   -1        & \text{otherwise.}
  \end{cases}
\end{align*}

The length of the arc joining $\g^{-1}x_0$ and $\g^{-1}y_0$ is controlled by the derivative of $\g^{-1}$, which is in turn related to the cocycle $\sigma_{\g^{-1}}$ and the quantity $\|\g^{-1}\|^2 = \| \g \|^ 2$ (cf. \eqref{d-gxgy}). Moreover, with high probability, the points $\g^{-1}x_0$ and $\g^{-1}y_0$ are very close.
This motivates the introduction of the following function in order to approximate $\Gamma(\g)$:
 \begin{equation} \label{eq:def-Lambda}
 \Lambda(\g):=\exp\Big[ik\cdot \sgn(\g x_0,x_0,y_0)\varphi'(\g^{-1}x_0){{d(x_0,y_0)}\over {\norm{\g}^2d(\g x_0,x_0)d(\g x_0,y_0)}}\Big]\psi^2(\g^{-1}x_0).
 \end{equation}

The precise approximation is given by Proposition \ref{prop:li-approximation} below. It offers a good estimate on the difference between $\Gamma(\g)$ and $\Lambda(\g)$ on a large subset $\bN^\pm_{t,s}(x_0,y_0)$ of $\bN^\pm_t$ defines as follows. For each $x,y\in \P^1$, we set 
\begin{align}
\bN^\pm_{t,s}(x,y):=\Big\{\g\in \bN^\pm_t:\, \big| \log\norm{\g}-\log\norm{L\g}\big|&<s/9, \,d(\g x,z^M_\g)<e^{-t},  \nonumber\\
&d(\g x,x)>2e^{-s/9},\,d(\g x,y)>2e^{-s/9}\Big\}. \label{N-t-s}
\end{align}

\begin{proposition}[\cite{li:fourier}--Proposition 3.6] \label{prop:li-approximation}
Assume $t > 2s$.
Let $\Gamma(\g)$ and $\Lambda(\g)$ be the functions introduced above. Then, there are constants $C>0$ and $\beta>0$ independent of $x_0,y_0$ such that, for $\g$ real	
$$\big|\Gamma(\g)-\Lambda(\g)\big|\leq C e^{-\beta s} \quad \text{for all} \quad \g \in \bN^\pm_{t,s}(x_0,y_0).$$
\end{proposition}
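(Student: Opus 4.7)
The plan is to write both $\Gamma(\g)$ and $\Lambda(\g)$ as $e^{i(\cdot)}\cdot(\text{amplitude})$ and use the triangle inequality together with $|e^{i\alpha}-e^{i\beta}|\leq |\alpha-\beta|$ and $\|\psi\|_\infty\leq c$. For the amplitudes, the bound $|\psi(\g^{-1}x_0)\psi(\g^{-1}y_0)-\psi^2(\g^{-1}x_0)|\leq c^2\, d(\g^{-1}x_0,\g^{-1}y_0)$ reduces matters to estimating $d(\g^{-1}x_0,\g^{-1}y_0)$ and the phase difference $|A-B|$, where $A:=k\bigl(\varphi(\g^{-1}x_0)-\varphi(\g^{-1}y_0)\bigr)$ and $B$ is the argument of the exponential in \eqref{eq:def-Lambda}. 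A second-order Taylor expansion of $\varphi$ at $\g^{-1}x_0$ in a chart of $\bC$ gives $\varphi(\g^{-1}y_0)-\varphi(\g^{-1}x_0)=\pm\,\varphi'(\g^{-1}x_0)\,d(\g^{-1}x_0,\g^{-1}y_0)+O\bigl(\|\varphi\|_{\Cc^2}\,d(\g^{-1}x_0,\g^{-1}y_0)^2\bigr)$, where the sign records the orientation of $\bC$ and we have used that the chordal distance $d$ from \eqref{eq:distance-def} and the arc-length parameter differ only by a relative factor $1+O(d^2)$.

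Next, the core estimate identifies $d(\g^{-1}x_0,\g^{-1}y_0)$ with the reciprocal quantity in \eqref{eq:def-Lambda}. The cocycle identity \eqref{d-gxgy} yields $d(\g^{-1}x_0,\g^{-1}y_0)=e^{-\sigma_{\g^{-1}}(x_0)-\sigma_{\g^{-1}}(y_0)}d(x_0,y_0)$, while \eqref{g^-2} applied to $\g^{-1}$, using $z^m_{\g^{-1}}=z^M_\g$ and $\|\g^{-1}\|=\|\g\|$, gives $e^{\sigma_{\g^{-1}}(x_0)}=\|\g\|\bigl(d(z^M_\g,x_0)+O(\|\g\|^{-2})\bigr)$. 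Since $\g\in\bN^\pm_{t,s}(x_0,y_0)$, the conditions $d(\g x_0,z^M_\g)<e^{-t}$ and $d(\g x_0,x_0)>2e^{-s/9}$ from \eqref{N-t-s} imply $d(z^M_\g,x_0)=d(\g x_0,x_0)\bigl(1+O(e^{-t+s/9})\bigr)$; the additive error $\|\g\|^{-2}/(\|\g\|d(\g x_0,x_0))$ is smaller still. Under $t>2s$, both relative errors are $O(e^{-cs})$ for some $c>0$. Performing the same analysis at $y_0$ yields $d(\g^{-1}x_0,\g^{-1}y_0)=\dfrac{d(x_0,y_0)}{\|\g\|^2 d(\g x_0,x_0)d(\g x_0,y_0)}\bigl(1+O(e^{-cs})\bigr)$. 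In particular $d(\g^{-1}x_0,\g^{-1}y_0)=O(e^{-2t+O(s)})$, so the amplitude contribution is $\ll e^{-\beta s}$, and the Taylor remainder contributes at most $k\cdot d(\g^{-1}x_0,\g^{-1}y_0)^2=O(e^{-2t+O(s)})$ to $|A-B|$, again negligible.

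The main obstacle lies in propagating the relative error through the phase. The leading part of $A$ has size $k\cdot d(\g^{-1}x_0,\g^{-1}y_0)\cdot c\leq e^{2t+s}\cdot e^{-2t+O(s)}=e^{O(s)}$, so a relative error of $1+O(e^{-cs})$ becomes an absolute error $O(e^{-\beta s})$ for any $\beta<c$ once $s$ is large enough; this is precisely where the hypothesis $t>2s$ enters quantitatively. Care must also be taken with the sign: since $\g^{-1}\in\SL_2(\R)$ acts on $\bC$ as an orientation-preserving homeomorphism, the signed arc length from $\g^{-1}x_0$ to $\g^{-1}y_0$ equals $\pm d(\g^{-1}x_0,\g^{-1}y_0)$, with sign determined by the cyclic order of the images. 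The conditions $d(\g x_0,z^M_\g)<e^{-t}$ together with $d(\g x_0,x_0),d(\g x_0,y_0)>2e^{-s/9}$ confine $x_0$ and $y_0$ to the complement of a small neighborhood of $z^M_\g$; under $\g^{-1}$ this complement is contracted onto a tiny neighborhood of $z^m_{\g^{-1}}$ far from $\g^{-1}x_0$, and a direct check using the M\"obius geometry of $\bC$ identifies the correct sign as $\sgn(\g x_0,x_0,y_0)$ appearing in \eqref{eq:def-Lambda}. Assembling these three contributions (amplitude, Taylor remainder, and matched main terms) produces the announced bound $|\Gamma(\g)-\Lambda(\g)|\leq Ce^{-\beta s}$, uniformly in $x_0,y_0\in\bC$ with $d(x_0,y_0)>e^{-s/9}$ and $\g\in\bN^\pm_{t,s}(x_0,y_0)$.
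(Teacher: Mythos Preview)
The paper does not give its own proof of this proposition: it is quoted verbatim from Li \cite{li:fourier}, Proposition~3.6, and no argument is reproduced here. Your sketch is therefore being compared against Li's original proof rather than anything in the present paper.

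Your approach is the natural one and matches Li's: split $|\Gamma-\Lambda|$ into an amplitude part and a phase part, control the amplitude via $\|\psi\|_{\Cc^1}$ and the smallness of $d(\g^{-1}x_0,\g^{-1}y_0)$, and control the phase by combining a second-order Taylor expansion of $\varphi$ with the identification of $d(\g^{-1}x_0,\g^{-1}y_0)$ with $d(x_0,y_0)/\big(\|\g\|^2 d(\g x_0,x_0)d(\g x_0,y_0)\big)$ via \eqref{d-gxgy} and \eqref{g^-2}. The arithmetic with the exponents is correct: the relative error coming from replacing $z^M_\g$ by $\g x_0$ and from the additive $\|\g\|^{-2}$ term is $O(e^{-17s/9})$ under $t>2s$, while the phase itself has size $O(e^{13s/9})$, so the absolute phase error is $O(e^{-4s/9})$, which is what is needed.

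Two minor points. First, in your contraction remark you write ``contracted onto a tiny neighborhood of $z^m_{\g^{-1}}$''; this should be $z^M_{\g^{-1}}=z^m_\g$ (the attracting point of $\g^{-1}$), not $z^m_{\g^{-1}}=z^M_\g$. Second, the sign identification is asserted (``a direct check using the M\"obius geometry'') rather than carried out; the clean way is to note that $\g^{-1}$ preserves cyclic order on $\bC$, so $\sgn(\g x_0,x_0,y_0)=\sgn(x_0,\g^{-1}x_0,\g^{-1}y_0)$, and since $\g^{-1}x_0,\g^{-1}y_0$ are both confined to a tiny arc far from $x_0$, this last sign coincides (up to a fixed convention) with the sign of the oriented short arc from $\g^{-1}y_0$ to $\g^{-1}x_0$. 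Neither point affects the validity of your outline.
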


The set $\bN^\pm_{t,s}(x,y)$ is a large subset of $\bN^\pm_t$ in the following sense.

\begin{lemma} \label{lemma:size-N(x,y)}
There exists a decreasing rate function $\varepsilon_2(s)$  such that $$\mu^\N\big(\bN^\pm_t\setminus \bN^\pm_{t,s}(x,y)\big)\leq \varepsilon_2(s) \quad\text{for all} \quad s>20, \,\, t\geq s^{20}  \,\, \text{ and } \,\, x,y\in\P^1.$$
	The same estimate holds for $\check\mu$ instead of $\mu$.
\end{lemma}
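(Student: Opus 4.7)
The complement $\bN_t^+\setminus\bN_{t,s}^+(x,y)$ decomposes as a union of four sets, one per failed condition, and I plan to bound the $\mu^\N$-measure of each by a rate function of $s$, uniformly in $t\geq s^{20}$ and $x,y\in\P^1$; the $\bN_t^-$ case is identical by symmetry. The first two bounds come from existing tools. The condition $\big|\log\|\g\|-\log\|L\g\|\big|<s/9$ is immediate from Lemma~\ref{dominate-s1} with $s$ replaced by $s/9$. For $d(\g x,z_\g^M)<e^{-t}$: Lemma~\ref{large-n} restricts $n$ to $[\lceil 2t/(3\gamma)\rceil,\lfloor 2t/\gamma\rfloor]$ up to a rate function of $t$; inside this range, fixing any $\epsilon\in(0,\gamma/2)$ one has $(2\gamma-\epsilon)n\geq t$, so the bad event lies in $\{d(\g x,z_\g^M)\geq e^{-(2\gamma-\epsilon)n}\}$ whose $\mu^n$-measure is $\leq C_{n,\epsilon}$ by Proposition~\ref{prop:BQLDT}, and the tail $\sum_{n\geq 2t/(3\gamma)}C_{n,\epsilon}$ is a rate function of $t\geq s^{20}$.

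The last two conditions are symmetric, so I treat only $d(\g x,x)\leq 2e^{-s/9}$. Using the second condition (already controlled), the triangle inequality gives $d(z_\g^M,x)\leq 3e^{-s/9}$; fixing a test point $x_\star\in\P^1$ with $d(x_\star,x)>1/2$ and applying Proposition~\ref{prop:BQLDT} to $x_\star$ gives, up to another rate-function error, a reduction to bounding
$$A:=\mu^\N\{\g\in\bN_t^+:d(\g x_\star,x)\leq 4e^{-s/9}\}.$$
Writing $\g=h\g'$ with $\g'=L\g$ and $h=g_n$, the crossing condition forces $\log\|\g'\|\in[t-\log\|h\|,t)$, and \eqref{d-gxgy} translates $d(h\g'x_\star,x)\leq 4e^{-s/9}$ into $\g'x_\star\in\D(h^{-1}x,e^{-s/10})$ (for $s$ large). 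I then split the $h$-integral at $\log\|h\|=\log s$. For $\log\|h\|>\log s$, Lemma~\ref{operator-L} combined with Lemma~\ref{large-n} gives $\sum_m\mu^{*m}\{\log\|g'\|\in[t-\log\|h\|,t)\}\lesssim 1+\log^2\|h\|$, and the finite second-moment hypothesis yields $\int_{\log\|h\|>\log s}(1+\log^2\|h\|)\,d\mu(h)\to 0$.

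For $\log\|h\|\leq\log s$, the technical heart is a transfer from the $\log\|\cdot\|$-crossing (native to $\bN_t^+$) to a $\sigma$-cocycle-crossing (native to the renewal operator $\oR$), achieved via \eqref{g^-2}. On the event $d(z_{\g'}^m,x_\star)\geq e^{-(\log s)^2}$ one has $\log\|\g'\|-\sigma_{\g'}(x_\star)\leq(\log s)^2+O(1)$, so the $\log$-crossing implies $\sigma_{\g'}(x_\star)-t\in[-2(\log s)^2,0]$; the resulting sum is $\oR\big(\mathbf 1_{\D(h^{-1}x,e^{-s/10})}\cdot\mathbf 1_{u\in[-2(\log s)^2,0]}\big)(x_\star,t)$, which the smoothing argument behind Lemma~\ref{logs-lemma} bounds via Proposition~\ref{renewal-1}: the disc bump has $W^{1,2}$-norm $O(1)$ and $\Cc^{\log^1}$-norm $O(s)$, yielding a main term $\lesssim(\log s)^2/s$ and an error $\lesssim(\log s)^4/s^4$ (using $t\geq s^{20}$). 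The complementary bad event is controlled by equidistribution for the reverse walk: noting that under $\mu^{*m}$, $z_{\g'}^m=z_{(\g')^{-1}}^M$ has the same law as $z_\eta^M$ under $\check\mu^{*m}$, and replacing $z_\eta^M$ by $\eta y_0$ for a fixed generic $y_0$ at cost $C_{m,\epsilon}$ via Proposition~\ref{prop:BQLDT}, a smooth bump of width $2e^{-(\log s)^2}$ around $x_\star$ together with Theorem~\ref{thm:equi-dis} applied to $\check\mu$ and the regularity $\check\nu(\D(x_\star,r))\leq C/|\log r|$ from Proposition~\ref{regularity} give $\mu^{*m}\{d(z_{\g'}^m,x_\star)<e^{-(\log s)^2}\}\lesssim 1/(\log s)^2$ uniformly in $m$ in the main range; summing over the window of $m$-values of length $\lesssim\log s$ yields $O(1/\log s)$.

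The main obstacle is precisely this transfer step: the $\log\|\cdot\|$-crossing defining $\bN_t^+$ does not fit the $\sigma$-cocycle framework of $\oR$, and one must balance the width of the ``generic direction'' event (controlled by equidistribution and $\check\nu$-regularity) against the induced $\sigma$-interval width (controlled by the smoothed Proposition~\ref{renewal-1}). The choice of threshold $(\log s)^2$ achieves this balance, and the hypothesis $t\geq s^{20}$ provides the slack needed so that all error terms from Proposition~\ref{renewal-1} remain rate functions of $s$.
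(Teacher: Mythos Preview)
Your treatment of the first two defining conditions of $\bN_{t,s}^\pm(x,y)$ is correct and matches the paper, and the split of the $h$-integral at $\log\|h\|=\log s$ is also the paper's move. The gap is in your handling of the complementary event $\{d(z_{\g'}^m,x_\star)<e^{-(\log s)^2}\}$ for the third condition. You obtain the uniform bound $\mu^{*m}\{d(z_{\g'}^m,x_\star)<e^{-(\log s)^2}\}\lesssim 1/(\log s)^2$ and then assert that ``summing over the window of $m$-values of length $\lesssim\log s$'' gives $O(1/\log s)$. But the constraint $\log\|\g'\|\in[t-\log s,t)$ does \emph{not} confine $m$ to a window of length $\lesssim\log s$: by the CLT the fluctuations of $\log\|\g'\|$ around $m\gamma$ are of order $\sqrt m\sim\sqrt t$, so the set of $m$ contributing non-negligibly has length $\sim\sqrt t$, and all that the coarse tools (Lemma~\ref{large-n}, Lemma~\ref{operator-L}) give you is $\sum_m\mu^{*m}\{\log\|\g'\|\in[t-\log s,t)\}\lesssim(\log s)^2$. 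With only $P(A_m\cap B_m)\leq\min(P(A_m),P(B_m))$ there is no way to conclude that $\sum_m P(A_m\cap B_m)\to 0$ from $\sum_m P(A_m)\lesssim(\log s)^2$ and $P(B_m)\lesssim 1/(\log s)^2$; these two bounds are compatible with $\sum_m P(A_m\cap B_m)\sim(\log s)^2$.

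The paper avoids this problem entirely by a different transfer mechanism: instead of relating $\log\|\g'\|$ to $\sigma_{\g'}(x_\star)$ via \eqref{g^-2} (which forces you to control $d(z_{\g'}^m,x_\star)$), it splits the word at level $l=\lfloor t/(3\gamma)\rfloor$ and invokes Lemma~\ref{diff-log}, which gives $|\log\|g_2g_1\|-\sigma_{g_2}(g_1x)-\log\|g_1\||\leq 4$ on a set whose complement has $\mu^{n+1-l}\otimes\mu^l$-measure $\leq D_n+D_l$. Since $\sum_n D_n<\infty$ and $nD_n\to 0$, the total error $\sum_{n\sim t/\gamma}(D_n+D_l)$ is itself a rate function, and no per-$m$ uniform bound needs to be summed over a long window. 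The renewal operator $\oR$ is then applied at the shifted point $(g_1x,\,t-\log\|g_1\|)$ and integrated over $g_1\in G^l$, which is exactly what Lemma~\ref{logs-lemma} and the set $\bB_l$ are designed for. Your route via $x_\star$ and \eqref{g^-2} would need either a joint local limit estimate for $(\log\|\g'\|,z_{\g'}^m)$ or a decorrelation argument, neither of which you invoke; the level-$l$ split is the missing idea.
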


\begin{proof}
	We will only prove the lemma for $\bN^+_t$ and $\mu$. The other cases can be obtained in the same way.
	From the definition of $\bN^+_{t,s}(x,y)$ it is clear that the desired estimate will follow if we show that the quantities
	\begin{equation} \label{eq:N(x,y)-1}
	\mu^\N\big\{\g\in \bN_t^+:\, \log\norm{\g}-\log\norm{L\g} \geq s\big\},
	\end{equation}
	\begin{equation} \label{eq:N(x,y)-2}
	\mu^\N\big\{\g\in \bN^+_t:\,d(\g x,z_\g^M)\geq e^{-t}\big\},
	\end{equation}
	\begin{equation} \label{eq:N(x,y)-3}
	\mu^\N\big\{\g\in \bN^+_t:\,d(\g x,y)\leq e^{-s}\big\}
	\end{equation}
	decrease to zero when $s>1, t\geq 2s^8$ and $s \to \infty$, uniformly in $x,y \in \P^1$. 
	
	The fact that \eqref{eq:N(x,y)-1} tends to  zero as $s \to \infty$ is the content of Lemma \ref{dominate-s1}.
	
	Let us estimate  \eqref{eq:N(x,y)-2}. 
	Using \eqref{head-tail-2}, it is enough to show that $$\sum_{n=\lceil 2t/(3\gamma)\rceil }^{\lfloor 2t/\gamma\rfloor} \mu^{*n} \big\{  d(gx,z_g^M)\geq e^{-t}    \big\} \longrightarrow 0 \quad\text{as}\quad t\to \infty.$$
	Notice that when $\lceil 2t/(3\gamma)\rceil\leq n$, we have $t\leq 3\gamma n/2$. It follows that  $e^{-t}\geq e^{-3\gamma n/2}.$ Applying Proposition \ref{prop:BQLDT} with $\ep=\gamma/2$, we obtain $\mu^{*n}\big\{d(gx,z_g^M)\geq e^{-3\gamma n/2}\big\}\leq C_n$. Hence, 
	$$\sum_{n=\lceil 2t/(3\gamma)\rceil }^{\lfloor 2t/\gamma\rfloor} \mu^{*n} \big\{  d(gx,z_g^M)\geq e^{-t}    \big\} \leq \sum_{n=\lceil 2t/(3\gamma)\rceil }^{\lfloor 2t/\gamma\rfloor}C_n,$$ which tends to $0$ as $t$ tends to infinity since $\sum C_n$ is finite. We conclude that  \eqref{eq:N(x,y)-2} tends to  zero as $s \to \infty$.
	
	It now remains to estimate  \eqref{eq:N(x,y)-3}. That quantity is bounded by
	\begin{align}
	\mu^\N\big\{\g\in \bN_t^+:\, &\log\norm{\g}-\log\norm{L\g}\geq \log s\big\} \, + \nonumber\\
	&  \mu^\N\big\{\g\in \bN^+_t:\,d(\g x,y)
	\leq e^{-s},\, \log\norm{\g}-\log\norm{L\g}\leq \log s\big\} .\label{eq:N(x,y)-3-2}
	\end{align}
	By Lemma \ref{dominate-s1}, the first term above is bounded by $\varepsilon_1(\log s)$, which decreases to zero as $s \to \infty$. Therefore, it is enough to bound the last term. As before, after \eqref{head-tail-2}, it is enough to consider $n$ in the interval between $\lceil 2t/(3\gamma)\rceil$ and $\lfloor 2t/\gamma\rfloor$. Using the definition of $\bN^+_t$ we see that, in this range, the quantity in  \eqref{eq:N(x,y)-3-2}  is bounded by 
	\begin{align*}
	&   \sum_{n=\lceil 2t/(3\gamma)\rceil -1}^{\lfloor 2t/\gamma\rfloor-1}\int_{\log\norm{g}<t\leq \log\norm{hg}}\mathbf 1_{\log\norm{hg}-\log\norm{g}\leq \log s} \cdot\mathbf 1_{d(hgx,y)\leq e^{-s}}\,\diff \mu^{*n}(g)\diff \mu(h) \\
	&\leq \sum_{n=\lceil 2t/(3\gamma)\rceil -1}^{\lfloor 2t/\gamma\rfloor-1}\int_{0\leq \log\norm{hg}-t\leq \log s} \mathbf 1_{d(hgx,y)\leq e^{-s}}\,\diff \mu^{*n}(g)\diff \mu(h) = \oL (\mathbf 1_{\D(y,e^{-s})\times [0,\log s]})(x,t),
	\end{align*}
	where $\oL $ is defined before Lemma \ref{operator-L}.

	Let  $l:=\lfloor t/(3\gamma)\rfloor$,  $n$ and $\bS_{n,l,x}$ be as in the proof of Lemma \ref{operator-L}. Notice that $\sum_{n=\lceil 2t/(3\gamma)\rceil }^{\lfloor 2t/\gamma\rfloor}(D_n+D_l)$ is bounded by a decreasing rate function $\rho(t)$. Using  \eqref{dominate-equation-1} and that  $t\geq 2s^8 \geq s$, we get that $\oL (\mathbf 1_{\D(y,e^{-s})\times [0,\log s]})(x,t)$ is bounded by 
	\begin{align*}
	&\sum_{n=\lceil 2t/(3\gamma)\rceil }^{\lfloor 2t/\gamma\rfloor}\int_{\bS_{n,l,x}} \mathbf  1_{\D(y,e^{-s})\times [0,\log s]}\big(g_2g_1x,\log\norm{g_2g_1}-t\big) \,\diff \mu^{*(n-l)}(g_2)\diff \mu^{*l}(g_1)  + \rho(t)\\
	&\leq \sum_{n=\lceil 2t/(3\gamma)\rceil }^{\lfloor 2t/\gamma\rfloor}\int_{\bS_{n,l,x}}\mathbf 1_{d(g_2g_1x,y)<e^{-s},\,-4\leq\sigma_{g_2}(g_1x)+\log\norm{g_1}-t\leq\log s+4} \,\diff \mu^{*(n-l)}(g_2)\diff \mu^{*l}(g_1) + \rho(s) \\
	&\leq  \int_{G} \oR( \mathbf 1_{\D(y,e^{-s})\times [-4,\log s+4]})\big(g_1x,t-\log\norm{g_1}\big) \,\diff \mu^{*l}(g_1)  + \rho(s).
	\end{align*}
	
	As in the proof of Lemma \ref{integral-R-l}, consider the set $\bB_l:=\big\{g_1\in G:\, \log\norm{g_1}\leq 3\gamma l/2\big\}$. Then $\mu^{*l}(\bB_l)\geq 1-C_l$ and $t-\log\norm{g_1}\geq t/2\geq s^{8}$ for $g_1\in \bB_l$. Lemma \ref{logs-lemma} implies that
	\begin{equation*}\label{inside-Bl}
	\oR( \mathbf 1_{\D(y,e^{-s})\times [-4,\log s+4]})\big(g_1x,t-\log\norm{g_1}\big)\leq \zeta(s)\quad\text{for all} \quad g_1 \in \bB_l,
	\end{equation*}
	where $\zeta(s)$ is a decreasing  rate function.
	
	On the other hand,   Lemma \ref{R-b-b}  gives
	\begin{equation*}\label{outside-Bl}
	\oR(\mathbf 1_{\D(y,e^{-s})\times [-4,\log s+4]})\leq    \oR( \mathbf 1_{u\in [-\log s-4,\log s+4]})\lesssim (\log s+5)^2.
	\end{equation*}

	Hence, splitting $G$ into $\bB_l$ and its complement and using the last two inequalities, we get
	\begin{align*}
	\int_{G} \oR(\mathbf 1_{\D(y,e^{-s})\times [-4,\log s+4]})\big(g_1x,t-\log\norm{g_1}\big) \diff \mu^{*l}(g_1)
	\lesssim \zeta
	(s) +   C_l  (\log s+5)^2 . 
	\end{align*}
	
	We have $C_l(\log s+5)^2\leq C_l(\log t+5)^2\lesssim C_l(t+5)$, since $t\geq 2s^8 \geq s$. Recall that  $l=\lfloor t/(3\gamma)\rfloor$, so the last quantity which tends to $0$ as $s\to\infty$. Thus, we can find a decreasing rate function dominating   $C_l(\log s+5)^2$.
	
	We conclude that \eqref{eq:N(x,y)-3-2} tends to $0$ as $s\to\infty$, which yields the same property for \eqref{eq:N(x,y)-3}.
	This completes the proof of the lemma.
\end{proof} 

The following result is stated in \cite[Lemma 3.8]{li:fourier} and can be obtained by using integration by parts.
\begin{lemma}\label{inequality-exp}
	For $b_1<b_2$ and $w\in\R\setminus \{0\}$, we have
	$$\Big| \int_{b_1}^{b_2} \exp\big[iw e^{-u}\big] \,\diff u \Big|\leq {{2e^{b_1}+2e^{b_2}}\over |w|}.$$
\end{lemma}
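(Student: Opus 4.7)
The plan is to convert the oscillatory integral to a form where standard integration by parts yields cancellation in $|w|$. The natural substitution is $v = e^{-u}$, so that $dv = -e^{-u}\,du$ and $du = -dv/v$. This transforms the integral into
$$\int_{b_1}^{b_2} \exp\bigl[iwe^{-u}\bigr]\,du = \int_{e^{-b_2}}^{e^{-b_1}} \frac{e^{iwv}}{v}\,dv,$$
so the problem reduces to bounding $\bigl|\int_{\alpha}^{\beta} v^{-1} e^{iwv}\,dv\bigr|$ with $\alpha := e^{-b_2}$ and $\beta := e^{-b_1}$, where $0 < \alpha < \beta$.

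Next I would integrate by parts with $U = 1/v$ and $dV = e^{iwv}\,dv$, giving $V = e^{iwv}/(iw)$ and $dU = -v^{-2}\,dv$. This produces
$$\int_{\alpha}^{\beta} \frac{e^{iwv}}{v}\,dv = \left[\frac{e^{iwv}}{iwv}\right]_{\alpha}^{\beta} + \frac{1}{iw}\int_{\alpha}^{\beta} \frac{e^{iwv}}{v^2}\,dv.$$
Since $|e^{iwv}| = 1$, the boundary term is bounded by $\frac{1}{|w|}(\alpha^{-1} + \beta^{-1}) = \frac{e^{b_1}+e^{b_2}}{|w|}$, and the remaining integral is controlled by $\frac{1}{|w|}\int_{\alpha}^{\beta} v^{-2}\,dv = \frac{1}{|w|}(\alpha^{-1}-\beta^{-1}) = \frac{e^{b_2}-e^{b_1}}{|w|}$.

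Adding the two contributions gives a bound of $\frac{2e^{b_2}}{|w|}$, which is in fact slightly stronger than the claim (and is trivially bounded by $\frac{2e^{b_1}+2e^{b_2}}{|w|}$, so the lemma as stated follows). The argument is entirely elementary; there is no real ``hard step.'' The only minor point to verify is that the substitution and the integration by parts are legitimate (the integrand $v^{-1}e^{iwv}$ is smooth on the compact interval $[\alpha,\beta] \subset (0,\infty)$, so both operations are routine).
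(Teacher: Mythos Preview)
Your proof is correct and follows exactly the approach the paper indicates (integration by parts, after the substitution $v=e^{-u}$); the paper itself does not give details but simply cites \cite[Lemma 3.8]{li:fourier} and notes that the result follows by integration by parts. Your computation even yields the slightly sharper bound $2e^{b_2}/|w|$, which of course implies the stated one.
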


We will need the following simple extension lemma for $\Cc^1$ functions on $\bC\times\bC\times\R\times \R$. The proof is left to the reader.

\begin{lemma}\label{extend}
	Let $F$ be a $\Cc^1$ function on $\bC \times\bC\times\R\times\R $. Then, there exists a $\Cc^1$ function $\widetilde F$ on $\P^1 \times\P^1\times\R\times\R$ such that $\widetilde F=F$ on $\bC\times\bC\times\R\times \R $,     
	$\norm{\widetilde F}_{L^\infty(\P^1\times\P^1\times\R\times\R)}\leq \norm{F}_{L^\infty(\bC\times\bC\times\R \times\R)}$, $\norm{\widetilde F}_{\Cc^1(\P^1\times\P^1\times\R\times\R)}\lesssim \norm{F}_{\Cc^1(\bC\times\bC\times\R\times\R )}$, and the projection of  $\supp(\widetilde F)$ to $\R\times\R$ is the same as the projection of  $\supp(F)$ to $\R\times\R$.
\end{lemma}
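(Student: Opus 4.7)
The plan is to construct $\widetilde F$ by composing $F$ with a smooth retraction from a tubular neighborhood of $\bC$, then multiplying by a cutoff function. The argument uses only the fact that $\bC$ is a smooth compact submanifold of $\P^1$.

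First, I would fix a tubular neighborhood $U$ of $\bC$ inside $\P^1$ together with a smooth retraction $\pi : U \to \bC$ satisfying $\pi|_\bC = \id$; such a pair exists because $\bC$ is a smooth closed submanifold of $\P^1$. Next, I would choose a smooth cutoff $\chi : \P^1 \times \P^1 \to [0,1]$ that is identically $1$ on a neighborhood of $\bC \times \bC$ and has compact support in $U \times U$. Then I would define
$$\widetilde F(x,y,u,v) := \chi(x,y) \, F\big(\pi(x), \pi(y), u, v\big)$$
for $(x,y) \in U \times U$ and extend $\widetilde F$ by zero to the remainder of $\P^1 \times \P^1 \times \R \times \R$.

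The required properties would then follow directly from the construction. The identity $\widetilde F = F$ on $\bC \times \bC \times \R \times \R$ holds because $\pi$ restricts to the identity there and $\chi \equiv 1$. The inequality $\norm{\widetilde F}_{L^\infty} \leq \norm{F}_{L^\infty}$ is a consequence of $0 \leq \chi \leq 1$, and $\norm{\widetilde F}_{\Cc^1} \lesssim \norm{F}_{\Cc^1}$ follows from the chain rule, since $\pi$ and $\chi$ are fixed smooth maps (depending only on the geometry of $\bC \subset \P^1$) with bounded derivatives in local charts. For the support condition, denoting by $\pi_{\R^2}$ the projection onto the last two factors, the inclusion $\pi_{\R^2}(\supp \widetilde F) \subseteq \pi_{\R^2}(\supp F)$ is immediate from the defining formula, while the reverse inclusion follows from $\widetilde F|_{\bC \times \bC \times \R \times \R} = F$, since $\supp F$ is already contained in $\bC \times \bC \times \R \times \R$.

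There is no serious obstacle here: the construction is standard and the verification is essentially bookkeeping. The only point requiring a little care is to ensure that the $L^\infty$ estimate is a genuine inequality (rather than just $\lesssim$), but this is automatic once one imposes $0 \leq \chi \leq 1$ at the cutoff stage.
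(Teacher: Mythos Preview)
Your proof is correct and is the standard tubular-neighborhood-plus-cutoff construction one would expect here. The paper itself does not give a proof of this lemma (it states ``The proof is left to the reader''), so there is nothing to compare against; your argument fills the gap cleanly.
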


We now proceed to the proof of Proposition \ref{x-y>es}.

\begin{proof}[Proof of Proposition \ref{x-y>es}] Recall that $x_0,y_0\in \bC $ are fixed and $d(x_0,y_0)> e^{-s/9}$. We first consider the case of $\bN_t^+$.  Recall that $\mu ^\N(\bN_t^+)$ is bounded uniformly in $t >0$ by Lemma \ref{dominate-s1}. Let $\bN^+_{t,s}(x_0,y_0)$ be the set defined in \eqref{N-t-s}.  Remind that $s>20,t> s^{20}$ by assumption and our goal is to prove that $$\Big|\int_{\bN^+_t} \Gamma(\g) \, \diff \check \mu ^\N(\g) \Big| \leq \varepsilon(s)+\varepsilon_{s}(t),$$ where $\Gamma(\g)$ is the function defined in \eqref{eq:def-Gamma} and  $\varepsilon(s),\varepsilon_{s}(t)$ are  rate functions as in the statement of the proposition.
	\vskip 3pt

 Lemma \ref{lemma:size-N(x,y)} applied to $\check\mu$ yields  a rate function $\varepsilon_2(s)$  such that $$\check\mu^\N\big(\bN^+_t\setminus \bN^+_{t,s}(x_0,y_0)\big)\leq \varepsilon_2(s).$$
Let $\Lambda(\g)$ be the function defined in \eqref{eq:def-Lambda}. Since $\psi$ is bounded, so are $\Gamma(\g)$ and $\Lambda(\g)$. Using this and the fact that $\supp(\check \mu)\subset \SL_2(\R)$ we deduce, from Proposition \ref{prop:li-approximation},  that 
	\begin{align}
	\Big|\int_{\bN^+_t} \big[\Gamma(\g) -\Lambda(\g) \big] \diff \check \mu ^\N(\g) \Big| 
	\lesssim \varepsilon_2(s) +e^{-\beta s}. \label{inequality-Lambda}
	\end{align}
	
	Since $k=e^{2t+s}$, we can write $\Lambda(\g)$ as
	$$\Lambda(\g)=\exp\Big[ie^s \cdot  \sgn(\g x_0,x_0,y_0)\varphi'(\g^{-1}x_0)e^{-2(\log\norm{\g}-t)}{{d(x_0,y_0)}\over {d(\g x_0,x_0)d(\g x_0,y_0)}}\Big]\psi^2(\g^{-1}x_0).$$
	This can be further rewritten as $$\Lambda(\g) = f\big(\g^{-1}x_0,\g x_0,\log\norm{\g}-\log\norm{L\g},\log\norm{L\g}-t\big),$$
	where  $f$ is the function on  $\bC  \times \bC \times \R_v \times \R_u$ defined by
	$$f(\check x,x,v,u):=\exp  \Big[ie^s \cdot \sgn (x,x_0,y_0)\varphi'(\check x)e^{-2(u+v)}{{d(x_0,y_0)}\over {d(x,x_0)d(x,y_0)}} \Big]\psi^2(\check x).$$
	
	The expected decay for $\Lambda(\g)$ will be achieved by applying Proposition \ref{renewal-4}. In order to apply that proposition, we need to extend $f$ to a function on  $\P^1  \times \P^1 \times \R_v \times \R_u$ with a good control on the $\Cc^1$-norm and modify it so that the projection of its support to $\R_v$ becomes compact.
	
 Consider the function	
	$$F(\check x,x,v,u):=f(\check x,x,v,u)\cdot \chi\big(e^{s/9}d(x,x_0)\big) \chi\big(e^{s/9}d(x,y_0)\big)\cdot \tau
	\Big({9v\over s}\Big)\tau\Big({9u\over2s}\Big),$$ 
	where $\chi$ is a increasing smooth function on $\R_{\geq 0}$ such that $\chi=0$ on $[0,1]$, $\chi=1$ on $[2,\infty)$, and $\norm{\chi}_{\Cc^1}\leq 4$, and $\tau$ is a real smooth cut-off function supported by $[-2,2]$ such that $|\tau|\leq 1$,   $\tau= 1$ on $[-1,1]$,  and $\norm{\tau}_{\Cc^1}\leq 4$.
	
		Observe that $f$ is discontinuous at $x=x_0$ and $x=y_0$ due to the discontinuity of  $\sgn (x,x_0,y_0)$. However, when $d(x,x_0)\leq e^{-s/9}$ or $d(x,y_0)\leq e^{-s/9}$, a factor involving $\chi$ in the definition of $F$ vanishes, thus canceling the singularity of $f$. This implies that $F$ is $\Cc^1$ on $\bC  \times \bC \times \R_v \times \R_u$. From the assumptions $\norm{\varphi}_{\Cc^2}\leq c, \norm{\psi}_{\Cc^1}\leq c$, it is not hard to see that  $$\norm{F}_{\Cc^1(\bC  \times \bC \times \R_v \times \R_u)}\lesssim e^{2s}.$$
		
		It is clear that the projection of $\supp(F)$ to $\R_v$ is contained in $[-2s/9,2s/9]$. 
	Moreover, note that for $\g\in \bN^+_{t,s}(x_0,y_0)$, we have 
	 $0<\log\norm{\g}-\log\norm{L\g}<s/9,\, -s/9<\log\norm{L\g}-t<0,\, d(\g x_0,x_0)>2e^{-s/9},\,d(\g x_0,y_0)>2e^{-s/9}
	 $.
	 So in this case, 
	 $$\chi\big(e^{s/9}d(\g x_0,x_0)\big) =\chi\big(e^{s/9}d(\g x_0,y_0)\big)=\tau\Big({9(\log\norm{\g}-\log\norm{L\g})\over s}\Big)=\tau\Big({9(\log\norm{L\g}-t)\over2s}\Big)=1.$$ 
	We conclude that 	for all  $\g\in \bN^+_{t,s}(x_0,y_0)$ real,
	\begin{equation}\label{f-tau-lambda}
	F\big(\g^{-1}x_0,\g x_0,\log\norm{\g}-\log\norm{L\g},\log\norm{L\g}-t\big)= \Lambda(\g)  .  
	\end{equation}

Using Lemma \ref{extend}, we can take a $\Cc^1$ function $\widetilde F$ such that $\widetilde F=F$ on $\bC  \times \bC \times \R_v \times \R_u$, $\norm{\widetilde F}_{\Cc^1(\P^1 \times \P^1\times \R_v \times \R_u)}\lesssim e^{2s}$ and the projection of the support of $\widetilde F$ to $\R_v$ is contained in $[-2s/9,2s/9]$. Notice that \eqref{f-tau-lambda} still holds with $\widetilde F$ instead of $F$. Therefore, we have
	\begin{align}\Big|\int_{\bN^+_t} \Big[\widetilde F\big(\g^{-1}x_0,\g x_0,\log\norm{\g}-\log\norm{L\g},\log\norm{L\g}-t\big) -\Lambda(\g)  \Big]\diff \check \mu ^\N(\g)\Big| 
	\lesssim \varepsilon_2(s). \label{inequality-f'}
	\end{align}
	
Observe now that with our notations
 $$\int_{\bN^+_t}   \widetilde F\big(\g^{-1}x_0,\g x_0,\log\norm{\g}-\log\norm{L\g},\log\norm{L\g}-t\big)\diff \check \mu ^\N(\g)=\oE_2^+ \widetilde F(x_0,x_0,t) ,$$ 
 where $\oE_2^+$ is the residual operator introduced in Subsection \ref{subseq:E_2^+}, with respect to $\check\mu$ instead of $\mu$.
We now apply Proposition \ref{renewal-4} to $\widetilde F$. Since  $\norm{\widetilde F}_{\Cc^1(\P^1 \times \P^1\times \R_v \times \R_u)}\lesssim e^{2s}$ and  the projection of $\supp(\widetilde F)$ to $\R_v$ is contained in $[-2s/9,2s/9]$, we obtain that $\big|\oE_2^+ \widetilde F(x_0,x_0,t)\big|$ is bounded by a constant times (remind that we are using $\check \mu$ instead of $\mu$)
	\begin{align}
	 \label{renewal-5-f'}  
	\Big| \int_{\P^1}\int_{\P^1}\int_G \int_{-\sigma_h^+(y)}^0 \widetilde F\big(\check y,hy,\sigma_h(y),u\big) \diff u \diff \check \mu (h) \diff \check\nu(y)\diff \nu(\check y)\Big|+C_\delta{ s^2e^{2s}\over (1+t)^{1/4}}+\delta e^{2s}+\varrho(t) e^{2s},
	\end{align} 
	for $t>4s/9+2$ and $0<\delta < 1$, where $\varrho(t)$ is a rate function and $C_\delta>0$ is independent of $\widetilde F,x_0,y_0,t,s$.

	Now, for $\check x,x\in\bC$, we put $$w(\check x,x):=e^{s}\cdot\sgn(x,x_0,y_0)\varphi'(\check x){{d(x_0,y_0)}\over {d(x,x_0)d(x,y_0)}}$$
	and
	 $$q(\check x,x):=\chi\big(e^{s/9}d(x,x_0)\big) \chi\big(e^{s/9}d(x,y_0)\big)\psi^2(\check x),$$
so that
$$\widetilde F(\check x,x,v,u) =  \exp \big[i e^{-2(u+v)} w(\check x,x) \big] q(\check x,x) \cdot\tau
\Big({9v\over s}\Big)\tau\Big({9u\over2s}\Big) \quad \text{for}\quad  x,\check x\in \bC.$$ 

		Since $d(x_0,y_0)>e^{-s/9},d(x,x_0)\leq 1,d(x,y_0)\leq 1$, the assumptions $|\varphi'|\geq 1/c$ on $\supp(\psi)$ and $|\psi|\leq c$  imply that
	$$|w(\check x,x)|\geq e^{8s/9}/c \quad \text{and} \quad |q(\check x,x)|\leq c^2 \quad \text{on} \quad  \supp(\psi) \times \bC .$$ 
	
	As $\widetilde F$ vanishes for $v$ outside $[-2s/9,2s/9]$,  the integrand in \eqref{renewal-5-f'}  is non-zero only when $\sigma^+_h(y)\leq 2s/9$, or equivalently $-2s/9\leq -\sigma_h^+(y)\leq u\leq 0$. In this case, $\tau({9u\over 2s})=1$. 
Therefore, using  that $\nu$ and $\check\nu $ are supported by $\bC $, the first term of \eqref{renewal-5-f'} is equal to
	\begin{align*}
	&\Big|\int_{\bC}\int_{\bC}\int_G\int_{-\sigma_h^+(y)}^0  \exp\big[i e^{-2(u+\sigma_h(y))} w(\check y,hy) \big] q(\check y,hy)\cdot \tau
	\Big({9\sigma_h(y)\over s}\Big) \,\diff u \diff \check \mu (h) \diff \check\nu(y)\diff \nu(\check y) \Big|\\
	&\leq\int_{\bC}\int_{\bC}\int_G \Big|q(\check y,hy) \tau
	\Big({9\sigma_h(y)\over s}\Big)\Big|\cdot \Big| \int_{-\sigma_h^+(y)}^0  \exp\big[i e^{-2(u+\sigma_h(y))} w(\check y,hy) \big]  \, \diff u\Big| \,\diff \check \mu (h) \diff \check\nu(y)\diff \nu(\check y) \\
	&\lesssim \int_{\supp(\psi)}\int_{\bC}\int_G\Big|\int_{-\sigma_h^+(y)}^0 \exp\big[ie^{-2(u+\sigma_h(y))}w(\check y,hy) \big]\, \diff u\Big| \,\diff \check \mu (h) \diff \check\nu(y)\diff \nu(\check y) \\
	&\leq \int_{\supp(\psi)}\int_{\bC}\int_G  \Big|\int_0^{\sigma_h^+(y)} \exp\big[ie^{-2u}w(\check y,hy) \big]\, \diff u \Big| \,\diff \check \mu (h) \diff \check\nu(y)\diff \nu(\check y)   \lesssim {1+e^{4s/9}\over  e^{8s/9}} \lesssim e^{-s/3},
	\end{align*}
	where in the last line we applied  Lemma \ref{inequality-exp} and used that $|w| \gtrsim e^{8s/9}$ and $0 \leq \sigma^+_h(y) \leq 2s/9$.
		Combining with \eqref{inequality-Lambda}, \eqref{inequality-f'} and \eqref{renewal-5-f'}, we obtain that
	\begin{equation*}\label{Nt+}
	\Big|\int_{\bN^+_t} \Gamma(\g) \, \diff \check \mu ^\N(\g) \Big|  \lesssim \varepsilon_2(s) +e^{-\beta s}+e^{-s/3}+C_\delta s^2e^{2s}(1+t)^{-1/4}+\delta e^{2s}+\varrho(t)e^{2s}.	
    \end{equation*}

	Now set $\delta:=e^{-3s}$. Then,  the last quantity is equal to  $\varepsilon_2(s) +e^{-\beta s}+e^{-s/3}+C_{e^{-3s}} s^2e^{2s}(1+t)^{-1/4}+ e^{-s}+\varrho(t)e^{2s}$. Finally, we take
	$$\varepsilon(s)=C\big[\varepsilon_2(s) +e^{-\beta s}+e^{-s/3}+ e^{-s} \big]\quad\text{and} \quad \varepsilon_s (t)=C\big[ C_{e^{-3s}} s^2e^{2s}(1+t)^{-1/4}+\varrho(t)e^{2s}\big]$$
	for some large constant $C$. This yields the desired result for $\bN^+_t$.
	\vskip 3pt
	
	For the case of $\bN^-_t$, we repeat the above proof by using the set $\bN^-_{t,s}(x_0,y_0)$ defined in  \eqref{N-t-s} instead of $\bN^+_{t,s}(x_0,y_0)$, and applying Proposition \ref{E_L^-} instead of Proposition \ref{renewal-4}. 
	It is clear that all estimates we used are uniform in $x_0$ and $y_0$. This concludes the proof of the proposition.
\end{proof}

%\bibliographystyle{alpha}
%\bibliography{refs}
\end{document}